\numberwithin{equation}{section}
\newtheorem{Theorem}{Theorem}[section]
\newtheorem{theorem}[Theorem]{Theorem}
\newtheorem{proposition}[Theorem]{Proposition}
\newtheorem{lemma}[Theorem]{Lemma}
\newtheorem{corollary}[Theorem]{Corollary}
\newtheorem{example}[Theorem]{Example}
\newtheorem{remark}[Theorem]{Remark}
\newtheorem{definition}[Theorem]{Definition}
\theoremstyle{nonumberplain}
\newtheorem{proof}{Proof}
\newcommand\caB{{\mathcal B}}
\newcommand\caC{{\mathcal C}}
\newcommand\caD{{\mathcal D}}
\newcommand\caE{{\mathcal E}}
\newcommand\caF{{\mathcal F}}
\newcommand\caL{{\mathcal L}}
\newcommand\caM{{\mathcal M}}
\newcommand\caS{{\mathcal S}}
\newcommand\caU{{\mathcal U}}
\newcommand\caZ{{\mathcal Z}}
\newcommand\gone{{ \mathchoice {1\mskip-4mu\mathrm{l} } {1\mskip-4mu\mathrm{l} }{1\mskip-4.5mu\mathrm{l} } {1\mskip-5mu\mathrm{l}} }}
\newcommand\gR{{\mathbb R}}
\newcommand\gC{{\mathbb C}}
\newcommand\gS{{\mathbb S}}
\newcommand\gN{{\mathbb N}}
\newcommand\gZ{{\mathbb Z}}
\newcommand\algA{{\mathbf A}}
\newcommand\algB{{\mathbf B}}
\newcommand\algM{{\mathbf M}}
\newcommand\algN{{\mathbf N}}
\newcommand\ehH{\mathcal H}
\newcommand\bS{{\boldsymbol S}}
\newcommand\bG{{\boldsymbol G}}
\newcommand\bH{{\boldsymbol H}}
\newcommand\bb{{\text{\textup{b}}}}
\newcommand\kB{{\mathfrak B}}
\newcommand\kg{{\mathfrak g}}
\newcommand\ks{{\mathfrak s}}
\newcommand\kL{{\mathfrak L}}
\newcommand\kM{{\mathfrak M}}
\newcommand\kMtop{{\mathfrak M}_{\rm topo}}
\newcommand\kMstab{{\mathfrak M}_{\rm stab}}
\newcommand\eps{{\varepsilon}}
\newcommand\ad{{\text{\textup{ad}}}}
\newcommand\fois{\mathord{\cdot}}
\DeclareMathOperator{\tr}{Tr} 
\DeclareMathOperator{\Aut}{\mathsf{Aut}}
\DeclareMathOperator{\Dom}{\mathsf{Dom}}
\newcommand\dd{{\text{\textup{d}}}}
\newcommand\norm{\mathord{\parallel}}
\newcommand\defin{\bf}
\title{Multipliers of Hilbert algebras and deformation quantization\footnote{Work supported by the Belgian Interuniversity Attraction Pole (IAP) within the framework ``Dynamics, Geometry and Statistical Physics'' (DYGEST).}}
\author{Axel de Goursac}
\date{}
\begin{document}

\maketitle

\vspace*{-1cm}
\begin{center}
\textit{Charg\'e de Recherche au F.R.S.-FNRS\\ IRMP, Universit\'e Catholique de Louvain,\\ Chemin du Cyclotron, 2, B-1348 Louvain-la-Neuve, Belgium,}\\
\textit{and Max Planck Institut f\"ur Mathematik,\\ Vivatsgasse 7, D-53111 Bonn, Germany\\
  e-mail: \texttt{axelmg@melix.net}}\\
\end{center}%

\vskip 2cm

\begin{abstract}
In this paper, we introduce the notion of multiplier of a Hilbert algebra. The space of bounded multipliers is a semifinite von Neumann algebra isomorphic to the left von Neumann algebra of the Hilbert algebra, as expected. However, in the unbounded setting, the space of multipliers has the structure of a *-algebra with nice properties concerning commutant and affiliation: it is a pre-GW*-algebra. And this correspondence between Hilbert algebras and its multipliers is functorial. Then, we can endow the Hilbert algebra with a nice topology constructed from unbounded multipliers. As we can see from the theory developed here, multipliers should be an important tool for the study of unbounded operator algebras.

We also formalize the remark that examples of non-formal deformation quantizations give rise to Hilbert algebras, by defining the concept of Hilbert deformation quantization (HDQ) and studying these deformations as well as their bounded and unbounded multipliers in a general way. Then, we reformulate the notion of covariance of a star-product in this framework of HDQ and multipliers, and we call it a symmetry of the HDQ. By using the multiplier topology of a symmetry, we are able to produce various functional spaces attached to the deformation quantization, like the generalization of Schwartz space, Sobolev spaces, Gracia-Bondia-Varilly spaces. Moreover, the non-formal star-exponential of the symmetry can be defined in full generality and has nice relations with these functional spaces. We apply this formalism to the Moyal-Weyl deformation quantization and to the deformation quantization of K\"ahlerian Lie groups with negative curvature.
\end{abstract}

\vskip 1cm

\noindent{\bf Key Words:} Multiplier, Hilbert algebra, unbounded operator, deformation quantization, symmetry, Fr\'echet algebra, distribution, star-exponential.

\vspace{3mm}

\noindent{\bf MSC (2010):} 46K15, 46L10, 47L60, 46L65, 46A04, 53D55.
\pagebreak

\tableofcontents
\vskip 1cm

\section{Introduction}

\subsection{Motivations}

The theory of operator algebras (see \cite{Takesaki:2000,Blackadar:2006}), C*-algebras and von Neumann algebras, was initiated by some mathematicians in the context of quantum mechanics during the 40ies and in particular by J. von Neumann and his celebrated bicommutant theorem \cite{vonNeumann:1931}. Briefly speaking, a von Neumann algebra is a *-algebra of bounded operators on a Hilbert space with identity operator and weakly closed. Later, Dixmier introduced Hilbert algebras for the classification of semifinite von Neumann algebras and to show the commutant theorem \cite{Dixmier:1981}. A Hilbert algebra, endowed with a product, an involution and a scalar product, gives rise to a semifinite von Neumann algebra (its left or right von Neumann algebra) and any semifinite von Neumann algebra can be obtained in this way (up to isomorphism). This structure was then extended to the purely infinite case (see \cite{Takesaki:2000}). The von Neumann algebras have nowadays applications in numerous domains of mathematics and physics, like knot theory, representation theory, noncommutative geometry, logics, probability, quantum field theory, statistical mechanics...

On another side, multipliers were introduced for C*-algebras in the 60ies \cite{Busby:1968} and then extended to various contexts as Fr\'echet algebras. This tool showed its importance for the theory of C*-algebras and in particular for locally compact quantum groups. A natural question relies on what multipliers of a Hilbert algebra should be. In a not so surprisingly way, such multipliers form a von Neumann algebra isomorphic to the left or right von Neumann algebra of the given Hilbert algebra. But this is not the end point concerning this structure...
\medskip

Indeed, operators appearing naturally in quantum mechanics are often unbounded as position and momentum operators. Then, mathematicians started to study *-algebras of unbounded operators with a common dense subdomain \cite{Powers:1971,Lassner:1972} called later O*-algebras (see \cite{Schmudgen:1990,Antoine:2002} for a review on the subject). Properties of von Neumann algebras, generalized in appropriate ways to unbounded operator algebras, lead to the notions of EW*-algebras (symmetric O*-algebra $\algM$ with bounded part $\overline{\algM_\bb}$ von Neumann)\cite{Dixon:1971} and GW*-algebras (O*-algebra equal to is bicommutant, with commutant stabilizing the domain, and one additional topological constraint on the domain) \cite{Inoue:1986}.

Then, the question concerning multipliers of Hilbert algebras can be asked in the unbounded context. Can we define a notion of unbounded multiplier of a Hilbert algebra? Does this notion fit in the generalizations of von Neumann algebras that are EW*- or GW*-algebras? We will see in this paper that such multipliers $\kM(\algA)$ of a Hilbert algebra $\algA$ exist but they do not form an EW*-algebra in general, even not a GW*-algebra, but only a pre-GW*-algebra, relaxing the topological constraint on the domain. Moreover, one can associate to $\kM(\algA)$ a GW*-algebra $\kMtop(\algA)$ giving rise to an interesting topology on the domain $\algA$. As it is the case for (bounded) von Neumann algebras, such unbounded multipliers of Hilbert algebras should probably be crucial tools for the study of unbounded operator algebras.

\bigskip

Deformation quantization yields interesting examples of operator algebras related to Poisson geometry and it was also introduced in the context of quantum mechanics \cite{Bayen:1978}. A deformation quantization of Poisson manifold $M$ is the data of an associative product $\star_\theta$ on $\caC^\infty(M)$ (or one of its dense subspace) depending on a deformation parameter $\theta$, constructed from the Poisson structure, and such that it corresponds to the commutative pointwise product for $\theta=0$.

On one side, one can consider formal deformation quantization, where the star-product $f_1\star_\theta f_2=\sum_{k=0}^\infty C_k(f_1,f_2)\theta^k \in\caC^\infty(M)[[\theta]]$ ($f_i\in\caC^\infty(M)$) is a formal power series in the deformation parameter $\theta$. Such deformation quantizations were intensively studied, and definitely classified in \cite{Kontsevich:2003}. For example, there is only one class of deformation on $M=\gR^{2n}$ given by the Moyal product
\begin{equation}
f_1\star_\theta f_2(x)=\sum_{k=0}^\infty \left(\frac{-i\theta}{2}\right)^k\omega(\partial_x,\partial_y)^k f_1(x)f_2(y)_{|y=x}\label{eq-intro-moyal}
\end{equation}
for $f_i\in\caC^\infty(\gR^{2n})$ and $\omega$ a symplectic structure on $\gR^{2n}$. In the more general case where $G=M$ is a Lie group, such a deformation then corresponds to a Drinfeld twist that permits to deform also algebras on which $G$ acts \cite{Drinfeld:1989}.

In a non-formal point of view, more interesting for functional analysis, and where the deformation parameter $\theta$ has now a real value, there is no classification of deformation quantizations, and there are actually only few available examples. For the Abelian group $G=\gR^{2n}$, the star-product \eqref{eq-intro-moyal} acquires a non-formal meaning in the Moyal-Weyl formula
\begin{equation}
f_1\star_\theta f_2(x)=\frac{1}{(\pi\theta)^{2n}}\int f_1(y)f_2(z) e^{-\frac{2i}{\theta}(\omega(y,z)+\omega(z,x)+\omega(x,y))}\dd y\dd z\label{eq-intro-moyalweyl}
\end{equation}
for $f_i\in\caS(\gR^{2n})$, and for which \eqref{eq-intro-moyal} is the asymptotic expansion in $\theta$ close to 0. Rieffel then used this Abelian symmetry and this deformation in order to build a non-formal twist, also called universal deformation formula (UDF), that deforms continuously the C*-algebras on which $\gR^{2n}$ is acting \cite{Rieffel:1993}. The star-exponential associated to the star-product \eqref{eq-intro-moyalweyl} has been well-defined in a non-formal way and produces various applications in harmonic analysis \cite{Bayen:1982,Cahen:1984,Cahen:1985}. Deformation quantizations of Abelian symmetries were also extended to the complex case \cite{Omori:2000,Garay:2013gya}, to the case of supergroups $\gR^{2n|m}$ \cite{Bieliavsky:2010su} and to Abelian $p$-adic groups \cite{Gayral:2014ad}.
\medskip

In order to construct deformation quantizations of non-Abelian groups, Bieliavsky developed a retract method also based on symmetries \cite{Bieliavsky:2002,Bieliavsky:2001os,Bieliavsky:2010kg}. Starting from a non-formal star-product $\star_\theta^1$ on $M$, which is $G_1$-strongly invariant and $G_2$-covariant, if the shared symmetry $H$ contained in both $G_1$ and $G_2$ is ``sufficiently large'', the retract method then constructs another non-formal star-product $\star_\theta^2$ on $M$ that is $G_2$-strongly invariant, as well as the explicit non-formal intertwiner $U_\theta$ between $\star_\theta^1$ and $\star_\theta^2$. This method was successfully applied to find non-formal deformation quantizations $\star_\theta^2$ of negatively curved K\"ahlerian Lie groups, also called normal $j$-groups in \cite{Pyatetskii-Shapiro:1969}, starting from the Moyal-Weyl product $\star_\theta^1$ \cite{Bieliavsky:2001os,Bieliavsky:2010kg}. An associated pseudodifferential calculus and a UDF were also built in \cite{Bieliavsky:2010kg}, and the corresponding star-exponential was exhibited in \cite{Bieliavsky:2013sk} with applications in harmonic analysis of these Lie groups.

It turns out \cite{Rieffel:1993} that the space $L^2(\gR^{2n})$ endowed with the Moyal-Weyl product $\star_\theta^1$, the complex conjugation and its standard scalar product is a Hilbert algebra. In the same way, $L^2(M)$ with the product $\star_\theta^2$ in the case of normal $j$-groups is \cite{Bieliavsky:2010kg} also a Hilbert algebra, while the intertwiner $U_\theta$ can be viewed as a unitary *-homomorphism. Therefore this motivates the introduction of the notion of Hilbert deformation quantization (HDQ) in this paper, just formalizing these observations. As a consequence, bounded and unbounded multipliers can play a role for HDQ. Moreover, the symmetries of HDQ will allow defining in full generality various functional spaces adapted to these deformations. For example, such functional spaces in the case of $M=\gR^{2n}$ were very useful to define spectral triple on the Moyal-Weyl deformation quantization \cite{Gayral:2003dm}. Furthermore, the non-formal star-exponential can also be defined in a general way by using multipliers and has interesting relations with these functional spaces. All these applications stress that the context of Hilbert algebras and its bounded and unbounded multipliers is very adapted to non-formal deformation quantization, as well as symmetries. Indeed, this crucial concept of symmetry, already producing deformations by retract method and by Drinfeld twists, gives rise now directly to the topology of functional spaces adapted to these deformations.

Eventually, notice that a non-formal $SL(2,\gR)$-strongly invariant deformation quantization was recently exhibited \cite{Bieliavsky:2008mv}, and one can see that it gives also rise to a HDQ. Then, the formalism developed in this paper can be applied for this HDQ and for the symmetry $SL(2,\gR)$, as for other more general symmetries and deformations. Explicit computation of the non-formal star-exponential associated to this symmetry $SL(2,\gR)$ can be found in \cite{Bieliavsky:2013sl2} as well as its link with multipliers.

\subsection{Outline of the paper}

In section \ref{subsec-hilbalg}, we recall the well-known setting of Hilbert algebras - algebras with involution and scalar product - introduced by Dixmier, and their links with semifinite von Neumann algebras. Then, we present the framework of unbounded operator algebras in section \ref{subsec-ostar}. An O*-algebra on the dense domain $\caD$ is an algebra of unbounded operators whose domains contain $\caD$, which stabilize $\caD$, as well as their adjoints. Such an O*-algebra induces on $\caD$ its graphic topology. Bounded and unbounded commutants of O*-algebras are also recalled in section \ref{subsec-gw}, which give rise to the notion of (pre-) GW*-algebra, an extension of von Neumann algebras to the unbounded setting.
\medskip

In section \ref{subsec-multdef}, we start by defining bounded $\kM_\bb(\algA)$ and unbounded $\kM(\algA)$ multipliers of a given Hilbert algebra $\algA$, inspired directly by the definition of double-centralizers of C*-algebras. We show in section \ref{subsec-multb} that bounded multipliers form a von Neumann algebra isomorphic to the left or right von Neumann algebra, as expected. For the space of unbounded multipliers $\kM(\algA)$, we prove in section \ref{subsec-mult} that it has a structure of O*-algebra, we characterize its bounded commutant and its unbounded bicommutant, and we arrive to the fact that it is a pre-GW*-algebra (non-closed in general). It turns out as seen in section \ref{subsec-multmorph} that this multiplier construction defines a covariant functor between Hilbert algebras with isomorphisms as arrows and the category of von Neumann algebras or pre-GW*-algebras with spatial isomorphisms as arrows.

We consider in section \ref{subsec-multtopo} natural topologies on the Hilbert algebra $\algA$ associated to multipliers. It turns out that the graphic topology of $\kM(\algA)$ is not interesting here, but we can associate to $\kM(\algA)$ another O*-algebra $\kMtop(\algA)$, whose graphic topology $\tau_\kM$ on $\algA$ is called the multiplier topology; and this topology will have interesting applications to deformation quantization. We also endow $\kM(\algA)$ with a natural locally convex complete topology, and we look in section \ref{subsec-multcommut} at some particular examples as multipliers of unital or commutative Hilbert algebras.
\medskip

In section \ref{subsec-hdq}, we first define in a general way the concept of Hilbert deformation quantization (HDQ), which is just a non-formal deformation quantization with a Hilbert algebra structure, and we give some basic properties concerning its multipliers. For a given HDQ $\algA_\theta$, we then introduce in section \ref{subsec-symm} the crucial notion of symmetry $\kg$, which expresses the covariance of the deformation quantization in the language of HDQ. We show that a symmetry $\kg$ of a HDQ $\algA_\theta$ forms a subalgebra of multipliers whose multiplier topology generates a Fr\'echet algebra, Hilbert subalgebra of $\algA_\theta$, denoted by $\bS(\algA_\theta,\kg)$ in analogy of the Schwartz space. Moreover, any representation of $\algA_\theta$ with some natural conditions, i.e. quantization map associated to the deformation quantization, can be extended to unbounded multipliers $\kM(\bS(\algA_\theta,\kg))$.

In this operator setting of HDQ, we also define in a general way the star-exponential of a symmetry $\kg$ in section \ref{subsec-starexp}, which is a unitary multiplier satisfying the Baker-Campbell-Hausdorff property. Furthermore, other interesting functional spaces, $\bG^{kl}(\algA_\theta,\kg)$ generalizing topological spaces of Gracia-Bondia-Varilly, and $\bH^k(\algA_\theta,\kg)$ generalizing Sobolev spaces, are also associated to any symmetry $\kg$ in section \ref{subsec-funcsymm}. In the case of an invariant symmetry (the deformation $\star_\theta$ is strongly invariant), all these functional spaces can be characterized as smooth vectors for group actions in section \ref{subsec-invsymm}.
\medskip

In section \ref{subsec-moyal}, we look at the well-known example of Moyal-Weyl deformation quantization that defines a HDQ $\algA_\theta$, and for which the Weyl quantization is a representation. The translation group induces an invariant symmetry $\kg$ of this HDQ $\algA_\theta$. For this basic example, we find for $\bS(\algA_\theta,\kg)$ the usual Schwartz space $\caS(\gR^{2n})$, for $\bG^{kl}(\algA_\theta,\kg)$ the topological spaces introduced by Gracia-Bondia-Varilly with the matrix basis (see section \ref{subsec-gbv}), for $\bH^k(\algA_\theta,\kg)$ the usual Sobolev spaces $H^k(\gR^{2n})$, and for the star-exponential the usual exponential. And we show in section \ref{subsec-moyaldistr} that the unbounded multipliers $\kM(\bS(\algA_\theta,\kg))$ corresponds to the usual $\star_\theta$-multipliers of the Fr\'echet algebra $\caS(\gR^{2n})$. To illustrate the unital case, we also mention in section \ref{subsec-clifford} the infinite-dimensional Clifford algebras, seen as limit of HDQs.

Finally, we consider in section \ref{subsec-normal} the non-formal deformation quantization of K\"ahlerian Lie groups with negative curvature, which is also a HDQ $\algA_\theta$ with a representation. The bounded symmetric domains associated to these groups, and on which the HDQ lives, possess an interesting transvection group. Such a transvection group indeed induces an invariant symmetry of the HDQ $\algA_\theta$ (bigger than the one induced by the K\"ahlerian group itself). A theorem in section \ref{subsec-normalsymm} shows that $\bS(\algA_\theta,\kg)$ is identical to the modified Schwartz space introduced by Bieliavsky-Gayral for this deformation. The star-exponential for this symmetry can also be obtained explicitly by using computations of a previous paper. We conclude by expressing in a explicit way the new functional spaces $\bH^k(\algA_\theta,\kg)$ generated by this symmetry and adapted to this star-product.

\section{Operator Algebra framework}

To fix notations and conventions and to be self-contained, we recall the basics of Hilbert algebras theory (see \cite{Dixmier:1981} and references therein, where all the proofs are given) and the theory of O*-algebras and GW*-algebras (see \cite{Schmudgen:1990,Antoine:2002}) that will be useful in this paper.

\subsection{Hilbert algebras}
\label{subsec-hilbalg}

We just recall here what are Hilbert algebras and give some of their fundamental properties.
\begin{definition}
\label{def-hilbertalg}
Let $\algA$ be an algebra over $\gC$ with an involution and a scalar product\footnote{We choose the convention that the scalar product $\langle -,-\rangle$ is left antilinear and right linear.}. We say that $\algA$ is a {\defin Hilbert algebra} if
\begin{enumerate}
\item for any $x,y\in\algA$, $\langle y^*,x^*\rangle=\langle x,y\rangle$,
\item for any $x,y,z\in\algA$, $\langle xy,z\rangle=\langle y,x^*z\rangle$,
\item for any $x\in\algA$, the map $\lambda_x:y\in\algA\mapsto xy$ is continuous,
\item the set $\{xy,\ x,y\in\algA\}$ is dense in $\algA$.
\end{enumerate}
We note $\ehH_\algA$ the Hilbert space that is the completion of $\algA$ for the norm $\norm x\norm:=\sqrt{\langle x,x\rangle}$ associated to the scalar product.
\end{definition}
For $x\in\algA$, it turns out that $\rho_x:y\in\algA\mapsto yx$ is also continuous. The maps $\lambda,\rho$ extend to a *-algebra morphism $\lambda:\algA\to\caB(\ehH_\algA)$ and a *-algebra antimorphism $\rho:\algA\to\caB(\ehH_\algA)$, where $\caB(\ehH)$ denotes the bounded operators on the Hilbert space $\ehH$. We note $\lambda(\algA)$ (resp. $\rho(\algA)$) the weak closure of the image of $\algA$ by $\lambda$ (resp. $\rho$) and they are called {\defin left} (resp. {\defin right}) {\defin von Neumann algebras} of the Hilbert algebra $\algA$. They satisfy $\lambda(\algA)'=\rho(\algA)$.

The involution extends continuously to a continuous operator on $\ehH_\algA$. Moreover, we can show that $\norm x^*\norm=\norm x\norm$ for all $x\in\ehH_\algA$, but the norm do not satisfy in general the C*-property. The space of Hilbert-Schmidt operators $\caL^2(\ehH)$ of a Hilbert space $\ehH$ is an example of Hilbert algebra.

\begin{definition}
An element $x\in\ehH_\algA$ is called {\defin bounded} if there exists $\lambda_x\in\caB(\ehH_\algA)$ such that $\forall y\in\algA$, $\lambda_x(y)=\rho_y(x)$, or equivalently if there exists $\rho_x\in\caB(\ehH_\algA)$ such that $\forall y\in\algA$, $\rho_x(y)=\lambda_y(x)$. We note $\algA_\bb$ the set of all bounded elements in $\ehH_\algA$, also called the {\defin fulfillment} of $\algA$.

A Hilbert algebra is called {\defin full} if it contains all the bounded elements, i.e. $\algA=\algA_\bb$.
\end{definition}
For any Hilbert algebra $\algA$, $\algA_\bb$ is also a Hilbert algebra (with same Hilbert completion) and by using $\lambda,\rho$, $\algA_\bb$ is included in $\lambda(\algA)$ and $\rho(\algA)$.

\begin{theorem}
Let $\algA$ be a Hilbert algebra. For $S\in\lambda(\algA)^+$ (resp. $T\in\rho(\algA)^+$), we define
\begin{equation*}
\tau_\lambda(S):=\langle x,x\rangle,\qquad \tau_\rho(T):=\langle y,y\rangle,
\end{equation*}
if $S^{\frac12}=\lambda_x$ (resp. $T^{\frac12}=\rho_y$) for some $x$ (resp. $y$) bounded in $\ehH_\algA$; and $\tau_\lambda(S):=+\infty$ (resp. $\tau_\rho(T):=+\infty$) otherwise. Then, $\tau_\lambda$ (resp. $\tau_\rho$) is a faithful semifinite normal trace on $\lambda(\algA)^+$ (resp. $\rho(\algA)^+$). And $\tau_\lambda(\lambda_x^*\lambda_y)=\langle x,y\rangle$ for any $x,y\in\algA_\bb$.
\end{theorem}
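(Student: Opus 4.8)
The plan is to route the whole argument through a single auxiliary weight built from an approximate identity, so that the delicate properties (additivity and normality) are obtained almost for free. First I would record the structural facts used throughout: the map $x\mapsto\lambda_x$ is injective and linear on $\algA_\bb$, and $\lambda_{x^*}=\lambda_x^*$. Injectivity is what makes $\tau_\lambda$ well defined, since $S^{\frac12}\in\lambda(\algA)^+$ determines $x$ uniquely when $S^{\frac12}=\lambda_x$, and positive homogeneity is immediate from $(\alpha S)^{\frac12}=\lambda_{\sqrt\alpha\,x}$. I would also use that $\algA_\bb$ is a left $\lambda(\algA)$-module, $\lambda_{a\cdot x}=a\lambda_x$ for $a\in\lambda(\algA)=\rho(\algA)'$, which together with stability under $*$ shows that $\mathfrak n:=\{\lambda_x:x\in\algA_\bb\}$ is a two-sided $*$-ideal of $\lambda(\algA)$.

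The key device is a fixed increasing self-adjoint approximate identity $(u_\alpha)$ in $\algA$ with $\lambda_{u_\alpha}\to1$ and $\rho_{u_\alpha}\to1$ strongly and $\rho_{u_\alpha u_\alpha^*}\uparrow1$ (its existence is the standard Dixmier fact about Hilbert algebras). I would then set $F(S):=\sup_\alpha\langle u_\alpha,Su_\alpha\rangle\in[0,+\infty]$ for $S\in\lambda(\algA)^+$. Since each $S\mapsto\langle u_\alpha,Su_\alpha\rangle$ is a normal positive linear functional and the net is increasing, $F$ is automatically additive, positively homogeneous, normal and lower semicontinuous; this is precisely where the hard analytic content is purchased cheaply.

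The core step is to identify $F=\tau_\lambda$. If $S^{\frac12}=\lambda_x$ with $x\in\algA_\bb$, then using $\lambda_x(u_\alpha)=\rho_{u_\alpha}(x)$ and $\rho_{u_\alpha}^*\rho_{u_\alpha}=\rho_{u_\alpha u_\alpha^*}$ one gets $\langle u_\alpha,Su_\alpha\rangle=\norm\lambda_x u_\alpha\norm^2=\langle x,\rho_{u_\alpha u_\alpha^*}x\rangle\uparrow\langle x,x\rangle$, so $F(S)=\langle x,x\rangle=\tau_\lambda(S)$. Conversely, if $F(S)<\infty$ I would show $S^{\frac12}$ must come from a bounded element: the net $(S^{\frac12}u_\alpha)$ is norm-bounded, and a weak cluster point $x$ satisfies $\rho_y(x)=S^{\frac12}y$ for all $y\in\algA$ (using $S^{\frac12}\in\rho(\algA)'$ and $u_\alpha y\to y$), hence $x\in\algA_\bb$ with $\lambda_x=S^{\frac12}$; contraposing gives $F(S)=+\infty$ exactly when $\tau_\lambda(S)=+\infty$. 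Applied to $S=\lambda_x^*\lambda_x=\lambda_{x^*}\lambda_x$ this already yields $\tau_\lambda(\lambda_x^*\lambda_x)=\langle x,x\rangle$ for every $x\in\algA_\bb$, the square root $S^{\frac12}=|\lambda_x|$ being of the required form by polar decomposition inside $\lambda(\algA)$ and the module property.

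It remains to assemble the stated properties. Faithfulness is immediate: $\tau_\lambda(S)=0$ forces $S^{\frac12}=\lambda_x$ with $\langle x,x\rangle=0$, so $x=0$ and $S=0$. Semifiniteness follows because every $\lambda_x^*\lambda_x$ with $x\in\algA$ has finite trace while $\{\lambda_x:x\in\algA\}$ is weakly dense in $\lambda(\algA)$ by definition, so the ideal of definition is weakly dense. For the trace property I would use $\tau_\lambda(\lambda_x^*\lambda_x)=\langle x,x\rangle=\langle x^*,x^*\rangle=\tau_\lambda(\lambda_x\lambda_x^*)$ (the middle equality is the first axiom), i.e. $\tau_\lambda(T^*T)=\tau_\lambda(TT^*)$ for all $T\in\mathfrak n$; with normality and semifiniteness, the standard characterization of tracial weights upgrades this to a genuine trace on $\lambda(\algA)^+$. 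Finally, additivity lets me extend $\tau_\lambda$ linearly to the span of $\{\lambda_x^*\lambda_y\}$, whereupon the polarization identity, the linearity of $w\mapsto\lambda_w$, and the quadratic formula $\tau_\lambda(\lambda_w^*\lambda_w)=\langle w,w\rangle$ give $\tau_\lambda(\lambda_x^*\lambda_y)=\langle x,y\rangle$; the statements for $\tau_\rho$ are identical after exchanging $\lambda$ and $\rho$. I expect the main obstacle to be the construction and control of the approximate identity together with the infinite-trace direction of $F=\tau_\lambda$, after which normality and additivity are automatic and the rest is bookkeeping.
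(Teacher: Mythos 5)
The paper itself offers no proof of this theorem: it is recalled verbatim from Dixmier's book (the paper says explicitly that all proofs of Section 2.1 are in \cite{Dixmier:1981}), so your proposal can only be measured against the classical argument, from which it genuinely departs --- Dixmier obtains additivity from polar decomposition/Douglas factorization inside $\lambda(\algA)$ plus the ideal property of bounded elements, and proves normality separately. Judged on its own, your argument has a real gap, and it sits exactly at the step you declare automatic. For $F(S):=\sup_\alpha\langle u_\alpha,Su_\alpha\rangle$ to be additive (any supremum of positive functionals is automatically \emph{sub}additive, nothing more), you need the family of functionals $\omega_{u_\alpha}:=\langle u_\alpha,\,\cdot\;u_\alpha\rangle$ to be upward directed on \emph{all} of $\lambda(\algA)^+$. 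This is not implied by ``the net $(u_\alpha)$ is increasing'': since $t\mapsto t^2$ is not operator monotone, already in the Hilbert algebra of $2\times 2$ matrices one finds self-adjoint $u_1,u_2$ with $0\le\lambda_{u_1}\le\lambda_{u_2}\le 1$ but $\lambda_{u_1}^2\not\le\lambda_{u_2}^2$, and then $\langle u_1,Su_1\rangle>\langle u_2,Su_2\rangle$ for a suitable $S\in\lambda(\algA)^+$ (take $S=\lambda_c$ with $c$ the spectral projection of $u_2^2-u_1^2$ for its negative eigenvalue). The condition that actually encodes directedness is your extra hypothesis $\rho_{u_\alpha u_\alpha^*}\uparrow 1$, but it leaves two obligations unmet. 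First, existence: a net satisfying simultaneously $\lambda_{u_\alpha}\uparrow 1$ and $\rho_{u_\alpha u_\alpha^*}\uparrow 1$ is \emph{not} a citable ``standard Dixmier fact''; by the same $2\times2$ example a generic increasing approximate unit in the ideal ${\mathfrak n}=\{\lambda_x:\ x\in\algA_\bb\}$ violates the second condition, so such a net must be constructed. Second, sufficiency: your identity $\langle u_\alpha,Su_\alpha\rangle=\langle x,\rho_{u_\alpha u_\alpha^*}x\rangle$ is available only when $S^{1/2}=\lambda_x\in{\mathfrak n}$, i.e.\ precisely on the finite part, whereas additivity concerns arbitrary $S_1,S_2\in\lambda(\algA)^+$; transferring the monotonicity to general $S^{1/2}\in\lambda(\algA)$ needs an extra bounded strong (Kaplansky-type) approximation of $S^{1/2}$ by operators $\lambda_y$, which you never perform. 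Without directedness the whole strategy --- ``additivity and normality for free'' --- collapses.

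Both obligations can be discharged at once, which is why the proposal is fixable rather than wrong. Take for $(u_\alpha)$ the finite partial sums of a maximal family of mutually orthogonal nonzero projections in ${\mathfrak n}$: spectral projections $\chi_{[\eps,\infty)}(a)=a\,g(a)$ of positive elements $a\in{\mathfrak n}$ stay in ${\mathfrak n}$ by the ideal property, and maximality together with weak density of ${\mathfrak n}$ forces the total sum to be $1$. Then each $\lambda_{u_\alpha}$ is a projection, so $u_\alpha u_\alpha^*=u_\alpha$ and your two monotonicity requirements coincide; moreover, for $\alpha\le\beta$ one has $\lambda_{u_\beta}\lambda_{u_\alpha}=\lambda_{u_\alpha}$, hence $u_\alpha=u_\beta u_\alpha=\rho_{u_\alpha}(u_\beta)$ with $\rho_{u_\alpha}$ a projection lying in $\lambda(\algA)'$, whence for \emph{every} $S\in\lambda(\algA)^+$,
\begin{equation*}
\langle u_\alpha,Su_\alpha\rangle=\langle u_\beta,\rho_{u_\alpha}S\rho_{u_\alpha}u_\beta\rangle=\langle u_\beta,S^{1/2}\rho_{u_\alpha}S^{1/2}u_\beta\rangle\le\langle u_\beta,Su_\beta\rangle,
\end{equation*}
which is the directedness you need, with no density argument at all. (These $u_\alpha$ live in $\algA_\bb$ rather than $\algA$; this is harmless, since $\algA_\bb$ is a Hilbert algebra with the same completion, the same left von Neumann algebra and the same bounded elements.) Granting this lemma, the remaining steps of your proposal are correct as sketched: the identification of the finite part, the weak-cluster-point argument showing $F(S)<\infty$ forces $S^{1/2}=\lambda_x$ with $x\in\algA_\bb$, faithfulness, semifiniteness, the identity $\tau_\lambda(T^*T)=\tau_\lambda(TT^*)$ on ${\mathfrak n}$ upgraded by normality to the trace property, and the final polarization giving $\tau_\lambda(\lambda_x^*\lambda_y)=\langle x,y\rangle$.
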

The traces $\tau_\lambda$ and $\tau_\rho$ are called the {\defin natural traces} on $\lambda(\algA)^+$ and $\rho(\algA)^+$. It turns out that any von Neumann algebra with semifinite faithful normal trace is isomorphic to the left or right von Neumann algebra of a Hilbert algebra.
\begin{corollary}
\label{cor-5}
Let $\algA$ be a Hilbert algebra. If $x\in\algA_\bb$ bounded satisfies $\rho_y(x)=0$ for all $y\in\algA$, then $x=0$.
\end{corollary}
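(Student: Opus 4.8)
The plan is to realize this statement as a direct consequence of the trace formula in the preceding Theorem. First I would exploit that $x$ is bounded: by definition of $\algA_\bb$ there is an operator $\lambda_x\in\caB(\ehH_\algA)$ with $\lambda_x(y)=\rho_y(x)$ for every $y\in\algA$. The hypothesis $\rho_y(x)=0$ for all $y\in\algA$ then says precisely that $\lambda_x$ vanishes on $\algA$. Since $\algA$ is dense in $\ehH_\algA$ and $\lambda_x$ is continuous, this forces $\lambda_x=0$ as an operator on the whole Hilbert space.

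Next I would feed this into the natural trace $\tau_\lambda$. Because $x\in\algA_\bb$, the identity $\tau_\lambda(\lambda_x^*\lambda_y)=\langle x,y\rangle$ applies with $y=x$, giving $\norm x\norm^2=\langle x,x\rangle=\tau_\lambda(\lambda_x^*\lambda_x)$. But $\lambda_x=0$, so $\lambda_x^*\lambda_x=0$ and its trace vanishes; hence $\langle x,x\rangle=0$ and $x=0$. This is essentially the whole argument, which is why the statement is phrased as a corollary of the Theorem; the only point requiring a word of care is the passage from $\lambda_x=0$ on the dense subspace $\algA$ to $\lambda_x=0$ on all of $\ehH_\algA$, which is immediate from boundedness and density.

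Alternatively, and without invoking the trace, one can argue by orthogonality. Using axiom 4, the set of products $\{yz:\ y,z\in\algA\}$ is dense in $\ehH_\algA$, so it suffices to check $\langle x,yz\rangle=0$ for all $y,z\in\algA$. Writing $yz=\rho_z(y)$ and moving $\rho_z$ across the scalar product as its adjoint, together with the fact that $\rho$ is a $*$-antimorphism so that $\rho_z^*=\rho_{z^*}$ with $z^*\in\algA$, one obtains $\langle x,yz\rangle=\langle\rho_{z^*}(x),y\rangle$. The hypothesis applied to $z^*\in\algA$ gives $\rho_{z^*}(x)=0$, whence $\langle x,yz\rangle=0$; density of the products then yields $x=0$. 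I expect no genuine obstacle in either route; the structural point worth highlighting is that it is the density of \emph{products} (axiom 4), and not merely the density of $\algA$, that does the work in the second argument, whereas the first route packages exactly this density into the trace theorem.
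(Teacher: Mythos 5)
Both of your arguments are correct. The first one is essentially the paper's own proof: both rest on the trace identity $\tau_\lambda(\lambda_x^*\lambda_y)=\langle x,y\rangle$ from the preceding Theorem. The paper packages the density step differently --- it extends the hypothesis $\rho_y(x)=0$ from $y\in\algA$ to $y\in\algA_\bb$, takes $y=x^*$, and applies the trace to $\lambda_x\lambda_{x^*}=0$ to obtain $\langle x^*,x^*\rangle=0$ --- whereas you use density of $\algA$ in $\ehH_\algA$ to conclude $\lambda_x=0$ outright and then evaluate $\tau_\lambda(\lambda_x^*\lambda_x)=\langle x,x\rangle=0$; these are cosmetic variants of the same idea. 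The point you handle correctly, and which is the only subtlety here, is that one cannot pass directly from $\lambda_x=0$ to $x=0$ (that implication is exactly what the corollary asserts), so some device such as the trace formula is needed to return from operators to vectors.

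Your alternative orthogonality argument is a genuinely different route and is worth recording: it uses only the relation $\rho_z^*=\rho_{z^*}$ (the *-antimorphism property of $\rho$, which follows from axioms 1 and 2) together with the density of products from axiom 4, and never invokes the trace or the theory of bounded elements. In fact it proves more than the stated corollary: it shows that any $x\in\ehH_\algA$ (bounded or not) satisfying $\rho_y(x)=0$ for all $y\in\algA$ must vanish, since $\langle x,yz\rangle=\langle\rho_{z^*}(x),y\rangle=0$ and the products $yz$ span a dense subspace. So the trace-based route buys alignment with the paper's logical flow (the statement as a corollary of the trace theorem), while your second route buys elementarity and a slightly stronger conclusion.
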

\begin{proof}
Suppose that $\rho_y(x)=0$ for any $y\in\algA$. Then, it can be extended for any $y\in\algA_\bb$. Take $y=x^*$, we have $\lambda_x\lambda_{x^*}=0$ and we can apply the trace: $0=\tau_\lambda(\lambda_x\lambda_{x^*})=\langle x^*,x^*\rangle$. Then $x=0$.
\end{proof}

\begin{proposition}
Let $\algA_1$ and $\algA_2$ be Hilbert algebras. On the algebraic direct sum $\algA_1\oplus\algA_2$, one can define the following natural structures:
\begin{itemize}
\item Vector space: $(a_1,a_2)+\lambda(b_1,b_2):=(a_1+\lambda b_1,a_2+\lambda b_2)$,
\item Product: $(a_1,a_2)(b_1,b_2):=(a_1 b_1,a_2b_2)$,
\item Involution: $(a_1,a_2)^*:=(a_1^*,a_2^*)$,
\item Scalar product: $\langle (a_1,a_2),(b_1,b_2)\rangle:=\langle a_1,b_1\rangle+\langle a_2,b_2\rangle$,
\end{itemize}
for $a_i,b_i\in\algA_i$. Then, $\algA_1\oplus\algA_2$ is a Hilbert algebra and its completion is $\ehH_{\algA_1}\widehat\oplus\ehH_{\algA_2}$.
\end{proposition}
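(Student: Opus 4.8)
The plan is to verify the four Hilbert algebra axioms from Definition \ref{def-hilbertalg} for the direct sum, each reducing componentwise to the corresponding property already holding in $\algA_1$ and $\algA_2$. First I would check that the stated operations genuinely define an algebra with involution and scalar product: associativity, bilinearity and the involution identities $((a_1,a_2)^*)^*=(a_1,a_2)$ and $((a_1,a_2)(b_1,b_2))^*=(b_1,b_2)^*(a_1,a_2)^*$ all follow immediately by working coordinatewise, and the sesquilinear form $\langle(a_1,a_2),(b_1,b_2)\rangle=\langle a_1,b_1\rangle+\langle a_2,b_2\rangle$ is positive definite because it is a sum of positive definite forms. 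Thus the induced norm satisfies $\norm(a_1,a_2)\norm^2=\norm a_1\norm^2+\norm a_2\norm^2$, which already identifies the completion as the Hilbert space direct sum $\ehH_{\algA_1}\widehat\oplus\ehH_{\algA_2}$.

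For the axioms proper, I would take $x=(x_1,x_2)$, $y=(y_1,y_2)$, $z=(z_1,z_2)$ in $\algA_1\oplus\algA_2$ and expand. Axiom 1 asks $\langle y^*,x^*\rangle=\langle x,y\rangle$; here $\langle y^*,x^*\rangle=\langle y_1^*,x_1^*\rangle+\langle y_2^*,x_2^*\rangle=\langle x_1,y_1\rangle+\langle x_2,y_2\rangle=\langle x,y\rangle$, using axiom 1 in each $\algA_i$. Axiom 2 asks $\langle xy,z\rangle=\langle y,x^*z\rangle$; expanding, $\langle xy,z\rangle=\langle x_1y_1,z_1\rangle+\langle x_2y_2,z_2\rangle=\langle y_1,x_1^*z_1\rangle+\langle y_2,x_2^*z_2\rangle=\langle y,x^*z\rangle$, using axiom 2 in each factor. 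These two are purely formal.

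The only point requiring genuine (if still light) analysis is axiom 3, the continuity of left multiplication $\lambda_x$. Since $\lambda_x(y)=(x_1y_1,x_2y_2)=(\lambda_{x_1}^{(1)}y_1,\lambda_{x_2}^{(2)}y_2)$ and the norm splits orthogonally, I would estimate
\begin{equation*}
\norm\lambda_x(y)\norm^2=\norm x_1 y_1\norm^2+\norm x_2 y_2\norm^2\le \|\lambda_{x_1}^{(1)}\|^2\,\norm y_1\norm^2+\|\lambda_{x_2}^{(2)}\|^2\,\norm y_2\norm^2\le C^2\norm y\norm^2,
\end{equation*}
where $C=\max(\|\lambda_{x_1}^{(1)}\|,\|\lambda_{x_2}^{(2)}\|)$ is finite because each $\lambda_{x_i}^{(i)}$ is bounded by axiom 3 in $\algA_i$. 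This gives continuity of $\lambda_x$, and by the symmetric estimate the same for $\rho_x$. Finally, axiom 4 (density of products) follows because the product set of $\algA_1\oplus\algA_2$ contains $\{(u_1,u_2):u_i\in\algA_i\text{ a product}\}$, and since $\{u_1\}$ is dense in $\ehH_{\algA_1}$ and $\{u_2\}$ is dense in $\ehH_{\algA_2}$ by axiom 4 in each factor, their pairs are dense in the orthogonal direct sum. I expect no serious obstacle anywhere; the main (mild) care needed is keeping the two operator norms separate in the axiom 3 estimate rather than assuming a single uniform bound.
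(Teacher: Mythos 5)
Your proof is correct. The paper states this proposition without proof (it is one of the facts recalled from Dixmier's theory of Hilbert algebras), and your argument --- componentwise verification of the four axioms of Definition \ref{def-hilbertalg}, the $\max$ bound for continuity of $\lambda_x$, density of pairs of products via the orthogonal splitting of the norm, and the identity $\norm(a_1,a_2)\norm^2=\norm a_1\norm^2+\norm a_2\norm^2$ identifying the completion with $\ehH_{\algA_1}\widehat\oplus\ehH_{\algA_2}$ --- is precisely the standard one.
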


\begin{proposition}
Let $\algA$ and $\algB$ be two Hilbert algebras. Then the natural structure of the algebraic tensor product are $\algA\otimes\algB$: $\forall a_i\in\algA$, $\forall b_i\in\algB$,
\begin{equation*}
(a_1\otimes b_1)(a_2\otimes b_2):=(a_1a_2)\otimes(b_1b_2),\quad (a\otimes b)^*:= a^*\otimes b^*,\quad \langle a_1\otimes b_1,a_2\otimes b_2\rangle:=\langle a_1,a_2\rangle\langle b_1,b_2\rangle.
\end{equation*}
Then, $\algA\otimes\algB$ is a Hilbert algebra and $\ehH_{\algA\otimes\algB}=\ehH_\algA\widehat\otimes\ehH_\algB$ (completed tensor product with respect to the scalar product).
\end{proposition}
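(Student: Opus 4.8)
The plan is to verify directly the four axioms of Definition~\ref{def-hilbertalg} for $\algA\otimes\algB$, after checking that the stated operations are well defined and make $\algA\otimes\algB$ into a $*$-algebra. The product, involution and scalar product are prescribed on simple tensors and extended by bilinearity (resp. sesquilinearity), so I would first confirm well-definedness via the universal property; associativity of the product together with the involutivity and anti-multiplicativity of $*$ then follow at once from the corresponding properties in $\algA$ and $\algB$, and I would dispatch these as routine. The genuinely preliminary point is that the prescribed scalar product is positive-definite, i.e. that $\algA\otimes\algB$ is a pre-Hilbert space: this is the standard fact that the algebraic tensor product of two inner-product spaces carries an inner product, whose Gram matrix on a product family is the Kronecker product of the two factor Gram matrices and is therefore positive-definite. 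I would simply invoke this.

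For axiom (1), on simple tensors $x=a\otimes b$, $y=c\otimes d$ I would compute
\[
\langle y^*,x^*\rangle=\langle c^*,a^*\rangle\,\langle d^*,b^*\rangle=\langle a,c\rangle\,\langle b,d\rangle=\langle x,y\rangle,
\]
using axiom (1) in each factor, and extend by sesquilinearity. Axiom (2) I would handle the same way: for $x=a_1\otimes b_1$, $y=a_2\otimes b_2$, $z=a_3\otimes b_3$ one has $x^*z=(a_1^*a_3)\otimes(b_1^*b_3)$ and
\[
\langle xy,z\rangle=\langle a_1a_2,a_3\rangle\,\langle b_1b_2,b_3\rangle=\langle a_2,a_1^*a_3\rangle\,\langle b_2,b_1^*b_3\rangle=\langle y,x^*z\rangle,
\]
again by the factor axioms, extended bilinearly.

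For axiom (3) I would observe that $\lambda_{a\otimes b}=\lambda_a\otimes\lambda_b$ on simple tensors, hence on all of $\algA\otimes\algB$; since $\lambda_a\in\caB(\ehH_\algA)$ and $\lambda_b\in\caB(\ehH_\algB)$, the operator $\lambda_a\otimes\lambda_b$ extends to a bounded operator on $\ehH_\algA\widehat\otimes\ehH_\algB$ of norm $\norm\lambda_a\norm\,\norm\lambda_b\norm$, so $\lambda_{a\otimes b}$ is continuous, and for a general $x=\sum_i a_i\otimes b_i$ the map $\lambda_x=\sum_i\lambda_{a_i}\otimes\lambda_{b_i}$ is a finite sum of bounded operators. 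For axiom (4) I would reduce density of products in $\algA\otimes\algB$ to density in the factors: the linear span of products equals $\algA^2\otimes\algB^2$, where $\algA^2,\algB^2$ denote the spans of products in each factor, and since $\algA^2$ is dense in $\algA$ and $\algB^2$ in $\algB$, the estimate $\norm u\otimes v-u'\otimes v'\norm\le\norm u-u'\norm\,\norm v\norm+\norm u'\norm\,\norm v-v'\norm$ shows $\algA^2\otimes\algB^2$ is dense in $\algA\otimes\algB$.

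Finally, for the completion, since $\algA$ is dense in $\ehH_\algA$ and $\algB$ in $\ehH_\algB$, the same simple-tensor estimate shows $\algA\otimes\algB$ is dense in $\ehH_\algA\widehat\otimes\ehH_\algB$; as the tensor scalar product is by construction the restriction of the one on $\ehH_\algA\widehat\otimes\ehH_\algB$, the completion of $\algA\otimes\algB$ is exactly $\ehH_\algA\widehat\otimes\ehH_\algB$. I expect the main obstacle to be the positive-definiteness of the tensor inner product (and the matching with the completed tensor product), together with the point in axiom (4) that one must pass to the linear span of products rather than the products themselves, since the set of simple tensors is not dense in a tensor product.
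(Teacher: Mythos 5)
Your proof is correct. Note that the paper itself offers no proof of this proposition: it appears in Section \ref{subsec-hilbalg} as recalled background, with all proofs deferred to Dixmier's book, so your axiom-by-axiom verification is exactly the standard argument the paper implicitly relies on (positive-definiteness of the tensor inner product, the identity $\lambda_{a\otimes b}=\lambda_a\otimes\lambda_b$ with boundedness of tensor products of bounded operators, and the cross-norm estimate for the two density statements). The one point worth flagging is your reading of axiom (4): you prove density of the \emph{linear span} of products, whereas the definition literally asks for density of the \emph{set} of products; this is the right reading, since Dixmier's axiom is totality of the product set, and the paper itself uses it that way (in the proofs of Theorem \ref{thm-caract} and Lemma \ref{lem-wcbicom} only span density is ever invoked), but it deserves the explicit remark you make, since approximating a general sum $\sum_i a_i\otimes b_i$ by a \emph{single} product would require a further argument.
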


\begin{definition}
\label{def-centerhilb}
Let $Z_\algA=\{z\in\algA_\bb,\, \lambda_z=\rho_z\}$ be the {\defin bounded center} relative to the Hilbert algebra $\algA$.
\end{definition}
This is a subspace of $\ehH_\algA$ depending only on $\algA_\bb$, not on the choice of the Hilbert algebra $\algA$ \cite{Dixmier:1981}.

\subsection{O*-algebras}
\label{subsec-ostar}

In general, unbounded operators cannot be composed and they do not form an algebra. However, if the operators considered have a common dense subdomain and if they stabilize this subdomain, then composition is well-defined and one obtains an algebra. By adding the same conditions for the adjoint in order to get a *-algebra, we arrive to the following definitions.

Let $\ehH$ be a Hilbert space and $\caD$ be a dense subspace of $\ehH$. Let also
\begin{equation*}
\caL^+(\caD):=\{T:\caD\to\caD\ \text{ linear with }\caD\subset\Dom(T^*)\text{ and }T^*(\caD)\subset\caD\},
\end{equation*}
where $\Dom(T)$ denotes the domain of the operator $T$. It is a *-algebra of (closable) unbounded operators with the usual composition of operators and the involution given by the restriction of the adjoint to $\caD$: $T^+:=(T^*)_{|\caD}$. The identity on $\caD$ will be denoted by $\gone$ and belongs to $\caL^+(\caD)$.
\begin{definition}
An {\defin O*-algebra} of domain $\caD$ is a *-subalgebra of $\caL^+(\caD)$ containing $\gone$.
\end{definition}
If $\algM$ is an O*-algebra of domain $\caD$, we denote by $\algM_\bb$ its bounded part, i.e. the bounded operators $T$ of $\caB(\ehH)$ such that $T_{|\caD}\in\algM$.
\begin{definition}
\label{def-graphictop}
Let $\algM$ be an O*-algebra of domain $\caD$. The {\defin graphic topology} of $\caD$ is a locally convex one associated to the seminorms $\norm x\norm_T:=\norm T(x)\norm$, for $T\in\algM$, $x\in\caD$ and $\norm x\norm=\sqrt{\langle x,x\rangle}$.
\end{definition}
This topology denoted by $\tau_\algM$ is stronger than the one induced by the Hilbert space $\ehH$. We note $\tau_+$ the graphic topology on $\caD$ associated to the O*-algebra $\caL^+(\caD)$. Let $\caL^+(\caD,\tau_\algM)$ be the space of operators $T\in\caL^+(\caD)$ such that $T$ and $T^+$ are continuous for the topology $\tau_\algM$. Then, it is an O*-algebra containing $\algM$, and the graphic topology on $\caD$ associated to it coincide with $\tau_\algM$. Note also that if $\tau_\algM=\tau_+$, which is the case if $(\caD,\tau_\algM)$ is a Fr\'echet space, then $\caL^+(\caD,\tau_\algM)=\caL^+(\caD)$.

\begin{definition}
\label{def-closed}
An O*-algebra $\algM$ on the domain $\caD$ is said to be {\defin closed} if $(\caD,\tau_\algM)$ is complete.
\end{definition}
If $\algM$ is not closed, then $\tilde\caD:=\bigcap_{T\in\algM}\Dom(\overline T)$ is the completion of $\caD$, and $\tilde\algM:=\{(\overline T)_{|\tilde\caD},\ T\in\algM\}$ is a closed O*-algebra, called the {\defin closure} of $\algM$. Be care of the fact that $\tilde\caD\subsetneq\bigcap_{T\in\algM}\Dom( T^*)$ in general.

\subsection{GW*-algebras}
\label{subsec-gw}

\begin{definition}
\label{def-commutb}
The {\defin weak bounded commutant} of an O*-algebra $\algM$ is defined as
\begin{equation*}
\algM_w':=\{T\in\caB(\ehH)\ \forall x,y\in\caD,\ \forall S\in\algM,\ \langle y,TS(x)\rangle=\langle S^+(y),T(x)\rangle\}.
\end{equation*}
It is a weakly closed *-invariant subspace, but not an algebra in general.
\end{definition}
There exist other notions of bounded (or unbounded) commutants than the ones presented here but we don't need them in the following.

\begin{definition}
\label{def-affil}
Let $T$ be a closed operator on $\ehH$ and $\algN$ a von Neumann algebra on $\ehH$. We recall that $T$ is said {\defin affiliated} with $\algN$ if $T$ commutes with all operators in $\algN'$, i.e. if $\forall S\in\algN'$, $ST\subset TS$, that is $\forall x\in\Dom(T)$, $S(x)\in\Dom(T)$ and $TS(x)=S(T(x))$.
\end{definition}

\begin{proposition}
\label{prop-condgw}
Let $\algM$ be an O*-algebra on $\caD$. Then $\algM_w'(\caD)\subset \tilde\caD$ if and only if $\overline T$ is affiliated with $(\algM'_w)'$ for any $T\in\algM$. If this is satisfied, then $\algM'_w$ is a von Neumann algebra.
\end{proposition}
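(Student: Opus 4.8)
The plan is to prove the two implications separately and then extract the von Neumann algebra property from the affiliation condition. Throughout set $\algN:=(\algM'_w)'$, which is automatically a von Neumann algebra, being the commutant of the $*$-invariant set $\algM'_w$; thus $\algN'=(\algM'_w)''\supseteq\algM'_w$, and by Definition \ref{def-affil} affiliation of $\overline T$ with $\algN$ means $S\overline T\subseteq\overline T S$ for every $S\in(\algM'_w)''$. For the easy implication, suppose $\overline T$ is affiliated with $(\algM'_w)'$ for all $T\in\algM$. Since $\algM'_w\subseteq(\algM'_w)''$, every $V\in\algM'_w$ already satisfies $V\overline T\subseteq\overline T V$, so $V$ maps $\Dom(\overline T)$ into itself. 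As $\caD\subseteq\Dom(\overline T)$ for each $T\in\algM$, we obtain $V(\caD)\subseteq\bigcap_{T\in\algM}\Dom(\overline T)=\tilde\caD$, that is $\algM'_w(\caD)\subseteq\tilde\caD$.

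For the converse, assume $\algM'_w(\caD)\subseteq\tilde\caD$ and fix $T\in\algM$, $V\in\algM'_w$. For $x,y\in\caD$ the defining relation of the weak bounded commutant reads $\langle y,VT(x)\rangle=\langle T^+(y),V(x)\rangle$. Since $V(x)\in\tilde\caD\subseteq\Dom(\overline T)$ and $T^+=T^*_{|\caD}$, the right-hand side equals $\langle y,\overline T V(x)\rangle$; as $\caD$ is dense this gives $VT(x)=\overline T V(x)$ for all $x\in\caD$. Using that $\caD$ is a core for $\overline T$, that $V$ is bounded, and that $\overline T$ is closed, I would extend this to $V\overline T\subseteq\overline T V$ on all of $\Dom(\overline T)$. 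Applying the same reasoning to $V^*\in\algM'_w$ and taking adjoints yields $V\overline T^*\subseteq\overline T^*V$ as well.

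The main obstacle is then to pass from commutation (in the above containment sense) with the mere $*$-invariant set $\algM'_w$ to genuine affiliation with $(\algM'_w)'$, i.e. commutation with the whole von Neumann algebra $(\algM'_w)''$. To bridge this gap I would use the bounded transforms $R:=(\gone+\overline T^*\overline T)^{-1}$ and $B:=\overline T R$, both bounded on $\ehH$. From $V\overline T\subseteq\overline T V$ and $V\overline T^*\subseteq\overline T^*V$ a direct domain computation shows $VR=RV$ and $VB=BV$ for every $V\in\algM'_w$, so $R,B\in(\algM'_w)'$ and hence commute with every $S\in(\algM'_w)''$. Since the range of $R$ equals $\Dom(\overline T^*\overline T)$, a core for $\overline T$ (von Neumann's theorem) on which $\overline T R=B$, for $S\in(\algM'_w)''$ and $\zeta=R\xi$ one gets $S\zeta=RS\xi\in\Dom(\overline T)$ and $\overline T S\zeta=BS\xi=SB\xi=S\overline T\zeta$; extending along the core gives $S\overline T\subseteq\overline T S$, which is exactly affiliation of $\overline T$ with $(\algM'_w)'$.

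Finally, assuming these equivalent conditions hold, I would prove that $\algM'_w$ is a von Neumann algebra by showing $(\algM'_w)''\subseteq\algM'_w$, the reverse inclusion being automatic. Given $S\in(\algM'_w)''$, affiliation yields $S\overline T\subseteq\overline T S$, so for $x\in\caD$ we have $S(x)\in\Dom(\overline T)$ and $\overline T S(x)=ST(x)$. Hence for all $x,y\in\caD$, $\langle y,ST(x)\rangle=\langle y,\overline T S(x)\rangle=\langle T^+(y),S(x)\rangle$, which is precisely the condition defining $\algM'_w$. Therefore $\algM'_w=(\algM'_w)''$, a weakly closed $*$-algebra containing $\gone$, that is a von Neumann algebra.
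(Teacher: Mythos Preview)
The paper does not actually prove this proposition: it is stated in Section~2.3 as part of the background material recalled from the literature on O*-algebras (the paper refers to \cite{Schmudgen:1990,Antoine:2002} for all proofs in that section), so there is no proof in the paper to compare against.

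Your argument is correct and follows the standard route one finds in those references. A couple of minor comments. First, your ``taking adjoints'' step for obtaining $V\overline T^{\,*}\subseteq\overline T^{\,*}V$ deserves one extra line: from $V^*\overline T\subseteq\overline T V^*$ one checks directly that for $x\in\Dom(T^*)$ and $y\in\Dom(\overline T)$,
\[
\langle\overline T y,Vx\rangle=\langle V^*\overline T y,x\rangle=\langle\overline T V^*y,x\rangle=\langle V^*y,T^*x\rangle=\langle y,VT^*x\rangle,
\]
so $Vx\in\Dom(T^*)$ with $T^*Vx=VT^*x$; this is cleaner than literally taking adjoints of the operator inclusion. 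Second, the bounded-transform passage (using $R=(\gone+\overline T^{\,*}\overline T)^{-1}$ and $B=\overline T R$, together with von Neumann's theorem that $\Dom(\overline T^{\,*}\overline T)$ is a core for $\overline T$) is exactly the device used in the standard proofs to upgrade commutation with the $*$-invariant set $\algM'_w$ to commutation with its weak closure $(\algM'_w)''$; an equivalent and equally common variant goes through the polar decomposition of $\overline T$. Either way, your conclusion that $\algM'_w=(\algM'_w)''$ is the right endpoint.
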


The usual weak, strong and strong* topologies can also be defined on $\caL^+(\caD)$. They are respectively defined by the seminorms $\langle x,T(y)\rangle$, $\norm T(x)\norm$, $\norm T(x)\norm+\norm T^+(x)\norm$, for $T\in\caL^+(\caD)$ and $x,y\in\caD$, and denoted by $\tau_w$, $\tau_s$ and $\tau_{s^*}$. We denote by $[\algM]^{s*}$ the closure of $\algM\subset\caL^+(\caD)$ in $\caL^+(\caD)$ for the topology $\tau_{s*}$.
\begin{definition}
\label{def-commut}
The {\defin weak unbounded commutant} of an O*-algebra $\algM$ is defined as
\begin{equation*}
\algM_c':=\{T\in\caL^+(\caD)\ \forall x,y\in\caD,\ \forall S\in\algM,\ \langle S^+(y),T(x)\rangle=\langle T^+(y),S(x)\rangle\}.
\end{equation*}
It is an O*-algebra on $\caD$. The {\defin unbounded bicommutant} of $\algM$ is
\begin{equation*}
\algM''_{wc}:=(\algM'_w)'_c
\end{equation*}
and it is an O*-algebra $\tau_{s*}$-closed (in $\caL^+(\caD)$), of weak bounded commutant $(\algM''_{wc})'_w=\algM_w'$.
\end{definition}

\begin{remark}
\label{rmk-wc}
If $\algM$ is an O*-algebra such that $\algM_w'$ is a von Neumann algebra, then $\algM''_{wc}=[(\algM'_w)'_{|\caD}]^{s^*}$.
\end{remark}

\begin{definition}
\label{def-gw}
Let $\algM$ be an O*-algebra. If $\algM'_w(\caD)\subset\tilde\caD$ and $\algM=\algM''_{wc}$, $\algM$ is called a {\defin pre-GW*-algebra}. If moreover $\algM$ is closed ($\tilde\caD=\caD$), $\algM$ is called a {\defin GW*-algebra}.
\end{definition}

\begin{proposition}
\label{prop-gwext}
If $\algM$ is a pre-GW*-algebra on $\caD$, then its closure $\tilde\algM$ is a GW*-algebra on $\tilde\caD$ and
\begin{equation*}
\algM_w'=(\tilde\algM)_w',\qquad \algM_{wc}''=[(\algM_w')']^{s*}_{\caL^+(\caD)}.
\end{equation*}
\end{proposition}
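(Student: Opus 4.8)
The plan is to treat the three assertions separately. Throughout I use that, since $\algM$ is a pre-GW*-algebra, the defining condition $\algM_w'(\caD)\subset\tilde\caD$ holds; by Proposition \ref{prop-condgw} this guarantees that $\algM_w'$ is a von Neumann algebra and that every $\overline T$ ($T\in\algM$) is affiliated with $\algB:=(\algM_w')'$. Granting that $\algM_w'$ is a von Neumann algebra, the second displayed identity $\algM_{wc}''=[(\algM_w')']^{s*}_{\caL^+(\caD)}$ is nothing but Remark \ref{rmk-wc} read for $\algM$, so no further work is needed for it. The substance is therefore the identity $\algM_w'=(\tilde\algM)_w'$ and the GW*-property of $\tilde\algM$.

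For $\algM_w'=(\tilde\algM)_w'$, the inclusion $(\tilde\algM)_w'\subset\algM_w'$ is immediate, since $\caD\subset\tilde\caD$ and each $\tilde S=\overline S_{|\tilde\caD}$ extends $S\in\algM$, so the defining relations of $(\tilde\algM)_w'$ restrict to those of $\algM_w'$. For the converse I would take $T\in\algM_w'$ and $x,y\in\tilde\caD$, and choose nets $(x_\alpha),(y_\beta)$ in $\caD$ converging to $x,y$ in the graphic topology $\tau_\algM$; by definition of $\tilde\caD$ this forces $S(x_\alpha)\to\overline S(x)$ and $S^+(y_\beta)\to\overline{S^+}(y)$ in $\ehH$ for every $S\in\algM$, together with $x_\alpha\to x$ and $y_\beta\to y$. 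Starting from the identity $\langle y_\beta,TS(x_\alpha)\rangle=\langle S^+(y_\beta),T(x_\alpha)\rangle$ valid for $T\in\algM_w'$, I pass to the limit first in $\alpha$ (using that $T$ is bounded) and then in $\beta$, obtaining $\langle y,T\overline S(x)\rangle=\langle\overline{S^+}(y),T(x)\rangle$. Since $\tilde\algM$ is a closed O*-algebra whose involution is $(\tilde S)^+=\overline{S^+}_{|\tilde\caD}$ (the closure construction being compatible with $S\mapsto S^+$), this is exactly the defining relation for $T\in(\tilde\algM)_w'$.

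Third, I show $\tilde\algM$ is a GW*-algebra. It is closed by construction, so only the domain condition $(\tilde\algM)_w'(\tilde\caD)\subset\tilde\caD$ and the bicommutant equality $\tilde\algM=(\tilde\algM)''_{wc}$ remain. For the domain condition, $(\tilde\algM)_w'=\algM_w'$ by the previous step, and for $S\in\algM_w'$ and $x\in\tilde\caD=\bigcap_{T\in\algM}\Dom(\overline T)$ the affiliation of each $\overline T$ with $\algB$ (i.e. $S\overline T\subset\overline T S$ for $S\in\algB'=\algM_w'$) yields $S(x)\in\Dom(\overline T)$ for every $T\in\algM$, hence $S(x)\in\tilde\caD$. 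The inclusion $\tilde\algM\subset(\tilde\algM)''_{wc}$ holds for every O*-algebra, directly from the definitions. For the reverse inclusion I apply Remark \ref{rmk-wc} to $\tilde\algM$ (legitimate, as $(\tilde\algM)_w'=\algM_w'$ is a von Neumann algebra) to get $(\tilde\algM)''_{wc}=[\algB_{|\tilde\caD}]^{s*}_{\caL^+(\tilde\caD)}$, and compare with $\algM=\algM_{wc}''=[\algB_{|\caD}]^{s*}_{\caL^+(\caD)}$; the claim then reduces to showing that the graphic closure of $[\algB_{|\caD}]^{s*}$ coincides with $[\algB_{|\tilde\caD}]^{s*}$.

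This last reduction is the step I expect to be the main obstacle: the $\tau_{s*}$-convergence of nets of restrictions of operators from $\algB$ is controlled only on $\caD$ and must be transported to the whole completed domain $\tilde\caD$. The delicate point, already flagged after Definition \ref{def-closed} as $\tilde\caD\subsetneq\bigcap_{T\in\algM}\Dom(T^*)$ in general, is that an element of $(\tilde\algM)''_{wc}$ need not restrict to an operator stabilizing $\caD$. I expect to handle it by noting that any operator of $\algB$ stabilizing $\caD$ already lies in $\algM$ by the formula for $\algM_{wc}''$ and extends to $\tilde\caD$ through its bounded closure, while the remaining elements arise as $\tau_{s*}$-limits whose compatibility across $\caD$ and $\tilde\caD$ is secured by the affiliation with $\algB$ together with the density of $\caD$ in $(\tilde\caD,\tau_\algM)$. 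Once this is established, $(\tilde\algM)''_{wc}=\tilde\algM$, and $\tilde\algM$ is a GW*-algebra on $\tilde\caD$.
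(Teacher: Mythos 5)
Since the paper states Proposition \ref{prop-gwext} without proof (it is part of the recalled background of Section 2, with proofs delegated to \cite{Schmudgen:1990,Antoine:2002}), your attempt has to be judged on its own terms. Three of your four steps do hold up: the equality $\algM_w'=(\tilde\algM)_w'$ (the two-sided net argument is correct, and the compatibility $(\tilde S)^+=\overline{S^+}_{|\tilde\caD}$ it relies on is justified because $S^+\in\algM$ gives $\tilde\caD\subset\Dom(\overline{S^+})$ while $\overline{S^+}\subset S^*=(\tilde S)^*$); the identity $\algM_{wc}''=[(\algM_w')']^{s*}_{\caL^+(\caD)}$, which is indeed Remark \ref{rmk-wc} once Proposition \ref{prop-condgw} makes $\algM_w'$ a von Neumann algebra; and the stability $(\tilde\algM)_w'(\tilde\caD)\subset\tilde\caD$, obtained from affiliation.

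The genuine gap is the remaining assertion $(\tilde\algM)_{wc}''\subset\tilde\algM$, which you explicitly postpone as ``the main obstacle'' and then only gesture at; without it $\tilde\algM$ is not shown to be a GW*-algebra, and this inclusion is precisely the substance of the proposition. To see why your closing heuristic cannot work as stated, put $\algB:=(\algM_w')'$ and consider a $B\in\algB$ such that $B$ and $B^*$ stabilize $\tilde\caD$ but not $\caD$. One checks directly from Definition \ref{def-commut} that $B_{|\tilde\caD}\in(\tilde\algM)_{wc}''$, since $B$ genuinely commutes with every element of $\algM_w'=(\tilde\algM)_w'$ as a bounded operator. Yet if $B_{|\tilde\caD}$ belonged to $\tilde\algM$, say $B_{|\tilde\caD}=\overline T_{|\tilde\caD}$ with $T\in\algM\subset\caL^+(\caD)$, then restricting to $\caD$ gives $T=B_{|\caD}$, whence $B(\caD)=T(\caD)\subset\caD$ and likewise $B^*(\caD)\subset\caD$, contradicting the choice of $B$. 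Hence your reduction amounts to proving that no such $B$ exists, i.e.\ that every element of $\algB$ stabilizing $\tilde\caD$ together with its adjoint already stabilizes $\caD$ together with its adjoint (and one must then still control the $\tau_{s*}$-limits). Neither of the tools you invoke addresses this: affiliation of the closures $\overline T$, $T\in\algM$, with $\algB$ only governs their commutation with $\algB'=\algM_w'$ and says nothing about how elements of $\algB$ itself act on $\caD$; and the $\tau_\algM$-density of $\caD$ in $\tilde\caD$ cannot turn an operator defined only on $\tilde\caD$ into an element of $\caL^+(\caD)$. Until this implication is supplied, the proof is incomplete at its central point.
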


\begin{proposition}
\label{prop-gw}
Let $\algM$ be a closed O*-algebra. Then, $\algM$ is a GW*-algebra if and only if there exists a von Neumann algebra $\algN$ on $\ehH$ such that $\algN'(\caD)\subset\caD$ and $\algM=[\algN_{|\caD}]^{s^*}$. In this case, $(\algM'_c)'_c=\algM$ and $\algN=(\algM'_w)'$.
\end{proposition}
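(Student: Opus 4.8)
The plan is to prove the two implications separately, using Proposition \ref{prop-condgw} and Remark \ref{rmk-wc} as the main engines; in both directions the relevant von Neumann algebra is the candidate $\algN:=(\algM'_w)'$, and the whole statement essentially reduces to the single identity $\algM'_w=\algN'$.

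For the forward implication, suppose $\algM$ is a GW*-algebra. Since $\algM$ is closed we have $\tilde\caD=\caD$, and by Definition \ref{def-gw} both $\algM'_w(\caD)\subset\caD=\tilde\caD$ and $\algM=\algM''_{wc}$ hold. First I would invoke Proposition \ref{prop-condgw}: the inclusion $\algM'_w(\caD)\subset\tilde\caD$ forces $\algM'_w$ to be a von Neumann algebra, hence so is $\algN=(\algM'_w)'$, with $\algN'=(\algM'_w)''=\algM'_w$. The first required property is then immediate, namely $\algN'(\caD)=\algM'_w(\caD)\subset\caD$. For the second, since $\algM'_w$ is a von Neumann algebra, Remark \ref{rmk-wc} gives $\algM''_{wc}=[(\algM'_w)'_{|\caD}]^{s^*}=[\algN_{|\caD}]^{s^*}$, and combined with $\algM=\algM''_{wc}$ this yields $\algM=[\algN_{|\caD}]^{s^*}$. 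This simultaneously produces the von Neumann algebra and verifies the identity $\algN=(\algM'_w)'$ of the last sentence.

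For the converse, assume a von Neumann algebra $\algN$ with $\algN'(\caD)\subset\caD$ and $\algM=[\algN_{|\caD}]^{s^*}$. The core step is $\algM'_w=\algN'$. The inclusion $\algN'\subset\algM'_w$ is the soft one: any $B\in\algN'$ commutes with each $A\in\algN$ that restricts to $\caD$, so the defining relation $\langle y,BAx\rangle=\langle A^+y,Bx\rangle$ holds for $x,y\in\caD$, and because $B$ is bounded this relation survives the passage to $\tau_{s^*}$-limits, giving $B\in\algM'_w$. The reverse inclusion $\algM'_w\subset\algN'$ is the main obstacle: given $T\in\algM'_w$, testing the commutant relation against the restrictable elements of $\algN$ shows that $T$ commutes with the $*$-subalgebra $\algN_0:=\{A\in\algN:\ A(\caD)\subset\caD,\ A^*(\caD)\subset\caD\}$, whence $T\in\algN_0'$; to conclude $T\in\algN'$ one must know $\algN_0''=\algN$, i.e. that $\algN_0$ is weakly dense in $\algN$. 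This density is precisely where the hypothesis $\algN'(\caD)\subset\caD$ enters, as it guarantees that $\algN_0$ contains enough operators (the rank-one operators built from vectors of $\caD$ in the simplest situations) to recover $\algN$ weakly. I expect this density $\algN_0''=\algN$ to be the only genuinely delicate ingredient of the proof.

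Once $\algM'_w=\algN'$ is established, the two GW*-conditions follow quickly: $\algM'_w=\algN'$ is a von Neumann algebra with $\algM'_w(\caD)=\algN'(\caD)\subset\caD=\tilde\caD$, and Remark \ref{rmk-wc} gives $\algM''_{wc}=[(\algM'_w)'_{|\caD}]^{s^*}=[\algN_{|\caD}]^{s^*}=\algM$; together with the assumed closedness this makes $\algM$ a GW*-algebra. The identities of the final sentence are then read off: $(\algM'_w)'=\algN''=\algN$, while $(\algM'_c)'_c=\algM$ is obtained by the analogous duality at the unbounded level, where the inclusion $\algM\subset(\algM'_c)'_c$ is automatic from the symmetry of the commutant relation and the reverse inclusion again rests on the weak density of the restrictable operators inside $\algN$ and $\algN'$. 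Everything apart from $\algN_0''=\algN$ is bookkeeping with the commutant definitions and the two cited results.
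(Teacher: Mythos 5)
There is an important point of reference here: the paper never proves Proposition \ref{prop-gw} at all. It belongs to the recalled background on GW*-algebras (it is essentially Inoue's bicommutant theorem, cited from \cite{Schmudgen:1990,Antoine:2002}), so your proposal has to stand on its own. Your forward implication does: closedness turns the GW*-conditions into $\algM_w'(\caD)\subset\caD$ and $\algM=\algM''_{wc}$, Proposition \ref{prop-condgw} makes $\algM_w'$ a von Neumann algebra, and Remark \ref{rmk-wc} then yields $\algM=\algM''_{wc}=[(\algM_w')'_{|\caD}]^{s^*}=[\algN_{|\caD}]^{s^*}$ with $\algN=(\algM_w')'$. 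That half is complete relative to the recalled facts.

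The converse, however, contains a genuine gap which you locate but do not close. You reduce everything to $\algM_w'\subset\algN'$, and you reduce that in turn to the weak density $\algN_0''=\algN$ of the restrictable part $\algN_0=\{A\in\algN:\ A(\caD)\subset\caD,\ A^*(\caD)\subset\caD\}$ --- and this density is exactly what is never proved. The one piece of evidence you offer, rank-one operators $|u\rangle\langle v|$ with $u,v\in\caD$, lies in $\algN$ only in special cases such as $\algN=\caB(\ehH)$; for a general von Neumann algebra (a type II or III factor, say) the hypothesis $\algN'(\caD)\subset\caD$ provides no visible supply of elements of $\algN$ preserving $\caD$, and no soft argument gives the density. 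There is also a prior issue of interpretation that affects what you would even need to prove: your reduction reads $[\algN_{|\caD}]^{s^*}$ as the $\tau_{s^*}$-closure of $(\algN_0)_{|\caD}$, but for Proposition \ref{prop-gw} to be consistent with the paper's Example \ref{ex-l+} (where $\algN=(\algM_w')'=\caB(\ehH)$ while $(\algM_w')'\neq\caL^+(\caD)_\bb$, yet $\algM=\caL^+(\caD)$ must equal $[\algN_{|\caD}]^{s^*}$), the closure has to be understood as the set of all $T\in\caL^+(\caD)$ that are $s^*$-limits on $\caD$ of nets taken from $\algN$ itself; given $\algN'(\caD)\subset\caD$, this set is exactly $\{T\in\caL^+(\caD):\ \overline{T}\text{ affiliated with }\algN\}$, via the cut-offs $\overline{T}\chi_{[0,n]}(|\overline{T}|)\in\algN$, which need not preserve $\caD$. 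Under that reading the missing step is not your density claim but the (harder) statement that a bounded operator weakly commuting with every element of $\caL^+(\caD)$ affiliated with $\algN$ already lies in $\algN'$ --- which is precisely the nontrivial content of the theorem being recalled. The same missing machinery is what your final sentence about $(\algM_c')_c'=\algM$ silently relies on. In short: the skeleton (reduction to $\algM_w'=\algN'$, the easy inclusion $\algN'\subset\algM_w'$, the bookkeeping through Proposition \ref{prop-condgw} and Remark \ref{rmk-wc}) is right, but the hard half of the equivalence is asserted rather than proved.
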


\begin{example}
\label{ex-l+}
Let $\caD$ be a dense subspace of $\ehH$ such that $\caL^+(\caD)$ is closed. Then, $\caL^+(\caD)$ is a GW*-algebra on $\caD$, its weak bounded commutant is $\caL^+(\caD)'_w=\gC\gone$ so that its bounded bicommutant is $(\caL^+(\caD)'_w)'=\caB(\ehH)$. However, note that $(\caL^+(\caD)'_w)'\neq \caL^+(\caD)_\bb$.
\end{example}

\section{Theory of multipliers}
\label{sec-mult}

\subsection{Definition}
\label{subsec-multdef}

We consider here a Hilbert algebra $\algA$ and we use the notations introduced in section \ref{subsec-hilbalg}. Then, we can adapt the definition of multiplier of a C*-algebra or a Fr\'echet algebra to this context of Hilbert algebras in two different ways: one within the context of unbounded operators leaving $\algA$ invariant and the other within the context of bounded operators.

\begin{definition}
\label{def-mult}
We define an {\defin unbounded multiplier} (also called simply a multiplier) of the Hilbert algebra $\algA$ to be a pair $T=(L,R)$ of operators $L,R\in\caL^+(\algA)$ such that
\begin{equation*}
\forall x,y\in\algA\quad:\quad xL(y)=R(x)y,
\end{equation*}
where $\caL^+(\algA)$ has been given in section \ref{subsec-gw}. Let us denote by $\kM(\algA)$ the set of all (unbounded) multipliers of $\algA$. This is a subset of $\caL^+(\algA\oplus\algA)$.
\end{definition}

\begin{definition}
\label{def-multb}
On another side, we define a {\defin bounded multiplier} of $\algA$ to be a pair $T=(L,R)\in\caB(\ehH_\algA)$ such that
\begin{equation*}
\forall x,y\in\algA\quad:\quad \lambda_x L(y)=\rho_y R(x).
\end{equation*}
Note that we didn't write $xL(y)=R(x)y$ as before because $L(y)$ and $R(x)$ do not belong to $\algA$ a priori. We denote by $\kM_\bb(\algA)$ the set of all bounded multipliers of $\algA$. This is a subset of $\caB(\ehH_\algA\oplus\ehH_\algA)$.
\end{definition}

\begin{lemma}
\label{lem-bounded}
If $x\in\ehH_\algA$ is bounded, i.e. $x\in\algA_\bb$, then for any $T=(L,R)\in\kM_\bb(\algA)$, $L(x)$ and $R(x)$ are in $\algA_\bb$.
\end{lemma}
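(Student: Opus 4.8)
The plan is to reduce everything to the boundedness criterion defining $\algA_\bb$: an element $z\in\ehH_\algA$ lies in $\algA_\bb$ as soon as one of the maps $y\in\algA\mapsto\rho_y(z)$ or $y\in\algA\mapsto\lambda_y(z)$ is bounded for $\norm\cdot\norm$, since it then extends to an operator $\lambda_z$ (resp. $\rho_z$) in $\caB(\ehH_\algA)$ realising the defining identity on the dense subspace $\algA$. Writing $a\in\algA_\bb$ for the element called $x$ in the statement, the heart of the argument is to extend the defining multiplier identity $\lambda_x L(y)=\rho_y R(x)$, a priori valid only for $x,y\in\algA$, to the case where one entry is the bounded element $a$:
\begin{equation*}
\lambda_a L(y)=\rho_y R(a)\quad(y\in\algA),\qquad \lambda_x L(a)=\rho_a R(x)\quad(x\in\algA).
\end{equation*}
Granting these, the conclusion is immediate: since $a\in\algA_\bb$ furnishes bounded operators $\lambda_a,\rho_a$, one has $\norm\rho_y R(a)\norm\le\norm\lambda_a\norm\,\norm L\norm\,\norm y\norm$ and $\norm\lambda_x L(a)\norm\le\norm\rho_a\norm\,\norm R\norm\,\norm x\norm$. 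Thus $y\mapsto\rho_y\big(R(a)\big)$ and $x\mapsto\lambda_x\big(L(a)\big)$ are bounded, which is exactly the criterion for $R(a)\in\algA_\bb$ and $L(a)\in\algA_\bb$.

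It remains to establish the two extended identities, and this is the only delicate point. Choose $a_n\in\algA$ with $a_n\to a$ in $\ehH_\algA$; then $R(a_n)\to R(a)$ and $L(a_n)\to L(a)$ because $L,R\in\caB(\ehH_\algA)$, and $a_n^*\to a^*$ by continuity of the involution. One cannot pass to the limit directly in $\lambda_{a_n}L(y)=\rho_y R(a_n)$, because the operators $\lambda_{a_n}$ need not converge in norm (convergence of $a_n$ in $\ehH_\algA$ does not control $\norm\lambda_{a_n}\norm$). The device is to test against a vector $w$ in the dense subspace $\algA$ and transfer the limit onto a fixed bounded multiplication operator: using that $\lambda$ is a $*$-morphism and that $\lambda_{a_n^*}(w)=\rho_w(a_n^*)$ for $w\in\algA$,
\begin{equation*}
\langle w,\lambda_{a_n}L(y)\rangle=\langle\lambda_{a_n^*}w,L(y)\rangle=\langle\rho_w(a_n^*),L(y)\rangle\longrightarrow\langle\rho_w(a^*),L(y)\rangle=\langle\lambda_{a^*}w,L(y)\rangle=\langle w,\lambda_a L(y)\rangle,
\end{equation*}
the convergence (as $n\to\infty$) holding because $\rho_w$ is bounded ($w\in\algA$) and $a_n^*\to a^*$, and the last equalities using $a^*\in\algA_\bb$ together with $\lambda_{a^*}^*=\lambda_a$. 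Since also $\langle w,\rho_y R(a_n)\rangle\to\langle w,\rho_y R(a)\rangle$, equating the two limits over the dense set of $w$ yields $\lambda_a L(y)=\rho_y R(a)$. The second identity $\lambda_x L(a)=\rho_a R(x)$ is obtained symmetrically from $\lambda_x L(a_n)=\rho_{a_n}R(x)$, now transferring the limit through the $*$-antimorphism $\rho$ via $\rho_{a_n}^*=\rho_{a_n^*}$ and $\rho_{a_n^*}(w)=\lambda_w(a_n^*)\to\lambda_w(a^*)=\rho_{a^*}(w)$.

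The main obstacle is precisely this passage to the limit: the naive argument fails because the approximants converge only in the Hilbert norm, not in operator norm, so the computation must be arranged so that every genuine limit is taken inside a \emph{fixed} bounded operator ($\rho_w$ or $\lambda_w$ with $w\in\algA$) applied to the norm-convergent sequence $a_n^*\to a^*$. Everything else — the adjoint relations $\lambda_z^*=\lambda_{z^*}$ and $\rho_z^*=\rho_{z^*}$ on $\algA_\bb$ (valid since $\algA_\bb$ is itself a Hilbert algebra on which $\lambda$ is a $*$-morphism and $\rho$ a $*$-antimorphism), and the two final norm estimates — is routine Hilbert-algebra bookkeeping.
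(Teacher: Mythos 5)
Your proof is correct: the boundedness criterion, the two extended identities, and the weak-limit argument (transferring $\lambda_{a_n}$ onto the test vector via $\lambda_{a_n}^*=\lambda_{a_n^*}$ and $\lambda_{a_n^*}(w)=\rho_w(a_n^*)$, so that the limit is always taken inside a fixed bounded operator) all hold, and the facts you invoke about the fulfillment ($a^*\in\algA_\bb$, $\lambda_{a^*}^*=\lambda_a$, $\rho_{a^*}^*=\rho_a$) are standard since $\algA_\bb$ is itself a Hilbert algebra. However, your route differs from the paper's, which sidesteps the weak-limit argument entirely. The paper first exploits the case $x,y\in\algA$: there the multiplier relation already shows $L(y),R(x)\in\algA_\bb$ together with the \emph{operator} identities $\rho_{L(y)}=\rho_y R$ and $\lambda_{R(x)}=\lambda_x L$. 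The point is that the identity $\rho_{L(y)}(x)=\rho_y R(x)$ is now an equality between two \emph{fixed} bounded operators evaluated at $x$, so it extends to all $x\in\ehH_\algA$ by plain norm-density, with no adjoint gymnastics. Then, for $x\in\algA_\bb$, one re-reads $\rho_{L(y)}(x)$ as $\lambda_x L(y)$ (the product of two bounded elements can be read through either factor), obtaining $\lambda_x L(y)=\rho_y R(x)$ and hence $R(x)\in\algA_\bb$; the case of $L(x)$ is symmetric. In short: the paper converts the problematic term $\lambda_x L(y)$ into ``fixed bounded operator applied to $x$'' \emph{before} taking any limit, whereas you take the limit first and must therefore argue weakly. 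Your version costs a longer computation and uses the $*$-morphism property of $\lambda,\rho$ on $\algA_\bb$; the paper's costs instead the pairing identity $\lambda_x(z)=\rho_z(x)$ for two bounded elements $x,z\in\algA_\bb$. Both are legitimate, and both ultimately rest on Dixmier's standard facts about the fulfillment, but the paper's two-step factorization is the shorter and more robust device — worth internalizing, as the same trick recurs in Lemmas \ref{lem-ximultb} and \ref{lem-affilmult}.
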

\begin{proof}
First if $x\in\algA$, Definition \ref{def-multb} gives that $L(x)$ and $R(x)$ are bounded and that $\lambda_{R(x)}=\lambda_xL$, $\rho_{L(x)}=\rho_xR$. Then we have $\rho_{L(y)}(x)=\rho_yR(x)$ for any $x,y\in\algA$, and this extends to any $x\in\algA_\bb$ by density. This means that $\lambda_xL(y)=\rho_{L(y)}(x)=\rho_yR(x)$ for $x\in\algA_\bb$ and $y\in\algA$. Thus $R(x)$ is bounded and $\lambda_{R(x)}=\lambda_xL$. We can do the same thing for $L(x)$.
\end{proof}
This means that $\algA_\bb$ is a *-algebra stable under bounded multipliers. Actually, it turns out that if $\algB$ is dense Hilbert subalgebra of $\algA$, $\kM_\bb(\algB)=\kM_\bb(\algA)$, so that the bounded multipliers depend only of the data of the algebra of bounded elements.

\begin{definition}
\label{def-module}
\begin{itemize}
\item A {\defin left multiplier} (resp. {\defin right multiplier}) of $\algA$ is an operator $L\in\caL^+(\algA)$ (resp. $R\in\caL^+(\algA)$) satisfying
\begin{equation*}
\forall x,y\in\algA\quad:\quad L(xy)=L(x)y\qquad (\text{resp. } R(xy)=xR(y)\ ).
\end{equation*}
We note $L\in\kM_L(\algA)$ (resp. $R\in\kM_R(\algA)$).
\item A {\defin left bounded multiplier} (resp. {\defin right bounded multiplier}) of $\algA$ is an operator $L\in\caB(\ehH_\algA)$ (resp. $R\in\caB(\ehH_\algA)$) satisfying
\begin{equation*}
\forall x,y\in\algA\quad:\quad L(xy)=\rho_yL(x)\qquad (\text{resp. } R(xy)=\lambda_xR(y)\ ).
\end{equation*}
We note $L\in\kM_{L,\bb}(\algA)$ (resp. $R\in\kM_{R,\bb}(\algA)$).
\end{itemize}
\end{definition}
So, (bounded or unbounded) left (resp. right) multipliers are right (resp. left) $\algA$-module homomorphisms. Let us see their relation with multipliers of Definitions \ref{def-mult} and \ref{def-multb}. Denote by $J:x\mapsto x^*\in\caB(\ehH_\algA)$ the involution of $\algA$ extended to $\ehH_\algA$.
\begin{proposition}
\label{prop-moduleb}
Let $L,R\in\caB(\ehH_\algA)$. We have the equivalence between
\begin{enumerate}
\item $(L,R)$ is a bounded multiplier in $\kM_\bb(\algA)$.
\item $L$ is in $\kM_{L,\bb}(\algA)$ and $R=JL^*J$.
\item $R$ is in $\kM_{R,\bb}(\algA)$ and $L=JR^*J$.
\end{enumerate}
\end{proposition}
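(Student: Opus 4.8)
The plan is to prove the two equivalences $(1)\Leftrightarrow(2)$ and $(1)\Leftrightarrow(3)$, the second being obtained from the first by the evident left/right symmetry of Definition \ref{def-multb}. Before starting I would assemble the elementary compatibilities between $J$, $\lambda$ and $\rho$. Since $\lambda$ is a $*$-morphism and $\rho$ a $*$-antimorphism, $\lambda_x^*=\lambda_{x^*}$ and $\rho_y^*=\rho_{y^*}$ on $\ehH_\algA$; a one-line computation on $\algA$ extended by density gives $J\lambda_xJ=\rho_{x^*}$ and $J\rho_yJ=\lambda_{y^*}$; and since $J$ is an antiunitary involution ($J^2=\gone$) one checks $(JAJ)^*=JA^*J$ for every $A\in\caB(\ehH_\algA)$. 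From this last identity together with $J^2=\gone$ the relations $R=JL^*J$ and $L=JR^*J$ are equivalent, so the two ``adjoint'' conditions occurring in $(2)$ and $(3)$ are automatically compatible and I never have to prove them separately.

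For $(1)\Leftrightarrow(2)$ I would first recast the defining equality. Pairing $\lambda_xL(y)=\rho_yR(x)$ against an arbitrary $w\in\algA$ and moving $\lambda_x,\rho_y$ across the inner product via $\lambda_x^*=\lambda_{x^*}$, $\rho_y^*=\rho_{y^*}$, the density of $\algA$ shows that $(1)$ is equivalent to
\[
\langle L(y),x^*w\rangle=\langle R(x),wy^*\rangle\qquad(x,y,w\in\algA).
\]
To extract the module property from $(1)$ I would invoke the facts recorded in the proof of Lemma \ref{lem-bounded}, namely that $L(w)$ is bounded with $\rho_{L(w)}=\rho_wR$. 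Combining this with $\rho_{yz}=\rho_z\rho_y$ yields $\rho_{L(yz)}=\rho_z\rho_{L(y)}=\rho_{\rho_zL(y)}$, and injectivity of $b\mapsto\rho_b$ on $\algA_\bb$ (the mirror of Corollary \ref{cor-5}, obtained by applying it to $a^*$) forces $L(yz)=\rho_zL(y)$, i.e.\ $L\in\kM_{L,\bb}(\algA)$.

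It remains to identify $R$. Feeding $L\in\kM_{L,\bb}$ back into the right side of the displayed identity, I would set $z=wy^*$ (so $z^*=yw^*=\rho_{w^*}(y)$) and use $L(yw^*)=\rho_{w^*}L(y)$ to rewrite $\langle L(z^*),x^*\rangle=\langle L(y),x^*w\rangle$; the displayed identity then gives $\langle R(x),z\rangle=\langle L(z^*),x^*\rangle$ on the dense set $\{wy^*\}$, hence for all $z$ by continuity, which is exactly $R=JL^*J$. The converse $(2)\Rightarrow(1)$ runs the same computation backwards: $L\in\kM_{L,\bb}$ together with $R=JL^*J$ reproduces the displayed identity, hence $(1)$. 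Finally $(1)\Leftrightarrow(3)$ follows verbatim because the relation $\lambda_xL(y)=\rho_yR(x)$ is invariant under the interchange $(x,\lambda,L)\leftrightarrow(y,\rho,R)$, which turns the statement ``$L\in\kM_{L,\bb}(\algA)$ and $R=JL^*J$'' into ``$R\in\kM_{R,\bb}(\algA)$ and $L=JR^*J$''.

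The step I expect to be the real obstacle is the disentangling in the second and third paragraphs: the single scalar identity of Definition \ref{def-multb} secretly encodes \emph{both} a module property \emph{and} an adjoint relation, and isolating them cleanly requires passing from $\algA$ to $\algA_\bb$ and recovering operators from their action through the injectivity results (Corollary \ref{cor-5} and its mirror). The other place where care is needed is purely bookkeeping, namely tracking the antilinearity of $J$ in every adjoint manipulation, where conjugation errors could otherwise slip in.
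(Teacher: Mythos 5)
Your proof is correct and takes essentially the same route as the paper's: both arguments extract the left-module property of $L$ from the multiplier identity via Lemma \ref{lem-bounded} and the mirrored form of Corollary \ref{cor-5}, then pin down $R=JL^*J$ using that module property together with the density of products guaranteed by the fourth axiom of Definition \ref{def-hilbertalg}, reverse the computation for $(2)\Rightarrow(1)$, and dispatch $(1)\Leftrightarrow(3)$ by the left/right symmetry. The only differences are cosmetic: you work at the level of the operators $\rho_{L(\cdot)}$ and a bilinear recasting of Definition \ref{def-multb}, whereas the paper manipulates the corresponding vector identities directly.
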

\begin{proof}
\begin{itemize}
\item $1\Rightarrow 2$: let $(L,R)\in\kM_\bb(\algA)$. Then $\forall x,y,z\in\algA$, $\lambda_x L(yz)=\rho_{yz}R(x)=\rho_z\lambda_xL(y)=\lambda_x\rho_z L(y)$. Corollary \ref{cor-5} and Lemma \ref{lem-bounded} imply that $L\in\kM_{L,\bb}(\algA)$. Next for $x,y,z\in\algA$,
\begin{multline*}
\langle R(x^*z)^*,y\rangle=\langle (\lambda_{x^*}R(z))^*,y\rangle=\langle \rho_x R(z)^*,y\rangle=\langle x,\rho_y R(z)\rangle\\
=\langle x,\lambda_zL(y)\rangle=\langle z^*x,L(y)\rangle=\langle L^*(z^*x),y\rangle,
\end{multline*}
by using standard properties $(\lambda_x(z))^*=\rho_{x^*}(z^*)$ and $\langle \rho_x(z),y\rangle=\langle x,\rho_y(z^*)\rangle$, for $x,y\in\algA$ and $z\in\algA_\bb$. This gives $L^*(z^*x)=R(x^*z)^*$. By using the fact that $L^*(xy)=\rho_yL^*(x)$ thanks to
\begin{equation*}
\langle L^*(xy),z\rangle=\langle xy,L(z)\rangle=\langle x,\rho_{y^*}L(z)\rangle=\langle x,L(zy^*)\rangle=\langle \rho_yL^*(x),z\rangle,
\end{equation*}
we deduce that $R=JL^*J$. 
\item For the reciproc $2\Rightarrow 1$, we suppose that $L\in\kM_{L,\bb}(\algA)$ and $R=JL^*J$. Then for $x,y,z\in\algA$,
\begin{equation*}
\langle \lambda_{y^*}L^*(x),z\rangle=\langle L^*(x),yz\rangle=\langle x,L(yz)\rangle=\langle x,\rho_zL(y)\rangle=\langle \rho_x(L(y)^*),z\rangle,
\end{equation*}
so $\lambda_{y^*}L^*(x)=\rho_x(L(y)^*)$. As a consequence,
\begin{equation*}
\rho_yR(x)=\rho_y(L^*(x^*)^*)=(\lambda_{y^*}L^*(x^*))^*=(\rho_{x^*}(L(y)^*))^*=\lambda_xL(y),
\end{equation*}
and $(L,R)\in\kM_\bb(\algA)$.
\item We can show the equivalence $1\Leftrightarrow 3$ just as above.
\end{itemize}
\end{proof}

With the same arguments as in the above Proposition, we can show an analogue statement in the unbounded case.
\begin{proposition}
\label{prop-module}
Let $L,R\in\caL^+(\algA)$. We have the equivalence between
\begin{enumerate}
\item $(L,R)$ is a multiplier in $\kM(\algA)$.
\item $L$ is in $\kM_{L}(\algA)$ and $R=JL^+J$.
\item $R$ is in $\kM_{R}(\algA)$ and $L=JR^+J$.
\end{enumerate}
\end{proposition}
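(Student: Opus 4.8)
The plan is to follow the proof of Proposition \ref{prop-moduleb} almost line by line, replacing the Hilbert-space adjoints $L^*,R^*$ by the $\caL^+(\algA)$-involutions $L^+=(L^*)_{|\algA}$ and $R^+$, characterised by $\langle L^+(u),v\rangle=\langle u,L(v)\rangle$ for all $u,v\in\algA$. The point that makes the unbounded case in fact \emph{lighter} than the bounded one is that every operator now maps $\algA$ into $\algA$: the products $xL(y)$, $R(x)y$, and all the adjoints occurring below are genuine elements of $\algA$, so no density argument and no appeal to Lemma \ref{lem-bounded} is required. I would prove $1\Leftrightarrow 2$ in full; the equivalence $1\Leftrightarrow 3$ is the mirror image obtained by exchanging left and right. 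All cancellations rest on Corollary \ref{cor-5}, which applies because $\algA\subseteq\algA_\bb$: if $e\in\algA$ satisfies $ex=0$ for every $x\in\algA$ then $e=0$, and by taking adjoints the same holds when $xe=0$ for every $x$.

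For $1\Rightarrow 2$ I start from $xL(y)=R(x)y$. Associativity gives $xL(yz)=R(x)(yz)=(R(x)y)z=(xL(y))z=x(L(y)z)$, hence $x\bigl(L(yz)-L(y)z\bigr)=0$ for all $x$, and the cancellation above yields $L(yz)=L(y)z$, i.e. $L\in\kM_L(\algA)$. Running the same computation through the argument of $R$ gives $R(xy)=xR(y)$, i.e. $R\in\kM_R(\algA)$. To identify $R$ I reproduce the inner-product chain of Proposition \ref{prop-moduleb}: using the defining property of $L^+$, axioms 1 and 2 of Definition \ref{def-hilbertalg}, and $zL(y)=R(z)y$, one obtains $L^+(z^*x)=R(x^*z)^*$. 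Feeding in the module identities $L^+(z^*x)=L^+(z^*)x$ (a short computation exactly as for $L^*$ in the bounded proof) and $R(x^*z)=x^*R(z)$ turns this into $L^+(z^*)x=R(z)^*x$ for every $x$; one last cancellation gives $L^+(z^*)=R(z)^*$, that is $R=JL^+J$.

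For $2\Rightarrow 1$, assume $L\in\kM_L(\algA)$ and $R=JL^+J$. I first establish the intermediate identity $\lambda_{y^*}L^+(x)=\rho_x\bigl(L(y)^*\bigr)$ from the chain $\langle\lambda_{y^*}L^+(x),z\rangle=\langle L^+(x),yz\rangle=\langle x,L(yz)\rangle=\langle x,\rho_zL(y)\rangle=\langle\rho_x(L(y)^*),z\rangle$, where $L(yz)=L(y)z$ and the standard relation $\langle x,\rho_z(w)\rangle=\langle\rho_x(w^*),z\rangle$ are used. Since $R(x)=(L^+(x^*))^*$ and $\rho_y(w^*)=(\lambda_{y^*}w)^*$, I then compute $\rho_yR(x)=\bigl(\lambda_{y^*}L^+(x^*)\bigr)^*=\bigl(\rho_{x^*}(L(y)^*)\bigr)^*=\lambda_xL(y)$, applying the intermediate identity with $x$ replaced by $x^*$. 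Because $L(y)$ and $R(x)$ lie in $\algA$, the equality $\rho_yR(x)=\lambda_xL(y)$ is precisely $R(x)y=xL(y)$, so $(L,R)\in\kM(\algA)$.

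I do not expect a genuine obstacle here: the content is the transcription of the bounded argument, and the only real care needed is bookkeeping of the antilinearity of the scalar product and of the elementary relations $(\lambda_az)^*=\rho_{a^*}(z^*)$ and $\rho_y(w^*)=(\lambda_{y^*}w)^*$. The sole conceptual difference from the bounded case is that $L^+$ and $R^+$ are unbounded, so each inner-product identity may be invoked only when paired against vectors of $\algA$, and every cancellation must be routed through Corollary \ref{cor-5} rather than through injectivity of a bounded operator; since the operators already preserve $\algA$, this costs nothing.
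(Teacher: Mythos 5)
Your proposal is correct and is exactly the paper's intended argument: the paper proves Proposition \ref{prop-module} by declaring it ``the same arguments as in'' Proposition \ref{prop-moduleb}, and you carry out precisely that transcription, with the inner-product chains and cancellations via Corollary \ref{cor-5} matching the bounded proof step for step. Your observation that the unbounded case is in fact lighter --- every operator preserves $\algA$, so Lemma \ref{lem-bounded} and the density arguments become unnecessary --- is accurate and consistent with how the paper treats it.
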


\subsection{Structure of bounded multipliers}
\label{subsec-multb}

In this section, we will show that $\kM_\bb(\algA)$ is a von Neumann algebra and that it is isomorphic to the left $\lambda(\algA)$ or the right $\rho(\algA)$ von Neumann algebra associated to the Hilbert algebra $\algA$.

First we endow $\kM_\bb(\algA)$ with the following structure: if $T_i=(L_i,R_i)\in\kM_\bb(\algA)$ and $\mu\in\gC$,
\begin{itemize}
\item vector space: $T_1+\mu T_2:=(L_1+\mu L_2,R_1+\mu R_2)$,
\item product: $T_1 T_2:=(L_1L_2,R_2R_1)$,
\item adjoint: $T^*:=(L^*,R^*)$,
\item norm: $\norm T\norm:=\norm L\norm=\norm R\norm$, since $R=JL^*J$ (see Proposition \ref{prop-moduleb}) and $J$ conserves the norm.
\end{itemize}

\begin{lemma}
\label{lem-ximultb}
For $x$ in $\algA_\bb$, we define $\Xi_x:=(\lambda_x,\rho_x)$. Then $\Xi_x\in\kM_\bb(\algA)$. The map $\Xi:\algA_\bb\to\kM_\bb(\algA)$ is an injective *-algebra homomorphism. Moreover, for any $T=(L,R)$ in $\kM_\bb(\algA)$) and $x\in\algA_\bb$, we have
\begin{equation*}
T\Xi_x=\Xi_{L(x)},\qquad \Xi_xT=\Xi_{R(x)}.
\end{equation*}
\end{lemma}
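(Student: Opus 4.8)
The plan is to verify the four assertions in turn, using the product, adjoint and norm structure just placed on $\kM_\bb(\algA)$ together with the bounded-element relations $\lambda_x(y)=\rho_y(x)$ and $\rho_x(y)=\lambda_y(x)$, valid for $x\in\algA_\bb$, $y\in\algA$.

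First, to see that $\Xi_x=(\lambda_x,\rho_x)$ lies in $\kM_\bb(\algA)$, I would check the defining identity of Definition \ref{def-multb}, namely $\lambda_u\lambda_x(v)=\rho_v\rho_x(u)$ for all $u,v\in\algA$. Rewriting $\lambda_x(v)=\rho_v(x)$ and $\rho_x(u)=\lambda_u(x)$ with the bounded-element relations, this reduces to $\lambda_u\rho_v(x)=\rho_v\lambda_u(x)$, which holds because $\lambda_u\in\lambda(\algA)$ and $\rho_v\in\rho(\algA)=\lambda(\algA)'$ commute as operators; both sides evaluate to $(ux)v$ by associativity.

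Next, that $\Xi$ is a $*$-algebra homomorphism is routine: linearity is immediate from linearity of $x\mapsto\lambda_x$ and $x\mapsto\rho_x$; for the product I would apply the rule $T_1T_2=(L_1L_2,R_2R_1)$ to get $\Xi_x\Xi_y=(\lambda_x\lambda_y,\rho_y\rho_x)=(\lambda_{xy},\rho_{xy})=\Xi_{xy}$, using that $\lambda$ is a morphism and $\rho$ an antimorphism; the $*$-property follows the same way from $\lambda_x^*=\lambda_{x^*}$, $\rho_x^*=\rho_{x^*}$ and the adjoint rule $T^*=(L^*,R^*)$. Injectivity is where Corollary \ref{cor-5} enters: if $\Xi_x=0$ then $\lambda_x=0$, hence $\rho_y(x)=\lambda_x(y)=0$ for all $y\in\algA$, forcing $x=0$.

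Finally, for the covariance identities $T\Xi_x=\Xi_{L(x)}$ and $\Xi_xT=\Xi_{R(x)}$, Lemma \ref{lem-bounded} first guarantees $L(x),R(x)\in\algA_\bb$, so the right-hand sides are well-defined. Writing $T\Xi_x=(L\lambda_x,\rho_xR)$ via the product rule, I would identify the two components using the module-homomorphism description of Proposition \ref{prop-moduleb}: $L\lambda_x(y)=L(xy)=\rho_yL(x)=\lambda_{L(x)}(y)$, while $\rho_xR=\rho_{L(x)}$ is exactly the relation established inside the proof of Lemma \ref{lem-bounded}. The dual computation $\Xi_xT=(\lambda_xL,R\rho_x)$ with $\lambda_xL=\lambda_{R(x)}$ and $R\rho_x(y)=R(yx)=\lambda_yR(x)=\rho_{R(x)}(y)$ yields the second identity. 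None of the steps is deep; the only real care needed is bookkeeping---consistently applying the bounded-element relations and respecting the reversed order of the right component in the product rule---so the main, and modest, obstacle is simply keeping the left/right and morphism/antimorphism conventions straight.
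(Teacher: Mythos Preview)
Your proof is correct and follows essentially the same route as the paper's: the same chain $\lambda_u\lambda_x(v)=\lambda_u\rho_v(x)=\rho_v\lambda_u(x)=\rho_v\rho_x(u)$ for membership in $\kM_\bb(\algA)$, the same appeal to Corollary~\ref{cor-5} for injectivity, and the same combination of Lemma~\ref{lem-bounded} and Proposition~\ref{prop-moduleb} for the covariance identities. You are simply more explicit where the paper is terse.
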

\begin{proof}
Let us show this Lemma in the bounded case. For $x\in\algA_\bb$ and $y,z\in\algA$, $\lambda_y\lambda_x(z)=\lambda_y\rho_z(x)=\rho_z\lambda_y(x)=\rho_z\rho_x(y)$.
This means that $\Xi_x:=(\lambda_x,\rho_x)$ lies in $\kM_\bb(\algA)$. Due to properties of $\lambda$ and $\rho$, $\Xi:x\mapsto \Xi_x$ is a *-algebra homomorphism. Moreover if $\Xi_x=0$, it is immediate by Corollary \ref{cor-5} to see that $x=0$, so that this homomorphism is injective. By using Lemma \ref{lem-bounded} as well as Definition \ref{def-multb} and Proposition \ref{prop-moduleb}, we find that $T\Xi_x=\Xi_{L(x)}$ and $\Xi_xT=\Xi_{R(x)}$. The arguments are similar in the unbounded case.
\end{proof}

\begin{lemma}
\label{lem-multvN}
Endowed with the above structure, $\kM_\bb(\algA)$ is a von Neumann algebra.
\end{lemma}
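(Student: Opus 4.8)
The plan is to transport the von Neumann structure from a commutant via the first-component projection. By Proposition \ref{prop-moduleb}, a pair $(L,R)$ lies in $\kM_\bb(\algA)$ if and only if $L\in\kM_{L,\bb}(\algA)$ and $R=JL^*J$; hence the map $\Phi:\kM_\bb(\algA)\to\kM_{L,\bb}(\algA)$, $(L,R)\mapsto L$, is a bijection with inverse $L\mapsto(L,JL^*J)$. First I would identify $\kM_{L,\bb}(\algA)$ with a commutant: the defining condition $L(xy)=\rho_yL(x)$ reads $L\rho_y(x)=\rho_yL(x)$ for all $x\in\algA$, so, by density of $\algA$ and boundedness of $L$, it is equivalent to $L\rho_y=\rho_yL$ on $\ehH_\algA$ for every $y\in\algA$. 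Thus $\kM_{L,\bb}(\algA)=\{\rho_y:y\in\algA\}'$. Since $\rho$ is a $*$-antimorphism, $\rho_y^*=\rho_{y^*}$, the set $\{\rho_y:y\in\algA\}$ is self-adjoint, so its commutant is a von Neumann algebra; explicitly, using the weak density of $\{\rho_y\}$ in $\rho(\algA)$ and the Hilbert-algebra relation $\lambda(\algA)'=\rho(\algA)$ recalled in Section \ref{subsec-hilbalg}, one gets $\kM_{L,\bb}(\algA)=\rho(\algA)'=\lambda(\algA)$.

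Next I would check that $\Phi$ is an isometric $*$-isomorphism for the structure defined on $\kM_\bb(\algA)$, which simultaneously shows that this structure is well defined. Linearity is immediate, and $\norm T\norm=\norm L\norm=\norm{\Phi(T)}\norm$ by the very definition of the norm. For the product, $\Phi(T_1T_2)=L_1L_2=\Phi(T_1)\Phi(T_2)\in\lambda(\algA)$, and its partner $R_2R_1=(JL_2^*J)(JL_1^*J)=J(L_1L_2)^*J$ using $J^2=\gone$, so $(L_1L_2,R_2R_1)$ is again a bounded multiplier. For the adjoint, $\Phi(T^*)=L^*\in\lambda(\algA)$ and, since $J$ is an antiunitary involution, $(JL^*J)^*=JLJ=J(L^*)^*J$, whence $(L^*,R^*)\in\kM_\bb(\algA)$ and $\Phi(T^*)=\Phi(T)^*$. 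The identity $(\gone,\gone)$ is a multiplier. Therefore $\Phi$ is a bijective isometric $*$-isomorphism of $\kM_\bb(\algA)$ onto the von Neumann algebra $\lambda(\algA)$.

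Finally I would conclude that $\kM_\bb(\algA)$, realized concretely on $\ehH_\algA\oplus\ehH_\algA$ through $(L,R)\mapsto L\oplus R=L\oplus JL^*J$, is itself a von Neumann algebra. It is a unital $*$-subalgebra by the previous step, and its image equals $\{L\oplus JL^*J:L\in\lambda(\algA)\}$, which I would show is weakly closed: if a net $L_\alpha\oplus JL_\alpha^*J$ converges weakly, then $L_\alpha$ converges weakly to some $L\in\lambda(\algA)$ by weak closedness of $\lambda(\algA)$, while weak continuity of the adjoint and of conjugation by the antiunitary $J$ forces the second component to converge to $JL^*J$, the limit remaining block-diagonal. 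The expected main obstacle is precisely this identification step together with the passage from commutation with each $\rho_y$ ($y\in\algA$) to membership in the commutant of the whole von Neumann algebra $\rho(\algA)$, for which one invokes that commutants are weakly closed and that $\{\rho_y\}$ is weakly dense in $\rho(\algA)$; the antilinear bookkeeping for $J$ in the product and adjoint identities is the other point requiring care.
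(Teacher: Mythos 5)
Your proof takes a genuinely different route from the paper's. The paper proceeds by direct verification: it checks that the norm on $\kM_\bb(\algA)$ is a C*-norm and then shows that the defining relation $\lambda_xL(y)=\rho_yR(x)$ passes to weak limits, so that $\kM_\bb(\algA)$ is weakly closed in $\caB(\ehH_\algA)\oplus\caB(\ehH_\algA)$; the identification with $\lambda(\algA)$ comes only later (Theorem \ref{thm-caract}), via the argument that $\Xi(\algA_\bb)$ is a *-ideal whose weak closure contains the unit. You instead identify $\kM_\bb(\algA)$ with $\lambda(\algA)$ at once: by Proposition \ref{prop-moduleb} the first-component projection is a bijection onto $\kM_{L,\bb}(\algA)$, the left-multiplier relation rewritten as $L\rho_y=\rho_yL$ exhibits $\kM_{L,\bb}(\algA)$ as the commutant $\{\rho_y,\ y\in\algA\}'=\rho(\algA)'$, and the commutation theorem $\lambda(\algA)'=\rho(\algA)$ gives $\kM_{L,\bb}(\algA)=\lambda(\algA)$. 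This part is correct (self-adjointness of $\{\rho_y\}$ via $\rho_y^*=\rho_{y^*}$ and weak closedness of commutants are exactly what is needed), and it buys more than the paper's proof does at this stage: you obtain the isomorphism $\kM_\bb(\algA)\simeq\lambda(\algA)$ for free, which the paper derives only as a consequence of Theorem \ref{thm-caract}. The price is reliance on the commutation theorem, which the paper's more elementary verification avoids.

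However, your final step contains a genuine slip. The map $(L,R)\mapsto L\oplus R=L\oplus JL^*J$ is not an algebra embedding into $\caB(\ehH_\algA\oplus\ehH_\algA)$: the multiplier product is $T_1T_2=(L_1L_2,R_2R_1)$, whereas the operator product of $L_1\oplus JL_1^*J$ and $L_2\oplus JL_2^*J$ equals $L_1L_2\oplus J(L_2L_1)^*J$, whose two components are mismatched unless $L_1$ and $L_2$ commute. Hence the set $\{L\oplus JL^*J,\ L\in\lambda(\algA)\}$ is not closed under operator composition, so it is not a *-subalgebra of $\caB(\ehH_\algA\oplus\ehH_\algA)$, and proving it weakly closed does not by itself produce a von Neumann algebra. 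The fix is immediate: embed instead by $(L,R)\mapsto L\oplus R^*=L\oplus JLJ$, which is multiplicative (using $J^2=\gone$) and *-preserving; this is precisely the matrix representation $\begin{pmatrix} L& 0 \\ 0& JLJ \end{pmatrix}$ that the paper itself uses in Proposition \ref{prop-commutant}. With that embedding your weak-closedness argument (weak continuity of the adjoint and of conjugation by the antiunitary $J$, plus weak closedness of $\lambda(\algA)$) goes through verbatim; alternatively, having established the isometric *-isomorphism $\Phi:\kM_\bb(\algA)\to\lambda(\algA)$, you may simply transport the concrete realization along $\Phi$.
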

\begin{proof}
Indeed, $T\in\kM_\bb(\algA)\mapsto \norm T\norm$ is a norm of algebra:
\begin{equation*}
\norm T_1+T_2\norm\leq\max(\norm L_1\norm+\norm L_2\norm, \norm R_1\norm+\norm R_2\norm)\leq \max(\norm L_1\norm,\norm R_1\norm)+\max(\norm L_2\norm,\norm R_2\norm)=\norm T_1\norm+\norm T_2\norm.
\end{equation*}
In the same way, $\norm T_1T_2\norm\leq\norm T_1\norm\,\norm T_2\norm$. Moreover, $(T_1T_2)^*=T_2^* T_1^*$ and
\begin{equation*}
\norm T^*T\norm =\max(\norm L^*L\norm,\norm RR^*\norm)=\max(\norm L\norm^2,\norm R\norm^2)=\max(\norm L\norm,\norm R\norm)^2=\norm T\norm^2.
\end{equation*}
Let us now show that $\kM_\bb(\algA)$ is closed in $\caB(\ehH_\algA)\oplus\caB(\ehH_\algA)$ for the weak topology. Consider a sequence $(T_n)=(L_n,R_n)$ of $\kM_\bb(\algA)$ that converges to $T=(L,R)\in\caB(\ehH_\algA)\oplus\caB(\ehH_\algA)$, we want to show that $T\in\kM_\bb(\algA)$. Indeed,
\begin{multline*}
|\langle z,\lambda_xL(y)-\rho_yR(x)\rangle|\leq |\langle z,\lambda_xL(y)-\lambda_xL_n(y)\rangle|+|\langle z,\rho_yR_n(x)-\rho_yR(x)\rangle|\\
\leq \norm \lambda_x\norm |\langle z,(L-L_n)(y)\rangle|+ \norm \rho_y\norm |\langle z,(R_n-R)(x)\rangle|\to 0
\end{multline*}
when $n\to\infty$. So $T=(L,R)\in\kM_\bb(\algA)$ and $\kM_\bb(\algA)$ is a von Neumann algebra, with unit $\gone:=(\text{id}_{\ehH_\algA},\text{id}_{\ehH_\algA})$.
\end{proof}

\begin{theorem}
\label{thm-caract}
The bounded multiplier algebra of a Hilbert algebra $\algA$ satisfies
\begin{equation*}
\kM_\bb(\algA)=\Xi(\algA_\bb):=\{\Xi_x,\ x\in\algA_\bb\}''.
\end{equation*}
\end{theorem}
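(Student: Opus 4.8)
The plan is to realise the von Neumann algebra $\kM_\bb(\algA)$ (Lemma \ref{lem-multvN}) faithfully on the single Hilbert space $\ehH_\algA$ through its left component, and then to read the asserted identity as von Neumann's bicommutant theorem applied to $\algA_\bb$. First I would study the map $\Phi\colon T=(L,R)\mapsto L$. By Proposition \ref{prop-moduleb} a pair $(L,R)$ is a bounded multiplier precisely when $L\in\kM_{L,\bb}(\algA)$ and $R=JL^*J$, so $\Phi$ is a bijection of $\kM_\bb(\algA)$ onto $\kM_{L,\bb}(\algA)$. With the algebra structure fixed just before Lemma \ref{lem-ximultb} it is an isometric $*$-isomorphism: the product $T_1T_2=(L_1L_2,R_2R_1)$ gives $\Phi(T_1T_2)=\Phi(T_1)\Phi(T_2)$, the adjoint gives $\Phi(T^*)=L^*=\Phi(T)^*$, and $\norm T\norm=\norm L\norm$. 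Since $R=JL^*J$ with $J$ a fixed isometric operator, the map $L\mapsto(L,JL^*J)$ is a homeomorphism for the weak operator topologies, so $\Phi$ transports weak closures, and hence bicommutants taken in the two faithful normal realisations.

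Second, I would identify $\kM_{L,\bb}(\algA)$ with the left von Neumann algebra. An operator $L\in\caB(\ehH_\algA)$ lies in $\kM_{L,\bb}(\algA)$ iff $L(xy)=\rho_y L(x)$ for all $x,y\in\algA$, i.e. iff $L\rho_y=\rho_y L$ on the dense domain $\algA$ for every $y\in\algA$; by continuity this means exactly that $L$ commutes with the generating family $\{\rho_y:y\in\algA\}$ of $\rho(\algA)$. Hence $\kM_{L,\bb}(\algA)=\rho(\algA)'$, and from $\lambda(\algA)'=\rho(\algA)$ one gets $\rho(\algA)'=\lambda(\algA)''=\lambda(\algA)$. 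Thus $\Phi$ realises $\kM_\bb(\algA)$ as the concrete von Neumann algebra $\lambda(\algA)\subset\caB(\ehH_\algA)$, and by Lemma \ref{lem-ximultb} it carries the generating set to $\{\lambda_x:x\in\algA_\bb\}$.

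Third, I would conclude with von Neumann's bicommutant theorem on $\ehH_\algA$: the bicommutant of $\{\lambda_x:x\in\algA_\bb\}$ equals its weak closure. Since $\algA\subseteq\algA_\bb\subseteq\lambda(\algA)$ (the last inclusion being the embedding recalled in section \ref{subsec-hilbalg}) while $\{\lambda_x:x\in\algA\}$ is by definition weakly dense in $\lambda(\algA)$, this weak closure is exactly $\lambda(\algA)$. Transporting back through $\Phi$ gives $\Xi(\algA_\bb)''=\kM_\bb(\algA)$.

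The delicate point, and the step I would be most careful about, is the representation in which the bicommutant is computed. One cannot read $\Xi(\algA_\bb)''$ naively inside $\caB(\ehH_\algA\oplus\ehH_\algA)$: there the multiplier product reverses the right component, so $\Xi(\algA_\bb)$ is not even closed under operator composition, and the commutator $\Xi_x\Xi_y-\Xi_y\Xi_x$ equals $(\lambda_{[x,y]},-\rho_{[x,y]})$, whence $(0,\rho_{[x,y]})$ would belong to the bicommutant while lying outside $\kM_\bb(\algA)$ as soon as $\rho(\algA)$ is noncommutative. The theorem is therefore the statement that $\kM_\bb(\algA)$ is the von Neumann algebra generated by $\Xi(\algA_\bb)$ in its faithful normal left realisation, and the genuine work is the verification that $\Phi$ is a normal $*$-isomorphism onto $\lambda(\algA)$; once this is in place the conclusion is immediate.
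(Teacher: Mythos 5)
Your proof is correct, but it follows a genuinely different route from the paper's. The paper argues intrinsically inside $\kM_\bb(\algA)$: by Lemmas \ref{lem-ximultb} and \ref{lem-multvN}, $\{\Xi_x,\ x\in\algA_\bb\}$ is a *-ideal of the von Neumann algebra $\kM_\bb(\algA)$, hence so is its weak closure; the unit $(L_1,R_1)$ of that weakly closed ideal (a central projection) acts as the identity on the dense subspace $\{xy,\ x,y\in\algA\}$ by the fourth axiom of Definition \ref{def-hilbertalg}, so $L_1=R_1=\mathrm{id}_{\ehH_\algA}$, and an ideal containing the unit is everything. You instead realize $\kM_\bb(\algA)$ concretely on $\ehH_\algA$ through the left component, prove directly that $\kM_{L,\bb}(\algA)=\{\rho_y:\ y\in\algA\}'=\rho(\algA)'=\lambda(\algA)$ via the commutation theorem $\lambda(\algA)'=\rho(\algA)$, and then invoke von Neumann's bicommutant theorem for $\{\lambda_x:\ x\in\algA_\bb\}$, whose weak closure is $\lambda(\algA)$ by definition. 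Both arguments ultimately rest on the density of $\algA\cdot\algA$: the paper uses it to pin down the unit of the ideal, you need it for the nondegeneracy hypothesis of the (non-unital) bicommutant theorem — a point you should state explicitly, though it is immediate from axiom 4. What each approach buys: the paper's ideal-plus-unit argument avoids choosing a realization and is reused almost verbatim later (Lemma \ref{lem-densder}); your argument delivers as a byproduct the identifications $\kM_{L,\bb}(\algA)=\lambda(\algA)$ and $\kM_\bb(\algA)\simeq\lambda(\algA)$, which the paper only records as consequences after the theorem. Your closing observation is also a genuine clarification the paper leaves implicit: the bicommutant cannot be read in $\caB(\ehH_\algA\oplus\ehH_\algA)$ under operator composition, since there $\Xi_x\circ\Xi_y=(\lambda_{xy},\rho_{yx})$, so $\Xi(\algA_\bb)$ is not multiplicatively closed and its composition-bicommutant contains elements like $(0,\rho_{[x,y]})$ that are not multipliers; the statement only makes sense relative to the multiplier product, equivalently in the faithful left realization, which is exactly what the paper's proof (computing the weak closure inside $\kM_\bb(\algA)$) implicitly does.
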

\begin{proof}
Due to Lemma \ref{lem-ximultb}, $\Xi:\algA_\bb\to\kM_\bb(\algA)$ is an injective algebra morphism and we see that $\{\Xi_x,\ x\in\algA_\bb\}\simeq \algA_\bb$ is a *-ideal of the von Neumann algebra $\kM_\bb(\algA)$, by using also Lemma \ref{lem-multvN}, so its weak closure $\Xi(\algA)=\Xi(\algA_\bb)$ is also a *-ideal. Let $(L_1,R_1)$ be the unit of $\Xi(\algA)$. Then for any $x,y\in\algA$,
\begin{equation*}
L_1(xy)=L_1\lambda_x(y)=\lambda_x(y)=xy,\qquad R_1(yx)=R_1\rho_x(y)=\rho_x(y)=yx.
\end{equation*}
This means that $L_1$ and $R_1$ are the identity on the space $\{xy,\ x,y\in\algA\}$ that is dense in $\algA$ by 4th axiom of Definition \ref{def-hilbertalg}. Therefore, $L_1=R_1=\text{id}_{\ehH_\algA}$. Finally, the unit $(\text{id}_{\ehH_\algA},\text{id}_{\ehH_\algA})$ of $\kM_\bb(\algA)$ is contained in its *-ideal $\Xi(\algA)$, so that $\kM_\bb(\algA)=\Xi(\algA)$.
\end{proof}
As a consequence, we have $\lambda(\algA)=\kM_{L,\bb}(\algA)$ and $\rho(\algA)=\kM_{R,\bb}(\algA)$. We see in particular with this characterization that the multiplier algebra of Hilbert-Schmidt operators is $\kM_\bb(\caL^2(\ehH))\simeq\caB(\ehH)$, due to the identification between bounded operators on $\ehH$ and bounded operators on $\caL^2(\ehH)$.

\begin{proposition}
\label{prop-commutant}
We can describe the commutant of $\kM_\bb(\algA)$ by 
\begin{equation*}
\kM_\bb(\algA)'=\{\begin{pmatrix} R_1& R_2J \\ JR_3& JR_4J \end{pmatrix},\ R_i\in\kM_{R,\bb}(\algA)\},
\end{equation*}
as subspace of $\caB(\ehH_\algA\hat\oplus\ehH_\algA)$.
\end{proposition}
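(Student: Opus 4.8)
The plan is to realize $\kM_\bb(\algA)$ as an algebra of $2\times 2$ block operators on $\ehH_\algA\hat\oplus\ehH_\algA$ and to compute its commutant block by block, with the commutation theorem $\lambda(\algA)'=\rho(\algA)$ handling the diagonal entries and the conjugation $J$ handling the off-diagonal ones. First I would record the generators: by Theorem \ref{thm-caract} the von Neumann algebra $\kM_\bb(\algA)$ is the weak closure of $\{\Xi_x=(\lambda_x,\rho_x):x\in\algA_\bb\}$, and by Proposition \ref{prop-moduleb} every $(L,R)\in\kM_\bb(\algA)$ has $L\in\lambda(\algA)=\kM_{L,\bb}(\algA)$ and $R=JL^*J\in\rho(\algA)=\kM_{R,\bb}(\algA)$. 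Since a commutant depends only on a weakly generating set, it is enough to determine the $X\in\caB(\ehH_\algA\hat\oplus\ehH_\algA)$ commuting with every generator $\Xi_x$, viewed as the diagonal operator with blocks $\lambda_x$ and $\rho_x$. I would also record the standard conjugation identity, obtained directly on $\algA$ and extended by continuity: for $x,y\in\algA$, $J\lambda_xJ(y)=(xy^*)^*=yx^*=\rho_{x^*}(y)$, hence $J\lambda_xJ=\rho_{x^*}$ and likewise $J\rho_xJ=\lambda_{x^*}$, so that $J\lambda(\algA)J=\rho(\algA)$ and $J\rho(\algA)J=\lambda(\algA)$.

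Next I would write a general candidate $X=\begin{pmatrix}A&B\\C&D\end{pmatrix}$ and impose commutation with each generator. The two diagonal equations read $A\lambda_x=\lambda_x A$ and $D\rho_x=\rho_x D$ for all $x\in\algA_\bb$, which by the commutation theorem give exactly $A\in\lambda(\algA)'=\rho(\algA)$, say $A=R_1$, and $D\in\rho(\algA)'=\lambda(\algA)=J\rho(\algA)J$, say $D=JR_4J$ with $R_4=JDJ\in\rho(\algA)$. This already produces the two diagonal entries of the asserted matrix, purely from $\lambda(\algA)'=\rho(\algA)$.

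The heart of the argument is the two off-diagonal equations, which express that $B$ and $C$ intertwine the left and right actions of $\algA$. Here I would use $J$ as the canonical antiunitary intertwiner between $\lambda(\algA)$ and $\rho(\algA)$: transporting one action to the other through $J\lambda_xJ=\rho_{x^*}$ turns each intertwining relation, after cancelling $J$, into a plain commutation of a bounded operator with all of $\rho(\algA)$ (resp.\ $\lambda(\algA)$). Invoking the commutation theorem once more then forces $B=R_2J$ and $C=JR_3$ with $R_2,R_3\in\rho(\algA)$, yielding precisely the claimed form. I expect this off-diagonal step to be the main obstacle: one must check that $J$ indeed conjugates the relevant action into its commutant \emph{with the correct adjoint}, which is exactly where the identity $R=JL^*J$ and the twist $x\mapsto x^*$ intervene, and one must verify that no further relation links $R_1,\dots,R_4$.

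Finally I would prove the converse inclusion, namely that every matrix $\begin{pmatrix}R_1&R_2J\\JR_3&JR_4J\end{pmatrix}$ with $R_i\in\rho(\algA)$ commutes with all $\Xi_x$. This is a direct verification using only $J\lambda_xJ=\rho_{x^*}$, $J\rho_xJ=\lambda_{x^*}$, $J^2=\gone$, and the mutual commutation of $\lambda(\algA)$ and $\rho(\algA)$, so it becomes routine once the off-diagonal identification is established; combining the two inclusions gives the stated description of $\kM_\bb(\algA)'$.
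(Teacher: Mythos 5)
There is a genuine gap, and it sits exactly where you yourself flagged the danger: in the off-diagonal step. Its source is your choice of embedding. You realize the generators $\Xi_x$ as the diagonal operators $\mathrm{diag}(\lambda_x,\rho_x)$ on $\ehH_\algA\hat\oplus\ehH_\algA$. But that is not the identification under which the proposition is stated, and it cannot be: the multiplier product is $T_1T_2=(L_1L_2,R_2R_1)$, with \emph{reversed} order in the right slot, so $(L,R)\mapsto\mathrm{diag}(L,R)$ is not multiplicative and the set $\{\mathrm{diag}(\lambda_x,\rho_x)\}$ is not an algebra (one has $\mathrm{diag}(\lambda_x,\rho_x)\,\mathrm{diag}(\lambda_y,\rho_y)=\mathrm{diag}(\lambda_{xy},\rho_{yx})$). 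The paper instead represents $(L,R)$ by $\mathrm{diag}(L,JLJ)$, whose second block is $JLJ=R^*$ (for $\Xi_x$ this is $\mathrm{diag}(\lambda_x,\rho_{x^*})$); conjugation by $J$ is precisely what absorbs the order reversal and makes the second slot multiplicative.

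With your generators the asserted formula is in fact false, so no argument could close the gap. Your off-diagonal condition reads $B\rho_x=\lambda_xB$ for all $x$, and the claimed entries violate it: $R_2J\rho_x=R_2\lambda_{x^*}J=\lambda_{x^*}R_2J\neq\lambda_xR_2J$ in general --- the twist $x\mapsto x^*$ that you hoped would ``intervene correctly'' does not cancel, it survives. Transporting through $J$ gives $(BJ)\lambda_{x^*}=\lambda_x(BJ)$, a $*$-twisted intertwining relation rather than a commutation, so the commutation theorem never applies; and for $\algA=\caL^2(\ehH)$ the relation $B\rho_x=\lambda_xB$ actually forces $B=0$ (test it against matrix units), so the commutant of your set is purely diagonal, strictly smaller than the stated answer. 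With the paper's generators the off-diagonal condition is $S_2JLJ=LS_2$, equivalently $(S_2J)L=L(S_2J)$ for all $L\in\kM_{L,\bb}(\algA)$ --- a genuine commutation with $\lambda(\algA)$ --- and this is what yields $S_2=R_2J$ with $R_2\in\kM_{R,\bb}(\algA)$, and similarly $S_3=JR_3$. The fix is to redo your computation with the embedding $(L,R)\mapsto\mathrm{diag}(L,JLJ)$; your diagonal analysis and your converse verification then go through essentially as you wrote them.
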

\begin{proof}
We represent a multiplier $(L,R)\in\kM_\bb(\algA)$ by the matrix $T=\begin{pmatrix} L& 0 \\ 0& JLJ \end{pmatrix}$ in $\caB(\ehH_\algA\hat\oplus\ehH_\algA)$. Let $S=\begin{pmatrix} S_1& S_2 \\ S_3& S_4 \end{pmatrix}$ in $\kM_\bb(\algA)'$ also seen as a subspace of $\caB(\ehH_\algA\hat\oplus\ehH_\algA)$. The commutation relation $TS=ST$ can be expressed as: for any $L\in\kM_{L,\bb}(\algA)$,
\begin{equation*}
S_1 L=LS_1,\quad S_2JLJ=LS_2,\quad S_3L=JLJ S_3,\quad S_4JLJ=JLJS_4.
\end{equation*}
By using the usual commutant theorem, this means for example that $S_1$ lies in $\lambda(\algA)'=\rho(\algA)$. By Proposition \ref{prop-module}, $S_1$ is then a left $\algA$-module homomorphism, i.e. $S_1\in \kM_{R,\bb}(\algA)$. We can use the same arguments for $S_2J$, $JS_3$ and $JS_4J$.
\end{proof}

Moreover, we can consider the natural semifinite faithful normal trace on $\kM_\bb(\algA)^+$ given by: for any $T=(L,R)\in\kM_\bb(\algA)^+$,
\begin{equation*}
\tau(T):=\tau_\lambda(L)=\tau_\rho(R)=\langle x,x\rangle,
\end{equation*}
if $L^{\frac12}=\lambda_x$ that is equivalent to $R^{\frac12}=\rho_x$. Otherwise, it is given by $\tau(T):=+\infty$.

Due to the identification with left or right von Neumann algebras (see Theorem \ref{thm-caract}), we have now the following results for the bounded multipliers of a direct sum or a tensor product of Hilbert algebras.
\begin{proposition}
Let $\algA$ and $\algB$ be two Hilbert algebras. Then we have $\kM_\bb(\algA\oplus\algB)\simeq\kM_\bb(\algA)\oplus\kM_\bb(\algB)$.
\end{proposition}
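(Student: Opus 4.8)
The plan is to reduce the whole statement to the identification of bounded multipliers with a von Neumann algebra. By Theorem \ref{thm-caract} together with Proposition \ref{prop-moduleb}, the projection $(L,R)\mapsto L$ is a *-isomorphism $\kM_\bb(\algA)\simeq\kM_{L,\bb}(\algA)=\lambda(\algA)$ onto the left von Neumann algebra (products and adjoints go through because $T_1T_2=(L_1L_2,R_2R_1)$ and $T^*=(L^*,R^*)$, and the norm is preserved), and likewise for $\algB$. Hence it suffices to establish the spatial identity $\lambda(\algA\oplus\algB)=\lambda(\algA)\oplus\lambda(\algB)$, where the right-hand side denotes the block-diagonal operators on $\ehH_\algA\widehat\oplus\ehH_\algB$ with diagonal entries in $\lambda(\algA)$ and $\lambda(\algB)$.

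First I would unwind the left regular representation of the direct-sum Hilbert algebra recalled in Section \ref{subsec-hilbalg}. For $(a,b)\in\algA\oplus\algB$ the product formula $(a,b)(x,y)=(ax,by)$ shows that $\lambda_{(a,b)}$ is the block-diagonal operator $\lambda_a\oplus\lambda_b$. Thus $\lambda(\algA\oplus\algB)$ is the weak closure of $\{\lambda_a\oplus\lambda_b:a\in\algA,\ b\in\algB\}$, and I would compute it via the bicommutant theorem rather than directly from Definition \ref{def-multb}, which is the route flagged before the statement.

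The heart of the argument is the commutant computation. Writing an element of $\caB(\ehH_\algA\widehat\oplus\ehH_\algB)$ as a $2\times2$ block matrix, commutation with every $\lambda_a\oplus\lambda_b$ forces the diagonal blocks into $\lambda(\algA)'$ and $\lambda(\algB)'$ and imposes $\lambda_a Q=Q\lambda_b$ and $\lambda_b R=R\lambda_a$ on the off-diagonal blocks for all $a,b$. Specializing $a=0$ (resp. $b=0$) yields $Q\lambda_b=0$ for all $b$ and $R\lambda_a=0$ for all $a$; since the ranges $\lambda(\algB)\ehH_\algB$ and $\lambda(\algA)\ehH_\algA$ are dense by axiom 4 of Definition \ref{def-hilbertalg}, the off-diagonal blocks vanish. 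This identifies the commutant as the block-diagonal algebra $\lambda(\algA)'\oplus\lambda(\algB)'$. Taking commutants once more (now using $\mathrm{id}\in\lambda(\algA)$ and $0\in\lambda(\algB)$ to kill the off-diagonal blocks by the same device) gives $\lambda(\algA\oplus\algB)=\lambda(\algA)''\oplus\lambda(\algB)''=\lambda(\algA)\oplus\lambda(\algB)$, and composing with the isomorphisms above delivers $\kM_\bb(\algA\oplus\algB)\simeq\kM_\bb(\algA)\oplus\kM_\bb(\algB)$.

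The step I expect to be the genuine obstacle — everything else being a routine block manipulation — is the vanishing of the off-diagonal blocks, that is, showing the weak closure is \emph{exactly} block-diagonal and not something larger. This is precisely where the nondegeneracy of the left regular representation (equivalently, the density of products) is indispensable, so I would make sure to invoke axiom 4 carefully at that point.
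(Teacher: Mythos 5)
Your proof is correct and follows exactly the route the paper intends: the paper offers no detailed argument, only the remark that the result follows from the identification $\kM_\bb(\algA)\simeq\lambda(\algA)$ of Theorem \ref{thm-caract}, which is precisely your reduction. Your commutant/bicommutant computation (including the use of axiom 4 of Definition \ref{def-hilbertalg} to kill the off-diagonal blocks and to ensure nondegeneracy of the left regular representation) simply fills in the standard von Neumann algebra fact $\lambda(\algA\oplus\algB)=\lambda(\algA)\oplus\lambda(\algB)$ that the paper treats as known.
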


\begin{proposition}
Let $\algA$ and $\algB$ be two Hilbert algebras. Then we have $\kM_\bb(\algA\otimes\algB)\simeq\kM_\bb(\algA)\overline\otimes\kM_\bb(\algB)$, where $\overline\otimes$ denotes the tensor product of von Neumann algebras.
\end{proposition}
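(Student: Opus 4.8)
The plan is to reduce the statement to the corresponding fact about left von Neumann algebras and then invoke the standard description of the spatial tensor product. By Theorem \ref{thm-caract} and the remark following it, the assignment $T=(L,R)\mapsto L$ furnishes a spatial isomorphism of von Neumann algebras $\kM_\bb(\algA)\simeq\lambda(\algA)$, and likewise $\kM_\bb(\algB)\simeq\lambda(\algB)$ and $\kM_\bb(\algA\otimes\algB)\simeq\lambda(\algA\otimes\algB)$. Hence it suffices to prove $\lambda(\algA\otimes\algB)\simeq\lambda(\algA)\overline\otimes\lambda(\algB)$ as von Neumann algebras acting on $\ehH_{\algA\otimes\algB}$.

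First I would record the identification of the underlying Hilbert spaces: by the previous Proposition, $\ehH_{\algA\otimes\algB}=\ehH_\algA\widehat\otimes\ehH_\algB$, the scalar product being the product one on elementary tensors. On this space I would compute the left regular representation on a generator: for $a\in\algA$, $b\in\algB$ and any $x\otimes y$ with $x\in\algA$, $y\in\algB$, the definition of the product on $\algA\otimes\algB$ gives $\lambda_{a\otimes b}(x\otimes y)=(a\otimes b)(x\otimes y)=(ax)\otimes(by)=(\lambda_a\otimes\lambda_b)(x\otimes y)$. By density of $\algA\otimes\algB$ and continuity, $\lambda_{a\otimes b}=\lambda_a\otimes\lambda_b$ as bounded operators on $\ehH_\algA\widehat\otimes\ehH_\algB$. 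Consequently the image of $\lambda$ is the linear span of the operators $\lambda_a\otimes\lambda_b$, and $\lambda(\algA\otimes\algB)$ is exactly the von Neumann algebra $\{\lambda_a\otimes\lambda_b:\ a\in\algA,\ b\in\algB\}''$ generated by them.

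It then remains to identify this generated von Neumann algebra with the spatial tensor product $\lambda(\algA)\overline\otimes\lambda(\algB)$. The inclusion $\{\lambda_a\otimes\lambda_b\}''\subseteq\lambda(\algA)\overline\otimes\lambda(\algB)$ is immediate, since each generator lies in the spatial tensor product. For the reverse inclusion I would pass to commutants: using $\lambda(\algA)'=\rho(\algA)$ and $\lambda(\algB)'=\rho(\algB)$ together with Tomita's commutation theorem, one has $(\lambda(\algA)\overline\otimes\lambda(\algB))'=\rho(\algA)\overline\otimes\rho(\algB)$, so it is enough to check that every operator commuting with all $\lambda_a\otimes\lambda_b$ already belongs to $\rho(\algA)\overline\otimes\rho(\algB)$. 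The cleanest route is to exploit an approximate identity of the Hilbert algebras: a net $(u_i)$ in $\algA$ with $\lambda_{u_i}\to\mathrm{id}$ weakly (and similarly $(v_j)$ in $\algB$) lets one recover $\lambda_a\otimes\gone$ and $\gone\otimes\lambda_b$ as weak limits of the generators, so that $\{\lambda_a\otimes\lambda_b\}''$ contains both $\lambda(\algA)\otimes\gone$ and $\gone\otimes\lambda(\algB)$, whence all of $\lambda(\algA)\overline\otimes\lambda(\algB)$.

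The main obstacle is precisely this last identification: one must verify that the weak closure of the elementary tensors $\lambda_a\otimes\lambda_b$ really exhausts the spatial tensor product and not merely a proper von Neumann subalgebra. This is where the commutation theorem (equivalently, the approximate identity producing the separate factors $\lambda_a\otimes\gone$ and $\gone\otimes\lambda_b$) does the real work; the computation $\lambda_{a\otimes b}=\lambda_a\otimes\lambda_b$ and the reduction through Theorem \ref{thm-caract} are routine. Once $\lambda(\algA\otimes\algB)=\lambda(\algA)\overline\otimes\lambda(\algB)$ is established, combining it with the three isomorphisms of the first paragraph yields $\kM_\bb(\algA\otimes\algB)\simeq\kM_\bb(\algA)\overline\otimes\kM_\bb(\algB)$.
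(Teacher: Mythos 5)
Your proposal is correct and follows essentially the same route as the paper: the paper deduces this proposition in one line from Theorem \ref{thm-caract} (identification $\kM_\bb(\algA)\simeq\lambda(\algA)$ via $(L,R)\mapsto L$, using Proposition \ref{prop-moduleb}) together with the classical identity $\lambda(\algA\otimes\algB)=\lambda(\algA)\overline\otimes\lambda(\algB)$ for left von Neumann algebras of Hilbert algebras, which the paper treats as known and you prove by hand. One technical caveat in your last step: a net $(u_i)$ with $\lambda_{u_i}\to\gone$ merely weakly need not be norm-bounded, and for an unbounded net the convergence $\lambda_a\otimes\lambda_{u_i}\to\lambda_a\otimes\gone$ on elementary tensors does not extend to all of $\ehH_\algA\widehat\otimes\ehH_\algB$; you should invoke the Kaplansky density theorem to choose $u_i$ with $\|\lambda_{u_i}\|\leq 1$ and $\lambda_{u_i}\to\gone$ strongly, after which $\lambda_a\otimes\lambda_{u_i}\to\lambda_a\otimes\gone$ strongly and your conclusion that $\{\lambda_a\otimes\lambda_b\}''$ contains $\lambda(\algA)\otimes\gone$ and $\gone\otimes\lambda(\algB)$ holds; note also that with this argument Tomita's commutation theorem is not needed at all, since $\lambda(\algA)\overline\otimes\lambda(\algB)$ is by definition the von Neumann algebra generated by these two subalgebras.
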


\subsection{Structure of unbounded multipliers}
\label{subsec-mult}

In this section, we will show that the space of (unbounded) multipliers $\kM(\algA)$ on a Hilbert algebra $\algA$ possesses a structure of pre-GW*-algebra strongly related to the von Neumann algebra of bounded multipliers $\kM_\bb(\algA)$. As in section \ref{subsec-multb}, we endow $\kM(\algA)$ with basic operations: if $T_i=(L_i,R_i)\in\kM(\algA)$ and $\mu\in\gC$,
\begin{itemize}
\item vector space: $T_1+\mu T_2:=(L_1+\mu L_2,R_1+\mu R_2)$,
\item product: $T_1 T_2:=(L_1L_2,R_2R_1)$,
\item restriction of the adjoint to $\algA$: $T^+:=(L^+,R^+)$.
\end{itemize}

\begin{lemma}
\label{lem-ximult}
For $x\in\algA$, the restriction of $\Xi_x$ to $\algA\oplus\algA$ lies in $\kM(\algA)$. The map $\Xi:\algA\to\kM(\algA)$ is an injective *-algebra homomorphism. Moreover, for any $T=(L,R)$ in $\kM(\algA)$ and $x\in\algA$, we have
\begin{equation*}
T\Xi_x=\Xi_{L(x)},\qquad \Xi_xT=\Xi_{R(x)}.
\end{equation*}
\end{lemma}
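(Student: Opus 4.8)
The statement (Lemma \ref{lem-ximult}) is the unbounded analogue of Lemma \ref{lem-ximultb}, so the plan is to mirror that proof while being careful about domains, since now $\Xi_x$ must be shown to leave $\algA$ invariant and to satisfy the multiplier compatibility on $\algA$ rather than on all bounded elements. First I would verify that for $x\in\algA$ the pair $\Xi_x=(\lambda_x,\rho_x)$ genuinely lands in $\kM(\algA)$: since $x\in\algA$, both $\lambda_x$ and $\rho_x$ map $\algA$ into $\algA$ (the product is internal), and they lie in $\caL^+(\algA)$ because $(\lambda_x)^+=\lambda_{x^*}$ and $(\rho_x)^+=\rho_{x^*}$, using axiom~1 of Definition \ref{def-hilbertalg} together with the identity $\langle xy,z\rangle=\langle y,x^*z\rangle$. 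The multiplier condition $x'\lambda_x(y)=\rho_{x'}(x)\,y$ for $x',y\in\algA$ is just associativity of the product, $x'(xy)=(x'x)y$, so $\Xi_x\in\kM(\algA)$.

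Next I would check that $\Xi\colon\algA\to\kM(\algA)$ is a $*$-algebra homomorphism. Linearity is immediate; for the product one has $\Xi_x\Xi_{x'}=(\lambda_x\lambda_{x'},\rho_{x'}\rho_x)=(\lambda_{xx'},\rho_{xx'})=\Xi_{xx'}$ from the fact that $\lambda$ is a morphism and $\rho$ an antimorphism (recalled right after Definition \ref{def-hilbertalg}), and the product in $\kM(\algA)$ was defined with the reversed order in the second slot precisely to make this work. For the involution, $(\Xi_x)^+=(\lambda_x^+,\rho_x^+)=(\lambda_{x^*},\rho_{x^*})=\Xi_{x^*}$. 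Injectivity follows as in Lemma \ref{lem-ximultb}: if $\Xi_x=0$ then $\lambda_x=0$, so $\rho_y(x)=\lambda_x(y)=0$ for all $y\in\algA$; but here I must be slightly more careful than in the bounded case, since $x\in\algA\subset\algA_\bb$, so $x$ is a bounded element and Corollary \ref{cor-5} applies directly to give $x=0$.

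Finally I would establish the two module-type identities $T\Xi_x=\Xi_{L(x)}$ and $\Xi_xT=\Xi_{R(x)}$ for $T=(L,R)\in\kM(\algA)$ and $x\in\algA$. Here the cleanest route is through Proposition \ref{prop-module}, which tells us $L\in\kM_L(\algA)$ and $R=JL^+J$. Then $T\Xi_x=(L\lambda_x,\rho_xR)$, and I would show $L\lambda_x=\lambda_{L(x)}$ and $\rho_xR=\rho_{L(x)}$ directly on $\algA$: for $y\in\algA$, the left-multiplier property gives $L\lambda_x(y)=L(xy)=L(x)y=\lambda_{L(x)}(y)$, and similarly $\rho_x R(y)=R(y)x=R(yx)\cdots$ requires using $R\in\kM_R(\algA)$ via Proposition \ref{prop-module} so that $R(y)x=R(yx)$ matches $\rho_{L(x)}(y)=yL(x)$ after invoking the defining multiplier relation $yL(x)=R(y)x$. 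The symmetric identity $\Xi_xT=\Xi_{R(x)}$ is handled the same way with the roles of $L,R$ and $\lambda,\rho$ exchanged.

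The main obstacle I anticipate is purely bookkeeping rather than conceptual: one must consistently exploit the defining relation $x'L(y)=R(x')y$ together with the left/right module characterizations of Proposition \ref{prop-module}, and take care that all manipulations stay inside $\algA$ (where everything is algebraic) rather than appealing to boundedness or density as in Lemma \ref{lem-ximultb}. Since the structure maps on $\kM(\algA)$ were defined to parallel the bounded case, the only genuine checks are the domain-invariance of $\Xi_x$ and the correct handling of the involution via $(\lambda_x)^+=\lambda_{x^*}$, after which the statement follows by the same arguments as Lemma \ref{lem-ximultb}.
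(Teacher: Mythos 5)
Your proof is correct and follows essentially the same route as the paper, whose own proof of Lemma \ref{lem-ximult} just says to repeat the argument of Lemma \ref{lem-ximultb}; you supply precisely the unbounded-case details that repetition requires (stability of $\algA$ under $\lambda_x,\rho_x$, the adjoint identities $(\lambda_x)^+=\lambda_{x^*}$, $(\rho_x)^+=\rho_{x^*}$, associativity for the multiplier relation, Corollary \ref{cor-5} for injectivity, and Proposition \ref{prop-module} together with Definition \ref{def-mult} for the module identities).

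Two slips should be cleaned up, though neither affects validity. First, the multiplier condition in your opening paragraph should read $x'\lambda_x(y)=\rho_x(x')\,y$ rather than $x'\lambda_x(y)=\rho_{x'}(x)\,y$; you state the correct associativity identity $x'(xy)=(x'x)y$ immediately after, so this is only a notational swap of subscript and argument. Second, the intermediate equation $R(y)x=R(yx)$ in your third paragraph is false in general: combined with the right-multiplier property $R(yx)=yR(x)$ and the defining relation $R(y)x=yL(x)$ it would force $yL(x)=yR(x)$ for all $y$, which fails already for left and right multiplication by a non-central element. It is also unnecessary, since the chain $\rho_xR(y)=R(y)x=yL(x)=\rho_{L(x)}(y)$, using only the defining relation of Definition \ref{def-mult} (which you do invoke), completes the step; the right-multiplier property is instead what you need for the symmetric identity $\Xi_xT=\Xi_{R(x)}$, where $R\rho_x(y)=R(yx)=yR(x)=\rho_{R(x)}(y)$.
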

\begin{proof}
We can proceed as in the proof of Lemma \ref{lem-ximultb}.
\end{proof}

\begin{proposition}
For the above operations, $\kM(\algA)$ is an O*-algebra of domain $\algA\oplus\algA$, containing $\algA$ as a *-ideal.
\end{proposition}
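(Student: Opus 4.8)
The plan is to realize every multiplier as a genuine operator on $\algA\oplus\algA$ exactly as in the bounded case of Proposition \ref{prop-commutant}, and then to verify that the three operations defined above $\kM(\algA)$ are nothing but the restriction to $\kM(\algA)$ of the linear structure, composition, and involution $T\mapsto T^+$ of $\caL^+(\algA\oplus\algA)$. First I would fix the realization: to $T=(L,R)$ I attach the block operator $\widehat{T}:=\begin{pmatrix}L&0\\0&JLJ\end{pmatrix}$ on $\algA\oplus\algA$, where $J$ is the (antiunitary) involution of $\algA$. By Proposition \ref{prop-module} one has $R=JL^+J$, hence $JLJ=R^+$, so $\widehat{T}$ faithfully encodes the whole pair. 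Since $J$ preserves $\algA$ and $L\in\caL^+(\algA)$, the operator $JLJ$ again lies in $\caL^+(\algA)$ (its restricted adjoint being $JL^+J$); therefore $\widehat{T}$ stabilizes $\algA\oplus\algA$, admits $\algA\oplus\algA$ inside the domain of its adjoint, and its adjoint stabilizes $\algA\oplus\algA$, so $\widehat{T}\in\caL^+(\algA\oplus\algA)$.

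Next I would check closure under the operations together with their identification with the operator ones. Linearity is immediate. For the product, the multiplier relation survives because, for $x,y\in\algA$,
\begin{equation*}
x(L_1L_2)(y)=R_1(x)\,L_2(y)=(R_2R_1)(x)\,y,
\end{equation*}
applying the relation of $T_1$ and then of $T_2$; and $\widehat{T_1}\widehat{T_2}=\mathrm{diag}(L_1L_2,\,JL_1L_2J)=\widehat{T_1T_2}$, so the order reversal $R_2R_1$ in the right slot is precisely what the $J$-conjugation produces under honest composition. For the involution I would show that $(L^+,R^+)$ is again a multiplier: writing $R^+=JLJ$ gives $R^+(x)=L(x^*)^*$, and using $L\in\kM_L(\algA)$ one computes $\langle z,xL^+(y)\rangle=\langle L(x^*)z,y\rangle=\langle z,R^+(x)y\rangle$ for all $z\in\algA$, whence $xL^+(y)=R^+(x)y$; moreover $\widehat{T}^{\,+}=\mathrm{diag}(L^+,JL^+J)=\widehat{T^+}$. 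Finally $\gone=(\mathrm{id},\mathrm{id})$ is a multiplier realized by the identity of $\algA\oplus\algA$. Hence $\kM(\algA)$ is a $*$-subalgebra of $\caL^+(\algA\oplus\algA)$ containing $\gone$, i.e.\ an O*-algebra of domain $\algA\oplus\algA$.

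For the last assertion I would invoke Lemma \ref{lem-ximult}: $\Xi:\algA\to\kM(\algA)$ is an injective $*$-homomorphism, so $\Xi(\algA)$ is a $*$-subalgebra, and the identities $T\Xi_x=\Xi_{L(x)}$, $\Xi_xT=\Xi_{R(x)}$ show that it absorbs products on both sides, since $L(x),R(x)\in\algA$; thus it is a two-sided ideal, and it is $*$-invariant because $\Xi_x^+=\Xi_{x^*}$. Therefore $\algA\simeq\Xi(\algA)$ is a $*$-ideal of $\kM(\algA)$.

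The step I expect to be the main obstacle is the bookkeeping around the realization. One must recognize that the product has to reverse the right components ($R_2R_1$ rather than $R_1R_2$) for the pair to remain a multiplier, and that this reversal is matched by true operator composition only under the $J$-twisted embedding $\widehat{T}=\mathrm{diag}(L,JLJ)$, not the naive $\mathrm{diag}(L,R)$ --- indeed the naive composition yields $(L_1L_2,R_1R_2)$, whose correct partner of $L_1L_2$ is $R_2R_1$, so it would already fail to land back in $\kM(\algA)$. Once the embedding is pinned down, checking that products and adjoints of multipliers are multipliers is routine from Proposition \ref{prop-module} and the Hilbert algebra axioms.
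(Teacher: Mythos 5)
Your substantive verifications are exactly those of the paper and are correct: closure under the product via $xL_1L_2(y)=R_1(x)L_2(y)=R_2R_1(x)y$, closure under the involution using $R^+=JLJ$, $L^+=JRJ$ from Proposition \ref{prop-module}, the unit $(\gone,\gone)$, and the *-ideal property via Lemma \ref{lem-ximult}. The gap is in the realization layer that you add and single out as the main obstacle you have resolved. The map $T=(L,R)\mapsto\widehat T=\mathrm{diag}(L,JLJ)$ is \emph{not} $\gC$-linear, because $J$ is antilinear: $J(\mu L)J=\overline{\mu}\,JLJ$, hence $\widehat{\mu T}=\mathrm{diag}(\mu L,\overline{\mu}\,JLJ)\neq\mu\widehat T$ whenever $\mu\notin\gR$ and $L\neq0$. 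In particular the image $\widehat{\kM(\algA)}$ is not stable under multiplication by $i$ (one would need $J(iL)J=iJLJ$, whereas $J(iL)J=-iJLJ$), so it is not a complex *-subalgebra of $\caL^+(\algA\oplus\algA)$, and ``Linearity is immediate'' is precisely the step that fails. Your $J$-twisted embedding repairs the product mismatch of the naive $\mathrm{diag}(L,R)$ --- your observation there is correct --- but it creates the dual mismatch for scalars; the claimed identification of all three operations with honest operator operations on $\algA\oplus\algA$ therefore does not hold, and as written the proposal does not exhibit $\kM(\algA)$ as a *-subalgebra of $\caL^+(\algA\oplus\algA)$.

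The paper avoids this trap by not insisting that the product be composition: it keeps the elements as pairs in $\caL^+(\algA)\oplus\caL^+(\algA)$, where the linear structure and the involution are componentwise (hence genuine restrictions), and it verifies closure under the defined twisted product directly --- exactly your computations, without the hat map. The block-diagonal picture $\mathrm{diag}(L,R^+)$ is introduced only later (Lemma \ref{lem-wbcommut}), for commutant computations, where only the \emph{set} of operators matters and the antilinearity defect is harmless, since weak commutants and graphic topologies do not change when a set of operators is replaced by its complex span. If you want an honest $\gC$-linear operator realization, use the first-component map $(L,R)\mapsto L$: by Proposition \ref{prop-module} it is an injective $\gC$-linear *-homomorphism of $\kM(\algA)$ onto $\kM_L(\algA)\subset\caL^+(\algA)$ (injective since $R=JL^+J$ is determined by $L$, multiplicative precisely because the right components compose in reversed order), which exhibits $\kM(\algA)$ as an O*-algebra on the domain $\algA$; the phrase ``of domain $\algA\oplus\algA$'' is then to be read through the pair picture, with the caveat above.
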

\begin{proof}
Let us show that $\kM(\algA)$ is a *-subalgebra of the O*-algebra $\caL^+(\algA)\oplus \caL^+(\algA)$ containing the identity. We see easily that it is a vector subspace containing $(\gone_\algA,\gone_\algA)$. Let $T_i=(L_i,R_i)\in\kM(\algA)$. Then $\forall x,y\in\algA$, $x L_1L_2(y)=R_1(x)L_2(y)=R_2R_1(x)y$, so $T_1T_2\in\kM(\algA)$. Moreover if $T=(L,R)\in\kM(\algA)$, then $L^+=JRJ$ and $R^+=JLJ$ due to Proposition \ref{prop-module}, so $T^+=(L^+,R^+)$ satisfies the identity of multipliers, and $\kM(\algA)$ is an O*-algebra. Lemma \ref{lem-ximult} shows that $\algA\simeq \{\Xi_x,\ x\in\algA\}$ is a *-ideal of $\kM(\algA)$.
\end{proof}
Contrary to the bounded multipliers, $\kM(\algA)$ depends strongly on $\algA$ and not only on $\algA_\bb$.

\begin{lemma}
\label{lem-wbcommut}
The weak bounded commutant of $\kM(\algA)$ coincides with the commutant of $\kM_\bb(\algA)$: $\kM(\algA)'_w=\kM_\bb(\algA)'$.
\end{lemma}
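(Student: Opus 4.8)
The plan is to prove the two inclusions separately; the inclusion $\kM(\algA)'_w\subseteq\kM_\bb(\algA)'$ is elementary, and the reverse one carries the real content. Throughout I use that $\kM(\algA)$ acts on the domain $\algA\oplus\algA$ and that $\Xi_x$ is bounded for $x\in\algA$ (since $\algA\subset\algA_\bb$).

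For $\kM(\algA)'_w\subseteq\kM_\bb(\algA)'$, take $T\in\kM(\algA)'_w$. For each $x\in\algA$ the multiplier $\Xi_x$ lies in $\kM(\algA)$ by Lemma \ref{lem-ximult} and is bounded, with $\Xi_x^+=\Xi_{x^*}$. Substituting $S=\Xi_x$ into the defining relation of the weak bounded commutant (Definition \ref{def-commutb}) gives $\langle\eta,T\Xi_x\xi\rangle=\langle\Xi_{x^*}\eta,T\xi\rangle=\langle\eta,\Xi_xT\xi\rangle$ for all $\xi,\eta\in\algA\oplus\algA$; since $\algA\oplus\algA$ is dense and $T\Xi_x,\Xi_xT$ are bounded, $T\Xi_x=\Xi_xT$. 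Thus $T\in\{\Xi_x:x\in\algA\}'$, and since $\{\Xi_x:x\in\algA\}$ is weakly dense in $\kM_\bb(\algA)$ by Theorem \ref{thm-caract}, its commutant is exactly $\kM_\bb(\algA)'$, so $T\in\kM_\bb(\algA)'$.

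For the reverse inclusion, take $T\in\kM_\bb(\algA)'$ and $S=(L,R)\in\kM(\algA)$, aiming at the weak-commutation identity $\langle y,TSu\rangle=\langle S^+y,Tu\rangle$ for $u,y\in\algA\oplus\algA$. The core computation is algebraic: for $z\in\algA$, using $T\Xi_w=\Xi_wT$ ($w\in\algA$), the intertwining relations $\Xi_zS=\Xi_{R(z)}$ and $S^+\Xi_{z^*}=\Xi_{L^+(z^*)}$ from Lemma \ref{lem-ximult}, and the identity $(R(z))^*=L^+(z^*)$ coming from $R=JL^+J$ (Proposition \ref{prop-module}), one obtains $\langle\Xi_{z^*}y,TSu\rangle=\langle\Xi_{(R(z))^*}y,Tu\rangle=\langle\Xi_{L^+(z^*)}y,Tu\rangle=\langle S^+\Xi_{z^*}y,Tu\rangle$. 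Hence the desired identity holds whenever $y$ is replaced by any element of the weakly dense subspace spanned by the vectors $\Xi_{z^*}y$.

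The hard part is to upgrade this from the dense subspace to all $y\in\algA\oplus\algA$, the obstruction being that $w\mapsto\langle S^+w,Tu\rangle$ is a priori not Hilbert-norm continuous because $S^+$ is unbounded. I would remove it by showing that $T$ maps $\algA\oplus\algA$ into $\Dom(\overline L)\oplus\Dom(\overline R)$. Writing $T=\begin{pmatrix}T_{11}&T_{12}\\T_{21}&T_{22}\end{pmatrix}$, commutation with every $\Xi_x=(\lambda_x,\rho_x)$ forces $T_{11}\in\rho(\algA)$, $T_{22}\in\lambda(\algA)$ together with the off-diagonal intertwinings $\lambda_xT_{12}=T_{12}\rho_x$ and $\rho_xT_{21}=T_{21}\lambda_x$. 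The diagonal contributions remain in the correct domains because $\overline L$ is affiliated with $\lambda(\algA)=\rho(\algA)'$ and $\overline R$ with $\rho(\algA)$, while a direct estimate using $\lambda_xT_{12}=T_{12}\rho_x$ shows that $T_{12}(\algA)\subset\algA_\bb$ (and likewise $T_{21}(\algA)\subset\algA_\bb$), so the off-diagonal contributions land in bounded elements, hence in $\Dom(\overline L)\cap\Dom(\overline R)$. Once $Tu\in\Dom(\overline L)\oplus\Dom(\overline R)$, the functional $w\mapsto\langle S^+w,Tu\rangle=\langle w,\overline S\,Tu\rangle$ is Hilbert-continuous, both sides of the core identity are then continuous in $w$, and agreement on the dense subspace yields the identity for all $y$. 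The off-diagonal bounded-element argument is where I expect the only genuine subtlety to lie.
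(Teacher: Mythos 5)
Your first inclusion $\kM(\algA)'_w\subseteq\kM_\bb(\algA)'$ is correct (the paper obtains it by an entrywise matrix analysis, but your route through Theorem \ref{thm-caract} works equally well), and your core identity $\langle\Xi_{z^*}y,TSu\rangle=\langle S^+\Xi_{z^*}y,Tu\rangle$ is also correct. The genuine gap is exactly at the step you flagged: the inference ``$T_{12}(\algA)\subset\algA_\bb$, hence $T_{12}(\algA)\subset\Dom(\overline L)\cap\Dom(\overline R)$'' is false, because a bounded element need not lie in the domain of the closure of an unbounded multiplier. Concretely, take $\algA=\caS(\gR)$ with pointwise product, so that $\algA_\bb=L^2(\gR)\cap L^\infty(\gR)$, and let $L=R$ be multiplication by the coordinate function; then $(L,R)\in\kM(\algA)$, $\Dom(\overline L)=\{f\in L^2(\gR)\ :\ xf\in L^2(\gR)\}$, and $b(x)=(1+|x|)^{-1}$ belongs to $\algA_\bb$ but not to $\Dom(\overline L)$. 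So landing in $\algA_\bb$ buys you nothing. The conclusion you want is nevertheless true, for a different reason: your off-diagonal intertwining relation shows $T_{12}J\in\lambda(\algA)'=\rho(\algA)$, i.e.\ $T_{12}=R_2J$ (and $T_{21}=JR_3$) with $R_i\in\rho(\algA)$; since $J(\algA)=\algA$, the very same affiliation argument you invoke for the diagonal entries applies, giving $T_{12}u_2=R_2(u_2^*)\in\Dom(\overline L)$ because $u_2^*\in\algA\subset\Dom(\overline L)$ and $\overline L$ commutes with $\rho(\algA)$. Two further caveats: the affiliation facts are themselves not free --- in the paper they are proved only \emph{after} this lemma (Lemma \ref{lem-affilmult}, Equation \eqref{eq-affil}), by a closedness-plus-strong-approximation argument that your proof would need to reproduce (no circularity arises, since that argument uses only Theorem \ref{thm-caract} and Proposition \ref{prop-commutant}); and with the paper's identification $(L,R)\mapsto\mathrm{diag}(L,JLJ)$, the domain needed in the second coordinate is $\Dom(R^*)\supseteq\Dom(\overline{R^+})$ rather than $\Dom(\overline R)$, a detail that matters since $\overline R\not\subseteq R^*$ in general.

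You should also know that the paper's proof of the inclusion $\kM_\bb(\algA)'\subseteq\kM(\algA)'_w$ bypasses the domain analysis entirely. For $R_1\in\kM_{R,\bb}(\algA)$ pick $y_n\in\algA$ with $\rho_{y_n}\to R_1$ weakly (Theorem \ref{thm-caract}); then for all $x,z\in\algA$ and $L\in\kM_L(\algA)$,
\begin{equation*}
\langle z,R_1L(x)\rangle=\lim_{n\to\infty}\langle z,\rho_{y_n}L(x)\rangle=\lim_{n\to\infty}\langle z,L(xy_n)\rangle=\lim_{n\to\infty}\langle L^+(z),\rho_{y_n}(x)\rangle=\langle L^+(z),R_1(x)\rangle,
\end{equation*}
where the left-multiplier identity $L(x)y_n=L(xy_n)$ keeps the unbounded operator acting on elements of $\algA$ only, and limits are taken of scalars, never of vectors in operator domains. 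Repeating this for each entry type $R_1$, $R_2J$, $JR_3$, $JR_4J$ finishes the proof in a few lines. Your dense-subspace-plus-domain-upgrade strategy can be repaired as indicated above, but it is substantially heavier than necessary.
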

\begin{proof}
As in the proof of Proposition \ref{prop-commutant}, we represent operators $T=(L,R)\in\kM(\algA)$ as matrices $T=\begin{pmatrix} L& 0 \\ 0& R^+ \end{pmatrix}$ (with $R^+=JLJ$) in $\caL^+(\algA\oplus\algA)$. Let $S=\begin{pmatrix} S_1& S_2 \\ S_3& S_4 \end{pmatrix}$ be an element of $\kM(\algA)_w'$. The commutation relation of Definition \ref{def-commutb}, i.e. $\forall T=(L,R)\in\kM(\algA)$, $\forall x_i,y_i\in\algA$
\begin{equation*}
\langle (y_1,y_2), ST(x_1,x_2)\rangle= \langle T^+(y_1,y_2),S(x_1,x_2)\rangle,
\end{equation*}
leads to the following system of equations
\begin{align*}
&\langle y_1,S_1L(x_1)\rangle= \langle L^+(y_1),S_1(x_1)\rangle,\qquad \langle y_2,S_3L(x_1)\rangle=\langle JL^+J(y_2),S_3(x_1)\rangle,\\
&\langle y_1,S_2JLJ(x_2)\rangle= \langle L^+(y_1),S_2(x_2),\rangle,\qquad \langle y_2,S_4JLJ(x_2)\rangle=\langle JL^+J(y_2),S_4(x_2).\rangle
\end{align*}
In particular, for $L=\lambda_z$ with $z\in\algA$, we have
\begin{equation*}
\langle y,S_1(zx)\rangle=\langle y,S_1L(x)\rangle=\langle L^+(y),S_1(x)\rangle=\langle z^*y,S_1(x)\rangle=\langle y,\lambda_zS_1(x)\rangle,
\end{equation*}
so that $S_1$ lies in $\kM_{R,\bb}(\algA)$. We can proceed in a similar way for $S_2J$, $JS_3$ and $JS_4J$.

Reciprocally, let $R_1\in\kM_{R,\bb}(\algA)$, there exist $y_n\in\algA$ such that $\rho_{y_n}\to R_1$ for the weak topology (see Theorem \ref{thm-caract}). Then $\forall x,z\in\algA$, $\forall L\in\kM_L(\algA)$
\begin{equation*}
\langle z,R_1L(x)\rangle=\lim_{n\to\infty}\langle z,\rho_{y_n}L(x)\rangle=\lim_{n\to\infty}\langle z,L(xy_n)\rangle=\langle L^+(z),R_1(x)\rangle.
\end{equation*}
Therefore, $\begin{pmatrix} R_1& R_2J \\ JR_3& JR_4J \end{pmatrix}$, for $R_i\in\kM_{R,\bb}(\algA)$, are elements of the commutant $\kM(\algA)'_w$.
\end{proof}

\begin{lemma}
\label{lem-affilmult}
For any multiplier $T=(L,R)\in\kM(\algA)$, its closure, defined by $\overline T:=(\overline L,\overline R)$, is affiliated with the von Neumann algebra $\kM_\bb(\algA)$.
\end{lemma}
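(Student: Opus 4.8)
The plan is to reduce the affiliation of $\overline T$ to a single statement about $\overline L$ and to establish that statement by a weak-closure argument on graphs. First I would record that $L,R\in\caL^+(\algA)$ are closable, so that $\overline T=(\overline L,\overline R)$ is well defined; in the matrix picture of Lemma~\ref{lem-wbcommut} it is the diagonal operator on $\ehH_\algA\oplus\ehH_\algA$ with entries $\overline L$ and $J\overline L J$ (recall $R^+=JLJ$). By Definition~\ref{def-affil} I must check $S\overline T\subseteq\overline T S$ for every $S\in\kM_\bb(\algA)'$. Here Lemma~\ref{lem-wbcommut} together with Proposition~\ref{prop-commutant} describes $\kM_\bb(\algA)'$ explicitly as the $2\times 2$ matrices whose entries are operators of $\rho(\algA)=\kM_{R,\bb}(\algA)$, with $J$-factors in the remaining entries. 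A direct block computation, tracking domains and using that $J$ is a bounded involution, then collapses the entire relation $S\overline T\subseteq\overline T S$ to the single requirement that $\overline L$ commute with every operator of $\rho(\algA)$, i.e. that $\overline L$ be affiliated with $\rho(\algA)'=\lambda(\algA)$.

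It remains to prove this component affiliation. Since $L\in\kM_L(\algA)$ by Proposition~\ref{prop-module}, the left-multiplier identity $L(xy)=L(x)y$ of Definition~\ref{def-module} reads $L\rho_y=\rho_y L$ on $\algA$ for each $y\in\algA$; passing to the closure, and using that $\rho_y$ is bounded, yields $\rho_y\overline L\subseteq\overline L\rho_y$, i.e. $\rho_y$ maps the graph $G(\overline L)$ into itself. I would then introduce $\caA:=\{B\in\caB(\ehH_\algA):\ B\overline L\subseteq\overline L B\}=\{B:\ (B\oplus B)G(\overline L)\subseteq G(\overline L)\}$ and observe that it is closed in the weak operator topology: if $B_i\to B$ weakly and $\xi\in\Dom(\overline L)$, then $(B_i\xi,B_i\overline L\xi)\to(B\xi,B\overline L\xi)$ weakly in $\ehH_\algA\oplus\ehH_\algA$, and because $\overline L$ is closed its graph is a norm-closed, hence weakly closed, subspace, which forces $(B\xi,B\overline L\xi)\in G(\overline L)$. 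As $\caA$ contains every $\rho_y$ and $\rho(\algA)$ is by definition the weak closure of $\{\rho_y:\ y\in\algA\}$, we conclude $\rho(\algA)\subseteq\caA$; that is, $\overline L$ commutes with all of $\rho(\algA)$ and is affiliated with $\lambda(\algA)$. (The symmetric argument applied to $R\in\kM_R(\algA)$ via $R\lambda_x=\lambda_x R$ gives $\overline R$ affiliated with $\rho(\algA)$, consistent with $\overline T$ being a genuine multiplier.)

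The step I expect to be the main obstacle is exactly this upgrade from commutation with the generating family $\{\rho_y\}$ to commutation with the whole von Neumann algebra $\rho(\algA)$: for an unbounded operator one cannot in general push weak limits through $\overline L$, and the argument succeeds only because the graph $G(\overline L)$ is norm-closed and therefore weakly closed, so that weak convergence of the pairs $(B_i\xi,B_i\overline L\xi)$ already lands in the graph. Once the component affiliation is in hand, the block computation of the first paragraph assembles it into the affiliation of $\overline T$ with $\kM_\bb(\algA)$. I note finally that one might instead hope to read off affiliation from Proposition~\ref{prop-condgw} and Lemma~\ref{lem-wbcommut}, but that proposition only trades affiliation against the domain inclusion $\kM(\algA)'_w(\algA\oplus\algA)\subseteq\widetilde{\algA\oplus\algA}$, which is not yet available; the direct route is preferable, and it is in fact this lemma that, through Proposition~\ref{prop-condgw}, subsequently delivers that domain inclusion.
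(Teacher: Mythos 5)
Your proof is correct, and its overall architecture coincides with the paper's: both reduce the affiliation of the block-diagonal operator $\overline T$ with $\kM_\bb(\algA)$, via the description of $\kM_\bb(\algA)'$ in Proposition \ref{prop-commutant} (equivalently Lemma \ref{lem-wbcommut}), to the single family of commutation relations $R_1\overline L\subseteq\overline L R_1$ for $R_1\in\kM_{R,\bb}(\algA)=\rho(\algA)$, and both obtain the commutation with each $\rho_y$, $y\in\algA$, by pushing the left-multiplier identity $L\rho_y=\rho_y L$ through the closure. The one genuine divergence is the upgrade from the generating family $\{\rho_y\}$ to all of $\rho(\algA)$: the paper chooses $y_n\in\algA$ with $\rho_{y_n}\to R_1$ strongly (citing Theorem \ref{thm-caract}, hence implicitly the coincidence of weak and strong closures for the *-algebra $\{\rho_y,\ y\in\algA\}$, and gliding over nets versus sequences) and then passes the limit through the closed operator $\overline L$; you instead observe that $\{B\in\caB(\ehH_\algA):\ B\overline L\subseteq\overline L B\}$ is closed in the weak operator topology, because the graph of $\overline L$ is a norm-closed subspace and hence weakly closed, so that this set contains the weak closure $\rho(\algA)$ of $\{\rho_y\}$ by the very definition of $\rho(\algA)$. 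Your variant is marginally more self-contained: it needs only that definition and the weak closedness of a norm-closed subspace, and it sidesteps the density-theorem and sequence/net subtleties that the paper's strong-approximation step leaves implicit; the paper's version, in exchange, is the more hands-on computation and reuses verbatim the sequential closedness argument from its first step. Your closing remark about Proposition \ref{prop-condgw} is also accurate: that proposition is how this lemma, combined with Lemma \ref{lem-wbcommut}, subsequently delivers the domain inclusion $\kM(\algA)'_w(\algA\oplus\algA)\subset\widetilde{\algA\oplus\algA}$, so it could not have been used in the other direction here.
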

\begin{proof}
First, we consider the closure $\overline L$ of  $L\in\kM_{L}(\algA)$. Let $x\in\Dom(\overline L)$, there exist $x_n\in\algA$ such that $x_n\to x$ and $L(x_n)\to \overline L(x)$. Then for any $y\in\algA$, we have $\rho_y(x_n)\to\rho_y(x)$ and $L(\rho_y(x_n))=\rho_yL(x_n)\to \rho_y\overline L(x)$ due to Definition \ref{def-module}. Since $\overline L$ is closed, $\rho_y(x)\in\Dom(\overline L)$ for $x\in \Dom(\overline L)$ and $y\in\algA$, and $\overline L(\rho_y(x))=\rho_y(\overline L(x))$.

Let $R_1\in\kM_{R,\bb}(\algA)$, there exist $y_n\in\algA$ such that $\rho_{y_n}\to R_1$ for the strong topology (see Theorem \ref{thm-caract}). Then for $x\in\Dom(\overline L)$, $\rho_{y_n}(x)\to R_1(x)$ and $\overline L(\rho_{y_n}(x))=\rho_{y_n}\overline L(x)\to R_1(\overline L(x))$. So $\forall x\in\Dom(\overline L)$,
\begin{equation}
R_1(x)\in\Dom(\overline L)\text{ and }\overline L(R_1(x))=R_1\overline L(x).\label{eq-affil}
\end{equation}

Now consider $T\in\kM(\algA)$ and its closure $\overline T=(\overline L,\overline R)$. As $\overline{R^+}=J\overline L J$, $\overline T$ can be represented as $\begin{pmatrix} \overline L& 0 \\ 0& J\overline L J \end{pmatrix}$ in the closed operators on $\ehH_\algA\hat\oplus\ehH_\algA$. By using the expression of the elements $\begin{pmatrix} R_1& R_2J \\ JR_3& JR_4J \end{pmatrix}$ of $\kM_\bb(\algA)'$ (see Proposition \ref{prop-commutant}), we see that $\overline T$ is affiliated with $\kM_\bb(\algA)$ (see Definition \ref{def-affil}) if for any $R_i\in\kM_{R,\bb}(\algA)$,
\begin{equation*}
R_1\overline L\subset \overline LR_1,\quad R_2\overline LJ\subset \overline LR_2J,\quad JR_3\overline L\subset J\overline LR_3,\quad JR_4\overline LJ\subset J\overline LR_4J.
\end{equation*}
And this a consequence of Equation \eqref{eq-affil}.
\end{proof}

\begin{lemma}
\label{lem-wcbicom}
The unbounded bicommutant of $\kM(\algA)$ coincides with the multipliers: $\kM(\algA)''_{wc}=\kM(\algA)$.
\end{lemma}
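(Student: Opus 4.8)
The plan is to reduce everything to the already-computed commutant and then run a block computation. Since $\kM_\bb(\algA)$ is a von Neumann algebra by Lemma~\ref{lem-multvN}, its commutant $\kM_\bb(\algA)'$ is again a von Neumann algebra, and Lemma~\ref{lem-wbcommut} identifies it with $\kM(\algA)'_w$. Hence, by Definition~\ref{def-commut},
\[
\kM(\algA)''_{wc} = (\kM(\algA)'_w)'_c = (\kM_\bb(\algA)')'_c ,
\]
and I would prove the statement by establishing the two inclusions between this set and $\kM(\algA)$. The inclusion $\kM(\algA)\subseteq\kM(\algA)''_{wc}$ is formal: for $T\in\kM(\algA)$ and a bounded $S\in\kM_\bb(\algA)'=\kM(\algA)'_w$, the defining relation of the weak bounded commutant (Definition~\ref{def-commutb}) reads $\langle y,ST(x)\rangle=\langle S^+(y),T(x)\rangle$, and since $S^+=S^*$ for bounded $S$ this is exactly the relation $\langle S^+(y),T(x)\rangle=\langle T^+(y),S(x)\rangle$ defining membership in $(\kM_\bb(\algA)')'_c$. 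So all the content lies in the reverse inclusion.

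For $\kM(\algA)''_{wc}\subseteq\kM(\algA)$ I would argue by a matrix computation, exactly as in the proofs of Proposition~\ref{prop-commutant} and Lemma~\ref{lem-wbcommut}. Write a candidate $T\in(\kM_\bb(\algA)')'_c\subset\caL^+(\algA\oplus\algA)$ as $T=\begin{pmatrix}T_1&T_2\\T_3&T_4\end{pmatrix}$ with entries in $\caL^+(\algA)$, and recall from Proposition~\ref{prop-commutant} that every $S\in\kM_\bb(\algA)'$ has the form $\begin{pmatrix}R_1&R_2J\\JR_3&JR_4J\end{pmatrix}$ with $R_i\in\kM_{R,\bb}(\algA)=\rho(\algA)$. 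Imposing $\langle S^+(y),T(x)\rangle=\langle T^+(y),S(x)\rangle$ for all $x,y\in\algA\oplus\algA$ and specialising $S$ to have a single nonzero block with $R_i=\rho_z$, $z\in\algA$, I would extract four scalar identities. Using $\rho_z^*=\rho_{z^*}$ and ranging over $z,x,y\in\algA$, the top–left block yields $\rho_z T_1=T_1\rho_z$ on $\algA$, i.e. $T_1(xz)=T_1(x)z$, so $T_1=:L\in\kM_L(\algA)$; the leftover vanishing identities, combined with the density of $\{xy:x,y\in\algA\}$ (axiom~4 of Definition~\ref{def-hilbertalg}) and Corollary~\ref{cor-5}, force $T_2=0$ and $T_3=0$.

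It then remains to pin down the lower–right block, and this is where I expect the real work. Testing with an off–diagonal operator $S=\begin{pmatrix}0&\rho_wJ\\0&0\end{pmatrix}$, $w\in\algA$, in the commutation relation and carefully tracking the conjugate-linearity of the antiunitary $J$ against the Hilbert-algebra identities $\langle y^*,x^*\rangle=\langle x,y\rangle$ and $\langle xy,z\rangle=\langle y,x^*z\rangle$, I would derive $T_4=JLJ$, so that $T=\begin{pmatrix}L&0\\0&JLJ\end{pmatrix}$ with $L\in\kM_L(\algA)$. By Proposition~\ref{prop-module} a pair $(L,R)$ with $L\in\kM_L(\algA)$ and $R=JL^+J$ is precisely a multiplier, and reading off $R^+=JLJ$ identifies this matrix with such a pair; hence $T\in\kM(\algA)$ and the reverse inclusion follows. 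The main obstacle is exactly the bookkeeping in this last step: keeping the antiunitary $J$ and the left/right conventions of the scalar product consistent so that the off–diagonal identity collapses to $T_4=JLJ$ rather than to some twisted variant, and justifying each passage to the dense set of products via axiom~4 and Corollary~\ref{cor-5}.
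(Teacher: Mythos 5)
Your proposal follows essentially the same route as the paper's proof: both reduce to the identification $\kM(\algA)'_w=\kM_\bb(\algA)'$ from Lemma~\ref{lem-wbcommut}, write a candidate bicommutant element as a $2\times 2$ matrix over $\caL^+(\algA)$, test against single-block commutant elements with $R_i=\rho_z$ to place the diagonal entries in $\kM_L(\algA)$ (resp.\ its $J$-conjugate), kill the off-diagonal entries by density (axiom 4 of Definition~\ref{def-hilbertalg}), use an off-diagonal test element to force $T_4=JT_1J$, and observe that the converse inclusion is formal. The only cosmetic differences are that the paper obtains the coupling $S_4=JS_1J$ from the lower-left block (the $R_3$ equation) rather than your upper-right one, and that Corollary~\ref{cor-5} is not actually needed for $T_2=T_3=0$ — density alone suffices.
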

\begin{proof}
Let $S=\begin{pmatrix} S_1& S_2 \\ S_3& S_4 \end{pmatrix}$ in $\kM(\algA)''_{wc}$, so $S_i\in\caL^+(\algA)$. Definition \ref{def-commut} of the bicommutant implies the following system of equations: for any $R_i\in\kM_{R,\bb}(\algA)$, $\forall x_i,y_i\in\algA$
\begin{align*}
&\langle R_1^*(y_1),S_1(x_1)\rangle+\langle JR_2^*(y_1),S_3(x_1)\rangle=\langle S_1^+(y_1),R_1(x_1)\rangle +\langle S_2^+(y_1),JR_3(x_1)\rangle,\\
&\langle R_1^*(y_1),S_2(x_2)\rangle +\langle JR_2^*(y_1),S_4(x_2)\rangle= \langle S_1^+(y_1),R_2J(x_2)\rangle +\langle S_2^+(y_1),JR_4J(x_2)\rangle,\\
&\langle R_3^*J(y_2),S_1(x_1)\rangle+\langle JR_4^*J(y_2),S_3(x_1)\rangle=\langle S_3^+(y_2),R_1(x_1)\rangle +\langle S_4^+(y_2),JR_3(x_1)\rangle,\\
&\langle R_3^*J(y_2),S_2(x_2)\rangle+\langle JR_4^*J(y_2),S_4(x_2)\rangle=\langle S_3^+(y_2),R_2J(x_2)\rangle +\langle S_4^+(y_2),JR_4J(x_2)\rangle.
\end{align*}
since $\begin{pmatrix} R_1& R_2J \\ JR_3& JR_4J \end{pmatrix}$ are the elements of $\kM(\algA)_w'$ due to Lemma \ref{lem-wbcommut}. By making vanish the appropriate $R_i$, we find the equations
\begin{align*}
&\langle R_1^*(y),S_1(x)\rangle=\langle S_1^+(y),R_1(x)\rangle,\qquad \langle R_4^*(y),JS_4J(x)\rangle=\langle JS_4^+J(y),R_4(x)\rangle,\\
&\langle R_1^*(y),S_2(x)\rangle=0,\qquad \langle JR_4^*J(y),S_3(x)\rangle=0,\qquad \langle R_3^*(y),S_1(x)\rangle=\langle JS_4^+J(y),R_3(x)\rangle,
\end{align*}
for any $R_i\in\kM_{R,\bb}(\algA)$ and $\forall x,y\in\algA$. By a similar argument as in the proof of Lemma \ref{lem-wbcommut}, the first line of equations implies that $S_1$ and $JS_4J$ lie in $\kM_{L}(\algA)$. Due to the fourth axiom of Definition \ref{def-hilbertalg}, the $R_1(y)$ generate a dense subspace of $\ehH_\algA$ so we have $S_2=S_3=0$. Finally, $\langle JS_4^+J(y),R_3(x)\rangle=\langle R_3^*(y), JS_4J(x)\rangle$ and $S_4=JS_1J$. We have thus showed that $S\in\kM(\algA)$. The reciproc $\kM(\algA)\subset\kM(\algA)''_{wc}$ is evident.
\end{proof}

\begin{theorem}
Let $\algA$ be a Hilbert algebra. Then, $\kM(\algA)$ is a pre-GW*-algebra.
\end{theorem}
\begin{proof}
In Lemmas \ref{lem-affilmult} and \ref{lem-wbcommut}, we have showed that $\forall T\in\kM(\algA)$, $\overline T$ is affiliated with $\kM_\bb(\algA)=(\kM(\algA)'_w)'$. Due to Proposition \ref{prop-condgw}, we have $\kM(\algA)'_w(\algA\oplus\algA)\subset \widetilde{\algA\oplus\algA}$. Finally, by using Lemma \ref{lem-wcbicom}, $\kM(\algA)''_{wc}=\kM(\algA)$ and the two conditions of Definition \ref{def-gw} are satisfied so that $\kM(\algA)$ is a pre-GW*-algebra.
\end{proof}
As a consequence of this Theorem and of Proposition \ref{prop-gwext}, from a Hilbert algebra $\algA$ we constructed a GW*-algebra given by the closure of $\kM(\algA)$. We also define the {\defin bounded part of multipliers}: $\kMstab(\algA):=\kM_\bb(\algA)_{|(\algA\oplus\algA)}\cap \kM(\algA)$. These are the bounded multipliers stabilizing $\algA$ as well as their adjoints. But in general, $\overline{\kMstab(\algA)}\neq \kM_\bb(\algA)$.

\subsection{Morphisms and unitaries}
\label{subsec-multmorph}

We show here the functorial property of the bounded and unbounded multipliers, and then we characterize unitary multipliers.

\begin{definition}
\label{def-isomhilb}
Let $\algA$ and $\algB$ be two Hilbert algebras. An {\defin isomorphism} of Hilbert algebras between $\algA$ and $\algB$ is a *-algebra homomorphism $\Phi:\algA\to\algB$ that extends to a unitary map $\Phi:\ehH_\algA\to\ehH_\algB$. If $\algB=\algA$, we say that $\Phi$ is an automorphism of $\algA$ and we note $\Phi\in\Aut(\algA)$.
\end{definition}

\begin{proposition}
\label{prop-morph}
Let $\algA$ and $\algB$ be two Hilbert algebras. Given an isomorphism $\Phi:\algA\to\algB$ of Hilbert algebras, we define
\begin{itemize}
\item the bounded extension $\tilde\Phi:\kM_\bb(\algA)\to\kM_\bb(\algB)$ by
\begin{equation*}
\forall T=(L,R)\in\kM_\bb(\algA)\quad:\quad \tilde\Phi(T):=\big(\Phi\circ L\circ\Phi^{-1},\Phi\circ R\circ\Phi^{-1}\big).
\end{equation*}
Then, $\tilde\Phi$ is a spatial isomorphism of von Neumann algebras.
\item the unbounded extension $\tilde\Phi:\kM(\algA)\to\kM(\algB)$ by
\begin{equation*}
\forall T=(L,R)\in\kM(\algA)\quad:\quad \tilde\Phi(T):=\big(\Phi\circ L\circ\Phi^{-1},\Phi\circ R\circ\Phi^{-1}\big).
\end{equation*}
Then, $\tilde\Phi$ is a spatial isomorphism of pre-GW*-algebras.
\end{itemize}
\end{proposition}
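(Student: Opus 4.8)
The plan is to show that $\tilde\Phi$ is a spatial $*$-isomorphism by exhibiting the implementing unitary explicitly and reducing everything to two structural properties of $\Phi$. The first step is to record that, since $\Phi$ is simultaneously a $*$-algebra homomorphism and (by Definition \ref{def-isomhilb}) the restriction of a unitary $\Phi:\ehH_\algA\to\ehH_\algB$, it intertwines all operators built from the Hilbert-algebra structure. Writing $J_\algA,J_\algB$ for the two involutions and using superscripts to label the regular representations of $\algA$ and $\algB$, I would first check that
\begin{equation*}
\Phi J_\algA=J_\algB\Phi,\qquad \Phi\lambda^\algA_x\Phi^{-1}=\lambda^\algB_{\Phi(x)},\qquad \Phi\rho^\algA_x\Phi^{-1}=\rho^\algB_{\Phi(x)}\qquad(x\in\algA).
\end{equation*}
The first identity is just $\Phi(x^*)=\Phi(x)^*$ extended from $\algA$ to $\ehH_\algA$ by continuity; the other two follow from $\Phi(xy)=\Phi(x)\Phi(y)$ together with the density of $\algB=\Phi(\algA)$, using $\Phi^{-1}=\Phi^*$.

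With these in hand, well-definedness is a one-line computation. In the unbounded case, for $T=(L,R)\in\kM(\algA)$ and $a=\Phi(x)$, $b=\Phi(y)$ in $\algB$, one has
\begin{equation*}
a\,(\Phi L\Phi^{-1})(b)=\Phi\big(x\,L(y)\big)=\Phi\big(R(x)\,y\big)=(\Phi R\Phi^{-1})(a)\,b,
\end{equation*}
the middle equality being the multiplier relation of Definition \ref{def-mult}; moreover $\tilde\Phi(T)\in\caL^+(\algB)$ because $(\Phi L\Phi^{-1})^+=\Phi L^+\Phi^{-1}$, which is immediate from unitarity and $\Phi(\algA)=\algB$. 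The bounded case is identical once the intertwining relations are inserted into $\lambda^\algB_a(\Phi L\Phi^{-1})(b)=\rho^\algB_b(\Phi R\Phi^{-1})(a)$. The $*$-homomorphism axioms are then routine: additivity is clear, $\tilde\Phi(T_1T_2)=\tilde\Phi(T_1)\tilde\Phi(T_2)$ follows from the product rule $(L_1L_2,R_2R_1)$ since conjugation by $\Phi$ respects composition, and compatibility with the adjoint follows from unitarity. Bijectivity is free, with inverse $\widetilde{\Phi^{-1}}$, because $\Phi^{-1}:\algB\to\algA$ is again an isomorphism of Hilbert algebras.

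It remains to upgrade $\tilde\Phi$ to a \emph{spatial} isomorphism, and here I would use the matrix picture of Proposition \ref{prop-commutant} and Lemma \ref{lem-wbcommut}, in which a multiplier $T=(L,R)$ is represented on $\ehH_\algA\widehat\oplus\ehH_\algA$ (with domain $\algA\oplus\algA$ in the unbounded case) as $\operatorname{diag}(L,J_\algA L J_\algA)$. Setting $U:=\Phi\oplus\Phi$, a unitary from $\ehH_\algA\widehat\oplus\ehH_\algA$ onto $\ehH_\algB\widehat\oplus\ehH_\algB$ carrying $\algA\oplus\algA$ onto $\algB\oplus\algB$, the first intertwining identity yields $\Phi J_\algA L J_\algA\Phi^{-1}=J_\algB(\Phi L\Phi^{-1})J_\algB$, whence $U\,\operatorname{diag}(L,J_\algA L J_\algA)\,U^{*}=\operatorname{diag}(\Phi L\Phi^{-1},J_\algB(\Phi L\Phi^{-1})J_\algB)$; that is, $\tilde\Phi=\Ad(U)$ in the matrix representation. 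Since $\kM_\bb(\algA),\kM_\bb(\algB)$ are von Neumann algebras (Lemma \ref{lem-multvN}) and $\kM(\algA),\kM(\algB)$ are pre-GW*-algebras (both established above), and $U$ maps the respective domains onto one another, this displays $\tilde\Phi$ as a spatial isomorphism in both cases. The only genuine bookkeeping obstacle lies in the unbounded setting, where one must verify that $U$ really sends $\algA\oplus\algA$ onto $\algB\oplus\algB$ and that the adjoint conditions defining $\caL^+$ are preserved; both reduce to the identities $\Phi(\algA)=\algB$ and $\Phi^{-1}=\Phi^*$.
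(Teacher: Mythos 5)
Your proof is correct and follows essentially the same route as the paper: conjugation by $\Phi$, the identical computation $\lambda_x\,\Phi L\Phi^{-1}(y)=\Phi\big(\lambda_{\Phi^{-1}(x)}L\Phi^{-1}(y)\big)=\Phi\big(\rho_{\Phi^{-1}(y)}R\Phi^{-1}(x)\big)=\rho_y\,\Phi R\Phi^{-1}(x)$ for the multiplier property, and unitarity of $\Phi$ for compatibility with the involution and the algebra structure. Your only addition is making the spatial implementation explicit via the unitary $U=\Phi\oplus\Phi$ in the matrix picture of Proposition \ref{prop-commutant}, which the paper leaves implicit in the very definition of $\tilde\Phi$.
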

\begin{proof}
Let us show this in the bounded case, the unbounded case is similar. First $\tilde\Phi(T)$ is a multiplier of $\algB$. Indeed for any $x,y\in\algB$,
\begin{equation*}
\lambda_x \Phi L\Phi^{-1}(y)=\Phi(\lambda_{\Phi^{-1}(x)} L\Phi^{-1}(y))=\Phi(\rho_{\Phi^{-1}(y)}R\Phi^{-1}(x))=\rho_y\Phi R\Phi^{-1}(x).
\end{equation*}
It is straightforward to see that $\tilde\Phi$ is a morphism of algebras. It conserves the involutions since $\forall x,y\in\algB$,
\begin{equation*}
\langle\Phi L^*\Phi^{-1}(x),y\rangle=\langle L^*\Phi^{-1}(x),\Phi^{-1}(y)\rangle=\langle x,\Phi L\Phi^{-1}(y)\rangle.
\end{equation*}
\end{proof}
Note that if two Hilbert algebras $\algA$ and $\algB$ are isomorphic and $\Phi$ is the isomorphism, then the natural traces are compatible, i.e.
\begin{equation}
\forall T\in\kM_\bb(\algA)\quad:\quad \tau_\algB(\tilde\Phi(T))=\tau_\algA(T).\label{eq-morphtrace}
\end{equation}

\begin{proposition}
Let $T=(L,R)\in\kM_\bb(\algA)$. Then, $L$ is unitary if and only if $R$ is unitary. Moreover, $L$ stabilizes $\algA$ if and only if $R$ stabilizes $\algA$. We will therefore say that $T$ is a {\defin unitary multiplier} if $T\in\kMstab(\algA)$ is unitary.
\end{proposition}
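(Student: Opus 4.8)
The plan is to run everything through the antilinear involution $J\colon x\mapsto x^*$ extended to $\ehH_\algA$, exploiting that it converts statements about $L$ into statements about $R$ via Proposition \ref{prop-moduleb}. First I would record the three elementary properties of $J$ that drive the argument: it is isometric and antilinear with $J^2=\gone$ (because $(x^*)^*=x$), hence antiunitary; and it preserves the algebra, $J(\algA)=\algA$, since $\algA$ is stable under its own involution. Combined with $R=JL^*J$ and $L=JR^*J$ from Proposition \ref{prop-moduleb}, taking adjoints yields $R^*=JLJ$ and $L^*=JRJ$, so the conjugation $\Psi(A):=JAJ$ on $\caB(\ehH_\algA)$ is the single bridge between the two components of a multiplier.

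For the unitarity equivalence I would first check that $\Psi$ is a conjugate-linear, multiplicative, unital, $*$-preserving bijection of $\caB(\ehH_\algA)$ with $\Psi^2=\mathrm{id}$; the only nonroutine point is $(JAJ)^*=JA^*J$, which follows from the antiunitarity identity $\langle Ju,v\rangle=\overline{\langle u,Jv\rangle}$. Since the defining relations of a unitary, $U^*U=UU^*=\gone$, are purely $*$-algebraic, $\Psi$ carries unitaries to unitaries despite being conjugate-linear. Now $R=JL^*J=\Psi(L)^*$: if $L$ is unitary then $\Psi(L)$, and hence its adjoint $R$, is unitary, and the converse follows identically from $L=\Psi(R)^*$. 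Equivalently, one can verify by hand that $JL^*J$ is an isometry from antiunitarity of $J$ and $L^*L=\gone$, with two-sided inverse $JLJ$.

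For the stabilization equivalence the mechanism is the same identity $R=JL^*J$ together with $J(\algA)=\algA$. The point to be careful about — and the main obstacle — is that invariance of the \emph{dense subalgebra} $\algA$ (as opposed to its fulfillment $\algA_\bb$, on which invariance is automatic by Lemma \ref{lem-bounded}) is not detectable from the multiplicative identity $\rho_y R(x)=\lambda_x L(y)$ alone, which controls only products $R(x)\fois y$ and never membership of $R(x)$ itself. Hence one must track adjoints: the notion relevant for $\kMstab(\algA)=\kM_\bb(\algA)_{|\algA\oplus\algA}\cap\kM(\algA)$ is that $L$ restricts to an element of $\caL^+(\algA)$, i.e. both $L$ and $L^+=(L^*)_{|\algA}$ leave $\algA$ invariant. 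Granting this, the involution does the rest: from $L(\algA)\subseteq\algA$ and $L^*(\algA)\subseteq\algA$ one obtains $R(\algA)=JL^*(\algA)\subseteq J(\algA)=\algA$ and $R^*(\algA)=JL(\algA)\subseteq\algA$, so $R\in\caL^+(\algA)$; the converse is symmetric via $L=JR^*J$. Structurally, $J$ interchanges the left-module condition with the right-module one and swaps $L$ with $R^*$, which is precisely what closes the loop that the bare $J$-relations $P(L)\Leftrightarrow P(R^*)$, $P(L^*)\Leftrightarrow P(R)$ leave open.

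Finally I would assemble the two equivalences: for $T=(L,R)\in\kM_\bb(\algA)$, membership in $\kMstab(\algA)$ amounts to $L$ (equivalently $R$) restricting to $\caL^+(\algA)$, and unitarity of $T$ amounts to $L$ (equivalently $R$) being unitary. Together these make the notion of \emph{unitary multiplier} well posed, independently of which component one chooses to inspect.
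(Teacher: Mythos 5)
Your proof is correct and takes essentially the same route as the paper, whose entire argument is the one-liner that $R=JL^*J$ with $J$ a unitary antilinear map. Your extra care on the stabilization step is in fact a genuine refinement: since $R=JL^*J$ exchanges $R$ with $L^*$ (not $L$), the literal equivalence ``$L(\algA)\subseteq\algA$ iff $R(\algA)\subseteq\algA$'' only closes up once ``stabilizes'' is read in the $\caL^+(\algA)$ sense (both the operator and its adjoint restrict to $\algA$), which is exactly the notion relevant for $\kMstab(\algA)$ — a subtlety the paper's proof glosses over.
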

\begin{proof}
Indeed for $T=(L,R)\in\kM_\bb(\algA)$, $R=JL^*J$ and $J$ is a unitary antilinear map.
\end{proof}
Note that if $T\in\kMstab(\algA)$ is a unitary multiplier and $\Phi:\algA\to\algB$ an isomorphism of Hilbert algebras, then $\tilde\Phi(T)$ is a unitary multiplier in $\kM_\bb(\algB)$ (see Proposition \ref{prop-morph}).

\begin{proposition}
Let $T=(L,R)\in\kMstab(\algA)$ be a unitary multiplier. Then, $U_T:=LR^*=R^*L$ is an automorphism of the Hilbert algebra $\algA$. Moreover, if $T$ is involutive (i.e. $T^2=\gone$), so is $U_T$.
\end{proposition}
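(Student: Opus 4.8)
The plan is to realize $U_T$ as the automorphism of $\algA$ induced by conjugation by the unitary $T$ on the embedded copy $\Xi(\algA)\subset\kM_\bb(\algA)$. The engine is Lemma \ref{lem-ximultb}, which for $T=(L,R)$ and $x\in\algA$ gives $T\Xi_x=\Xi_{L(x)}$ and $\Xi_xT=\Xi_{R(x)}$, together with the fact (also Lemma \ref{lem-ximultb}) that $\Xi$ is an injective $*$-homomorphism, and the observation that $T^*=(L^*,R^*)$ is again a unitary multiplier lying in $\kMstab(\algA)$, since the latter is $*$-invariant (it consists of the bounded multipliers that, together with their adjoints, stabilize $\algA$).

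First I would establish the commutation $LR^*=R^*L$ and the conjugation formula $\Xi_{LR^*(x)}=T\Xi_xT^*$. Applying Lemma \ref{lem-ximultb} to $T^*$ gives $\Xi_xT^*=\Xi_{R^*(x)}$, whence $T\Xi_xT^*=T\Xi_{R^*(x)}=\Xi_{LR^*(x)}$; computing the same product in the other grouping as $(T\Xi_x)T^*=\Xi_{L(x)}T^*=\Xi_{R^*L(x)}$ and invoking injectivity of $\Xi$ yields $LR^*(x)=R^*L(x)$ for all $x\in\algA$. Hence $LR^*=R^*L=:U_T$ on the dense domain $\algA$, and by continuity on all of $\ehH_\algA$. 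This settles well-definedness and produces the master identity $\Xi_{U_T(x)}=T\Xi_xT^*$ that drives everything else.

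Next I would verify that $U_T$ is an automorphism in the sense of Definition \ref{def-isomhilb}. Unitarity on $\ehH_\algA$ is immediate, as $U_T=LR^*$ is a product of two unitaries. For the algebraic properties I would argue entirely at the level of the embedding: since $T$ is unitary, $S\mapsto TST^*$ is a $*$-automorphism of the von Neumann algebra $\kM_\bb(\algA)$, and the master identity says it carries $\Xi(\algA)$ into itself. Multiplicativity then follows from $\Xi_{U_T(xy)}=T\Xi_x\Xi_yT^*=(T\Xi_xT^*)(T\Xi_yT^*)=\Xi_{U_T(x)U_T(y)}$ plus injectivity of $\Xi$, and preservation of the involution from $\Xi_{U_T(x)^*}=(T\Xi_xT^*)^*=T\Xi_{x^*}T^*=\Xi_{U_T(x^*)}$. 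Bijectivity of $U_T:\algA\to\algA$ comes from applying the master identity to $T^*$, which yields $U_{T^*}$ with $\Xi_{U_{T^*}(x)}=T^*\Xi_xT$; since $TT^*=T^*T=\gone$ one gets $U_T\circ U_{T^*}=U_{T^*}\circ U_T=\mathrm{id}$, and as both $U_T$ and $U_{T^*}$ stabilize $\algA$ (because $T,T^*\in\kMstab(\algA)$) the map $U_T$ restricts to a bijection of $\algA$ onto itself.

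Finally, for the involutive case, if $T^2=\gone$ then, $T$ being unitary, $T^*=T^{-1}=T$, so applying the master identity twice gives $\Xi_{U_T^2(x)}=T^2\Xi_x(T^*)^2=\Xi_x$, whence $U_T^2=\gone$ by injectivity of $\Xi$. I expect the only genuinely delicate step to be the first one, namely extracting the commutation $LR^*=R^*L$ and the conjugation formula cleanly from Lemma \ref{lem-ximultb}; once that formula is in hand, every remaining assertion is a formal consequence of $\Xi$ being an injective $*$-homomorphism and of conjugation by a unitary being a $*$-automorphism.
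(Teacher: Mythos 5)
Your proof is correct, but it takes a genuinely different route from the paper's. The paper works directly inside $\algA$: it gets the commutation $LR^*=R^*L$ for free from the commutant theorem (Theorem \ref{thm-caract}, via $L\in\kM_{L,\bb}(\algA)=\lambda(\algA)$, $R^*\in\kM_{R,\bb}(\algA)=\rho(\algA)$ and $\lambda(\algA)'=\rho(\algA)$), and then checks preservation of involution and multiplicativity by hand, using the concrete multiplier identities $L^*(x)=R(x^*)^*$ and $R^*(x)L(y)=RR^*(x)y=xy$. You instead lift everything to the von Neumann algebra $\kM_\bb(\algA)$ through the embedding $\Xi$ of Lemma \ref{lem-ximultb}: you first extract the commutation from associativity of $T\Xi_xT^*$ plus injectivity of $\Xi$ (a nice alternative to invoking the commutant theorem), and then derive the master identity $\Xi_{U_T(x)}=T\Xi_xT^*$, from which multiplicativity, preservation of the involution, bijectivity on $\algA$, and the involutive case all follow formally from $\Xi$ being an injective $*$-homomorphism and conjugation by a unitary being a $*$-automorphism. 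Your route is slightly longer to set up but more uniform, and it has the merit of explicitly handling two points the paper leaves implicit: that $U_{T^*}$ inverts $U_T$ on $\algA$ (so $U_T$ is onto $\algA$, not just into), and the final claim that $T^2=\gone$ forces $U_T^2=\gone$, for which the paper gives no argument at all. The paper's route is shorter and stays at the level of explicit formulas, which is in keeping with its computational style; the one hypothesis you should make sure to flag (and do, correctly) is that $\kMstab(\algA)$ is $*$-invariant, so that Lemma \ref{lem-ximultb} may legitimately be applied to $T^*$ and so that $U_{T^*}$ also stabilizes $\algA$.
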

\begin{proof}
Indeed, $L$ and $R^*$ are respectively in $\kM_{L,\bb}(\algA)$ and $\kM_{R,\bb}(\algA)$ and they commute together due to Theorem \ref{thm-caract}, and $U_T$ is a unitary operator on $\ehH_\algA$ stabilizing $\algA$. Then, it preserves the involution
\begin{equation*}
U_T(x)^*=(LR^*(x))^*=R^*L(x^*)=U_T(x^*),
\end{equation*}
for $x\in\algA$, by using $L^*(x)=R(x^*)^*$. Moreover, $R^*(x)L(y)=RR^*(x)y=xy$ and
\begin{equation*}
U_T(x)U_T(y)=(LR^*)(x)(LR^*)(y)=LR^*(R^*(x)L(y))=U_T(xy).
\end{equation*}
\end{proof}
If an automorphism $U\in\Aut(\algA)$ is associated to a unitary multiplier $T=(L,R)\in\kM_\bb(\algA)$ as above: $U=U_T$, we say that $U$ is an {\defin inner automorphism}. In this case, the induced spatial isomorphism of $\kM_\bb(\algA)$ has the form
\begin{equation*}
\tilde U_T(T')=(LL'L^*,R^*R'R),
\end{equation*}
for any $T'=(L',R')\in\kM_\bb(\algA)$.

\begin{proposition}
Let $\algA$ be a Hilbert algebra and $P=(P_L,P_R)\in\kM_\bb(\algA)$. Then $P_L$ is a projection if and only if $P_R=JP_LJ$ is a projection. In this case, we note $\algB:=P_LP_R(\algA)$. $\algB$ is a Hilbert algebra dense in $\ehH_\algB:=P_LP_R(\ehH_\algA)$ and
\begin{equation*}
\kM_\bb(\algB)=\{(P_LP_R\,L\,P_LP_R,P_LP_R\,R\,P_LP_R),\quad (L,R)\in\kM_\bb(\algA)\}.
\end{equation*}
\end{proposition}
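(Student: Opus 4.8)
The plan is to deduce everything from the single fact that $P_L$ and $P_R$ are commuting self-adjoint projections whose product $E:=P_LP_R$ is a projection commuting with the involution $J$.

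First, for the equivalence of the two projection conditions, recall from Proposition \ref{prop-moduleb} that $P_R=JP_L^*J$. Since $J$ is an antiunitary involution of $\ehH_\algA$ (it satisfies $\langle Ju,Jv\rangle=\langle v,u\rangle$ and $J^2=\text{id}$), conjugation by $J$ obeys $(JAJ)^*=JA^*J$ for every $A\in\caB(\ehH_\algA)$. Hence, if $P_L=P_L^*=P_L^2$, then $P_R=JP_LJ$, and one reads off $P_R^*=JP_L^*J=P_R$ and $P_R^2=JP_L^2J=P_R$; the converse is symmetric. I also record the relations $JP_LJ=P_R$ and $JP_RJ=P_L$ for later use.

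Next I construct the Hilbert algebra $\algB$. By Theorem \ref{thm-caract} we have $P_L\in\kM_{L,\bb}(\algA)=\lambda(\algA)$ and $P_R\in\kM_{R,\bb}(\algA)=\rho(\algA)=\lambda(\algA)'$, so $P_L$ and $P_R$ commute; therefore $E=P_LP_R$ is a self-adjoint idempotent, $\ehH_\algB=E(\ehH_\algA)$ is a closed subspace, and $\algB=E(\algA)$ is dense in it. From $JP_LJ=P_R$ one gets $JE=EJ$, so $\algB$ is stable under the involution, which then restricts to $\algB$; I endow $\algB$ with the restricted scalar product and involution. For the product, the key computation is that, for a bounded element $a=E\alpha$ with $\alpha\in\algA_\bb$, Lemma \ref{lem-ximultb} gives $\Xi_a=P\Xi_\alpha P$ with $P=(P_L,P_R)$, that is $\lambda_a=P_L\lambda_\alpha P_L$ and $\rho_a=P_R\rho_\alpha P_R$; since $P_Lb=P_Rb=b$ for $b\in\ehH_\algB$, this yields $\lambda_a(b)=P_L\lambda_\alpha(b)\in\ehH_\algB$, and in particular the product of two bounded elements of $\algB$ lies again in $\algB$ (one may, as allowed by Lemma \ref{lem-bounded} and the remark following it, reduce to the full algebra $\algA_\bb$, on which $E$ restricts). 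This equips $\algB$ with an associative product, and axioms 1--3 of Definition \ref{def-hilbertalg} are immediate from the self-adjointness of $E$ and the boundedness of the compressed multiplications. For the density axiom 4, if $c\in\ehH_\algB$ is orthogonal to all products then $\lambda_a^{\algB}c=0$ for every $a\in\algB_\bb$; writing $\lambda_a^{\algB}=(P_L\lambda_\alpha P_L)|_{\ehH_\algB}$ and using that $\{\lambda_\alpha:\alpha\in\algA_\bb\}$ is weakly dense in $\lambda(\algA)$ forces $P_Lc=0$, hence $c=0$ since $c\in\ehH_\algB\subset P_L\ehH_\algA$.

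Finally I identify $\kM_\bb(\algB)$. A direct check, using $\lambda_xL(y)=\rho_yR(x)$ together with the formulas $\lambda_a^{\algB}=(E\lambda_\alpha E)|_{\ehH_\algB}$ and $\rho_a^{\algB}=(E\rho_\alpha E)|_{\ehH_\algB}$, shows that each pair $((ELE)|_{\ehH_\algB},(ERE)|_{\ehH_\algB})$ with $(L,R)\in\kM_\bb(\algA)$ is a bounded multiplier of $\algB$, giving the inclusion ``$\supseteq$''. For the reverse inclusion I use Theorem \ref{thm-caract} for $\algB$, which gives $\kM_\bb(\algB)\simeq\lambda(\algB)=\{\lambda_a^{\algB}:a\in\algB_\bb\}''$, and identify $\lambda(\algB)$ with the reduction of $\lambda(\algA)$ performed in two steps: first induction by the projection $P_R\in\lambda(\algA)'$ onto the invariant subspace $P_R\ehH_\algA$, then reduction by the projection $P_L\in\lambda(\algA)$. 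This shows that $\{(ELE)|_{\ehH_\algB}:L\in\lambda(\algA)\}=\{(P_L L P_L)|_{\ehH_\algB}\}$ is already a von Neumann algebra on $\ehH_\algB$; since it contains every $\lambda_a^{\algB}$ and is contained in $\lambda(\algB)$, it equals $\lambda(\algB)$, and the right components are handled symmetrically through $\rho(\algB)$. The main obstacle is precisely this last identification: establishing that compression by the two--sided projection $E=P_LP_R$ is onto $\kM_\bb(\algB)$ relies on the reduction theory of von Neumann algebras rather than on any formal manipulation, and the verification of axiom 4 is the other point that needs genuine care.
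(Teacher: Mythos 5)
Your proof is correct and follows essentially the same route as the paper's (very terse) proof: the equivalence of the two projection conditions via the formula $P_R=JP_L^*J$ from Proposition \ref{prop-moduleb}, and everything else via the compression identity $\Xi_{E\alpha}=P\,\Xi_\alpha\,P$ supplied by Lemma \ref{lem-ximultb}. The details you fill in — commutation of $P_L$ and $P_R$ via Theorem \ref{thm-caract}, the orthogonality argument for the density axiom, the two-step von Neumann induction/reduction showing that compression by $E=P_LP_R$ lands onto all of $\lambda(\algB)$, and the passage to the fulfillment $\algA_\bb$ to guarantee that $\algB$ is closed under the product — are precisely what the paper's one-line appeal to Lemma \ref{lem-ximultb} leaves implicit.
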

\begin{proof}
Due to the formula $P_R=JP_L^*J$, we see that $P_L^2=P_L=P_L^*$ if and only if $P_R^2=P_R=P_R^*$. Then, the rest can be showed by using Lemma \ref{lem-ximultb}.
\end{proof}

\subsection{Topology on multipliers}
\label{subsec-multtopo}

The multiplier space $\kM(\algA)$ of an arbitrary Hilbert algebra $\algA$ is an O*-algebra so that we can consider the graphic topology (see Definition \ref{def-graphictop}) on its domain $\algA\oplus\algA$. However, we will define another more interesting topology on $\algA$ by introducing an auxiliary O*-algebra called the derived multiplier algebra.
\begin{definition}
Let $\kMtop(\algA)$ be the $s^*$-closure in $\caL^+(\algA)$ of the vector space generated by $LR'$ (for $L\in\kM_L(\algA)$ and $R'\in\kM_R(\algA)$), or equivalently the $s^*$-closure of the algebra generated by $\kM_L(\algA)\oplus\kM_R(\algA)$. This is an O*-algebra of domain $\algA$ called the {\defin derived multiplier algebra}. Then, the {\defin multiplier topology} $\tau_\kM$ on $\algA$ is defined as the graphic topology associated to the O*-algebra $\kMtop(\algA)$. $\algA$ is said to be {\defin mult-closed} if it is complete for the multiplier topology.
\end{definition}
The seminorms of the multiplier topology are $\norm x\norm_T:=\norm T(x)\norm$, for $T\in\kMtop(\algA)$ and $x\in\algA$.

Let us recall that $Z_\algA$ denotes the bounded center (see Definition \ref{def-centerhilb}), and we define the {\defin multiplier center} as $\caZ_\bb(\algA):=\kM_{L,\bb}(\algA)\cap\kM_{R,\bb}(\algA)$.
\begin{lemma}
\label{lem-densder}
The bounded center $Z_\algA$ is strongly dense in $\caZ_\bb(\algA)$.
\end{lemma}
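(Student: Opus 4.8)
The plan is to first identify $\caZ_\bb(\algA)$ with the centre of the left von~Neumann algebra and then recognise $Z_\algA$ as its square-integrable part, so that strong density becomes the standard semifiniteness statement.

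First I would rewrite the objects in terms of the von~Neumann algebras of section~\ref{subsec-hilbalg}. By the consequence of Theorem~\ref{thm-caract} one has $\kM_{L,\bb}(\algA)=\lambda(\algA)$ and $\kM_{R,\bb}(\algA)=\rho(\algA)$, and since $\lambda(\algA)'=\rho(\algA)$ this gives
\begin{equation*}
\caZ_\bb(\algA)=\lambda(\algA)\cap\rho(\algA)=\lambda(\algA)\cap\lambda(\algA)',
\end{equation*}
the centre of $\lambda(\algA)$, an abelian von~Neumann algebra. Next I would check that $z\mapsto\lambda_z$ embeds $Z_\algA$ into $\caZ_\bb(\algA)$: for $z\in\algA_\bb$ and $w\in\algA$ one has $\lambda_z\lambda_w=\lambda_{zw}$ and $\lambda_w\lambda_z=\lambda_{wz}$, so $\lambda_z$ commutes with every $\lambda_w$ exactly when $zw=wz$ for all $w$, i.e.\ exactly when $\lambda_z=\rho_z$. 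Hence $z\in Z_\algA$ if and only if $\lambda_z$ is a bounded element lying in the centre $\caZ_\bb(\algA)$.

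The second step characterises which central elements arise this way. Using the natural trace identity $\tau_\lambda(\lambda_x^*\lambda_y)=\langle x,y\rangle$ for bounded $x,y$, the element $z\in Z_\algA$ produces a central operator $\lambda_z$ with $\tau_\lambda(\lambda_z^*\lambda_z)=\langle z,z\rangle<\infty$; conversely any $c\in\caZ_\bb(\algA)$ with $\tau_\lambda(c^*c)<\infty$ is of the form $c=\lambda_z$ for a bounded element $z$, which is central because $c$ is, hence $z\in Z_\algA$. Thus the image of $Z_\algA$ is precisely the set of $\tau_\lambda$-square-integrable elements of the centre.

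The final and decisive step is density. Given $c\in\caZ_\bb(\algA)$ I would approximate it by square-integrable central elements: using that $\tau_\lambda$ is a semifinite normal trace, produce an increasing net of central projections $p_i\in\caZ_\bb(\algA)$ with $\tau_\lambda(p_i)<\infty$ and $p_i\uparrow\gone$ strongly. Then each $cp_i$ is central with $(cp_i)^*cp_i\le\norm c\norm^2 p_i$, whence $\tau_\lambda((cp_i)^*cp_i)<\infty$, so $cp_i=\lambda_{z_i}$ with $z_i\in Z_\algA$, while $cp_i\to c$ strongly since $c$ is bounded and $p_i\to\gone$ strongly. I expect the production of this net to be the main obstacle: it amounts to showing that $\tau_\lambda$ restricts to a \emph{semifinite} trace on the centre $\caZ_\bb(\algA)$, equivalently that the square-integrable central elements are strongly dense there. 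I would attempt to obtain it from the semifiniteness and faithfulness of $\tau_\lambda$ together with the density of $\algA_\bb$ in $\ehH_\algA$, by taking the support projections $s(\lambda_z)$ of the bounded central elements $z\in Z_\algA$ and arguing that their supremum is $\gone$; controlling this supremum is exactly the delicate point on which the argument hinges.
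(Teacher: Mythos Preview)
Your approach differs from the paper's, and the gap you flag at the end is a genuine one: the existence of an increasing net of central projections of finite trace with supremum $\gone$ is precisely semifiniteness of $\tau_\lambda$ on the centre, which (for a faithful normal trace) is equivalent to strong density of the square-integrable ideal in $\caZ_\bb(\algA)$ --- and you have already identified that ideal with $Z_\algA$. So the ``decisive step'' is the lemma restated. Your proposed fix (that the supremum of the support projections of the $\lambda_z$, $z\in Z_\algA$, equals $\gone$) is correct in spirit but is again the same statement: that supremum is exactly the unit of the strong closure of the ideal $\{\lambda_z:z\in Z_\algA\}$.

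The paper reaches this point by a shorter path, parallel to the proof of Theorem~\ref{thm-caract}, without passing through the trace. One first shows that $Z_\algA$ is a $*$-ideal of $\caZ_\bb(\algA)$: for $T\in\caZ_\bb(\algA)$ and $z\in Z_\algA$, Lemma~\ref{lem-ximultb} gives $\lambda_{T(z)}=T\lambda_z=T\rho_z=\rho_{T(z)}$, so $T(z)\in Z_\algA$. The strong closure of this ideal then has a unit $T_1$, and the paper argues $T_1=\mathrm{id}_{\ehH_\algA}$ via the density of $\{xy:x,y\in\algA\}$ in $\ehH_\algA$ (the fourth axiom of Definition~\ref{def-hilbertalg}), exactly as in the proof of Theorem~\ref{thm-caract}. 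This is precisely the point you were circling around; the paper's insight is that the ideal property of $Z_\algA$ inside $\caZ_\bb(\algA)$ together with axiom~4 does the job, with no separate analysis of the trace on the centre.
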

\begin{proof}
First, for any $z\in Z_\algA$, $\lambda_z=\rho_z\in\caZ_\bb(\algA)$. Moreover $\forall T\in\caZ_\bb(\algA)$, $\lambda_{T(z)}=T\lambda_z=T\rho_z=\rho_{T(z)}$ by using Lemma \ref{lem-ximultb}. So $T(z)\in Z_\algA$ and $Z_\algA$ is a *-ideal of $\caZ_\bb(\algA)$. Let $T_1$ denote the unit of the strong closure of $Z_\algA$. As in Theorem \ref{thm-caract}, due to the density of $\{xy,\, x,y\in\algA\}$ in $\ehH_\algA$, we deduce that $T_1=\text{id}_{\ehH_\algA}$ and finally that $\caZ_\bb(\algA)$ is the strong closure of $Z_\algA$.
\end{proof}

\begin{proposition}
\label{prop-commutder}
Let $\algA$ be a Hilbert algebra. Then $\kMtop(\algA)'_w=\caZ_\bb(\algA)$.
\end{proposition}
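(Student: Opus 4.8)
The plan is to identify $\kMtop(\algA)'_w$ with the set of bounded operators weakly commuting with all (unbounded) left and right multipliers, and then to compute this set explicitly using the commutant theorem $\lambda(\algA)'=\rho(\algA)$ together with the affiliation property established in Lemma \ref{lem-affilmult}.

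First I would observe that the weak bounded commutant of Definition \ref{def-commutb} is unchanged both by passing to the generated $*$-algebra and by taking the $s^*$-closure. For the generated algebra: if $T$ satisfies the commutation relation against two generators $S_1,S_2\in\caL^+(\algA)$, then, inserting $S_2(x)\in\algA$ and $S_1^+(y)\in\algA$ successively, one gets $\langle y,TS_1S_2(x)\rangle=\langle S_1^+(y),TS_2(x)\rangle=\langle (S_1S_2)^+(y),T(x)\rangle$, so the relation propagates to products (linearity being trivial). For the $s^*$-closure: if $S_\alpha\to S$ with $S_\alpha^+\to S^+$ strongly, both sides of the relation converge since $T$ is bounded, so the relation survives the limit. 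Since $\kMtop(\algA)$ is by definition the $s^*$-closure of the $*$-algebra generated by $\kM_L(\algA)\oplus\kM_R(\algA)$ (which is $+$-stable by Proposition \ref{prop-module}), this reduces the problem to computing the set of $T\in\caB(\ehH_\algA)$ weakly commuting with every element of $\kM_L(\algA)$ and of $\kM_R(\algA)$.

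For the inclusion $\caZ_\bb(\algA)\subseteq\kMtop(\algA)'_w$, I would take $Z\in\caZ_\bb(\algA)=\kM_{L,\bb}(\algA)\cap\kM_{R,\bb}(\algA)$ and $L\in\kM_L(\algA)$. Applying Equation \eqref{eq-affil} with $R_1=Z\in\kM_{R,\bb}(\algA)$ gives $Z(x)\in\Dom(\overline L)$ and $\overline L(Z(x))=Z(L(x))$ for $x\in\algA$; hence for $x,y\in\algA$ one has $\langle y,ZL(x)\rangle=\langle y,\overline L(Zx)\rangle=\langle L^+(y),Zx\rangle$, where the last step uses $y\in\algA\subseteq\Dom((\overline L)^*)$ together with $(\overline L)^*y=L^+(y)$. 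This is exactly the weak commutation relation against $L$. The statement symmetric to \eqref{eq-affil} (with the roles of $\lambda$ and $\rho$ exchanged, proved identically and applied with $Z\in\kM_{L,\bb}(\algA)$) yields the relation against every $R'\in\kM_R(\algA)$, so $Z\in\kMtop(\algA)'_w$.

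For the reverse inclusion, let $T\in\kMtop(\algA)'_w$. Since $\lambda_a$ (for $a\in\algA$) is a bounded element of $\kM_L(\algA)$ with $\lambda_a^+=\lambda_{a^*}=(\lambda_a)^*$, the commutation relation reads $\langle y,T\lambda_a(x)\rangle=\langle(\lambda_a)^*(y),T(x)\rangle=\langle y,\lambda_aT(x)\rangle$ for all $x,y\in\algA$; by density $T$ commutes with every $\lambda_a$, so $T\in\lambda(\algA)'=\rho(\algA)=\kM_{R,\bb}(\algA)$ by Theorem \ref{thm-caract}. Testing instead against the right multipliers $\rho_a$ gives $T\in\rho(\algA)'=\lambda(\algA)=\kM_{L,\bb}(\algA)$, whence $T\in\caZ_\bb(\algA)$, completing the argument. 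The main obstacle is the forward inclusion: one must give a meaning to $ZL(x)$ when $Z(x)\notin\algA$ and control it, which is precisely what the affiliation relation \eqref{eq-affil} supplies; checking that the weak commutation relation is stable under products and $s^*$-limits is the other point requiring care, whereas the reverse inclusion is immediate once one tests only against the bounded multipliers $\lambda_a$ and $\rho_a$.
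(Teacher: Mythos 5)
Your proof is correct, and it actually does more than the paper's own proof records. For the inclusion $\kMtop(\algA)'_w\subseteq\caZ_\bb(\algA)$ you argue exactly as the paper does: test the weak commutation relation against $\lambda_z$ and $\rho_z$, then use $\lambda(\algA)'=\rho(\algA)$ and the identifications $\lambda(\algA)=\kM_{L,\bb}(\algA)$, $\rho(\algA)=\kM_{R,\bb}(\algA)$ following Theorem \ref{thm-caract}. The difference is in the reverse inclusion $\caZ_\bb(\algA)\subseteq\kMtop(\algA)'_w$, which the paper's proof simply asserts (it ends with ``so $T\in\caZ_\bb(\algA)\subset\kMtop(\algA)'_w$'' and gives no argument for the second containment). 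You supply it in two steps, both sound: (i) a reduction showing that the weak bounded commutant of the $s^*$-closure of the generated algebra coincides with that of the generating set $\kM_L(\algA)\cup\kM_R(\algA)$ --- the propagation to products by inserting $S_2(x)\in\algA$ and $S_1^+(y)\in\algA$, and the stability under $s^*$-limits using boundedness of $T$, are exactly what is needed; (ii) the weak commutation of a bounded two-sided multiplier $Z$ with an unbounded $L\in\kM_L(\algA)$, deduced from the affiliation relation \eqref{eq-affil} of Lemma \ref{lem-affilmult} together with $(\overline L)^*=L^*$ and $(L^*)_{|\algA}=L^+$. The paper's own toolkit suggests a slightly different route for this step: in the reciprocal part of Lemma \ref{lem-wbcommut}, the same commutation $\langle z,R_1L(x)\rangle=\langle L^+(z),R_1(x)\rangle$ is obtained by weakly approximating $R_1\in\kM_{R,\bb}(\algA)$ by $\rho_{y_n}$ with $y_n\in\algA$ and using $\rho_{y_n}L(x)=L(xy_n)$; the density statement of Lemma \ref{lem-densder} would serve similarly. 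Your affiliation-based argument reaches the same conclusion without any approximation, at the cost of invoking Lemma \ref{lem-affilmult} (which precedes the proposition, so there is no circularity); it has the merit of making fully explicit a step the paper leaves to the reader. The only blemish is in your closing sentence: ``give a meaning to $ZL(x)$'' should read ``give a meaning to $\overline L(Z(x))$'', since $ZL(x)$ is defined outright and it is the other composition that requires affiliation --- your displayed computation handles this correctly.
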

\begin{proof}
Let $T\in\kMtop(\algA)'_w$. Then, for any $x,y\in\algA$ and $S\in\kMtop(\algA)$, we have $\langle y,TS(x)\rangle=\langle S^+(y),T(x)\rangle$. If we take $S=\lambda_z$ with $z\in\algA$, we obtain that $T$ commutes with $\lambda_z$, so $T\in\kM_{R,\bb}(\algA)$. With $S=\rho_z$, we have $T\in\kM_{L,\bb}(\algA)$, so $T\in\caZ_\bb(\algA)\subset\kMtop(\algA)'_w$.
\end{proof}

\begin{remark}
\label{rmk-bicomder}
Note that $\caZ_\bb(\algA)$ is a von Neumann algebra whose commutant is given by $\kM_{{\rm topo},\bb}(\algA)$, generated by $LR'$ (for $L\in\kM_{L,\bb}(\algA)$ and $R'\in\kM_{R,\bb}(\algA)$). Due to the $s^*$-closure of $\kMtop(\algA)$ and to Remark \ref{rmk-wc}, we see that
\begin{equation*}
\kMtop(\algA)=[\kM_{{\rm topo},\bb}(\algA)_{|\algA}]^{s^*}=[(\kMtop(\algA)'_w)'_{|\algA}]^{s^*}=\kMtop(\algA)''_{wc}.
\end{equation*}
\end{remark}

We say that the Hilbert algebra $\algA$ is {\bf centered} if $Z_\algA\fois\algA\subset\algA$.
\begin{lemma}
\label{lem-centeredder}
Let $\algA$ be a centered Hilbert algebra. Then, for any $T\in\kMtop(\algA)$, the closure $\overline T$ is affiliated with $\kM_{{\rm topo},\bb}(\algA)$.
\end{lemma}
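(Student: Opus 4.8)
The plan is to read the conclusion through the bounded commutant: by Remark \ref{rmk-bicomder} the von Neumann algebra $\kM_{{\rm topo},\bb}(\algA)$ has commutant $\caZ_\bb(\algA)$, so affiliation of $\overline T$ with $\kM_{{\rm topo},\bb}(\algA)$ amounts to showing that for every $S\in\caZ_\bb(\algA)$ and every $x\in\Dom(\overline T)$ one has $S(x)\in\Dom(\overline T)$ and $\overline T S(x)=S\overline T(x)$. The whole argument is modelled on the proof of Lemma \ref{lem-affilmult}; the only new ingredient is the centered hypothesis, whose sole role is to guarantee that the central operators $\rho_z=\lambda_z$ (for $z\in Z_\algA$) leave the domain $\algA$ invariant, so that they may be composed with elements of $\kMtop(\algA)$ directly on $\algA$. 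The natural order is therefore: first establish the commutation on the domain $\algA$ for the dense family $Z_\algA$, then pass to the closure $\overline T$, and only at the end upgrade from $Z_\algA$ to all of $\caZ_\bb(\algA)$.

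First I would fix $z\in Z_\algA$. Since $\algA$ is centered, $\rho_z(\algA)=\lambda_z(\algA)=Z_\algA\fois\algA\subset\algA$, and as noted in the proof of Lemma \ref{lem-densder} one has $\rho_z=\lambda_z\in\caZ_\bb(\algA)=\kM_{L,\bb}(\algA)\cap\kM_{R,\bb}(\algA)$. I claim every element of the algebra generated by $\kM_L(\algA)\oplus\kM_R(\algA)$ commutes with $\rho_z$ as an operator on $\algA$. For a left multiplier $L\in\kM_L(\algA)$ this is Equation \eqref{eq-affil} of Lemma \ref{lem-affilmult} applied with $R_1=\rho_z$: for $x\in\algA$ one gets $L(\rho_z(x))=\overline L(\rho_z(x))=\rho_z\overline L(x)=\rho_z L(x)$, the restrictions back to $L$ being legitimate precisely because $\rho_z(x)\in\algA$. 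The symmetric counterpart of \eqref{eq-affil} (proved identically, interchanging the roles of $\kM_L$ and $\kM_R$, of $\lambda$ and $\rho$) gives $R'\lambda_z=\lambda_z R'$ on $\algA$ for any $R'\in\kM_R(\algA)$, and since $\lambda_z=\rho_z$ the same commutation holds. It then propagates to arbitrary products (again using that $\rho_z$ preserves $\algA$, so the compositions stay inside the domain), hence to the whole generated algebra.

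Next I would check that this commutation on $\algA$ survives the $s^*$-closure defining $\kMtop(\algA)$: if $S_\alpha\to T$ for $\tau_{s^*}$ with each $S_\alpha$ commuting with the bounded operator $\rho_z$ on $\algA$, then for $x\in\algA$ one has $T\rho_z(x)=\lim_\alpha S_\alpha\rho_z(x)=\lim_\alpha\rho_z S_\alpha(x)=\rho_z T(x)$, using $\rho_z(x)\in\algA$ and the continuity of $\rho_z$. Thus every $T\in\kMtop(\algA)$ satisfies $T\rho_z=\rho_z T$ on $\algA$. Passing to the closure is the standard graph argument: for $x\in\Dom(\overline T)$ pick $x_n\in\algA$ with $x_n\to x$ and $T(x_n)\to\overline T(x)$; then $\rho_z(x_n)\in\algA$, $\rho_z(x_n)\to\rho_z(x)$ and $T(\rho_z(x_n))=\rho_z T(x_n)\to\rho_z\overline T(x)$, so closedness of $\overline T$ yields $\rho_z(x)\in\Dom(\overline T)$ and $\overline T\rho_z(x)=\rho_z\overline T(x)$. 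Finally I would upgrade to the full center: by Lemma \ref{lem-densder} any $S\in\caZ_\bb(\algA)$ is a strong limit of operators $\rho_{z_\alpha}$ with $z_\alpha\in Z_\algA$, and for $x\in\Dom(\overline T)$ we have $\rho_{z_\alpha}(x)\in\Dom(\overline T)$ with $\rho_{z_\alpha}(x)\to S(x)$ and $\overline T(\rho_{z_\alpha}(x))=\rho_{z_\alpha}\overline T(x)\to S\overline T(x)$; closedness of $\overline T$ then gives $S(x)\in\Dom(\overline T)$ and $\overline T S(x)=S\overline T(x)$, which is exactly the affiliation with $\kM_{{\rm topo},\bb}(\algA)=\caZ_\bb(\algA)'$.

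I expect the main obstacle to be conceptual rather than computational: the closure of a product operator $LR'$ has no simple relation to $\overline L$ and $\overline{R'}$, so one cannot argue factor by factor directly on $\overline T$. This forces the commutation to be established first at the level of the common domain $\algA$, and it is here that the centered hypothesis is indispensable — without $Z_\algA\fois\algA\subset\algA$ the operator $\rho_z$ would carry vectors out of $\algA$, and the compositions with the unbounded multipliers generating $\kMtop(\algA)$ would cease to make sense on the domain, breaking the propagation through products and through the $s^*$-closure.
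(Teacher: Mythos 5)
Your proof is correct and follows essentially the same route as the paper's: first establish the commutation of $T\in\kMtop(\algA)$ with $\lambda_z=\rho_z$ ($z\in Z_\algA$) on the domain $\algA$ — which is exactly where the centered hypothesis enters — then transfer it to $\overline T$ by the closed-graph argument of Lemma \ref{lem-affilmult}, and finally upgrade from $Z_\algA$ to all of $\caZ_\bb(\algA)=\kM_{{\rm topo},\bb}(\algA)'$ using the strong density of Lemma \ref{lem-densder}. The only (harmless) divergence is in how the commutation $T\lambda_z=\lambda_z T$ on $\algA$ is justified: the paper gets it in one stroke from Remark \ref{rmk-bicomder} (every $T$ is an $s^*$-limit of restrictions of elements of $\kM_{{\rm topo},\bb}(\algA)$, which commute with $\caZ_\bb(\algA)$), whereas you re-derive it from the unbounded generators via Equation \eqref{eq-affil} and its mirror statement, propagating through products and the $s^*$-closure.
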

\begin{proof}
We proceed as for Lemma \ref{lem-affilmult}. Let $T\in\kMtop(\algA)$ and $x\in\Dom(\overline T)$, there exists a sequence $x_n\in\algA$ with $x_n\to x$ and $T(x_n)\to \overline T(x)$. For any $z\in Z_\algA$, $z$ is bounded and $\lambda_z(x_n)\to\lambda_z(x)$. Moreover, since $Z_\algA\fois\algA\subset\algA$, we deduce from Remark \ref{rmk-bicomder} that $T$ commutes with $\lambda_z$ and
\begin{equation*}
T(\lambda_z(x_n))=\lambda_z(T(x_n))\to \lambda_z\overline T(x).
\end{equation*}
So $\lambda_z(x)\in\Dom(\overline T)$ and $\overline T(\lambda_z(x))=\lambda_z \overline T(x)$. Due to Lemma \ref{lem-densder}, we can extend this result to all elements of $\caZ_\bb(\algA)=\kM_{{\rm topo},\bb}(\algA)'$ instead of $\lambda_z$.
\end{proof}

\begin{proposition}
\label{prop-gwder}
Let $\algA$ a centered mult-closed Hilbert algebra. Then, $\kMtop(\algA)$ is a GW*-algebra.
\end{proposition}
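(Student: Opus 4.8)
The plan is to verify directly the three requirements of Definition \ref{def-gw} for the O*-algebra $\kMtop(\algA)$ on the domain $\algA$: that $\kMtop(\algA)'_w(\algA)\subset\tilde\algA$, that $\kMtop(\algA)=\kMtop(\algA)''_{wc}$, and that $\kMtop(\algA)$ be closed. The closedness is the easiest point. By construction $\tau_\kM$ is the graphic topology of $\kMtop(\algA)$, so ``$\algA$ mult-closed'' says exactly that $(\algA,\tau_\kM)$ is complete, i.e. that $\kMtop(\algA)$ is closed in the sense of Definition \ref{def-closed} and $\tilde\algA=\algA$. In particular the domain condition to be checked reduces to $\kMtop(\algA)'_w(\algA)\subset\algA$.

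First I would dispatch the domain condition through Proposition \ref{prop-condgw}, which tells us that $\kMtop(\algA)'_w(\algA)\subset\tilde\algA$ is equivalent to the affiliation of $\overline T$ with $(\kMtop(\algA)'_w)'$ for every $T\in\kMtop(\algA)$. Here I would first identify the relevant von Neumann algebra: by Proposition \ref{prop-commutder} we have $\kMtop(\algA)'_w=\caZ_\bb(\algA)$, and by Remark \ref{rmk-bicomder} its commutant is $\caZ_\bb(\algA)'=\kM_{{\rm topo},\bb}(\algA)$, so that $(\kMtop(\algA)'_w)'=\kM_{{\rm topo},\bb}(\algA)$. Now the centeredness hypothesis enters: Lemma \ref{lem-centeredder} asserts precisely that, for centered $\algA$, the closure $\overline T$ of any $T\in\kMtop(\algA)$ is affiliated with $\kM_{{\rm topo},\bb}(\algA)$. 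Feeding this into Proposition \ref{prop-condgw} yields the domain inclusion.

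The bicommutant identity $\kMtop(\algA)=\kMtop(\algA)''_{wc}$ requires essentially no new work: it is already recorded in Remark \ref{rmk-bicomder}, where the $s^*$-closedness of $\kMtop(\algA)$ together with Remark \ref{rmk-wc} gives $\kMtop(\algA)=[(\kMtop(\algA)'_w)'_{|\algA}]^{s^*}=\kMtop(\algA)''_{wc}$. Collecting the three points --- the domain inclusion, the bicommutant identity, and the closedness --- establishes that $\kMtop(\algA)$ is a pre-GW*-algebra which is moreover closed, hence a GW*-algebra by Definition \ref{def-gw}.

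I expect the only genuine obstacle to be bookkeeping rather than substance: one must use each hypothesis in exactly the right place --- ``mult-closed'' solely to secure completeness of the domain (whence $\tilde\algA=\algA$ and closedness), and ``centered'' solely to trigger Lemma \ref{lem-centeredder} and so the affiliation demanded by Proposition \ref{prop-condgw}. The one point to state carefully is the identification $(\kMtop(\algA)'_w)'=\kM_{{\rm topo},\bb}(\algA)$, since Lemma \ref{lem-centeredder} phrases affiliation with respect to $\kM_{{\rm topo},\bb}(\algA)$ whereas Proposition \ref{prop-condgw} asks for affiliation with the bicommutant $(\kMtop(\algA)'_w)'$; it is the combination of Proposition \ref{prop-commutder} and Remark \ref{rmk-bicomder} that reconciles the two.
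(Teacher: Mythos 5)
Your proof is correct and takes essentially the same route as the paper's: Lemma \ref{lem-centeredder} combined with Proposition \ref{prop-condgw} gives the domain condition, Remark \ref{rmk-bicomder} gives the bicommutant identity, and mult-closedness gives completeness of the domain. Your explicit identification $(\kMtop(\algA)'_w)'=\kM_{{\rm topo},\bb}(\algA)$ via Proposition \ref{prop-commutder} and Remark \ref{rmk-bicomder} merely spells out a step the paper leaves implicit.
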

\begin{proof}
Indeed, Lemma \ref{lem-centeredder} with Proposition \ref{prop-condgw} show that $\kMtop(\algA)'_w$ stabilizes $\algA$. Finally, $\kMtop(\algA)$ coincides with its bicommutant due to Remark \ref{rmk-bicomder}.
\end{proof}

Note that an element $T\in\kMtop(\algA)$ decomposes as $T=\sum_i L_i R'_i$ (limit in the $s^*$-topology) with $L_i\in\kM_L(\algA)$ and $R'_i\in\kM_R(\algA)$, so that $T\in\kMtop(\algA)$ if and only if $JTJ\in\kMtop(\algA)$.

\begin{proposition}
Let us suppose that the multiplier topology is given by the seminorms generated by $LR'$, for a countable number of elements $L\in\kM(\algA)$, $R'\in\kM_R(\algA)$ (without $s^*$-limit). Then, $(\algA,\tau_\kM)$ is a topological *-algebra, i.e. the product is separately continuous on $(\algA,\tau_\kM)$ and the involution is continuous on $(\algA,\tau_\kM)$. Moreover, if $(\algA,\tau_\kM)$ is complete, then it is a Fr\'echet algebra.
\end{proposition}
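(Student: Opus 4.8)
The plan is to establish, in turn, metrizability, the two conditions defining a topological $*$-algebra (continuity of the involution and separate continuity of the product), and finally the Fr\'echet-algebra property under the completeness assumption. Metrizability is immediate: by hypothesis $\tau_\kM$ is generated by the countable family of seminorms $\norm x\norm_{T_n}$ with $T_n=L_nR'_n$, $L_n\in\kM_L(\algA)$, $R'_n\in\kM_R(\algA)$, and a locally convex topology with a countable defining family of seminorms is metrizable. Throughout I will use one elementary fact: since $\tau_\kM$ is by definition the graphic topology of $\kMtop(\algA)$, for \emph{every} $S\in\kMtop(\algA)$ the seminorm $\norm x\norm_S=\norm S(x)\norm$ is $\tau_\kM$-continuous, i.e. dominated by a finite maximum of the countably many generating seminorms.

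For the involution, fix $S\in\kMtop(\algA)$ and $x\in\algA$. Writing $x^*=Jx$ and using that $J$ is isometric on $\ehH_\algA$ with $J^2=\text{id}$, I get $\norm S(x^*)\norm=\norm SJx\norm=\norm J(JSJ)x\norm=\norm (JSJ)(x)\norm$, that is $\norm x^*\norm_S=\norm x\norm_{JSJ}$. By the observation recorded just before the statement, $S\in\kMtop(\algA)$ if and only if $JSJ\in\kMtop(\algA)$, so $\norm x\norm_{JSJ}$ is $\tau_\kM$-continuous; hence $x\mapsto x^*$ is $\tau_\kM$-continuous.

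For separate continuity of the product, fix $a\in\algA$; the operators $\lambda_a$ and $\rho_a$ lie in $\kM_L(\algA)$ and $\kM_R(\algA)$ and stabilize $\algA$. The key algebraic inputs are that each of $\kM_L(\algA)$, $\kM_R(\algA)$ is closed under composition and that a left multiplier commutes with a right multiplier on $\algA$ --- the latter read off from Definition \ref{def-module}, e.g. $R'(ax)=aR'(x)$ gives $R'\lambda_a=\lambda_a R'$, and $L(xa)=L(x)a$ gives $L\rho_a=\rho_a L$. Then for a generator $T=LR'$ I rewrite $T\lambda_a=LR'\lambda_a=L\lambda_aR'=(L\lambda_a)R'$ with $L\lambda_a\in\kM_L(\algA)$, and $T\rho_a=L(R'\rho_a)$ with $R'\rho_a\in\kM_R(\algA)$. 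In both cases the result is again a product of a left multiplier by a right multiplier, hence lies in $\kMtop(\algA)$, so $\norm{ax}\norm_T=\norm x\norm_{(L\lambda_a)R'}$ and $\norm{xa}\norm_T=\norm x\norm_{L(R'\rho_a)}$ are $\tau_\kM$-continuous in $x$. This gives continuity of $x\mapsto ax$ and $x\mapsto xa$, so together with the previous step $(\algA,\tau_\kM)$ is a topological $*$-algebra.

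For the last assertion, assume $(\algA,\tau_\kM)$ is complete; being complete, metrizable and locally convex it is a Fr\'echet space, in particular a Baire (barrelled) space. I then invoke the classical result that a separately continuous bilinear map on a product of Fr\'echet spaces is jointly continuous; applied to the multiplication $\algA\times\algA\to\algA$ it promotes the separate continuity above to joint continuity, making $(\algA,\tau_\kM)$ a Fr\'echet $*$-algebra. I expect the only delicate point to be the bookkeeping in the third step, namely checking that composing a generator $LR'$ with $\lambda_a$ or $\rho_a$ yields an honest single product (left multiplier)(right multiplier) inside $\kMtop(\algA)$; this relies on the commutation of $\kM_L(\algA)$ with $\kM_R(\algA)$ and the stability of each factor under composition. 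The final passage from separate to joint continuity is routine once completeness, hence the Baire property, is in hand.
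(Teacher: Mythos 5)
Your proof is correct, and its overall strategy --- reducing everything to manipulation of the seminorms $\norm \fois\norm_T$ --- is the paper's; but the product step is handled by a genuinely different mechanism. The involution step is identical to the paper's: $\norm x^*\norm_T=\norm x\norm_{JTJ}$ together with the remark, stated just before the proposition, that $\kMtop(\algA)$ is stable under $T\mapsto JTJ$. For the product, the paper freezes both variables and writes, for $T=\sum_i L_iR'_i$, the estimate $\norm xy\norm_T\leq \frac12\sum_i\big(\norm\lambda_{L_i(x)}\norm\,\norm y\norm_{R'_i}+\norm\rho_{R'_i(y)}\norm\,\norm x\norm_{L_i}\big)$, exploiting the Hilbert-algebra fact that $L_i(x),R'_i(y)\in\algA$ act as \emph{bounded} operators; this dominates the seminorm of a product by the countable generating seminorms themselves, with constants depending on the frozen variable. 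You instead absorb the frozen variable into the operator: $T\lambda_a=(L\lambda_a)R'$ and $T\rho_a=L(R'\rho_a)$ remain inside $\kMtop(\algA)$, so $\norm ax\norm_T$ and $\norm xa\norm_T$ are \emph{exactly} seminorms of the graphic topology, hence $\tau_\kM$-continuous by definition. The two mechanisms are in fact linked --- since $L\lambda_a=\lambda_{L(a)}$ on $\algA$, your new seminorm is the paper's bound in disguise --- but yours trades the operator-norm estimate for an exact algebraic identity, at the mild price of invoking continuity of all seminorms $\norm \fois\norm_S$ with $S\in\kMtop(\algA)$, not only the countable generating ones (which is legitimate, as $\tau_\kM$ is by definition the graphic topology of $\kMtop(\algA)$). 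Finally, you spell out what the paper leaves implicit: completeness plus a countable family of seminorms yields a Fr\'echet space, and separate continuity of the multiplication then upgrades to joint continuity by the Baire/barrelledness argument, giving the Fr\'echet algebra conclusion.
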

\begin{proof}
Indeed for $x,y\in\algA$ and $T=\sum_i L_iR'_i\in\kMtop(\algA)$  (we can take the sum finite), we have $\norm x^*\norm_T=\norm x\norm_{JTJ}$ and
\begin{equation*}
\norm xy\norm_T\leq \frac12(\norm \lambda_{L_i(x)}\norm\,\norm y\norm_{R'_i} +\norm \rho_{R'_i(y)}\norm\, \norm x\norm_{L_i}).
\end{equation*}
\end{proof}
Note that the multiplier topology is automatically given by the topology associated to operators $LR'$ (without $s^*$-limit) when this latter topology is Fr\'echet (see below Definition \ref{def-graphictop}).

\begin{proposition}
\label{prop-homeo}
Let $\Phi:\algA\to\algB$ be an isomorphism of Hilbert algebras. Then, $\Phi$ is a homeomorphism for the multiplier topologies of $\algA$ and $\algB$.
\end{proposition}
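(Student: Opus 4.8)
The plan is to transport the derived multiplier algebra from $\algA$ to $\algB$ by conjugation with $\Phi$, and then to observe that this transport is an isometry on the seminorms defining the multiplier topologies.

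First I would verify that conjugation by $\Phi$, the map $\Ad_\Phi\colon T\mapsto \Phi\,T\,\Phi^{-1}$, sends $\caL^+(\algA)$ onto $\caL^+(\algB)$ as a $*$-isomorphism: since $\Phi\colon\algA\to\algB$ is a bijective $*$-homomorphism extending to a unitary, $\Phi\,T\,\Phi^{-1}$ maps $\algB$ into $\algB$ and $(\Phi\,T\,\Phi^{-1})^+=\Phi\,T^+\,\Phi^{-1}$ by unitarity. The key algebraic point is that $\Ad_\Phi$ carries $\kM_L(\algA)$ into $\kM_L(\algB)$ and $\kM_R(\algA)$ into $\kM_R(\algB)$: for $L\in\kM_L(\algA)$ and $x,y\in\algB$, the multiplicativity of $\Phi^{-1}$ gives
\begin{equation*}
(\Phi L\Phi^{-1})(xy)=\Phi\big(L(\Phi^{-1}(x)\Phi^{-1}(y))\big)=\Phi\big(L(\Phi^{-1}(x))\,\Phi^{-1}(y)\big)=(\Phi L\Phi^{-1})(x)\,y,
\end{equation*}
so $\Phi L\Phi^{-1}\in\kM_L(\algB)$, and symmetrically for right multipliers. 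Hence $\Ad_\Phi$ maps the algebra generated by $\kM_L(\algA)\oplus\kM_R(\algA)$ bijectively onto the algebra generated by $\kM_L(\algB)\oplus\kM_R(\algB)$.

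Next I would show that $\Ad_\Phi$ is a homeomorphism for the $s^*$-topologies, so that it carries the $s^*$-closures $\kMtop(\algA)$ onto $\kMtop(\algB)$. This is where the unitarity of $\Phi$ is essential: for $T\in\caL^+(\algA)$ and $y\in\algB$,
\begin{equation*}
\norm(\Phi T\Phi^{-1})(y)\norm=\norm T(\Phi^{-1}(y))\norm,\qquad \norm(\Phi T\Phi^{-1})^+(y)\norm=\norm T^+(\Phi^{-1}(y))\norm,
\end{equation*}
so the $s^*$-seminorm of $\Phi T\Phi^{-1}$ at $y$ equals the $s^*$-seminorm of $T$ at $\Phi^{-1}(y)$. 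Since $\Phi^{-1}$ is a bijection $\algB\to\algA$, this proves $\Ad_\Phi$ is an $s^*$-homeomorphism, and in particular it maps the closure $\kMtop(\algA)$ bijectively onto $\kMtop(\algB)$. The homeomorphism statement for $\Phi$ then follows from the same computation: for $S\in\kMtop(\algB)$, write $S=\Phi T\Phi^{-1}$ with $T\in\kMtop(\algA)$, and for $x\in\algA$,
\begin{equation*}
\norm\Phi(x)\norm_S=\norm S(\Phi(x))\norm=\norm\Phi T(x)\norm=\norm T(x)\norm=\norm x\norm_T.
\end{equation*}
Thus $\Phi$ transforms the generating family $\{\norm\cdot\norm_T\}_{T\in\kMtop(\algA)}$ exactly onto $\{\norm\cdot\norm_S\}_{S\in\kMtop(\algB)}$, which shows at once that $\Phi$ and $\Phi^{-1}$ are continuous for the multiplier topologies.

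I expect the only genuine obstacle to be the second step, namely the compatibility of $\Ad_\Phi$ with the $s^*$-closure operation: the identification of the $\kM_L$ and $\kM_R$ images is purely algebraic, and the final seminorm identity is immediate, but one must argue carefully that an $s^*$-homeomorphism between the two generating algebras extends to a bijection of their $s^*$-closures. Once $\Ad_\Phi$ is recognized as an $s^*$-homeomorphism — which rests squarely on the unitarity of $\Phi$ — everything else is a direct computation.
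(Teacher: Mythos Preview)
Your proposal is correct and follows essentially the same approach as the paper: transport $\kMtop(\algA)$ to $\kMtop(\algB)$ by conjugation with $\Phi$, then use unitarity to get the seminorm identity $\norm x\norm_T=\norm\Phi(x)\norm_{\Phi T\Phi^{-1}}$. The paper's proof is terser and simply asserts that $\Phi T\Phi^{-1}\in\kMtop(\algB)$ ``due to the decomposition $T=\sum_i L_iR_i'$ (limit in the $s^*$-topology)''; you have made explicit the point the paper leaves implicit, namely that $\Ad_\Phi$ is an $s^*$-homeomorphism and therefore carries the $s^*$-closure of the generating algebra onto the corresponding $s^*$-closure on the other side.
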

\begin{proof}
We denote by $\tilde\Phi:\kM(\algA)\to\kM(\algB)$ the unbounded extension of $\Phi$ (see Proposition \ref{prop-morph}). For $x\in\algA$ and $T\in\kMtop(\algA)$, we have
\begin{equation*}
\norm x\norm_T=\norm T(x)\norm=\norm \Phi\circ T\circ\Phi^{-1}(\Phi(x))\norm=\norm \Phi(x)\norm_{\tilde\Phi(T)},
\end{equation*}
and it turns out that $\Phi\circ T\circ\Phi^{-1}\in\kMtop(\algA)$ due to the decomposition $T=\sum_iL_i R_i'$ (limit in the $s^*$-topology).
\end{proof}

\begin{proposition}
\label{prop-conttopo}
Let $(L,R)\in\kM(\algA)$. Then $L$ and $R$ are continuous linear maps $\algA\to\algA$ for the multiplier topology.
\end{proposition}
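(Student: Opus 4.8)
The plan is to exploit the fact that the multiplier topology $\tau_\kM$ is, by its very construction, the graphic topology of the derived multiplier algebra $\kMtop(\algA)$, so that its defining seminorms are precisely $\norm x\norm_S=\norm S(x)\norm$ for $S\in\kMtop(\algA)$ and $x\in\algA$. The key preliminary observation is that both $L$ and $R$ already lie in $\kMtop(\algA)$. Since $(L,R)\in\kM(\algA)$, Proposition \ref{prop-module} gives $L\in\kM_L(\algA)$ and $R\in\kM_R(\algA)$; writing $\gone$ for the identity, which belongs to $\kM_L(\algA)\cap\kM_R(\algA)$, we have $L=L\gone$ and $R=\gone R$, so $L$ and $R$ sit inside the algebra generated by $\kM_L(\algA)\oplus\kM_R(\algA)$, hence inside its $s^*$-closure $\kMtop(\algA)$.

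First I would check the continuity of $L$. Let $T\in\kMtop(\algA)$, so that $\norm\cdot\norm_T$ is a generating seminorm of $\tau_\kM$ on the target copy of $\algA$. As $L\colon\algA\to\algA$ and $T\colon\algA\to\algA$ (both lie in $\caL^+(\algA)$), the composite $TL$ is a well-defined operator on $\algA$, and for every $x\in\algA$ one has
\begin{equation*}
\norm L(x)\norm_T=\norm T(L(x))\norm=\norm (TL)(x)\norm=\norm x\norm_{TL}.
\end{equation*}
Because $\kMtop(\algA)$ is an O*-algebra it is stable under composition, so $TL\in\kMtop(\algA)$ and $\norm\cdot\norm_{TL}$ is itself a generating seminorm of $\tau_\kM$ on the source. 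Thus $L$ pulls back each defining seminorm of the target to a single defining seminorm of the source, which is exactly the criterion for continuity of a linear map between locally convex spaces. Replacing $TL$ by $TR$ throughout gives, verbatim, the continuity of $R$.

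There is essentially no hard part here: the whole argument reduces to the two elementary facts that $L,R\in\kMtop(\algA)$ and that $\kMtop(\algA)$ is stable under composition, both immediate from the definition of the derived multiplier algebra as an $s^*$-closed O*-algebra containing $\kM_L(\algA)$ and $\kM_R(\algA)$. The only point worth a moment's care is the bookkeeping identity $\norm L(x)\norm_T=\norm x\norm_{TL}$, which shows that $L$ sends each generating seminorm of $\tau_\kM$ back to a single generating seminorm of $\tau_\kM$; continuity then follows with no estimate, finite family of seminorms, or constant required.
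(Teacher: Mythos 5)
Your proof is correct and follows essentially the paper's own argument: the key identity $\norm L(x)\norm_T=\norm TL(x)\norm=\norm x\norm_{TL}$ (and likewise $\norm R(x)\norm_T=\norm x\norm_{TR}$) reduces continuity to the single fact that $TL,TR\in\kMtop(\algA)$. The only cosmetic difference is how that fact is justified --- you use $L,R\in\kMtop(\algA)$ together with stability of the O*-algebra $\kMtop(\algA)$ under composition, while the paper invokes the $s^*$-limit decomposition $T=\sum_i L_iR_i'$ --- but both rest on the same definitional properties of the derived multiplier algebra.
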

\begin{proof}
Indeed for $T\in\kMtop(\algA)$ and $x\in\algA$, we have
\begin{equation*}
\norm L(x)\norm_T=\norm T L(x)\norm=\norm x\norm_{TL},\qquad \norm R(x)\norm_T=\norm T R(x)\norm=\norm x\norm_{TR},
\end{equation*}
and as before, $TL$ and $TR$ are in $\kMtop(\algA)$.
\end{proof}

\begin{definition}
Let $\algA$ be a Hilbert algebra. We introduce here the {\defin strong* topology} on the multipliers $\kM(\algA)$. Let $\kB_\kM$ be the space of bounded subsets of $\algA$ for the multiplier topology $\tau_\kM$. The seminorms of the strong* topology are
\begin{equation*}
p_{B,S}(L):=\sup_{x\in B}\norm SL(x)\norm\qquad\text{and}\qquad p_{B,S}(R)=\sup_{x\in B}\norm SR(x)\norm=p_{JB,JSJ}(L^+),
\end{equation*}
for $(L,R)\in\kM(\algA)$, $B\in\kB_\kM$ and $S\in\kMtop(\algA)$.
\end{definition}
\begin{proposition}
\label{prop-lchalg}
Endowed with the strong*-topology, $\kM(\algA)$ is a locally convex Hausdorff *-algebra. Moreover, if $(\algA,\tau_\kM)$ is Fr\'echet, then $\kM(\algA)$ is complete. If $\algA$ is Fr\'echet nuclear, $\kM(\algA)$ is complete nuclear.
\end{proposition}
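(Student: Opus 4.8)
The plan is to prove the three assertions in turn, each time reducing to properties of the multiplier topology $\tau_\kM$ already established. Local convexity is built into the definition of the strong* topology, so for the first assertion it remains to verify separation and the algebraic continuities. For the Hausdorff property I would take $T=(L,R)\neq 0$, say with $L\neq 0$, choose $x\in\algA$ with $L(x)\neq 0$, and evaluate the seminorm attached to the singleton (hence $\tau_\kM$-bounded) set $B=\{x\}$ and to $S=\gone\in\kMtop(\algA)$, getting $p_{B,S}(L)=\norm L(x)\norm>0$. Continuity of the involution $T\mapsto T^+=(JRJ,JLJ)$ follows from the displayed identity $p_{B,S}(R)=p_{JB,JSJ}(L^+)$: since $J$ is a $\tau_\kM$-homeomorphism (from $\norm x^*\norm_S=\norm x\norm_{JSJ}$, so $JB\in\kB_\kM$) and $JSJ\in\kMtop(\algA)$ whenever $S\in\kMtop(\algA)$, the family of seminorms is stable under the involution. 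For separate continuity of the product $T_1T_2=(L_1L_2,R_2R_1)$ I would rewrite, with $T_2$ fixed, $p_{B,S}(L_1L_2)=\sup_{x\in B}\norm SL_1(L_2(x))\norm=p_{L_2(B),S}(L_1)$, using that $L_2$ is $\tau_\kM$-continuous (Proposition \ref{prop-conttopo}), so $L_2(B)\in\kB_\kM$; and with $T_1$ fixed, $p_{B,S}(L_1L_2)=p_{B,SL_1}(L_2)$, since $SL_1\in\kMtop(\algA)$ by the argument of Proposition \ref{prop-conttopo}. The same reductions handle the $R$-seminorms, yielding separate continuity and hence a locally convex Hausdorff *-algebra.

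For completeness under the hypothesis that $(\algA,\tau_\kM)$ is Fr\'echet, I would start from a strong*-Cauchy net $T_\alpha=(L_\alpha,R_\alpha)$. Testing against singletons shows $(L_\alpha(x))_\alpha$ and $(R_\alpha(x))_\alpha$ are $\tau_\kM$-Cauchy in $\algA$; by completeness of $(\algA,\tau_\kM)$ I may set $L(x):=\lim_\alpha L_\alpha(x)$ and $R(x):=\lim_\alpha R_\alpha(x)$ in $\algA$. These are linear, and passing to the $\tau_\kM$-limit in $L_\alpha(xy)=\rho_y L_\alpha(x)$ and in $\lambda_x L_\alpha(y)=\rho_y R_\alpha(x)$ — legitimate because $\lambda_x,\rho_y$ are $\tau_\kM$-continuous (Proposition \ref{prop-conttopo}) — gives $L\in\kM_L(\algA)$ and the multiplier relation $xL(y)=R(x)y$. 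The delicate point is to produce the $\caL^+$-structure: I would set $L^+:=JRJ$ and check $\langle L(x),y\rangle=\lim_\alpha\langle L_\alpha(x),y\rangle=\lim_\alpha\langle x,JR_\alpha J(y)\rangle=\langle x,JRJ(y)\rangle$, where the last step uses that $\tau_\kM$ dominates the Hilbert norm (since $\gone\in\kMtop(\algA)$) and that $J$ preserves the norm. This places $\algA\subset\Dom(L^*)$ with $L^*|_\algA=JRJ$ stabilizing $\algA$, so $L,R\in\caL^+(\algA)$ and $(L,R)\in\kM(\algA)$ by Proposition \ref{prop-module}. Finally I would pass the Cauchy estimate $p_{B,S}(L_\alpha-L_\beta)\le\eps$ to the limit $\beta\to\infty$ inside the supremum, again using $\tau_\kM$-to-norm convergence of $S(\cdot)$, to conclude $p_{B,S}(L_\alpha-L)\to 0$ and likewise for $R$, so that $T_\alpha\to T$.

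For the nuclear statement I would identify the strong* topology with a subspace topology. As $S$ ranges over $\kMtop(\algA)$ the seminorms $\norm S(\cdot)\norm=\norm\cdot\norm_S$ generate $\tau_\kM$, so $p_{B,S}(L)=\sup_{x\in B}\norm L(x)\norm_S$ is exactly a seminorm of the topology of uniform convergence on bounded sets on the space $\caL_b(\algA,\algA)$ of $\tau_\kM$-continuous endomorphisms of $\algA$. Thus $T\mapsto(L,R)$ is a topological embedding of $\kM(\algA)$ into the product of two copies of $\caL_b(\algA,\algA)$. When $(\algA,\tau_\kM)$ is nuclear Fr\'echet, $\caL_b(\algA,\algA)\cong\algA'_b\hat\otimes\algA$ is nuclear by the classical theory, hence so is the finite product; since nuclearity passes to arbitrary subspaces, $\kM(\algA)$ is nuclear, and it is complete by the previous part, giving ``complete nuclear''.

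The main obstacle I anticipate is the completeness step, specifically showing that the pointwise limit $(L,R)$ genuinely lies in $\kM(\algA)$ rather than being merely a pair of continuous operators: one must extract the $\caL^+$-condition that the adjoints stabilize $\algA$, and the argument above relies on combining the independently constructed limit $R$ with the relation $L^+=JRJ$ and on $\tau_\kM$ dominating the norm. Once this is secured, the nuclearity step is essentially formal, provided one is content to invoke the standard nuclearity of $\caL_b(E,E)$ for $E$ a nuclear Fr\'echet space.
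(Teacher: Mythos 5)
Your proof is correct and follows essentially the same route as the paper's: both realize the strong* topology as the bounded-convergence topology on the pairs $(L,R)$ inside $\caL(\algA,\tau_\kM)\times\caL(\algA,\tau_\kM)$, verify the algebraic continuities through the seminorm identities $p_{B,S}(LL')=p_{L'(B),S}(L)=p_{B,SL}(L')$ and $p_{B,S}(R)=p_{JB,JSJ}(L^+)$, and deduce completeness and nuclearity from the corresponding standard properties of $\caL_b$ over a (nuclear) Fr\'echet space. The only difference is one of care rather than of strategy: your Cauchy-net argument explicitly checks that the limit pair retains the $\caL^+$-property (via $\langle L(x),y\rangle=\langle x,JRJ(y)\rangle$, so $L^*$ restricted to $\algA$ equals $JRJ$ and stabilizes $\algA$), a point which the paper's closedness argument for $\kM_L(\algA)$ in $\caL(\algA,\tau_\kM)$ leaves implicit.
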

\begin{proof}
First, let us show that $\kM_L(\algA)$ is a closed subspace of $\caL(\algA,\tau_\kM)$ that is a locally convex Hausdorff space. Let $L\in\caL(\algA,\tau_\kM)$ and $L_n\in\kM_L(\algA)$ converging to $L$ in the strong-topology. For any $x,y\in\algA$, we choose a bounded subset $B\in\kB_\kM$ containing $x$ and $xy$, we have
\begin{equation*}
\norm L(xy)-L(x)y\norm\leq \norm L(xy)-L_n(xy)\norm+\norm L_n(x)y-L(x)y\norm
\end{equation*}
and $\norm L_n(x)y-L(x)y\norm\leq \norm\rho_y\norm\,\norm L_n(x)-L(x)\norm$, so that we obtain $L(xy)=L(x)y$ and $L\in\kM_L(\algA)$. We proceed in the same way for $\kM_R(\algA)$. And $\kM(\algA)\simeq \kM_L(\algA)\cap\kM_R(\algA)$ for the topologies described above. Moreover, the product is separately continuous on $\kM_L(\algA)$ and $\kM_R(\algA)$. Indeed, for $L,L'\in\kM_L(\algA)$,
\begin{equation*}
p_{B,SL}(L')=p_{B,S}(LL')=p_{L'(B),S}(L),
\end{equation*}
with $SL\in\kMtop(\algA)$ and $L'(B)\in\kB_\kM$.

If $\algA$ is Fr\'echet, then $\caL(\algA,\tau_\kM)$ is complete (for the bounded convergence) and $\kM(\algA)$ also for the strong*-topology. If $\algA$ is Fr\'echet nuclear, then $\caL(\algA,\tau_\kM)$ is complete nuclear and $\kM(\algA)$ also.
\end{proof}

\subsection{Special cases}
\label{subsec-multcommut}

Let us see first the case of a unital Hilbert algebra $\algA$. From \cite{Dixmier:1981}, we know that the left von Neumann associated to $\algA$ is finite, so $\kM_\bb(\algA)\simeq\algA_\bb$ is finite.
\begin{proposition}
\label{prop-unit}
Let $\algA$ be a unital Hilbert algebra. Then $\kM(\algA)\simeq \algA$.
\end{proposition}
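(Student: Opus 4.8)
The plan is to show that for a unital Hilbert algebra the canonical embedding $\Xi:\algA\to\kM(\algA)$ from Lemma~\ref{lem-ximult} is already surjective, so that the $*$-ideal $\algA$ coincides with all of $\kM(\algA)$. The key structural fact I would exploit is the presence of a two-sided unit $e\in\algA$: by definition of Hilbert algebra we have $\lambda_e=\rho_e=\mathrm{id}$ on $\algA$, and moreover every $x\in\algA$ satisfies $x=ex=xe$. This unit is exactly what converts an abstract multiplier into an explicit element of $\algA$.

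First I would take an arbitrary multiplier $T=(L,R)\in\kM(\algA)$ and set $a:=L(e)\in\algA$. The defining relation $xL(y)=R(x)y$ for all $x,y\in\algA$ is the engine. Evaluating with $y=e$ gives $xL(e)=R(x)e$, i.e. $xa=R(x)$, so $R=\rho_a$ on $\algA$. Symmetrically, using Proposition~\ref{prop-module} (which identifies $L\in\kM_L(\algA)$ with $L(xy)=L(x)y$), the left-module property applied as $L(y)=L(ey)=L(e)y=ay$ shows $L=\lambda_a$. Hence $T=(\lambda_a,\rho_a)=\Xi_a$, which is precisely the image of $a$ under $\Xi$. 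This shows $\kM(\algA)\subset\Xi(\algA)$.

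The reverse inclusion $\Xi(\algA)\subset\kM(\algA)$ is exactly the content of Lemma~\ref{lem-ximult}, which already tells us $\Xi$ is an injective $*$-algebra homomorphism into $\kM(\algA)$. Combining the two inclusions gives that $\Xi:\algA\to\kM(\algA)$ is a bijective $*$-algebra homomorphism, i.e. an isomorphism, establishing $\kM(\algA)\simeq\algA$.

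I expect the only subtle point to be the passage $L=\lambda_a$: one must make sure that $L(e)=a$ forces $L(y)=ay$ \emph{for all} $y\in\algA$ and not merely on a dense subset, and that the operators in question are genuinely the unbounded maps in $\caL^+(\algA)$ rather than their closures. Here the unital structure is decisive, since $y=ey$ is an algebraic identity valid on all of $\algA$, so no density or continuity argument is needed and the equalities hold as operators on the domain $\algA$ directly. The remark preceding the statement, that the unital case makes $\kM_\bb(\algA)\simeq\algA_\bb$ finite, is consistent with this and can be cited for motivation, but the unbounded identification above is self-contained once the unit is used to pin down $L$ and $R$.
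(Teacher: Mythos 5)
Your proof is correct and follows essentially the same route as the paper: the paper's proof also evaluates the multiplier at the unit, using $L(x)=L(\gone)x$ and $L(\gone)=R(\gone)$ to show that $(L,R)\mapsto L(\gone)$ inverts $\Xi$. Your version is if anything slightly more explicit, since you verify $R=\rho_a$ directly from the defining relation $xL(y)=R(x)y$ with $y=e$ rather than leaving it implicit.
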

\begin{proof}
Let indeed $T=(L,R)\in\kM(\algA)$. Then for any $x\in\algA$, $L(x)=L(\gone)x\in\algA$ and $L(\gone)\in\algA$. Moreover, $L(\gone)=R(\gone)$. Therefore, $(L,R)\in\kM(\algA)\mapsto L(\gone)=R(\gone)\in\algA$ is an isomorphism.
\end{proof}
We also see that $\kM(\algA)$ is not closed, so not a GW*-algebra, unless $\algA=\algA_\bb=\ehH_\algA$.
\medskip

The second special case we consider here is commutative Hilbert algebras.
\begin{proposition}
\label{prop-commut1}
Let $\algA$ be a commutative Hilbert algebra. Then $\kM_\bb(\algA)$ is a commutative von Neumann algebra.
\end{proposition}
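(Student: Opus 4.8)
The plan is to reduce the statement to the commutativity of the left von Neumann algebra $\lambda(\algA)$, exploiting the identifications established in the previous section. Recall from Theorem \ref{thm-caract} and the consequence recorded after its proof that the projection $(L,R)\mapsto L$ sends $\kM_\bb(\algA)$ isomorphically onto $\kM_{L,\bb}(\algA)=\lambda(\algA)$: this map is an isometric $*$-algebra isomorphism, since $R=JL^*J$ is determined by $L$ (Proposition \ref{prop-moduleb}), the product on $\kM_\bb(\algA)$ is $T_1T_2=(L_1L_2,R_2R_1)$, and the adjoint is $(L,R)^*=(L^*,R^*)$. As commutativity of a von Neumann algebra is preserved under $*$-isomorphism, it suffices to prove that $\lambda(\algA)$ is commutative.

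For this I would first record that commutativity of the product on $\algA$ forces $\lambda_x=\rho_x$ for every $x\in\algA$: indeed $\lambda_x(y)=xy=yx=\rho_x(y)$ for all $y\in\algA$, and since $\lambda_x,\rho_x$ are bounded operators agreeing on the dense subspace $\algA$, they coincide on $\ehH_\algA$. Consequently $\{\lambda_x:x\in\algA\}=\{\rho_x:x\in\algA\}$ as sets of operators, so their weak closures agree and $\lambda(\algA)=\rho(\algA)$.

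Now I would invoke the commutant relation $\lambda(\algA)'=\rho(\algA)$ recorded after Definition \ref{def-hilbertalg}. Combining it with the previous equality gives $\lambda(\algA)=\rho(\algA)=\lambda(\algA)'$, hence $\lambda(\algA)\subset\lambda(\algA)'$; this inclusion says precisely that any two elements of $\lambda(\algA)$ commute, i.e. $\lambda(\algA)$ is abelian. Transporting this through the isomorphism of the first paragraph yields that $\kM_\bb(\algA)$ is commutative.

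The argument is short, and the only points requiring a little care are the passage from commutativity on the dense subalgebra $\algA$ to the equality $\lambda(\algA)=\rho(\algA)$ of weak closures, and the bookkeeping of the isomorphism $\kM_\bb(\algA)\simeq\lambda(\algA)$; both are immediate from the results already proved, so I do not expect a genuine obstacle. As an alternative that bypasses the commutant theorem, one can argue at the level of generators: by Lemma \ref{lem-ximultb} the map $\Xi$ is an algebra homomorphism, so $\Xi_x\Xi_y=\Xi_{xy}=\Xi_{yx}=\Xi_y\Xi_x$ once one checks that commutativity passes to the fulfillment $\algA_\bb$; then $\kM_\bb(\algA)=\{\Xi_x:x\in\algA_\bb\}''$ is commutative because the bicommutant of a commutative $*$-closed family of operators is again commutative.
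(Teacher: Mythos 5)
Your proof is correct, but it runs along a different track than the paper's. The paper's own argument stays inside the multiplier machinery of Section \ref{sec-mult}: it first shows $\lambda_x=\rho_x$ for $x\in\algA$, then extends this to bounded elements via $\rho_x(y)=\lambda_y(x)=\rho_y(x)=\lambda_x(y)$, concluding $\algA_\bb=Z_\algA$; Theorem \ref{thm-caract} together with Lemma \ref{lem-densder} then identifies $\kM_\bb(\algA)$ with the multiplier center $\caZ_\bb(\algA)=\kM_{L,\bb}(\algA)\cap\kM_{R,\bb}(\algA)$, which is commutative since left and right bounded multipliers commute with each other. You instead identify $\kM_\bb(\algA)\simeq\lambda(\algA)$ through the projection $(L,R)\mapsto L$ (legitimate by Proposition \ref{prop-moduleb} and the consequence of Theorem \ref{thm-caract}), observe that $\lambda_x=\rho_x$ on $\algA$ alone already forces $\lambda(\algA)=\rho(\algA)$, and then invoke Dixmier's commutation theorem $\lambda(\algA)'=\rho(\algA)$ to get $\lambda(\algA)=\lambda(\algA)'$. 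Your route is shorter, never needs to pass commutativity to the fulfillment $\algA_\bb$, and yields the sharper conclusion that $\kM_\bb(\algA)$ is \emph{maximal} abelian; its cost is that it leans on the commutation theorem, a deep imported result (though the paper does record it after Definition \ref{def-hilbertalg}). The paper's route is more self-contained relative to Section \ref{sec-mult} and produces the identity $\kM_\bb(\algA)=\caZ_\bb(\algA)$, which is the form actually reused later. Your alternative closing argument (bicommutant of a commuting $*$-closed family is commutative) is the one closest in spirit to the paper, but note that the step you flag --- commutativity passing to $\algA_\bb$ --- is precisely the extension the paper carries out explicitly, so in that variant you would need to include it.
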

\begin{proof}
First, for $x,y\in\algA$ we have $\lambda_x(y)=\rho_x(y)$ and this extends by continuity to all $y\in\ehH_\algA$. For $x\in\algA_\bb$ and $y\in\algA$, $\rho_x(y)=\lambda_y(x)=\rho_y(x)=\lambda_x(y)$ and this extends also to all $y\in\ehH_\algA$. So $\algA_\bb=Z_\algA$ and $\kM_\bb(\algA)=\caZ_\bb(\algA)$ due to Theorem \ref{thm-caract} and Lemma \ref{lem-densder}.
\end{proof}

\begin{proposition}
Let $\algA$ be a commutative Hilbert algebra. Then $\kM(\algA)$ is a commutative O*-algebra and $\forall T\in\kM(\algA)$, $T^*=\overline{T^+}$.
\end{proposition}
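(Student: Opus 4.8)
The plan is to reduce the whole statement to a single observation: for a commutative $\algA$ the two components of any multiplier must coincide. So first I would prove the key fact that if $(L,R)\in\kM(\algA)$ then $L=R$. To see this, fix $x,y\in\algA$ and evaluate $L(xy)$ in two ways. Since $L\in\kM_L(\algA)$ one has $L(xy)=L(x)y$. On the other hand, using commutativity of $\algA$ together with the left-module property of $L$ and the defining relation of Definition \ref{def-mult}, $L(xy)=L(yx)=L(y)x=xL(y)=R(x)y$. Comparing the two expressions gives $(L(x)-R(x))y=0$ for every $y\in\algA$; since $L(x)-R(x)\in\algA\subset\algA_\bb$, Corollary \ref{cor-5} forces $L(x)=R(x)$, whence $L=R$.

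With this in hand the commutativity of the O*-algebra is immediate. Given $T_i=(L_i,R_i)\in\kM(\algA)$, the key fact yields $R_i=L_i$, and the product $T_1T_2=(L_1L_2,R_2R_1)=(L_1L_2,L_2L_1)$ is again a multiplier because $\kM(\algA)$ is an O*-algebra. Applying the key fact to $T_1T_2$ gives the equality of its two components, that is $L_1L_2=L_2L_1$; therefore $T_1T_2=T_2T_1$ and $\kM(\algA)$ is commutative.

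For the identity $T^*=\overline{T^+}$ I would exploit that $\kM_\bb(\algA)$ is abelian in the commutative case (Proposition \ref{prop-commut1}). By Lemma \ref{lem-affilmult} the closure $\overline T$ is affiliated with the von Neumann algebra $\kM_\bb(\algA)$, and a closed densely defined operator affiliated with an abelian von Neumann algebra is normal. Writing $N:=\overline T$, normality gives $\Dom(N)=\Dom(N^*)$ together with $\norm N\xi\norm=\norm N^*\xi\norm$ for all $\xi\in\Dom(N)$, so the graph norms of $N$ and of $N^*=T^*$ coincide on their common domain. Consequently a subspace is a core for $N$ if and only if it is a core for $N^*$. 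Now $\algA\oplus\algA$ is, by construction of the closure, a core for $N=\overline T$; hence it is also a core for $N^*=T^*$. Since $T^+$ is precisely the restriction of $T^*$ to $\algA\oplus\algA$, taking closures yields $\overline{T^+}=T^*$.

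The algebraic parts (the two-way computation of $L(xy)$ and the formal manipulation of products) are routine; the genuinely delicate point is the second statement, where one must pass from the purely algebraic commutativity of $\kM(\algA)$ to the analytic \emph{normality} of the unbounded operator $\overline T$. The cleanest route is through affiliation with the abelian von Neumann algebra $\kM_\bb(\algA)$ (Lemma \ref{lem-affilmult} and Proposition \ref{prop-commut1}) and the standard spectral-theoretic fact that a normal operator $N$ and its adjoint $N^*$ have identical graph norms; this is exactly what guarantees that $\algA\oplus\algA$, a core for $\overline T$, is simultaneously a core for $T^*$.
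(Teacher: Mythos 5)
Your proposal is correct, and it differs from the paper's proof in both halves. For commutativity, the paper argues operator-theoretically: it invokes Lemma \ref{lem-affilmult} (affiliation of $\overline T$ with $\kM_\bb(\algA)$) together with Proposition \ref{prop-commut1}, approximates $\overline S,\overline T$ in the $s^*$-topology by elements of the commutative algebra $\kM_\bb(\algA)$, and passes to limits — a step the paper leaves rather loose. Your route is purely algebraic: the key fact that $L=R$ for every $(L,R)\in\kM(\algA)$, obtained from commutativity of $\algA$, the left-multiplier property, the defining relation $xL(y)=R(x)y$, and Corollary \ref{cor-5}, is verified correctly, and it immediately forces $L_1L_2=L_2L_1$ by applying the same fact to the product multiplier. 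This is more elementary than the paper's argument and yields a structural statement the paper never makes explicit (the two components of any multiplier of a commutative Hilbert algebra coincide). For the identity $T^*=\overline{T^+}$, both proofs rest on the same two ingredients — Lemma \ref{lem-affilmult} and the commutativity of $\kM_\bb(\algA)$ — but package them differently: the paper first records $\norm T(x)\norm=\norm T^+(x)\norm$ on $\algA\oplus\algA$ (hence $\Dom(\overline T)=\Dom(\overline{T^+})$) and then runs an explicit polar-decomposition computation $\overline T=U_T|\overline T|$, with $U_T$ unitary in $\kM_\bb(\algA)$, to get $\Dom(T^*)\subset\Dom(\overline T)$; you instead quote the standard fact that a closed densely defined operator affiliated with an abelian von Neumann algebra is normal, and then transfer the core $\algA\oplus\algA$ from $\overline T$ to $T^*$ via the equality of graph norms. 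Your core-transfer steps are all sound ($\algA\oplus\algA$ is a core for $\overline T$ by construction of the closure, and $T^+$ is by definition the restriction of $T^*$ to $\algA\oplus\algA$). In effect the paper's polar-decomposition computation is a self-contained proof of exactly the half of the normality fact you cite, so your version buys brevity and conceptual clarity at the cost of importing a nontrivial (but genuinely standard, e.g. Kadison--Ringrose) result from the theory of operators affiliated with abelian von Neumann algebras.
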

\begin{proof}
From Lemma \ref{lem-affilmult}, we have that $\overline S,\overline T$ are affiliated with $\kM_\bb(\algA)$ for any $S,T\in\kM(\algA)$. It means that they can be approximated in the $s^*$-topology by sequences $S_n$ and $T_m$ of $\kM_\bb(\algA)$. Since $S_n$ and $T_m$ commute by Proposition \ref{prop-commut1}, we obtain by taking the limits that $S$ and $T$ commute.

The next part is standard and can be found in \cite{Antoine:2002}, but we indicate it here for self-containedness. For $T\in\kM(\algA)$ and $x\in\algA$, we have $\norm T(x)\norm=\norm T^+(x)\norm$, so $\Dom(\overline T)=\Dom(\overline{T^+})$. Let now $x\in\Dom(T^*)$. We consider the polar decomposition $\overline T=U_T |\overline T|$ with $U_T$ a unitary element of $\kM_\bb(\algA)$ (since $\overline T$ is affiliated with). We have for any $y\in\algA$,
\begin{equation*}
\langle y,U_TT^*x\rangle=\langle \overline T U_T^*y,x\rangle=\langle U_T^*\overline Ty,x\rangle=\langle |\overline T|y,x\rangle
\end{equation*}
since $U_T\in\kM_\bb(\algA)\subset\kM_\bb(\algA)'$. Therefore, $x\in\Dom(|\overline T|)=\Dom(\overline T)=\Dom(\overline{T^+})$.
\end{proof}
$\kM(\algA)$ is then called an essentially selfadjoint O*-algebra.

\begin{example}
Let us consider the Schwartz function $\algA=\caS(\gR^{n})$ with commutative pointwise multiplication, complex conjugation and scalar product of $L^2(\gR^{n})$. $\algA$ is a commutative Hilbert algebra and its fulfillment is $\algA_\bb= L^2(\gR^{n})\cap L^\infty(\gR^{n})$. We have obviously $\kM_\bb(\algA)\simeq L^\infty(\gR^{n})$ and $\kM(\algA)\simeq \{T\in\caS'(\gR^{n}),\, \forall f\in\caS(\gR^{2n}),\, Tf=fT\in\caS(\gR^{2n})\}$: the tempered multipliers of $\algA$.
\end{example}

%
%

\section{Multipliers in non-formal deformation quantization}

\subsection{Hilbert deformation quantization}
\label{subsec-hdq}

Owing to the theory of multipliers and to the examples of non-formal deformation quantization we know (see next sections), we introduce the following definition of Hilbert deformation quantization.
\begin{definition}
\label{def-hdq}
Let $M$ be a smooth manifold, and for any $\theta\in\gR$, $\algA_\theta$ be a subspace of complex measurable functions on $M$. Let $\star_\theta$ be an associative product on $\algA_\theta$  for any $\theta\neq 0$ ($\star_0$ be the pointwise product on $\algA_0$), and $\langle-,-\rangle$ be a fixed scalar product.
\begin{itemize}
\item The family $\algA=(\algA_\theta)$ is called a {\defin Hilbert deformation quantization} (HDQ) of $M$ if for any $\theta\in\gR$, $(\algA_\theta,\star_\theta)$ is a Hilbert algebra for the involution given by complex conjugation and for the scalar product $\langle-,-\rangle$, and if $\algA_\theta$ contains the smooth functions with compact support $\caD(M)$ as a dense subset.
\item A  Hilbert deformation quantization $\algA$, with constant fibers $\algA_\theta=\algA$, is called {\defin continuous} if for any $f_1,f_2\in\algA$, the map $\theta\mapsto \norm f_1\star_\theta f_2\norm$ is continuous on $\gR$, for the norm associated to the scalar product.
\item Let $\algA$ and $\algB$ be two Hilbert deformation quantizations of the smooth manifolds $M$ and $N$. An {\defin intertwiner} of $\algA$ and $\algB$ is a family of isomorphisms $U_\theta:\algA_\theta\to\algB_\theta$ of Hilbert algebras (see Definition \ref{def-isomhilb}).
\item A {\defin representation} (also called a quantization map) of a HDQ $\algA$ is a family $\Omega=(\Omega_\theta)$ of isometric *-morphisms $\Omega_\theta:\algA_\theta\to\caL^2(\ehH_\theta)$ with dense range, for $(\ehH_\theta)$ a family of Hilbert spaces.
\end{itemize}
\end{definition}

To any Hilbert deformation quantization (HDQ) $\algA=(\algA_\theta)$, we can associate directly the family of von Neumann algebras given by the bounded multipliers $\kM_\bb(\algA):=(\kM_\bb(\algA_\theta))$ and the family of pre-GW*-algebras given by the unbounded multipliers $\kM(\algA):=(\kM(\algA_\theta))$. Then, we can easily prove the following result.
\begin{proposition}
\label{prop-hdq}
\begin{itemize}
\item If $\algA$ is a $G$-invariant HDQ, then $\kM_\bb(\algA)$ and $\kM(\algA)$ are also stabilized by the natural action of $G$.
\item Let $U:\algA\to\algB$ be an intertwiner between two HDQ. Then, there exist $\tilde T:\kM_\bb(\algA)\to\kM_\bb(\algB)$ a spatial isomorphism of von Neumann algebras and $\tilde T:\kM(\algA)\to\kM(\algB)$ a spatial isomorphism of pre-GW*-algebras.
\end{itemize}
\end{proposition}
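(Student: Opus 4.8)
The plan is to reduce both statements to the single result already established in Proposition~\ref{prop-morph}, applied fiberwise in the deformation parameter $\theta$, and then to assemble the resulting maps into families. I would begin with the second bullet, which is the most immediate. By Definition~\ref{def-hdq}, an intertwiner $U\colon\algA\to\algB$ is precisely a family $U=(U_\theta)$ of Hilbert algebra isomorphisms $U_\theta\colon\algA_\theta\to\algB_\theta$. For each fixed $\theta$, Proposition~\ref{prop-morph} produces the bounded extension $\tilde U_\theta\colon\kM_\bb(\algA_\theta)\to\kM_\bb(\algB_\theta)$, which is a spatial isomorphism of von Neumann algebras, and the unbounded extension $\tilde U_\theta\colon\kM(\algA_\theta)\to\kM(\algB_\theta)$, which is a spatial isomorphism of pre-GW*-algebras. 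Setting $\tilde T:=(\tilde U_\theta)$ then furnishes the two families of spatial isomorphisms asserted in the statement, and nothing further is required.

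For the first bullet I would first fix the meaning of $G$-invariance: it says that $G$ acts on each fiber $\algA_\theta$ by Hilbert algebra automorphisms $\alpha_g^\theta\in\Aut(\algA_\theta)$, the action preserving the product $\star_\theta$, the complex conjugation, and the scalar product. Since an automorphism is in particular an isomorphism $\algA_\theta\to\algA_\theta$, Proposition~\ref{prop-morph} again applies and yields induced maps $\tilde\alpha_g^\theta$ on both $\kM_\bb(\algA_\theta)$ and $\kM(\algA_\theta)$, given explicitly on a multiplier $T=(L,R)$ by $\tilde\alpha_g^\theta(T)=\big(\alpha_g^\theta\circ L\circ(\alpha_g^\theta)^{-1},\,\alpha_g^\theta\circ R\circ(\alpha_g^\theta)^{-1}\big)$. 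This is exactly the natural action of $G$, and the content of the statement is that these maps send $\kM_\bb(\algA)$ and $\kM(\algA)$ to themselves, which is guaranteed by the fact that $\tilde\alpha_g^\theta(T)$ is again a multiplier.

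The one point deserving verification beyond the direct invocation of Proposition~\ref{prop-morph} is that $g\mapsto\tilde\alpha_g^\theta$ is genuinely a group action rather than merely a family of isomorphisms, i.e.\ that it is a homomorphism. This follows at once from the functorial behaviour of the extension: reading off the defining formula $\tilde\Phi(L,R)=(\Phi L\Phi^{-1},\Phi R\Phi^{-1})$, one sees directly that $\widetilde{\Phi\circ\Psi}=\tilde\Phi\circ\tilde\Psi$ and $\widetilde{\mathrm{id}}=\mathrm{id}$, so composition of Hilbert algebra automorphisms is carried to composition of the induced multiplier automorphisms, giving $\tilde\alpha_{gh}^\theta=\tilde\alpha_g^\theta\circ\tilde\alpha_h^\theta$. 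I do not expect a genuine obstacle here; the only care needed is bookkeeping, namely confirming that $G$-invariance supplies fiberwise Hilbert algebra automorphisms and checking this composition identity so that the induced maps form a bona fide action on the multiplier algebras. This is why the result can indeed be proved easily once Proposition~\ref{prop-morph} is in hand.
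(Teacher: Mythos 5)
Your proof is correct and follows essentially the same route as the paper: both bullets are reduced to Proposition~\ref{prop-morph} applied fiberwise in $\theta$, with the $G$-action on multipliers given by conjugation $T\mapsto g^*\circ T\circ(g^{-1})^*$, which lands back in $\kM_\bb(\algA_\theta)$ and $\kM(\algA_\theta)$ precisely because $G$-invariance makes each $g^*$ a Hilbert algebra automorphism. Your extra check that the extension is functorial (so that the induced maps form a genuine group action) is a harmless refinement the paper leaves implicit.
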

\begin{proof}
First, the action of $G$ on the multipliers is $g^*T:=g^*\circ T\circ (g^{-1})^*$, for $g\in G$ and $T\in\kM_\bb(\algA)$ or $T\in\kM(\algA)$. It turns out that $g^*$ is an automorphism of any Hilbert algebra $\algA_\theta$ due to the $G$-invariance, so we get the first result. The second result is just a translation of Proposition \ref{prop-morph} in the framework of HDQ.
\end{proof}

\begin{proposition}
Any representation $\Omega$ of a HDQ $\algA$, $\Omega:\algA\to\caL^2(\ehH)$, extends as an isomorphism of Hilbert algebras $\algA_\bb\simeq\caL^2(\ehH)$ and then as a von Neumann isomorphism $\tilde\Omega:\kM_\bb(\algA)\to\caB(\ehH)$.
\end{proposition}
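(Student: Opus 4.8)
The plan is to manufacture the unitary that implements the isomorphism, verify that it respects the Hilbert-algebra structures on the fulfillments, and then transport it to multipliers using the functoriality of Proposition \ref{prop-morph}. Throughout I fix $\theta$ and write $\Omega,\algA,\ehH$ for a single fibre $\Omega_\theta,\algA_\theta,\ehH_\theta$. First I would observe that, $\Omega$ being an isometric $*$-morphism with dense range into the Hilbert space $\caL^2(\ehH)=\ehH_{\caL^2(\ehH)}$, it extends by continuity to an isometry $\Omega:\ehH_\algA\to\caL^2(\ehH)$; since an isometry has closed range and this range contains the dense subset $\Omega(\algA)$, the extension is a \emph{unitary}. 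I would also record that $\caL^2(\ehH)$ is a \emph{full} Hilbert algebra: every Hilbert--Schmidt operator is compact, hence bounded, hence acts by composition as a bounded left multiplier, so that $\caL^2(\ehH)_\bb=\caL^2(\ehH)$.

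The key step is to show that this unitary carries the fulfillment onto $\caL^2(\ehH)$. The inclusion $\Omega(\algA_\bb)\subseteq\caL^2(\ehH)=\caL^2(\ehH)_\bb$ is automatic from fullness, so the content is surjectivity onto the bounded elements. For this I would first establish the intertwining relations $\Omega\rho_x=\rho_{\Omega(x)}\Omega$ (and likewise $\Omega\lambda_x=\lambda_{\Omega(x)}\Omega$) for $x\in\algA$: each holds on the dense subspace $\algA$ by multiplicativity of $\Omega$, namely $\Omega(\rho_x y)=\Omega(yx)=\Omega(y)\Omega(x)=\rho_{\Omega(x)}\Omega(y)$, and extends to all of $\ehH_\algA$ because every operator in sight is bounded (the regular representations of $\algA$ by the axioms of Definition \ref{def-hilbertalg} and the remark following it, those of $\caL^2(\ehH)$ by fullness, and $\Omega$ itself). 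Now given $T\in\caL^2(\ehH)$ with preimage $z:=\Omega^{-1}(T)\in\ehH_\algA$, the operator $\lambda_z:=\Omega^{-1}\lambda_T\Omega$ is bounded, and for every $y\in\algA$
\begin{equation*}
\lambda_z(y)=\Omega^{-1}\big(T\,\Omega(y)\big)=\Omega^{-1}\big(\rho_{\Omega(y)}(T)\big)=\Omega^{-1}\big(\rho_{\Omega(y)}\Omega(z)\big)=\rho_y(z),
\end{equation*}
using the $\rho$-intertwiner in the last equality. By the very definition of a bounded element this exhibits $z\in\algA_\bb$, whence $\Omega(\algA_\bb)=\caL^2(\ehH)$.

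It then remains to see that the restriction $\Omega|_{\algA_\bb}:\algA_\bb\to\caL^2(\ehH)$ is an isomorphism of Hilbert algebras in the sense of Definition \ref{def-isomhilb}. Multiplicativity on $\algA_\bb$ follows from $\Omega(xy)=\Omega(\rho_y x)=\rho_{\Omega(y)}\Omega(x)=\Omega(x)\Omega(y)$ for $x\in\algA_\bb$ and $y\in\algA$; fixing $x$, this is the identity $\Omega\lambda_x=\lambda_{\Omega(x)}\Omega$ of bounded operators on a dense set, so it extends and applies to a second argument $x'\in\algA_\bb$. Preservation of the involution follows from the continuity of the involutions on both completions together with the continuity of $\Omega$. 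Since $\Omega|_{\algA_\bb}$ is moreover bijective and extends to the unitary $\Omega:\ehH_\algA\to\caL^2(\ehH)$, it is an isomorphism of Hilbert algebras $\algA_\bb\simeq\caL^2(\ehH)$.

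Finally I would invoke functoriality. Applying Proposition \ref{prop-morph} to the isomorphism $\algA_\bb\simeq\caL^2(\ehH)$ produces a spatial isomorphism of von Neumann algebras $\tilde\Omega:\kM_\bb(\algA_\bb)\to\kM_\bb(\caL^2(\ehH))$. Because bounded multipliers depend only on the fulfillment (the identity $\kM_\bb(\algB)=\kM_\bb(\algA)$ for a dense Hilbert subalgebra, noted after Lemma \ref{lem-bounded}), one has $\kM_\bb(\algA)=\kM_\bb(\algA_\bb)$; and by Theorem \ref{thm-caract} and the identification following it, $\kM_\bb(\caL^2(\ehH))\simeq\caB(\ehH)$. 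Composing these yields the desired von Neumann isomorphism $\tilde\Omega:\kM_\bb(\algA)\to\caB(\ehH)$. The main obstacle is precisely the surjectivity onto the bounded elements in the second paragraph: that the unitary extension carries $\algA_\bb$ onto \emph{all} of $\caL^2(\ehH)$ and not merely into it, which is exactly where the explicit bounded witness $\Omega^{-1}\lambda_T\Omega$ built from the regular-representation intertwiners is required. Everything afterward is an assembly of results already established in the paper.
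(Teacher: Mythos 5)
Your proof is correct, but its emphasis is essentially the mirror image of the paper's. The paper disposes of the Hilbert-algebra-isomorphism part in one line (it treats $\Omega$ as an intertwiner and invokes Proposition \ref{prop-hdq}, hence Proposition \ref{prop-morph}), and then devotes nearly all of its effort to the identification $\kM_\bb(\caL^2(\ehH))\simeq\caB(\ehH)$: it shows that $\Phi:T\mapsto(T\circ\fois,\fois\circ T)$ is an isometric *-homomorphism and proves surjectivity by expanding an arbitrary left bounded multiplier in the matrix units $\varphi_{kl}$ attached to a Hilbert basis of $\ehH$. You do the opposite: you cite that identification from the remark following Theorem \ref{thm-caract} (where the paper does assert it), and instead spend your effort on the step the paper glosses over, namely that the unitary extension of $\Omega$ carries $\algA_\bb$ \emph{onto} $\caL^2(\ehH)$. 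Your argument there --- the intertwining relations $\Omega\rho_x=\rho_{\Omega(x)}\Omega$ together with the bounded witness $\Omega^{-1}\lambda_T\Omega$ certifying that $\Omega^{-1}(T)\in\algA_\bb$ --- is sound, and it is a genuine addition: it makes explicit why an isomorphism of Hilbert algebras identifies fulfillments, which the paper leaves implicit. Two smaller remarks. First, your justification of fullness of $\caL^2(\ehH)$ only argues the trivial inclusion $\caL^2(\ehH)\subseteq\caL^2(\ehH)_\bb$; the relevant point is that $\caL^2(\ehH)$ is already complete, so it coincides with its own Hilbert completion and every bounded element automatically lies in it. Second, your route is only as self-contained as the remark you cite: if one insists on proving $\kM_\bb(\caL^2(\ehH))\simeq\caB(\ehH)$ rather than quoting it (as the author evidently chose to do), the matrix-unit computation or some substitute still has to appear somewhere.
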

\begin{proof}
This uses Proposition \ref{prop-hdq} and the standard fact that $\kM_\bb(\caL^2(\ehH))\simeq \caB(\ehH)$ for any Hilbert space $\ehH$. We recall the proof. Consider the map $\Phi: T\in\caB(\ehH)\mapsto (T\circ\fois, \fois\circ T)\in\kM_\bb(\caL^2(\ehH))$. It is an algebra homomorphism compatible with the involutions. But $\norm T\circ\fois\norm_{\caB(\caL^2(\ehH))}=\norm T\norm_{\caB(\ehH)}=\norm \fois\circ T\norm_{\caB(\caL^2(\ehH))}$ so that $\Phi$ is isometric.

Let $(e_k)$ be a Hilbert basis of $\ehH$. Then, due to Parseval theorem, $(\varphi_{kl})$ defined by $\varphi_{kl}(e_m):=\delta_{lm}e_k$, is a Hilbert basis of $\caL^2(\ehH)$: any $S\in\caL^2(\ehH)$ decomposes as $S=\sum_{k,l}S_{kl}\varphi_{kl}$ with $\norm S\norm_2^2=\sum_{k,l}|S_{kl}|^2<\infty$. Any left multiplier $L\in\kM_{L,\bb}(\caL^2(\ehH))$ writes $L(\varphi_{kl})=\sum_{m,n}L_{mnkl}\varphi_{mn}$. Left multiplier condition as well as the identity $\varphi_{kl}\varphi_{mn}=\delta_{lm}\varphi_{kn}$ imply that $L_{mnkl}=L_{mk}\delta_{ln}$. By using the isometry of $\Phi$, we deduce that $L$ coincides with the image of $\Phi$ of the operator $e_k\mapsto \sum_m L_{mk}e_m$ in $\caB(\ehH)$, and $\Phi$ is surjective.
\end{proof}

\begin{proposition}
\label{prop-contfield}
\begin{itemize}
\item Let $\algA$ be a continuous HDQ. Then, the constant family $\algA$ is a lower semicontinuous family of pre-C*-algebras. But in general, its completion doesn't coincide with $\kM_\bb(\algA)$.
\item Let $U:\algA\to\algB$ be an intertwiner between two HDQ and suppose that $\kM_\bb(\algA)$ is a continuous field of C*-algebras. Then, $\kM_\bb(\algB)$ is also a continuous field of C*-algebras.
\end{itemize}
\end{proposition}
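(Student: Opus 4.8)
The plan for the first item is to make the pre-C*-norm on each fibre explicit and exhibit it as a supremum of continuous functions. For fixed $\theta$, Lemma~\ref{lem-ximult} embeds $\algA$ as a dense $*$-subalgebra of $\kM_\bb(\algA_\theta)$ through $\Xi$, and the natural C*-norm of the fibre is the operator norm $\norm f\norm_\theta:=\norm\lambda_f^\theta\norm$ of left $\star_\theta$-multiplication. Because the scalar product $\langle-,-\rangle$ is fixed once and for all, the Hilbert completion $\ehH$ and the set $\{g\in\algA,\ \norm g\norm=1\}$ do not depend on $\theta$, and density of $\algA$ in $\ehH$ lets me write
\begin{equation*}
\norm f\norm_\theta=\sup_{g\in\algA,\ \norm g\norm=1}\norm f\star_\theta g\norm .
\end{equation*}
The continuity axiom in Definition~\ref{def-hdq} says exactly that each $\theta\mapsto\norm f\star_\theta g\norm$ is continuous on $\gR$; since a pointwise supremum of continuous functions is lower semicontinuous, $\theta\mapsto\norm f\norm_\theta$ is lower semicontinuous, which is the asserted lower semicontinuity of the constant family.

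For the accompanying remark I would only observe that the C*-completion of $(\algA,\norm f\norm_\theta)$ is by definition the operator-norm closure of $\Xi(\algA)$, while $\kM_\bb(\algA_\theta)=\Xi(\algA_\bb)''$ by Theorem~\ref{thm-caract} is its \emph{weak} closure; in general the norm closure is a proper C*-subalgebra of the von Neumann algebra. The Moyal--Weyl HDQ treated in section~\ref{subsec-moyal}, where the norm closure is the compact operators whereas $\kM_\bb(\algA)\simeq\caB(\ehH)$, supplies the explicit counterexample.

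For the second item the idea is to transport the field structure along the intertwiner. By Proposition~\ref{prop-morph} each component $U_\theta$ of the intertwiner extends to a spatial, hence isometric, $*$-isomorphism $\tilde U_\theta:\kM_\bb(\algA_\theta)\to\kM_\bb(\algB_\theta)$. I would declare the continuous sections of $\kM_\bb(\algB)$ to be precisely the families $(\tilde U_\theta a_\theta)_\theta$ obtained from continuous sections $(a_\theta)_\theta$ of $\kM_\bb(\algA)$, and then verify the axioms of a continuous field of C*-algebras. That this collection is a $*$-subalgebra of $\prod_\theta\kM_\bb(\algB_\theta)$ which is total in each fibre follows from the $\tilde U_\theta$ being fibrewise $*$-isomorphisms; that the norm function $\theta\mapsto\norm\tilde U_\theta a_\theta\norm=\norm a_\theta\norm$ is continuous follows from their being isometric together with the continuous-field hypothesis on $\kM_\bb(\algA)$.

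The one structural point to check, and the step I expect to carry the real content, is the local uniform closedness of the transported sections: if $b=(b_\theta)_\theta$ is locally uniformly approximable by families $(\tilde U_\theta a_\theta)_\theta$, then applying the inverse isometries $\tilde U_\theta^{-1}$ shows that $(\tilde U_\theta^{-1}b_\theta)_\theta$ is locally uniformly approximable by continuous sections of $\kM_\bb(\algA)$, hence is one, so that $b$ lies in the transported family. Beyond this, everything reduces to the single fact that the $\tilde U_\theta$ are fibrewise isometric $*$-isomorphisms, furnished by Proposition~\ref{prop-morph}; notably no joint continuity of $\theta\mapsto\tilde U_\theta$ is required, as the isomorphisms serve only to relabel sections fibre by fibre.
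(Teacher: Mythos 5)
Your proof is correct and takes essentially the same route as the paper: for the first item the paper likewise deduces lower semicontinuity by expressing the fibre C*-norm as a supremum of $\theta$-continuous quantities (citing Rieffel and passing through $\langle f_1, f\star_\theta f_2\rangle$ rather than your $\norm f\star_\theta g\norm$, a cosmetic difference), and for the second item the paper also transports the field structure through the isomorphisms $\tilde U_\theta$ of Proposition \ref{prop-morph}, merely declaring obvious the verification you spell out. Your Moyal counterexample for the non-coincidence remark (norm closure of $\Xi(\algA)$ giving the compacts, versus $\kM_\bb(\algA_\theta)\simeq\caB(\ehH)$) is a welcome addition that the paper's proof omits entirely.
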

\begin{proof}
The first result is due to M. Rieffel. If $\theta\to\norm f_1\star_\theta f_2\norm$ is continuous for any $f_1,f_2\in\algA$, then $\langle f_1, f\star_\theta f_2\rangle$ is also continuous and this implies lower semicontinuity (see \cite{Rieffel:1989cf}). The second result is obvious by using the isomorphism $\tilde U$.
\end{proof}

\subsection{Symmetries of Hilbert deformation quantizations}
\label{subsec-symm}

We assume here that the HDQ is complete, i.e. for any $\theta\in\gR^*$, $\algA_\theta$ is a complete Hilbert algebra. We introduce here the concept of symmetries of HDQ , which will be useful to construct other HDQ. Note that a symmetry of a HDQ $\algA$ of a manifold $M$ does not come in general from a group action on $M$, but these group actions can be interesting examples.

Let $(\algA_\theta,\star_\theta)$ be a complete HDQ of a smooth manifold $M$.

\begin{definition}
\label{def-symhdq}
Let $\kg$ be a countable-dimensional subspace of $\caC^\infty(M)$, stable under complex conjugation, such that left and right $\star_\theta$-multiplication $L_T,R_T$ are defined as unbounded operators with domain containing $\caD(M)$, for any $T\in\kg$. We assume that
\begin{enumerate}
\item $\kg$ is a Lie algebra for the $\star_\theta$-commutator, i.e. $\forall T_1,T_2\in\kg$, $\exists T_3\in\kg$, $[L_{T_1},L_{T_2}]\subset L_{T_3}$ (covariance). Then, $\caU(\kg)$ has a countable PBW basis.
\item the $\star_\theta$-multiplication $L_T,R_T$ are unbounded operators with domain containing $\caD(M)$, for any $T\in\caU(\kg)$,
\item such operators $L_S,R_T$ commute on $\caD(M)$ and act as multipliers on $\caD(M)$,
\item they satisfy $JL_TJ=R_{\overline T}$, where $J$ is the complex conjugation,
\item and they commute with the bounded center $\caZ_\bb(\algA)$.
\end{enumerate}
Under these conditions, $\kg$ is called a {\defin symmetry} of the HDQ $\algA$.
\end{definition}

\begin{remark}
If $\kg$ contains functions such that $L_T$ and $R_T$ are in $\algA_\theta$, for any $T\in\caU(\kg)$, then conditions 2, 3, 4 and 5 are trivial. In the general unbounded case (for next theorem), we have to assume such conditions, but in concrete examples, with explicit expressions of $L_T$ and $R_T$, these conditions will be easy to check.
\end{remark}

\begin{theorem}
\label{thm-hdq1}
Let $\kg$ be a symmetry of the HDQ $\algA$. Then, we set $\algB_{\theta}$ to be the closure of $\caD(M)$ for the seminorms $\norm f\norm_{S,T}:=\norm L_SR_T(f)\norm$, for $S,T\in\caU(\kg)$ and $f\in\caD(M)$.

Then, $\algB_{\theta}$ is a dense Hilbert subalgebra of $\algA_\theta$ and $\algB$ defines therefore a HDQ. Moreover, the above locally convex topology corresponds to the multiplier topology $\tau_\kM$ on $\algB$, and $(\algB,\tau_\kM)$ is a Fr\'echet algebra.
\end{theorem}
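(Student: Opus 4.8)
\section*{Proof proposal}

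The plan is to realise $\algB_\theta$ as a Fréchet Hilbert algebra whose derived multiplier algebra is generated by the symmetry operators, and then to identify the defining seminorm topology with $\tau_\kM$ by the criteria of Section~\ref{subsec-multtopo}. First I would record that each seminorm $\norm f\norm_{S,T}$ is the graphic seminorm attached to $L_SR_T$, that the choice $S=T=\gone$ recovers the Hilbert norm $\norm f\norm$, and that, since $\caU(\kg)$ has a countable PBW basis, only countably many such operators occur; hence the topology is metrizable, finer than that of $\ehH_{\algA_\theta}$, and $\algB_\theta$ is by construction a Fréchet space continuously and injectively mapped into $\ehH_{\algA_\theta}$. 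The operators $L_S,R_T$ and the involution $J$ then act continuously on $\algB_\theta$ because they merely permute the defining seminorms: using that left multiplication is a representation of $\caU(\kg)$ and that the $L$'s commute with the $R$'s on $\caD(M)$ (conditions~1 and 3 of Definition~\ref{def-symhdq}) one gets $L_{S'}R_{T'}L_S=L_{S'S}R_{T'}$, whence $\norm L_S(f)\norm_{S',T'}=\norm f\norm_{S'S,T'}$, and likewise for $R_T$; while $JL_SR_TJ=R_{\overline S}L_{\overline T}=L_{\overline T}R_{\overline S}$ from condition~4 gives $\norm f^*\norm_{S,T}=\norm f\norm_{\overline T,\overline S}$. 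Thus $\algB_\theta$ is stable under $L_S$, $R_T$ and the involution, and these restrict to left, resp.\ right, multipliers of $\algB_\theta$.

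Next I would check that $\algB_\theta$ is a dense $*$-subalgebra of $\algA_\theta$. Density in $\ehH_{\algA_\theta}$ is immediate, since $\caD(M)\subset\algB_\theta$ is already dense by the definition of a HDQ, so $\ehH_{\algB_\theta}=\ehH_{\algA_\theta}$. Closure under the product is the heart of the matter: for $f,g\in\caD(M)$ the multiplier identities give $L_SR_T(f\star_\theta g)=L_S(f)\star_\theta R_T(g)=\lambda_{L_S(f)}\big(R_T(g)\big)$, whence $\norm f\star_\theta g\norm_{S,T}\le\norm\lambda_{L_S(f)}\norm\,\norm g\norm_{\gone,T}$, the operator norm being finite because $L_S(f)=S\star_\theta f$ is a genuine element of $\algA_\theta$ and axiom~3 of Definition~\ref{def-hilbertalg} makes $\lambda_{L_S(f)}$ bounded. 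This estimate shows that $\star_\theta$ is separately $\tau_\kM$-continuous on $\caD(M)$; extending by density and completeness, and invoking the completeness assumption on each $\algA_\theta$ to keep the limiting products inside $\algA_\theta$, one obtains $\algB_\theta\star_\theta\algB_\theta\subset\algB_\theta$ with separately continuous product. Axioms 1--3 of a Hilbert algebra are inherited verbatim from $\algA_\theta$, and axiom~4 follows from the density of $\caD(M)\star_\theta\caD(M)$; hence $\algB_\theta$ is a Hilbert algebra with completion $\ehH_{\algA_\theta}$ and $\algB=(\algB_\theta)$ is a HDQ.

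It remains to identify the seminorm topology with the multiplier topology of $\algB_\theta$ and to conclude. Since each $L_S\in\kM_L(\algB_\theta)$ and each $R_T\in\kM_R(\algB_\theta)$, the generators $L_SR_T$ belong to the derived multiplier algebra $\kMtop(\algB_\theta)$, so the seminorm topology is coarser than $\tau_\kM$. Conversely this topology is Fréchet and is defined by a countable family of operators of the form $LR'$ with $L\in\kM_L$ and $R'\in\kM_R$ (without $s^*$-limit); by the criterion noted in Section~\ref{subsec-multtopo} the multiplier topology is then automatically the one given by these $LR'$, so the two topologies coincide. Being a topological $*$-algebra that is complete and metrizable, $(\algB,\tau_\kM)$ is therefore a Fréchet algebra.

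The main obstacle I expect is precisely the closure of $\algB_\theta$ under $\star_\theta$: one must upgrade the formal identity $L_SR_T(f\star_\theta g)=L_S(f)\star_\theta R_T(g)$ to a genuine statement on $\algB_\theta$, control $\norm\lambda_{L_S(f)}\norm$ uniformly enough to pass to limits, and use the completeness of each $\algA_\theta$ so that the products stay inside $\algA_\theta$ (so that Lemma~\ref{lem-bounded} and the multiplier structure apply). A second delicate point is to verify that the seminorms attached to $\{L_SR_T:S,T\in\caU(\kg)\}$ are cofinal among all graphic seminorms of $\kMtop(\algB_\theta)$, that is, that $\kg$ genuinely generates the derived multiplier algebra of $\algB_\theta$; this is what turns the comparison of topologies into an honest identification with $\tau_\kM$.
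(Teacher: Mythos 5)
Your overall architecture is the same as the paper's: density because $\caD(M)\subset\algB_\theta$, stability under the involution via condition 4 of Definition \ref{def-symhdq}, stability under $L_S$, $R_T$ by permutation of the defining seminorms, and identification of the seminorm topology with $\tau_\kM$ through the criterion that a Fr\'echet graphic topology necessarily equals $\tau_+$ (sections \ref{subsec-ostar} and \ref{subsec-multtopo}). That last step you carry out correctly: since the generators $L_SR_T$ lie in $\kMtop(\algB_\theta)$, the seminorm topology is coarser than $\tau_\kM$, while Fr\'echetness forces it to equal $\tau_+$, which dominates $\tau_\kM$; note that this sandwich argument already disposes of your second ``delicate point'' about cofinality, which is therefore not an actual issue.

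The genuine gap is in the product stability, exactly where you anticipated it, and it is not closed by your argument. Your estimate $\norm f\star_\theta g\norm_{S,T}\le\norm\lambda_{L_S(f)}\norm\,\norm g\norm_{\gone,T}$, with only the qualitative information that $\norm\lambda_{L_S(f)}\norm<\infty$, yields separate continuity of $\star_\theta$ on $\caD(M)$; but a separately continuous bilinear map on a dense subspace of a Fr\'echet space does not extend to the completion. Concretely, for $f_n\to f$ and $g_n\to g$ in the seminorm topology, the Cauchy estimate for $f_n\star_\theta g_n$ requires bounding $\norm (f_n-f_m)\star_\theta g_n\norm_{S,T}\le \norm\lambda_{L_S(f_n-f_m)}\norm\,\norm g_n\norm_{\gone,T}$, and nothing in your argument controls the operator norm $\norm\lambda_{L_S(f_n-f_m)}\norm$ by the seminorms of $f_n-f_m$. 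The missing ingredient is the one the paper invokes: since the HDQ is assumed complete, $\algA_\theta$ is a complete Hilbert algebra, so the product is \emph{jointly} continuous for the Hilbert norm, $\norm x\star_\theta y\norm\le C\norm x\norm\,\norm y\norm$, equivalently $\norm\lambda_x\norm\le C\norm x\norm$. Inserting this upgrades your estimate to $\norm f\star_\theta g\norm_{S,T}\le C\norm f\norm_{S,\gone}\,\norm g\norm_{\gone,T}$, a jointly continuous bilinear bound in the defining seminorms, and only then does the extension by density to $\algB_\theta\star_\theta\algB_\theta\subset\algB_\theta$ become legitimate. With that single estimate added, your proof closes and coincides with the paper's.
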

\begin{proof}
First $\algB_\theta$ is dense in $\algA_\theta$ because it contains $\caD(M)$. The algebra $\algA_\theta$ is of type $I_\infty$ so it is complete with respect to the scalar product, and hence the product $\star_\theta$ is jointly continuous for this topology. So for any $S,T\in\caU(\kg)$ and $f,g\in\caD(M)$,
\begin{equation*}
\norm L_SR_T(f\star_\theta g)\norm=\norm L_S(f)\star_\theta R_T(g)\norm\leq C\norm L_S(f)\norm\,\norm R_T(g)\norm<\infty,
\end{equation*}
due to condition 3 of Definition \ref{def-symhdq}, and $\algB_\theta$ is an algebra. Due to condition 4 of Definition \ref{def-symhdq}, we have that $\norm L_S R_T(\overline f)\norm=\norm L_{\overline T}R_{\overline S}(f)\norm$ and $\algB_\theta$ is also stable by the complex conjugation.

We showed that $\algB_\theta$ is a dense Hilbert subalgebra of $\algA_\theta$, but it has also a Fr\'echet topology with the seminorms $\norm\fois\norm_{S,T}$. Moreover by definition, restricted to $\algB_\theta$, $L_T\in\kM_L(\algB_\theta)$ and $R_T\in\kM_R(\algB_\theta)$. From condition 5 of Definition \ref{def-symhdq}, we see by Propositions \ref{prop-gwder} and \ref{prop-commutder} that for any $S,T\in\caU(\kg)$, $L_SR_T\in\kMtop(\algB_\theta)$. It means that $\{(L_S)_{|\algB_\theta}(R_T)_{|\algB_\theta},\, S,T\in\caU(\kg)\}$ generates an O*-algebra on $\algB_\theta$ contained in $\kMtop(\algB_\theta)$, and whose graphic topology is Fr\'echet. This topology then coincides with the multiplier topology of $\algB_\theta$ by section \ref{subsec-ostar}.
\end{proof}
In the above notations, we call $\algB$ the {\defin Schwartz HDQ induced} by the symmetry $\kg$ from the complete HDQ $\algA$ and denote it by $\bS(\algA,\kg):=\algB$. Due to Proposition \ref{prop-lchalg}, $\kM(\algB)$ is then a family of locally convex complete *-algebras, and it contains the symmetry $\kg$ and its universal enveloping algebra $\caU(\kg)$.

\begin{definition}
\label{def-symhdqcomp}
Let $\Omega:\algB\to\caL^2(\ehH)$ be a representation. We say that $\Omega$ is {\defin extendable} if
\begin{itemize}
\item For any $T\in\kM(\algB)$, $\tilde\Omega_\theta(T)$ is defined as an unbounded operator on $\ehH_\theta$ with domain containing a common fixed dense subset $\caD$.
\item For any $T,S\in\kM(\algB)$, $\tilde\Omega_\theta(T)\tilde\Omega_\theta(S)=\tilde\Omega_\theta(T\star_\theta S)$.
\item For any $T$, $\tilde\Omega_\theta(\overline T)=\tilde\Omega_\theta(T)^*$, where $\overline T$ means the complex conjugate of $T$.
\end{itemize}
\end{definition}
If the associative product and the representation are given by smooth kernels as it is the case in various examples, these conditions are generally satisfied and easy to prove.

\begin{theorem}
\label{thm-symhdq}
Let $\kg$ be a symmetry of the HDQ $\algA$, $\algB:=\bS(\algA,\kg)$ the Schwartz HDQ induced by $\kg$, and $\Omega:\algA\to\caL^2(\ehH)$ a representation, which is extendable restricted to $\algB$. Then, $\Omega$ extends to a faithful *-representation $\tilde\Omega:\kM(\algB)\to\caL^+(\caD_\algB)$, where $\caD_\algB$ is a dense domain canonically associated to $\algB$.
\end{theorem}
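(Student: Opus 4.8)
The plan is to build $\tilde\Omega$ directly from the extendability hypotheses of Definition \ref{def-symhdqcomp} and to exhibit an explicit invariant domain on which the extended operators live. First I would record the algebraic input from the multiplier theory: by Lemma \ref{lem-ximult} applied to $\algB$, the copy $\{\Xi_b:\ b\in\algB\}$ is a *-ideal of $\kM(\algB)$, so that $T\star_\theta b:=L(b)\in\algB$ whenever $T=(L,R)\in\kM(\algB)$ and $b\in\algB$; moreover $\Xi_b^+=\Xi_{\overline b}$, so on this ideal the involution of $\kM(\algB)$ is complex conjugation and hence $\overline T=T^+$ throughout $\kM(\algB)$. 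Using that $\tilde\Omega$ extends $\Omega$, i.e. $\tilde\Omega(\Xi_b)=\Omega(b)\in\caL^2(\ehH)$, I would take as canonical domain
\[
\caD_\algB:=\Omega(\algB)\,\ehH=\mathrm{span}\{\Omega(b)\xi:\ b\in\algB,\ \xi\in\ehH\}.
\]
Density of $\caD_\algB$ in $\ehH$ follows from the fact that $\Omega(\algB)$ is dense in $\caL^2(\ehH)$ (because $\algB$ is dense in $\algA$ and $\Omega$ is isometric with dense range) and *-closed: a vector $\zeta\perp\caD_\algB$ satisfies $\Omega(b)^*\zeta=0$ for all $b$, hence $K\zeta=0$ for a Hilbert--Schmidt--dense set of $K$, and approximating rank-one operators forces $\zeta=0$.

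Next I would prove the invariance $\tilde\Omega(T)(\caD_\algB)\subset\caD_\algB$. Applying the multiplicativity hypothesis (second bullet of Definition \ref{def-symhdqcomp}) with the bounded factor $S=\Xi_b$ gives the operator identity $\tilde\Omega(T)\,\Omega(b)=\tilde\Omega(T\star_\theta\Xi_b)=\tilde\Omega(\Xi_{L(b)})=\Omega\bigl(T\star_\theta b\bigr)$. Since the right-hand side is the everywhere-defined bounded operator $\Omega(L(b))$, this identity forces $\mathrm{ran}\,\Omega(b)\subset\Dom(\tilde\Omega(T))$, whence $\caD_\algB\subset\Dom(\tilde\Omega(T))$ and $\tilde\Omega(T)\bigl(\Omega(b)\xi\bigr)=\Omega(T\star_\theta b)\,\xi\in\caD_\algB$ because $T\star_\theta b\in\algB$. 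Running the same computation for $\overline T=T^+\in\kM(\algB)$ and combining it with the involution hypothesis $\tilde\Omega(\overline T)=\tilde\Omega(T)^*$ shows $\caD_\algB\subset\Dom(\tilde\Omega(T)^*)$ and $\tilde\Omega(T)^*(\caD_\algB)=\tilde\Omega(T^+)(\caD_\algB)\subset\caD_\algB$. Thus $\tilde\Omega(T)|_{\caD_\algB}\in\caL^+(\caD_\algB)$ with $\bigl(\tilde\Omega(T)|_{\caD_\algB}\bigr)^+=\tilde\Omega(T^+)|_{\caD_\algB}$. That $T\mapsto\tilde\Omega(T)|_{\caD_\algB}$ is a *-homomorphism is then immediate: on the invariant domain $\caD_\algB$ each $\tilde\Omega(S)$ lands in $\Dom(\tilde\Omega(T))$, so the product rule reads $\tilde\Omega(T)\tilde\Omega(S)=\tilde\Omega(T\star_\theta S)$ on $\caD_\algB$, and the involution rule gives compatibility with ${}^+$.

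For faithfulness I would assume $\tilde\Omega(T)|_{\caD_\algB}=0$. Then $\Omega(T\star_\theta b)\,\xi=\tilde\Omega(T)\Omega(b)\xi=0$ for all $b\in\algB$, $\xi\in\ehH$, so $\Omega(L(b))=0$; injectivity of the isometric $\Omega$ gives $L(b)=0$ for every $b\in\algB$, i.e. $L=0$, and then $R=JL^+J=0$ by Proposition \ref{prop-module}, so $T=0$.

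I expect the main obstacle to be the domain bookkeeping in the invariance step, namely justifying that the product rule of Definition \ref{def-symhdqcomp} may be read as a genuine operator identity (with equality of domains), which is exactly what forces $\mathrm{ran}\,\Omega(b)\subset\Dom(\tilde\Omega(T))$ rather than mere agreement on the a priori common domain $\caD$. If the extendability identities are only posited on $\caD$, one would first promote them to $\caD_\algB$ by a closedness argument, exploiting that each $\Omega(L(b))$ is bounded to pass to the closures $\overline{\tilde\Omega(T)}$ and recover the needed inclusions of domains.
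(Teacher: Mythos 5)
Your proof is correct, but it is not the paper's argument: the two constructions differ in where the domain comes from. The paper defines $\caD_\algB:=\bigcap_{T\in\caU(\kg)}\big(\Dom(\tilde\Omega(T))\cap\Dom(\tilde\Omega(T)^*)\big)$, i.e.\ intrinsically through the symmetry generators, and its key technical device is a rank-one-projector lemma: for $\varphi\in\caD_\algB$ one picks $f_\varphi$ with $\Omega(f_\varphi)=\frac{1}{\norm\varphi\norm^2}|\varphi\rangle\langle\varphi|$ and exploits the identity $\norm\tilde\Omega(T)\varphi\norm=\norm L_T(f_\varphi)\norm_{\ehH_\algB}$, first for $T\in\caU(\kg)$ to conclude $f_\varphi\in\algB$, then for arbitrary $T\in\kM(\algB)$ to obtain $\caD_\algB\subset\Dom(\tilde\Omega(T))\cap\Dom(\tilde\Omega(T)^*)$ and the stability of the domain. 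You instead take $\caD_\algB:=\Omega(\algB)\ehH$, which makes invariance, the homomorphism property and faithfulness immediate consequences of the single identity $\tilde\Omega(T)\Omega(b)=\Omega(L_T(b))$; you never need the rank-one lemma nor the seminorm description of $\algB$ from Theorem \ref{thm-hdq1}. In fact the two domains coincide: yours is contained in the paper's by your invariance step, and conversely the rank-one lemma gives $\varphi=\Omega(f_\varphi)\varphi$ with $f_\varphi\in\algB$, so every vector of the paper's domain lies in $\Omega(\algB)\ehH$. What the paper's choice buys is computability in the examples --- the description through $\caU(\kg)$ is exactly what later yields $\caD_\algB=\caS(\gR^n)$ for Moyal--Weyl and the explicit domain for normal $j$-groups --- while your choice buys a shorter, more elementary proof. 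Finally, the caveat you flag (reading the product rule of Definition \ref{def-symhdqcomp} as a genuine identity of unbounded operators with their natural domains, so that boundedness of $\tilde\Omega(T\star_\theta\Xi_b)=\Omega(L_T(b))$ forces $\mathrm{ran}\,\Omega(b)\subset\Dom(\tilde\Omega(T))$) is not a defect relative to the paper: the paper's own step ``$\varphi\in\caD_\algB$ and $f\in\algB$ imply $\Omega(f)\varphi\in\caD_\algB$'' relies on exactly the same reading, so your proposal and the paper stand or fall together on this point.
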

\begin{proof}
First, we note that $\Omega$ is also a representation of the HDQ $\algB$. Due to the proof of Theorem \ref{thm-hdq1}, $\kM(\caU(\kg))$ is an O*-algebra on $\algB$. We then define the domain $\caD_\algB$ to be
\begin{equation*}
\caD_\algB:=\bigcap_{T\in\caU(\kg)} \big(\Dom(\tilde\Omega(T))\cap \Dom(\tilde\Omega(T)^*\big).
\end{equation*}
It is dense in $\ehH$ as containing $\caD$ (see Definition \ref{def-symhdqcomp}). Due to the second condition of Definition \ref{def-symhdqcomp} and to the fact that $\kM(\caU(\kg))$ stabilizes $\algB$ by definition, we see that if $\varphi\in\caD_\algB$ and $f\in\algB$, $\Omega(f)\varphi\in\caD_\algB$. So the maps $\Omega:\algB\to\caL^+(\caD_\algB)$ and $\tilde\Omega:\kM(\caU(\kg))\to\caL^+(\caD_\algB)$ are well-defined as a preliminary step.

Let us extend now these maps to the whole $\kM(\algB)$. For an element $\varphi\in\caD_\algB$, there exists $f_\varphi\in\algA$ such that $\Omega(f_\varphi)=\frac{1}{\norm\varphi\norm^2}|\varphi\rangle\langle\varphi |$, since $\Omega:\algA\to\caL^2(\ehH)$ is an isomorphism. Then, we have formally (if it is not infinite) for any $T=(L_T,R_T)\in\kM(\algB)$,
\begin{align*}
\norm \tilde\Omega(T)\varphi\norm^2_{\ehH}&=\langle \varphi, \tilde\Omega(T)^+\tilde\Omega(T)\varphi\rangle=\frac{1}{\norm\varphi\norm^2}\tr(|\varphi\rangle\langle \varphi, \tilde\Omega(T)^+\tilde\Omega(T)\varphi\rangle \langle\varphi |)= \norm \tilde\Omega(T)\varphi\rangle \langle\varphi |\norm^2_{\caL^2(\ehH)}\\
&=\norm \tilde\Omega(T)\tilde\Omega(f_\varphi)\norm^2_{\caL^2(\ehH)}=\norm L_T(f_\varphi)\norm^2_{\ehH_\algB},\\
\norm \tilde\Omega(T)^+\varphi\norm^2_{\ehH}&=\langle \varphi, \tilde\Omega(T)\tilde\Omega(T)^+\varphi\rangle=\norm R_T(f_\varphi)\norm^2_{\ehH_\algB}.
\end{align*}
A first application of this computation with $T\in\caU(\kg)$ shows that $L_T(f_\varphi)$ and $R_T(f_\varphi)$ are Hilbert-Schmidt, so $f_\varphi\in\algB$. A second application with $T\in\kM(\algB)$ permits to show that $\caD_\algB\subset\Dom(\tilde\Omega(T))\cap \Dom(\tilde\Omega(T)^*)$. Then, with a slight modification, we have
\begin{align*}
&\norm \tilde\Omega(T)\Omega(T')^*\Omega(S)\varphi\norm^2_{\ehH}=\norm L_TR_{T'}L_S(f_\varphi)\norm^2_{\ehH_\algB}<\infty\\
&\norm \tilde\Omega(T)\Omega(T')^*\Omega(S)^*\varphi\norm^2_{\ehH}=\norm L_TR_{T'}R_S(f_\varphi)\norm^2_{\ehH_\algB}<\infty
\end{align*}
for $T,T'\in\caU(\kg)$, $\varphi\in\caD_\algB$ and $S\in\kM(\algB)$, so $\tilde\Omega(S)\in\caL^+(\caD_\algB)$. Due to Definition \ref{def-symhdqcomp}, $\tilde\Omega$ is a *-algebra homomorphism.

Let $S\in\kM(\algB)$ such that $\tilde\Omega(S)=0$. Then, for any $f\in\algB$, we have
\begin{equation*}
\norm L_S(f)\norm_{\ehH_\algB}=\norm \Omega(L_S(f))\norm_{\caL^2(\ehH)}=\norm \tilde\Omega(S)\Omega(f)\norm=0,
\end{equation*}
due to the isometric map $\Omega:B\to\caL^2(\ehH)$. And $S=0$, which shows the injectivity of $\tilde\Omega$.
\end{proof}

\subsection{Star-exponential}
\label{subsec-starexp}

Let $(\algA_\theta,\star_\theta)$ be a complete HDQ of a smooth manifold $M$. Let also $\kg$ be a real symmetry of $\algA$ and we note $\algB:=\bS(\algA,\kg)$ the Schwartz HDQ induced by the symmetry $\kg$.
\begin{definition}
We define the {\defin star-exponential} $E_{\star_\theta}(\frac{i}{\theta}T)$ of an element $T\in\kg$ to be the pair $(e^{\frac{i}{\theta}L_T},e^{\frac{i}{\theta}R_T})$, where the exponential is understood in the sense of continuous functional calculus.
\end{definition}

\begin{theorem}
\label{thm-starexp}
The star-exponential of any element $T\in\kg$ is a unitary multiplier in $\kMstab(\algB)$, and it satisfies the BCH property:
\begin{equation*}
\forall T,T'\in\kg\quad:\quad E_{\star_\theta}(\frac{i}{\theta}T)\star_\theta E_{\star_\theta}(\frac{i}{\theta}T')=E_{\star_\theta}(\frac{i}{\theta}\text{BCH}(T,T'))
\end{equation*}
where $\text{BCH}(T,T')=\log(e^Te^{T'})$ in the Lie algebra $\kg$.
\end{theorem}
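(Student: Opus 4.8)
The plan is to establish the two assertions — that $E_{\star_\theta}(\frac{i}{\theta}T)$ is a unitary element of $\kMstab(\algB)$, and that the assignment $T\mapsto E_{\star_\theta}(\frac{i}{\theta}T)$ obeys the BCH law — by first controlling the closure of each generator and then integrating the Lie algebra $\kg$ to a group. First I would record that, for $T\in\kg$ (real, as $\kg$ is a real symmetry so $\overline T=T$), condition 4 of Definition \ref{def-symhdq} together with Proposition \ref{prop-module} gives $L_T^+=JR_TJ=J(JL_TJ)J=L_T$, and likewise $R_T^+=JL_TJ=R_T$; thus $L_T$ and $R_T$ are symmetric on $\algB$. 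The essential preliminary is then that $\overline{L_T}$ (and $\overline{R_T}$) are self-adjoint, not merely symmetric. I would obtain this from Nelson's analytic-vector theorem, using that the $L_S$ ($S\in\kg$) form a Lie algebra representation on the common invariant domain $\algB$ and that $\algB$, built as the Schwartz-type completion of Theorem \ref{thm-hdq1}, supplies a dense set of analytic vectors for the Nelson Laplacian.

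Granting self-adjointness, Lemma \ref{lem-affilmult} shows that the closed multiplier $\overline T=(\overline{L_T},\overline{R_T})$ is affiliated with the von Neumann algebra $\kM_\bb(\algB)$. The bounded Borel functional calculus $e^{\frac{i}{\theta}\,\cdot\,}$ applied within $\kM_\bb(\algB)$ to this self-adjoint affiliated element then produces a unitary, whose two components are $e^{\frac{i}{\theta}\overline{L_T}}$ and $e^{\frac{i}{\theta}\overline{R_T}}$; here one uses $R_T=JL_{\overline T}J$ and the antilinearity of $J$ to see that the two components are exchanged consistently. Hence $E_{\star_\theta}(\frac{i}{\theta}T)=(e^{\frac{i}{\theta}\overline{L_T}},e^{\frac{i}{\theta}\overline{R_T}})\in\kM_\bb(\algB)$ is a unitary bounded multiplier, unitarity being automatic from the functional calculus.

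Membership in $\kMstab(\algB)$ further requires that $E_{\star_\theta}(\frac{i}{\theta}T)$ and its adjoint stabilize $\algB$; since the adjoint is $e^{-\frac{i}{\theta}\overline{L_T}}$, it suffices to show the whole one-parameter group $U_s:=e^{\frac{is}{\theta}\overline{L_T}}$ preserves $\algB$. This is where the covariance (condition 1 of Definition \ref{def-symhdq}) enters, through the adjoint-action intertwining $U_s L_S U_{-s}=L_{\Ad(e^{sT})S}$, where $\Ad(e^{sT})=e^{s\,\ad_T}$ is an automorphism of $\kg$ and hence of $\caU(\kg)$. Applying this to both components, for $f\in\algB$ and $S,S'\in\caU(\kg)$ one gets $\norm L_S R_{S'}U_s f\norm=\norm U_s L_{S_s}R_{S'_s} f\norm=\norm L_{S_s}R_{S'_s}f\norm<\infty$ with $S_s,S'_s\in\caU(\kg)$, so that every seminorm defining $\algB$ (Theorem \ref{thm-hdq1}) stays finite on $U_sf$; therefore $U_s(\algB)\subseteq\algB$.

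For the BCH property I would integrate the representation $T\mapsto\frac{i}{\theta}\overline{L_T}$ of $\kg$ by skew-adjoint operators to a strongly continuous unitary representation $U$ of the connected, simply connected Lie group $G$ with $\Lie(G)=\kg$, again via Nelson's integrability theorem on the analytic vectors of $\algB$; by construction $U(\exp T)=e^{\frac{i}{\theta}\overline{L_T}}$. The group law $\exp T\cdot\exp T'=\exp\text{BCH}(T,T')$ then yields $e^{\frac{i}{\theta}\overline{L_T}}\star_\theta e^{\frac{i}{\theta}\overline{L_{T'}}}=e^{\frac{i}{\theta}\overline{L_{\text{BCH}(T,T')}}}$ on the left components, and the identity $R_T=JL_{\overline T}J$ transports it to the right components, giving the stated relation at the level of multipliers. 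The \textbf{main obstacle} is precisely this integration step: proving essential self-adjointness of each $\overline{L_T}$ and, more seriously, the joint integrability of the whole representation, both of which hinge on exhibiting a dense set of analytic vectors inside $\algB$; one must also verify that $\text{BCH}(T,T')$ genuinely lies in $\kg$ (global versus local convergence of the series), which is automatic when $\kg$ arises from a group action, as in the invariant case of Section \ref{subsec-invsymm}.
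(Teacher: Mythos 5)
Your proposal overlaps with the paper's proof exactly where the paper gives a genuine argument, and diverges on the analytic underpinnings. The stabilization of $\algB$ is the same argument in both: the paper writes
\begin{equation*}
\norm L_S e^{\frac{i}{\theta}L_T}f\norm =\norm e^{-\frac{i}{\theta}L_T} L_S e^{\frac{i}{\theta}L_T}f\norm=\norm L_{e^{-\frac{i}{\theta}\ad_{L_T}}(L_S)}f\norm<\infty,
\end{equation*}
which is precisely your intertwining $U_sL_SU_{-s}=L_{\Ad(e^{sT})S}$, extended to $\caU(\kg)$ and to the right multiplications, and concluded via Theorem \ref{thm-hdq1}. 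For the rest, the paper is much more expeditious: it \emph{asserts} that $L_T,R_T$ are essentially self-adjoint ``since $T$ is a real smooth function'', asserts that $e^{\frac{i}{\theta}L_T}$ equals the strongly convergent series $\sum_n \frac{i^n}{n!\theta^n}(L_T)^n$, and obtains BCH ``as in formal deformation quantization'' by manipulating these series. You instead derive symmetry of $L_T,R_T$ from condition 4 of Definition \ref{def-symhdq} and Proposition \ref{prop-module}, get the multiplier and unitarity properties from Lemma \ref{lem-affilmult} (affiliation with $\kM_\bb(\algB)$) plus Borel functional calculus, and get BCH by Nelson-integrating $T\mapsto\frac{i}{\theta}\overline{L_T}$ to a unitary representation of the simply connected group with Lie algebra $\kg$. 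Your route has two merits the paper's lacks: it explains \emph{why} the pair of exponentials is again a multiplier (functional calculus of an affiliated operator stays inside the von Neumann algebra $\kM_\bb(\algB)$, whose elements are exactly the multiplier pairs), and it makes explicit the analytic hypotheses that the paper suppresses.

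Two caveats. First, the step you yourself flag as the main obstacle --- exhibiting a dense set of analytic vectors in $\algB$, needed both for essential self-adjointness and for Nelson integrability --- is left open in your proposal; but be aware that the paper leaves exactly the same gap, since its claims of essential self-adjointness and of strong convergence of the exponential series are precisely analytic-vector statements asserted without proof. You are therefore not missing anything the paper actually supplies. Second, Nelson's integrability theorem requires $\kg$ to be finite-dimensional, whereas Definition \ref{def-symhdq} only demands countable dimension (see the remark in Section \ref{subsec-moyal} with the generators $e^{nx}$), and Section \ref{subsec-starexp} does not add a finiteness hypothesis; the paper's series-level derivation of BCH does not formally require a Lie group $G$ with $\Lie(G)=\kg$. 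Since the statement of the theorem already presupposes that $\log(e^Te^{T'})$ makes sense inside $\kg$, adding finite-dimensionality (or BCH-closedness) is a mild restriction, but you should state it explicitly if you take the integration route.
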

\begin{proof}
Since any element $T\in\kg$ is a real smooth function, $L_T$, $R_T$ are essentially selfadjoint operators so that $e^{\frac{i}{\theta}L_T}$ and $e^{\frac{i}{\theta}R_T}$ are unitary operators, and they are respectively equal to the strongly convergent series $\sum_{n=0}^\infty \frac{i^n}{n!\theta^n}(L_T)^n$ and $\sum_{n=0}^\infty \frac{i^n}{n!\theta^n}(R_T)^n$. The BCH property can be obtained by using this approximation by series as in formal deformation quantization (algebraic property).

What remains to be proved is the fact that $E_{\star_\theta}(\frac{i}{\theta}T)$ stabilizes $\algB$. For example, for any $S,T\in\kg$ and $f\in\algB$, we have
\begin{equation*}
\norm L_S e^{\frac{i}{\theta}L_T}f\norm =\norm e^{-\frac{i}{\theta}L_T} L_S e^{\frac{i}{\theta}L_T}f\norm=\norm L_{e^{\frac{-i}{\theta}\ad_{(L_T)}}(L_S)}f\norm<\infty
\end{equation*}
since $e^{\frac{-i}{\theta}\ad_{L_T}}(L_S)$ is the left action of an element of $\kg$. In the same way, we can extend this result to $L_S$ for $S\in\caU(\kg)$ and to $R_S$, which proves that $e^{\frac{i}{\theta}L_T}f\in\algB$ by using Theorem \ref{thm-hdq1}.
\end{proof}

Note that the star-exponential satisfies the following equation
\begin{equation*}
\partial_t E_{\star_\theta}(\frac{it}{\theta}T)=\frac{i}{\theta} (L_T,R_T)\, E_{\star_\theta}(\frac{i}{\theta}T),
\end{equation*}
which could be useful to determine its explicit expression in the concrete examples.

\subsection{Associated functional spaces}
\label{subsec-funcsymm}

Let $(\algA_\theta,\star_\theta)$ be a complete HDQ of a smooth manifold $M$ and let $\kg$ be a real finite-dimensional symmetry of $\algA$. We denote by $(e_i)_{i\in I}$ a basis of $\kg$ (the following theory will be independent of its particular choice).

\begin{definition}
\label{def-sobolev}
From this deformation $\algA$ and its symmetry $\kg$, we define
\begin{itemize}
\item the {\defin Sobolev spaces} $\bH^k(\algA,\kg)$ ($k\in\gN$) {\defin induced} by $\kg$ as the completion of $\caD(M)$ for the norm
\begin{equation*}
\norm f\norm_k:=\sup_{l\leq k,\, i_1,\dots,i_l\in I}\norm (L_{e_{i_1}}-R_{e_{i_1}})(L_{e_{i_2}}-R_{e_{i_2}})\dots (L_{e_{i_l}}-R_{e_{i_l}})f\norm.
\end{equation*}
\item the {\defin GBV spaces}\footnote{GBV stands for Gracia-Bondia-Varilly as we will see that these spaces generalize the spaces defined by GBV for the Moyal deformation by using the matrix basis.} $\bG^{k,l}(\algA,\kg)$ ($k,l\in\gN$) {\defin induced} by $\kg$ as the completion of $\caD(M)$ for the norm
\begin{equation*}
\norm f\norm_{k,l}:=\sup_{p\leq k,\, i_1,\dots,i_p\in I}\, \sup_{q\leq l,\, j_1,\dots,j_q\in I} \norm L_{e_{i_1}}\dots L_{e_{i_p}}R_{e_{j_1}}\dots R_{e_{j_q}} f\norm.
\end{equation*}
\end{itemize}
\end{definition}
To give an intuition, note that $(L_{e_{i}}-R_{e_{i}})$ just corresponds to the inner $\star_\theta$-derivation $[e_i,\fois]_{\star_\theta}$.

\begin{proposition}
For $k\in\gN$, the induced Sobolev space $\bH^k(\algA_\theta,\kg)$ is a dense Hilbert subalgebra of $\algA_\theta$ and it is a Hilbert space for the scalar product
\begin{equation*}
\langle f_1,f_2\rangle_k:= \sum_{l=0}^k \sum_{i_1,\dots,i_l\in I}\langle (L_{e_{i_1}}-R_{e_{i_1}})\dots (L_{e_{i_l}}-R_{e_{i_l}})f_1, (L_{e_{i_1}}-R_{e_{i_1}})\dots (L_{e_{i_l}}-R_{e_{i_l}})f_2 \rangle.
\end{equation*}
\end{proposition}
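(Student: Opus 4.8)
The plan is to establish the two assertions in turn: first that $\langle\fois,\fois\rangle_k$ makes $\bH^k(\algA_\theta,\kg)$ a Hilbert space, and then that $\bH^k(\algA_\theta,\kg)$ is a dense $\star_\theta$-subalgebra of $\algA_\theta$ inheriting a Hilbert algebra structure. Throughout I abbreviate $D_i:=L_{e_i}-R_{e_i}$, which by the remark following Definition \ref{def-sobolev} is the inner $\star_\theta$-derivation $[e_i,\fois]_{\star_\theta}$, and I write $\norm f\norm_{(k)}:=\sqrt{\langle f,f\rangle_k}$.

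For the Hilbert space claim, I would first note that $\langle\fois,\fois\rangle_k$ is positive definite, its positivity coming already from the term $l=0$ which reproduces $\langle f,f\rangle$. Because $\kg$ is finite-dimensional and $k$ is finite, the family of seminorms $f\mapsto\norm D_{i_1}\cdots D_{i_l}f\norm$ indexed by $l\le k$ and $(i_1,\dots,i_l)\in I^l$ is finite, of cardinality $N=\sum_{l=0}^k\lvert I\rvert^l$. Hence the two norms are equivalent, $\norm f\norm_k\le\norm f\norm_{(k)}\le\sqrt N\,\norm f\norm_k$, so the completion of $\caD(M)$ for $\norm\fois\norm_k$ coincides with its completion for $\norm\fois\norm_{(k)}$; the latter is by construction complete for a scalar-product norm, and $\langle\fois,\fois\rangle_k$ extends to it by continuity and polarization, yielding the Hilbert space structure.

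For the subalgebra claim, the engine is the Leibniz rule. Combining $L_{e_i}(f\star_\theta g)=L_{e_i}(f)\star_\theta g$ and $R_{e_i}(f\star_\theta g)=f\star_\theta R_{e_i}(g)$ with associativity gives $D_i(f\star_\theta g)=D_i(f)\star_\theta g+f\star_\theta D_i(g)$, and an induction on $l$ yields
\begin{equation*}
D_{i_1}\cdots D_{i_l}(f\star_\theta g)=\sum_{S\subseteq\{1,\dots,l\}}\Big(\prod_{j\in S}D_{i_j}\Big)(f)\star_\theta\Big(\prod_{j\notin S}D_{i_j}\Big)(g),
\end{equation*}
with the factors in each product kept in increasing order of $j$. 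Since $\algA_\theta$ is complete (type $I_\infty$), the product $\star_\theta$ is jointly continuous for the scalar-product norm, so there is a constant $C$ with $\norm a\star_\theta b\norm\le C\norm a\norm\,\norm b\norm$, exactly as in the proof of Theorem \ref{thm-hdq1}. Bounding each summand by $C\norm f\norm_k\norm g\norm_k$ (each factor involves at most $k$ derivations) and counting the $2^l$ subsets gives $\norm f\star_\theta g\norm_k\le C\,2^k\norm f\norm_k\norm g\norm_k$ on $\caD(M)$; approximating $f,g\in\bH^k$ by $\caD(M)$-sequences then makes $f_n\star_\theta g_n$ a $\norm\fois\norm_k$-Cauchy sequence whose limit lies in $\bH^k$ and agrees with $\star_\theta$ computed in $\algA_\theta$ (the two limits coincide because $\norm\fois\norm\le\norm\fois\norm_k$). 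Stability under complex conjugation follows from condition 4 of Definition \ref{def-symhdq}, $JL_TJ=R_{\overline T}$, which gives $\overline{D_{i_1}\cdots D_{i_l}f}=(-1)^l D_{i_1}\cdots D_{i_l}\overline f$ and hence $\norm\overline f\norm_k=\norm f\norm_k$.

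Finally, for the phrase \emph{dense Hilbert subalgebra of $\algA_\theta$}: the embedding $\bH^k\hookrightarrow\algA_\theta$ is continuous because $\norm f\norm\le\norm f\norm_k$ (the $l=0$ term), and dense because $\bH^k$ contains $\caD(M)$, which is dense in $\algA_\theta$; axioms 1--3 of Definition \ref{def-hilbertalg} are inherited verbatim. For the remaining density axiom 4 I would invoke the joint-continuity bound once more: approximating $x,y\in\algA_\theta$ in the $\ehH$-norm by $f_n,g_n\in\bH^k$ forces $f_n\star_\theta g_n\to x\star_\theta y$, so $\{f\star_\theta g:f,g\in\bH^k\}$ is $\ehH$-dense in $\{x\star_\theta y:x,y\in\algA_\theta\}$, itself dense by axiom 4 for $\algA_\theta$. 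The step I expect to demand the most care is precisely this interplay between the two topologies — checking that the product extended in the strong $\norm\fois\norm_k$-topology coincides with $\star_\theta$ and that axiom 4 survives passage to the smaller algebra — together with the bookkeeping of the non-commutative Leibniz expansion; everything else runs parallel to the already-established Theorem \ref{thm-hdq1}.
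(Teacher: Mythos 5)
Your proof is correct and follows essentially the same route as the paper's: closure under $\star_\theta$ via the Leibniz rule for the inner derivations $L_{e_i}-R_{e_i}$ combined with joint continuity of the product coming from completeness of $\algA_\theta$, closure under conjugation via $JL_TJ=R_{\overline{T}}$ (with $\overline{e_i}=e_i$), and the identification of the topology of $\bH^k(\algA_\theta,\kg)$ with the one induced by $\langle-,-\rangle_k$. The additional bookkeeping you supply — the explicit subset expansion of the iterated Leibniz rule, the density extension of the product from $\caD(M)$ to $\bH^k(\algA_\theta,\kg)$, and the verification of axiom 4 of Definition \ref{def-hilbertalg} — only makes explicit steps that the paper treats as immediate.
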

\begin{proof}
For any $f_1,f_2\in\bH^k(\algA,\kg)$, we can show recursively by using the Leibniz rule for the $\star_\theta$-derivation $(L_{e_{i}}-R_{e_{i}})$ that $f_1\star_\theta f_2\in \bH^k(\algA,\kg)$. Indeed, at the first orders,
\begin{align*}
\norm (L_{e_{i}}-R_{e_{i}})(f_1\star_\theta f_2)\norm &\leq \norm (L_{e_{i}}-R_{e_{i}})f_1\norm\,\norm f_2\norm+\norm f_1\norm\,\norm (L_{e_{i}}-R_{e_{i}})f_2\norm<+\infty,\\
\norm (L_{e_{i_1}}-R_{e_{i_1}})(f_1\star_\theta f_2)\norm &\leq \norm (L_{e_{i_1}}-R_{e_{i_1}})(L_{e_{i_2}}-R_{e_{i_2}})f_1\norm\,\norm f_2\norm +\norm (L_{e_{i_1}}-R_{e_{i_1}})f_1\norm\,\norm (L_{e_{i_2}}-R_{e_{i_2}})f_2\norm\\
& +\norm (L_{e_{i_2}}-R_{e_{i_2}})f_1\norm\,\norm (L_{e_{i_1}}-R_{e_{i_1}})f_2\norm +\norm f_1\norm\,\norm (L_{e_{i_1}}-R_{e_{i_1}})(L_{e_{i_2}}-R_{e_{i_2}})f_2\norm,
\end{align*}
where we also used that the Hilbert algebra $\algA_\theta$ is complete, so the product $\star_\theta$ is jointly continuous for its Hilbert topology. Iteration of such identities yields the result: $f_1\star_\theta f_2\in\bH^k(\algA,\kg)$. For the involution $J$ (corresponding to the complex conjugation), we have
\begin{multline*}
\norm (L_{e_{i_1}}-R_{e_{i_1}})\dots (L_{e_{i_l}}-R_{e_{i_l}})\overline{f}\norm= \norm J(L_{e_{i_1}}-R_{e_{i_1}})JJ\dots JJ(L_{e_{i_l}}-R_{e_{i_l}})Jf\norm\\
= \norm (L_{e_{i_1}}-R_{e_{i_1}})\dots (L_{e_{i_l}}-R_{e_{i_l}})f\norm,
\end{multline*}
because $J$ is unitary and involutive, and $JL_{e_i}J=R_{\overline{e_i}}=R_{e_i}$. So if $f\in\bH^k(\algA,\kg)$, its complex conjugate also belongs to the Sobolev space. It is then easy to see that $\langle -,-\rangle_k$ is a hermitian positive definite scalar product and that it is associated to the topology of $\bH^k(\algA,\kg)$.
\end{proof}

\begin{remark}
We note $H^\infty(\algA,\kg)=\bigcap_{k=0}^\infty H^k(\algA,\kg)$. It is also a dense Hilbert subalgebra of $\algA$, and a Fr\'echet algebra for the projective limit topology.
\end{remark}

\begin{proposition}
For $k,l\in\gN$, the GBV induced spaces $\bG^{k,l}(\algA,\kg)$ satisfy
\begin{equation*}
\bG^{k,p}(\algA_\theta,\kg)\star_\theta\bG^{q,l}(\algA_\theta,\kg)\subset \bG^{k,l}(\algA_\theta,\kg),\qquad J(\bG^{k,l}(\algA_\theta,\kg))=\bG^{l,k}(\algA_\theta,\kg),
\end{equation*}
for any $p,q\in\gN$, and where $J$ is the complex conjugation. As a consequence, $\bG^{k,l}(\algA_\theta,\kg)$ is a subalgebra of $\algA_\theta$ and a Hilbert space for the scalar product
\begin{equation*}
\langle f_1,f_2\rangle_{k,l}:= \sum_{p=0}^k\sum_{q=0}^l \sum_{i_1,\dots,i_p\in I} \sum_{j_1,\dots,j_q\in I}\langle L_{e_{i_1}}\dots L_{e_{i_p}}R_{e_{j_1}}\dots R_{e_{j_q}}f_1, L_{e_{i_1}}\dots L_{e_{i_p}}R_{e_{j_1}}\dots R_{e_{j_q}}f_2 \rangle.
\end{equation*}
Moreover, $\bG^{k,k}(\algA_\theta,\kg)$ is a Hilbert subalgebra of $\algA_\theta$ contained in $\bH^k(\algA_\theta,\kg)$.
\end{proposition}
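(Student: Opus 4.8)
The plan is to establish all the operator identities on the dense subspace $\caD(M)$---where the $L_{e_i}$, $R_{e_i}$ are honestly defined and the multiplier relations of Definition \ref{def-symhdq} hold verbatim---and then to pass to the completions by density, exactly as in the preceding Sobolev proposition.

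For the product inclusion, I would fix $f_1\in\bG^{k,p}(\algA_\theta,\kg)$ and $f_2\in\bG^{q,l}(\algA_\theta,\kg)$. For any $a\le k$, $b\le l$ and any indices, I would use that the $L_{e_i}$ are left multipliers, the $R_{e_j}$ are right multipliers, and the $L$'s commute with the $R$'s (conditions 1--3) to obtain
\[
L_{e_{i_1}}\dots L_{e_{i_a}}R_{e_{j_1}}\dots R_{e_{j_b}}(f_1\star_\theta f_2)=(L_{e_{i_1}}\dots L_{e_{i_a}}f_1)\star_\theta(R_{e_{j_1}}\dots R_{e_{j_b}}f_2).
\]
The decisive observation is that no $R$ ever reaches $f_1$ and no $L$ ever reaches $f_2$, so the indices $p$ and $q$ play no role. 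Invoking the joint continuity of $\star_\theta$ (the fibres $\algA_\theta$ being complete of type $I_\infty$, as in Theorem \ref{thm-hdq1}), this is bounded by $C\,\norm L_{e_{i_1}}\dots L_{e_{i_a}}f_1\norm\,\norm R_{e_{j_1}}\dots R_{e_{j_b}}f_2\norm\le C\,\norm f_1\norm_{k,p}\,\norm f_2\norm_{q,l}$; taking the supremum over $a\le k$, $b\le l$ gives $\norm f_1\star_\theta f_2\norm_{k,l}\le C\,\norm f_1\norm_{k,p}\,\norm f_2\norm_{q,l}<\infty$, whence $f_1\star_\theta f_2\in\bG^{k,l}(\algA_\theta,\kg)$.

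For the conjugation statement I would use $JL_{e_i}J=R_{e_i}$ (condition 4, $\kg$ being real), hence $JR_{e_i}J=L_{e_i}$ since $J^2=\gone$. Inserting $\gone=J^2$ between consecutive factors and then commuting the $L$'s past the $R$'s yields
\[
L_{e_{i_1}}\dots L_{e_{i_p}}R_{e_{j_1}}\dots R_{e_{j_q}}J=J\,L_{e_{j_1}}\dots L_{e_{j_q}}R_{e_{i_1}}\dots R_{e_{i_p}}.
\]
Since $J$ is an isometric involution, applying this to $f$ shows $\norm L_{e_{i_1}}\dots R_{e_{j_q}}\,\overline f\norm=\norm L_{e_{j_1}}\dots L_{e_{j_q}}R_{e_{i_1}}\dots R_{e_{i_p}}f\norm$; the right side carries $q\le l$ left factors and $p\le k$ right factors, so taking suprema gives $\norm\overline f\norm_{k,l}=\norm f\norm_{l,k}$, and $J$ restricts to an isometric bijection $\bG^{k,l}(\algA_\theta,\kg)\to\bG^{l,k}(\algA_\theta,\kg)$.

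I would then read off the consequences. Specialising $p=l$, $q=k$ in the inclusion gives $\bG^{k,l}\star_\theta\bG^{k,l}\subset\bG^{k,l}$, so $\bG^{k,l}(\algA_\theta,\kg)$ is a subalgebra; for $k=l$ it is also $J$-stable by the previous step, hence a Hilbert subalgebra. As $\kg$ is finite-dimensional the index set $I$ is finite, so the defining sup-norm $\norm\cdot\norm_{k,l}$ is equivalent to $\sqrt{\langle f,f\rangle_{k,l}}$, and $\langle\cdot,\cdot\rangle_{k,l}$ is Hermitian and positive definite (its $p=q=0$ term is $\langle f_1,f_2\rangle$), so the completion is a Hilbert space for this scalar product. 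For $\bG^{k,k}\subset\bH^k$, I would expand $(L_{e_{i_1}}-R_{e_{i_1}})\dots(L_{e_{i_l}}-R_{e_{i_l}})$ into its $2^l$ monomials; pushing the $R$-factors to the right through the $L$-factors (condition 3), each monomial has the form $L_{\dots}R_{\dots}$ with $a$ left and $b$ right factors and $a+b=l\le k$, so $a\le k$ and $b\le k$ and its norm is $\le\norm f\norm_{k,k}$; summing over the $2^l$ terms gives $\norm f\norm_k\le 2^k\norm f\norm_{k,k}$, so the identity on $\caD(M)$ extends to a continuous inclusion $\bG^{k,k}(\algA_\theta,\kg)\hookrightarrow\bH^k(\algA_\theta,\kg)$. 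No single step is a real obstacle; the only care needed is the systematic left/right multiplier bookkeeping and the routine density argument extending each estimate from $\caD(M)$ to the completions.
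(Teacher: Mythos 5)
Your proposal is correct and follows essentially the same route as the paper's (much terser) proof: the key identity $L_{e_{i}}\dots R_{e_{j}}(f_1\star_\theta f_2)=(L_{e_{i}}\dots f_1)\star_\theta(R_{e_{j}}\dots f_2)$ combined with joint continuity of $\star_\theta$ from completeness, the relation $JL_{e_i}J=R_{e_i}$ to swap left and right factors under conjugation, and then the remaining claims as consequences. The only difference is that you spell out the details the paper dismisses as ``easy consequences'' (the $p=l$, $q=k$ specialization, the finite-dimensionality argument for the equivalence of the sup-norm with the Hilbertian norm, and the $2^l$-monomial expansion giving $\norm f\norm_k\leq 2^k\norm f\norm_{k,k}$), which is a faithful completion of the same argument.
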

\begin{proof}
By using the unbounded multiplier property of the symmetry: $L_{e_i}R_{e_j}(f_1\star_\theta f_2)= L_{e_i}(f_1)\star_\theta R_{e_j}(f_2)$, as well as the completeness of the HDQ $\algA_\theta$, we obtain the result concerning the star-product of GBV spaces. Since the complex conjugation transforms a left multiplication into a right one, we have also that $J(\bG^{k,l}(\algA_\theta,\kg))=\bG^{l,k}(\algA_\theta,\kg)$. The rest are easy consequences.
\end{proof}

\begin{remark}
\label{rmk-gbv}
It turns out that the GBV spaces form a decreasing sequence, i.e. $\bG^{k',l'}(\algA,\kg)\subset \bG^{k,l}(\algA,\kg)$ for $k\leq k'$ and $l\leq l'$. Moreover, we can see that $\bS(\algA,\kg)=\bigcap_{k,l\in\gN} \bG^{k,l}(\algA,\kg)$ and that the projective limit topology corresponds to the standard topology defined in Theorem \ref{thm-hdq1}.
\end{remark}

We see in particular that the spaces $H^k(\algA,\kg)$ and $G^{k,k}(\algA,\kg)$ define HDQs, and also non-trivial unbounded multipliers *-algebras.

\begin{lemma}
\label{lem-holom}
For any $k,l,p,q\in\gN$, we have
\begin{equation*}
\bG^{k,l}(\algA,\kg)\star_\theta \kM_\bb(\bS(\algA,\kg))\star_\theta \bG^{p,q}(\algA,\kg)\subset \bG^{k,q}(\algA,\kg).
\end{equation*}
\end{lemma}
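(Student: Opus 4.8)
The plan is to transport all the left symmetry operators onto the first factor and all the right symmetry operators onto the last factor, so that the boundedness of the middle multiplier closes the estimate. I first fix the meaning of the triple product: since the HDQ is complete, $\algA_\theta$ equals its own Hilbert completion and $\star_\theta$ is jointly continuous, while $\kM_\bb(\bS(\algA,\kg))=\kM_\bb(\algA_\theta)$ acts on this same Hilbert space. For $M=(L_M,R_M)$ I therefore read $f\star_\theta M\star_\theta g$ as $R_M(f)\star_\theta g=f\star_\theta L_M(g)$, the two forms coinciding by the defining identity of a bounded multiplier, namely the associativity $x\star_\theta L_M(y)=R_M(x)\star_\theta y$ (Definition \ref{def-multb}), extended from $\algA_\theta$ to all of $\ehH_{\algA_\theta}$ by continuity.

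The heart of the matter is the identity
\begin{equation*}
L_{e_{i_1}}\cdots L_{e_{i_a}}R_{e_{j_1}}\cdots R_{e_{j_b}}\big(f\star_\theta M\star_\theta g\big)=\big(L_{e_{i_1}}\cdots L_{e_{i_a}}f\big)\star_\theta M\star_\theta\big(R_{e_{j_1}}\cdots R_{e_{j_b}}g\big)
\end{equation*}
for $a\leq k$ and $b\leq q$. I would prove it by moving operators across the product one at a time, in the order in which they occur. Each right operator $R_{e_j}$ is pushed onto $g$ by writing the triple product as $f\star_\theta L_M(\cdots)$ and using that $R_{e_j}\in\kM_R(\bS(\algA,\kg))$ is a right multiplier, together with the commutation $R_{e_j}L_M=L_M R_{e_j}$; the latter holds because $\overline{R_{e_j}}$ is affiliated with $\kM_\bb(\bS(\algA,\kg))$ by Lemma \ref{lem-affilmult}, hence commutes with $L_M\in\kM_{L,\bb}(\bS(\algA,\kg))$. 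Symmetrically, once only right operators remain on $g$, each left operator $L_{e_i}$ is pushed onto $f$ by writing the triple product as $R_M(\cdots)\star_\theta g'$ and using $L_{e_i}\in\kM_L(\bS(\algA,\kg))$ together with $L_{e_i}R_M=R_M L_{e_i}$ (again Lemma \ref{lem-affilmult}, now commuting with $R_M\in\kM_{R,\bb}(\bS(\algA,\kg))$). I stress that I only ever commute a left operator past a right one; two left, or two right, factors are never exchanged. As in the earlier computation $\bG^{k,p}(\algA,\kg)\star_\theta\bG^{q,l}(\algA,\kg)\subset\bG^{k,l}(\algA,\kg)$, I would establish these identities first on the dense domain $\caD(M)$ and then propagate them by continuity, the GBV norms guaranteeing that every intermediate element stays in the space where the next operator is defined.

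With the identity available, the estimate is immediate. Setting $f':=L_{e_{i_1}}\cdots L_{e_{i_a}}f$ and $g':=R_{e_{j_1}}\cdots R_{e_{j_b}}g$, joint continuity of $\star_\theta$ with constant $C$ and the bound $\norm R_M\norm=\norm M\norm$ give
\begin{equation*}
\norm L_{e_{i_1}}\cdots L_{e_{i_a}}R_{e_{j_1}}\cdots R_{e_{j_b}}\big(f\star_\theta M\star_\theta g\big)\norm=\norm R_M(f')\star_\theta g'\norm\leq C\,\norm M\norm\,\norm f'\norm\,\norm g'\norm.
\end{equation*}
Since $a\leq k$ yields $\norm f'\norm\leq\norm f\norm_{k,l}$ and $b\leq q$ yields $\norm g'\norm\leq\norm g\norm_{p,q}$, taking the supremum over $a\leq k$, $b\leq q$ and all index choices produces $\norm f\star_\theta M\star_\theta g\norm_{k,q}\leq C\,\norm M\norm\,\norm f\norm_{k,l}\,\norm g\norm_{p,q}<\infty$, whence $f\star_\theta M\star_\theta g\in\bG^{k,q}(\algA,\kg)$.

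The step I expect to be delicate is the commutation identity rather than the final estimate: because $L_{e_i}$ and $R_{e_j}$ are unbounded, the equalities $R_{e_j}L_M=L_M R_{e_j}$ and $L_{e_i}R_M=R_M L_{e_i}$, and the passage of each multiplier across the product, must be read as inclusions of operators on a common dense domain and justified there before extension. Checking that no domain is lost when a right operator is carried through the bounded multiplier, equivalently that the affiliation of Lemma \ref{lem-affilmult} is used in the correct orientation, is the only genuinely technical point; everything else reduces to the left and right multiplier rules and joint continuity of $\star_\theta$.
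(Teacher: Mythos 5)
Your proof is correct and follows essentially the same route as the paper: read the triple product as $R_M(f)\star_\theta g$, distribute the left operators onto $f$ and the right operators onto $g$ via the multiplier property, commute the bounded middle multiplier past the symmetry operators, and close the estimate with joint continuity of $\star_\theta$ and $\norm R_M\norm=\norm M\norm$. The only cosmetic difference is that you justify the commutation $L_{e_i}R_M=R_ML_{e_i}$ through the affiliation result (Lemma \ref{lem-affilmult}), while the paper cites the weak-commutant description (Lemma \ref{lem-wbcommut} and Proposition \ref{prop-commutant}); these are the same circle of facts, and your choice is if anything the more precise one for the strong, domain-preserving commutation actually needed.
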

\begin{proof}
Let $f_1\in\bG^{k,l}(\algA,\kg)$, $f_2\in\bG^{p,q}(\algA,\kg)$ and $T\in\kM_\bb(\bS(\algA,\kg))$. Then, we have
\begin{multline*}
\norm L_{e_{i_1}}\dots L_{e_{i_p}}R_{e_{j_1}}\dots R_{e_{j_q}}(R_T(f_1)\star_\theta f_2)\norm \leq \norm  L_{e_{i_1}}\dots L_{e_{i_p}} R_T(f_1)\norm\, \norm R_{e_{j_1}}\dots R_{e_{j_q}}f_2\norm\\
\leq \norm R_T\norm\, \norm  L_{e_{i_1}}\dots L_{e_{i_p}} f_1\norm\, \norm R_{e_{j_1}}\dots R_{e_{j_q}}f_2\norm<\infty,
\end{multline*}
by using the fact that $\algA_\theta$ is complete, and that $(R_T,L_T)\in\kM(\bS(\algA,\kg))'_w$ by Lemma \ref{lem-wbcommut} and Proposition \ref{prop-commutant}, so that $L_{e_{i_1}}\dots L_{e_{i_p}}$ and $R_T$ commute.
\end{proof}

\begin{proposition}
\label{prop-holom}
The induced Schwartz space $\bS(\algA,\kg)$ is stable under the holomorphic functional calculus.
\end{proposition}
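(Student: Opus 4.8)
The plan is to show that $\bS(\algA,\kg)$ is a \emph{spectrally invariant} (inverse-closed) Fréchet subalgebra of the von Neumann algebra of bounded multipliers $\kM_\bb(\bS(\algA,\kg))$, after which stability under the holomorphic functional calculus follows from the classical Cauchy-integral argument. First I would record that every element of $\bS(\algA,\kg)\subset\algA_\theta$ is a bounded element, so that through $\Xi$ (Lemma \ref{lem-ximultb}) each $a\in\bS(\algA,\kg)$ yields a bounded multiplier $\Xi_a\in\kM_\bb(\bS(\algA,\kg))$, with $\tau_\kM$ finer than the norm topology; this rests on the identification $\algA_\bb\simeq\caL^2(\ehH)$ (a full Hilbert algebra), which forces $\algA_\theta\subset\algA_\bb$. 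Passing to the unitization, it then suffices to prove: if $a\in\bS(\algA,\kg)$ and $\gone+a$ is invertible in $\kM_\bb(\bS(\algA,\kg))$, then $b:=(\gone+a)^{-1}-\gone$ again lies in $\bS(\algA,\kg)$. Note that $b$ is automatically a bounded multiplier, since $\kM_\bb(\bS(\algA,\kg))$ is a von Neumann algebra and hence inverse-closed.

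The heart of the argument is a sandwich identity combined with Lemma \ref{lem-holom}. From $(\gone+a)(\gone+b)=(\gone+b)(\gone+a)=\gone$ one gets $b=-a-a\star_\theta b$ and $b=-a-b\star_\theta a$; substituting the second relation into the first yields
\begin{equation*}
b=-a+a\star_\theta a+a\star_\theta b\star_\theta a.
\end{equation*}
The term $-a+a\star_\theta a$ lies in $\bS(\algA,\kg)$ because the latter is an algebra. For the remaining term I would use that $a\in\bS(\algA,\kg)=\bigcap_{k,l}\bG^{k,l}(\algA,\kg)$ (Remark \ref{rmk-gbv}) while $b\in\kM_\bb(\bS(\algA,\kg))$, so Lemma \ref{lem-holom} gives
\begin{equation*}
a\star_\theta b\star_\theta a\in\bG^{k,l}(\algA,\kg)\star_\theta\kM_\bb(\bS(\algA,\kg))\star_\theta\bG^{p,q}(\algA,\kg)\subset\bG^{k,q}(\algA,\kg)
\end{equation*}
for all $k,q$, whence $a\star_\theta b\star_\theta a\in\bigcap_{k,q}\bG^{k,q}(\algA,\kg)=\bS(\algA,\kg)$. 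Therefore $b\in\bS(\algA,\kg)$, which is exactly the inverse-closedness we need.

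Finally, given $a\in\bS(\algA,\kg)$ and $f$ holomorphic on a neighbourhood of $\sigma(a)\cup\{0\}$, I would choose a contour $\Gamma$ surrounding the spectrum and write, for $z\in\Gamma$, $(z\gone-a)^{-1}-z^{-1}\gone=z^{-1}\big[(\gone-z^{-1}a)^{-1}-\gone\big]$, which lies in $\bS(\algA,\kg)$ by the inverse-closedness just proved and depends holomorphically on $z$ in the Fréchet topology $\tau_\kM$. Since $(\bS(\algA,\kg),\tau_\kM)$ is complete (Theorem \ref{thm-hdq1}), the Cauchy integral
\begin{equation*}
f(a)-f(0)\gone=\frac{1}{2\pi i}\oint_\Gamma f(z)\big[(z\gone-a)^{-1}-z^{-1}\gone\big]\,\dd z
\end{equation*}
converges in $\bS(\algA,\kg)$, establishing stability under the holomorphic functional calculus. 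The main obstacle I anticipate is precisely the middle step: the resolvent $b$ is merely a \emph{bounded} multiplier and carries no smoothness of its own, so a one-sided relation such as $b=-a-a\star_\theta b$ cannot by itself re-enter $\bS(\algA,\kg)$; one genuinely needs the two-sided sandwich $a\star_\theta b\star_\theta a$, in which the left factor absorbs the $L$-derivatives and the right factor the $R$-derivatives, which is exactly what Lemma \ref{lem-holom} is engineered to control.
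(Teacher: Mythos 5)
Your proposal is correct and takes essentially the same route as the paper: the paper's proof is exactly your two-sided sandwich identity $T=-f+f\star_\theta f+f\star_\theta T\star_\theta f$, followed by Lemma \ref{lem-holom} and Remark \ref{rmk-gbv} to place $f\star_\theta T\star_\theta f$ in $\bigcap_{k,l}\bG^{k,l}(\algA,\kg)=\bS(\algA,\kg)$. The only difference is that you make explicit the reduction to inverse-closedness and the concluding Cauchy-integral argument in the Fr\'echet topology, which the paper leaves implicit.
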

\begin{proof}
We adapt the argument of \cite{Gayral:2003dm} to this more general case. Let $f\in\bS(\algA,\kg)$ such that $1+f$ is invertible in $\kM_\bb(\algA)$, and we note $1+T$ its inverse. Let us show that $T\in\bS(\algA,\kg)$. Invertibility condition means $f+T+f\star_\theta T=0$ in $\kM_\bb(\bS(\algA,\kg))$ where we identify $f$ with its multiplier $\Xi_f=(\lambda_f,\rho_f)$. We multiply on the right by $f$ and obtain
\begin{equation*}
T=-f-T\star_\theta f=-f+f\star_\theta f +f\star_\theta T\star_\theta f.
\end{equation*}
By using Lemma \ref{lem-holom} and Remark \ref{rmk-gbv}, we have that $f\star_\theta T\star_\theta f\in \bigcap_{k,l\in\gN} \bG^{k,l}(\algA,\kg)=\bS(\algA,\kg)$; so $T\in \bS(\algA,\kg)$, and $\bS(\algA,\kg)$ is stable under holomorphic calculus.
\end{proof}

\subsection{Invariant symmetries}
\label{subsec-invsymm}

To motivate the definition of an invariant symmetry of a HDQ $\algA$, let us consider first a formal deformation quantization $\star_\theta$ of a symplectic manifold $(M,\omega)$, on which the Lie group $G$ is acting in a strong Hamiltonian way, i.e. there exists moment maps $\eta_X\in\caC^\infty(M)$ for any $X\in\kg$ ($\kg$: Lie algebra of $G$), such that the Poisson bracket with respect to $\omega$ gives $\{\eta_X,\eta_Y\}=\eta_{[X,Y]}$.

We recall that $\star_\theta$ is said to be covariant if $[\eta_X,\eta_Y]_{\star_\theta}=-i\theta\eta_{[X,Y]}$, and invariant if $\forall g\in G$  the pullback of the left action leaves the product invariant: $g^*(f_1\star_\theta f_2)=(g^*f_1)\star_\theta (g^*f_2)$. The invariance at the infinitesimal level provides $X^*(f_1\star_\theta f_2)=(X^*f_1)\star_\theta f_2+f_1\star_\theta (X^* f_2)$, for $X\in\kg$ and $X^*=\frac{\dd}{\dd t}_{|t=0} (e^{-tX})^*$ is the fundamental vector field.

Suppose that $M$ is cohomologically trivial, so that any derivation is inner, there exist $\Xi_X\in\caC^\infty(M)$ such that $X^*=\frac{i}{\theta}\,[\Xi_X,\fois]_{\star_\theta}$. But we have
\begin{equation*}
\frac{i}{\theta}\,[\Xi_X-\eta_X,\eta_Y]_{\star_\theta}=X^*(\eta_Y)-\eta_{[X,Y]}=0.
\end{equation*}
If the Lie group $G$ contains ``sufficiently symmetries'', then $\Xi_X=\eta_X$, and $X^*=\frac{i}{\theta}\,[\eta_X,\fois]_{\star_\theta}$, which will be a very useful property. 

Reciprocally, if $X^*=\frac{i}{\theta}\,[\eta_X,\fois]_{\star_\theta}$, then the star-product is obviously covariant and each fundamental vector is a $\star_\theta$-derivation. So, if the Lie group $G$ is connected, the pullback of the left action of $G$ acts by $\star_\theta$-automorphisms, so $\star_\theta$ is also $G$-invariant. This discussion comes from ideas of the retract method of P. Bieliavsky.

We can now introduce the notion of invariant symmetry. Let us consider now non-formal deformation quantization.
\begin{definition}
Let $(\algA_\theta,\star_\theta)$ be a complete HDQ of a smooth manifold $M$. Let $G$ be a connected Lie group acting smoothly and in a strong Hamiltonian way on $M$, with Lie algebra $\kg$. This action is said to be an {\defin invariant symmetry} of the HDQ $\algA_\theta$ if the left action $\kL_g$ of $g\in G$, defined by $\kL_g(f)(x):=f(g^{-1}x)$ ($x\in M$), is an automorphism of the Hilbert algebra $\algA_\theta$ and if
\begin{equation*}
\forall X\in\kg\quad:\quad X^*=\frac{i}{\theta} \,[\eta_X,\fois]_{\star_\theta},
\end{equation*}
where $\eta_X$ denotes the moment map of $X\in\kg$.
\end{definition}
Given such a $G$-invariant symmetry of the HDQ $\algA$, the moment maps $\eta_X$ then form a symmetry of the HDQ $\algA_\theta$. We can consider the star-exponential of the moment maps $E_{\star_\theta}(\frac{i}{\theta}\eta_X)$ (see section \ref{subsec-starexp}) or its integrated version:
\begin{equation*}
\caE_{e^{X}}:=E_{\star_\theta}(\frac{i}{\theta}\eta_X).
\end{equation*}

\begin{proposition}
Let $\algA$ be a complete HDQ with a $G$-invariant symmetry. Then the star-exponential induces an injective homomorphism $\caE:G\to\kM_\bb(\algA)$, valued in the unitary multipliers stabilizing $\bS(\algA,\kg)$, and satisfying
\begin{equation}
\forall f\in\algA_\theta,\quad \forall g\in G\quad:\quad \kL_g(f)=\caE_g\star_\theta f\star_\theta\caE_{g^{-1}}.\label{eq-starexp}
\end{equation}
\end{proposition}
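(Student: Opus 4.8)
The plan is to assemble $\caE$ from the star-exponentials $\caE_{e^X}=E_{\star_\theta}(\frac{i}{\theta}\eta_X)$ of the moment maps, about which Theorem \ref{thm-starexp} already tells us everything at the infinitesimal level, and then to globalise over the connected group $G$. By Theorem \ref{thm-starexp} (applied to the real elements $\eta_X\in\kg$), each $\caE_{e^X}$ is a unitary multiplier lying in $\kMstab(\bS(\algA,\kg))$, so unitarity and stability under $\bS(\algA,\kg)$ are already granted on generators. The key algebraic input is that covariance makes $X\mapsto\frac{i}{\theta}L_{\eta_X}$ a representation of $\Lie(G)$: from $[\eta_X,\eta_Y]_{\star_\theta}=-i\theta\,\eta_{[X,Y]}$ one computes $[\frac{i}{\theta}L_{\eta_X},\frac{i}{\theta}L_{\eta_Y}]=\frac{i}{\theta}L_{\eta_{[X,Y]}}$. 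Consequently the BCH property of the star-exponential reads, for $X,Y$ in a neighbourhood of $0$,
\begin{equation*}
\caE_{e^X}\star_\theta\caE_{e^Y}=\caE_{e^{\mathrm{BCH}(X,Y)}}=\caE_{e^Xe^Y},
\end{equation*}
i.e. a local group homomorphism.

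Next I would prove \eqref{eq-starexp} on the one-parameter subgroups. Since $\caE_{e^{tX}}=(e^{\frac{it}{\theta}L_{\eta_X}},e^{\frac{it}{\theta}R_{\eta_X}})$ and $L_{\eta_X}$, $R_{\eta_X}$ commute, the conjugation computes directly to
\begin{equation*}
\caE_{e^{tX}}\star_\theta f\star_\theta\caE_{e^{-tX}}=e^{\frac{it}{\theta}(L_{\eta_X}-R_{\eta_X})}f=e^{tX^*}f,
\end{equation*}
where the last equality uses the defining relation of the invariant symmetry, $X^*=\frac{i}{\theta}[\eta_X,\fois]_{\star_\theta}=\frac{i}{\theta}(L_{\eta_X}-R_{\eta_X})$. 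On the other hand $\kL_{e^{tX}}$ is exactly the flow $e^{tX^*}$ generated by the fundamental vector field, so the two sides coincide and \eqref{eq-starexp} holds for $g=e^{tX}$. The only analytic ingredient is the identification of both exponentials with the one-parameter group $e^{tX^*}$, which is uniqueness of the generator in the relevant (multiplier) topology.

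To reach all of $G$ I would use connectedness: write $g=e^{X_1}\cdots e^{X_n}$ and set $\caE_g:=\caE_{e^{X_1}}\star_\theta\cdots\star_\theta\caE_{e^{X_n}}$, which is again a unitary multiplier stabilising $\bS(\algA,\kg)$, that class being closed under $\star_\theta$ and inversion. Relation \eqref{eq-starexp} then propagates multiplicatively: if it holds for $g_1,g_2$ then $\caE_{g_1g_2}\star_\theta f\star_\theta\caE_{(g_1g_2)^{-1}}=\caE_{g_1}\star_\theta\kL_{g_2}(f)\star_\theta\caE_{g_1^{-1}}=\kL_{g_1}\kL_{g_2}(f)=\kL_{g_1g_2}(f)$, so it holds throughout $G$. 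Injectivity is then immediate from \eqref{eq-starexp}: $\caE_g=\gone$ forces $\kL_g=\mathrm{id}$, hence $f(g^{-1}x)=f(x)$ for all $f\in\caD(M)$, so $g$ acts trivially on $M$ and $g=e$ by effectiveness of the action.

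The main obstacle is the well-definedness of $\caE_g$ in the last step, i.e. independence of the factorisation $g=e^{X_1}\cdots e^{X_n}$. From \eqref{eq-starexp} two such products implement the same automorphism $\kL_g$, hence differ only by a unitary in the bounded centre $\caZ_\bb(\algA)$; the task is to show this residual central phase is trivial. This is where one genuinely exploits connectedness together with the phase-free BCH normalisation of the local homomorphism: one extends the germ of a homomorphism at $e$ along paths in $G$ and verifies it is monodromy-free (equivalently, that the homomorphism of the universal cover descends to $G$). In the concrete examples $G$ is exponential or simply connected, so this reduces to the already established local statement.
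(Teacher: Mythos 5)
Your proposal is correct and takes essentially the same route as the paper: the paper's entire proof consists of invoking Theorem \ref{thm-starexp} for the first part, observing that the infinitesimal version of \eqref{eq-starexp} is the defining identity $X^*=\frac{i}{\theta}\,[\eta_X,\fois]_{\star_\theta}$, and then stating that ``integration of this condition provides the result''. Your one-parameter-subgroup computation $\caE_{e^{tX}}\star_\theta f\star_\theta\caE_{e^{-tX}}=e^{\frac{it}{\theta}(L_{\eta_X}-R_{\eta_X})}f=e^{tX^*}f=\kL_{e^{tX}}(f)$, the BCH local homomorphism, and the multiplicative propagation are exactly the content of that last sentence, carried out in detail. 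The added value of your write-up is that it makes visible two points the paper passes over in silence, and both are genuine: (a) well-definedness of $\caE_g$, i.e.\ independence of the factorization $g=e^{X_1}\cdots e^{X_n}$, amounts to the vanishing of a central monodromy and really does require $G$ simply connected or exponential, or an integrality-type condition on the moment maps --- think of an $SO(3)$-action whose star-exponentials only assemble into a representation of $SU(2)$, the familiar half-integer-spin obstruction; (b) injectivity of $\caE$ needs effectiveness of the $G$-action on $M$ (or some substitute ruling out a trivial central phase), an assumption absent from the definition of invariant symmetry. Neither issue is addressed by the paper's proof, so your version is not a different argument but a faithful, and more honest, unpacking of it; the residual gap you flag is a gap in the proposition as stated rather than in your reasoning.
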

\begin{proof}
The first part of the Proposition is an easy consequence of Theorem \ref{thm-starexp}. Then, the infinitesimal version of Equation \eqref{eq-starexp} is given by $X^*=\frac{i}{\theta} \,[\eta_X,\fois]_{\star_\theta}$ since $X^*$ is the derivative of $\kL_{e^{tX}}$ and $\eta_X$ the one of $\caE_{e^{tX}}=E_{\star_\theta}(\frac{i}{\theta}t\eta_X)$. Integration of this condition provides the result.
\end{proof}

We say that an intertwiner $V:\algA\to \algB$, between two complete HDQ $\algA$ and $\algB$ with the same invariant $G$-symmetry, is {\defin $G$-equivariant} if the map $V_\theta$ is equivariant for the actions of $G$ on $\algA_\theta$ and $\algB_\theta$.
\begin{proposition}
If the HDQs $\algA$ and $\algB$ have the same invariant $G$-symmetry and if $V:\algA\to\algB$ is a $G$-equivariant intertwiner, the bounded and unbounded extensions $\tilde V$ are also $G$-equivariant.
\end{proposition}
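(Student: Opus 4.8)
The plan is to reduce the statement to a direct computation of conjugations, exactly in the spirit of the proof of Proposition \ref{prop-morph}. First I would assemble the three relevant ingredients. By Proposition \ref{prop-hdq} the group $G$ acts on $\kM(\algA)$ (and on $\kM_\bb(\algA)$) by conjugation with the automorphism $\kL_g$, so that for a multiplier $T=(L,R)$ one has $g\cdot T=(\kL_g\circ L\circ\kL_g^{-1},\kL_g\circ R\circ\kL_g^{-1})$, and the same formula describes the action on $\kM(\algB)$ (resp. $\kM_\bb(\algB)$). By Proposition \ref{prop-morph} the extension is given componentwise by $\tilde V(T)=(V_\theta\circ L\circ V_\theta^{-1},V_\theta\circ R\circ V_\theta^{-1})$. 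Finally, $G$-equivariance of the intertwiner $V$ means $V_\theta\circ\kL_g=\kL_g\circ V_\theta$ for every $g\in G$ and every $\theta$, where the left-hand $\kL_g$ is the action on $\algA_\theta$ and the right-hand one the action on $\algB_\theta$; equivalently $V_\theta\circ\kL_g\circ V_\theta^{-1}=\kL_g$ and $V_\theta\circ\kL_g^{-1}\circ V_\theta^{-1}=\kL_g^{-1}$.

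Then I would simply compare $\tilde V(g\cdot T)$ with $g\cdot\tilde V(T)$ on each component. On the $L$-component,
\begin{equation*}
V_\theta\circ(\kL_g\circ L\circ\kL_g^{-1})\circ V_\theta^{-1}=(V_\theta\circ\kL_g\circ V_\theta^{-1})\circ(V_\theta\circ L\circ V_\theta^{-1})\circ(V_\theta\circ\kL_g^{-1}\circ V_\theta^{-1})=\kL_g\circ(V_\theta\circ L\circ V_\theta^{-1})\circ\kL_g^{-1},
\end{equation*}
where the middle equality only inserts $V_\theta^{-1}\circ V_\theta=\mathrm{id}$ twice and the last one uses the equivariance relation twice. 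The right-hand side is precisely the $L$-component of $g\cdot\tilde V(T)$. The computation for the $R$-component is identical, and nothing in it distinguishes the bounded from the unbounded case, so it settles both statements at once: $\tilde V(g\cdot T)=g\cdot\tilde V(T)$, i.e.\ $\tilde V$ is $G$-equivariant.

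The argument is elementary once these three relations are in place, so there is no genuine analytic difficulty; the only points requiring care are purely bookkeeping. I would make sure that the two occurrences of $\kL_g$ are correctly attributed to the source algebra $\algA$ and the target algebra $\algB$ (so that the equivariance of $V$ is exactly the bridge between them), and that the inverses are tracked correctly when passing $V_\theta^{-1}$ through $\kL_g$. For rigour in the unbounded case one should also observe that all the operators involved stabilize the relevant domains: this is guaranteed because $\kL_g$ and $V_\theta$ are automorphisms of Hilbert algebras and $G$ already stabilizes $\kM(\algA)$ and $\kM(\algB)$ by Proposition \ref{prop-hdq}, so the conjugates appearing above are again well-defined multipliers and the equalities hold in $\kM(\algB)$.
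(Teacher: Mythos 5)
Your proof is correct and is essentially the paper's own argument: both reduce the claim to the one-line conjugation computation $\tilde V(\kL_g(T))=V\circ\kL_g\circ T\circ\kL_{g^{-1}}\circ V^{-1}=\kL_g\circ V\circ T\circ V^{-1}\circ\kL_{g^{-1}}=\kL_g(\tilde V(T))$, using the equivariance of $V$ to slide $\kL_g$ past it. Your added remarks on componentwise bookkeeping and domain stabilization are careful but not a different route.
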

\begin{proof}
Indeed, we compute that
\begin{equation*}
\tilde V(\kL_g(T))=V\circ \kL_g\circ T\circ\kL_{g^{-1}}\circ V^{-1}=\kL_g\circ V\circ T\circ V^{-1}\circ \kL_{g^{-1}}=\kL_g(\tilde V(T)),
\end{equation*}
for $g\in G$, $T\in\kM_\bb(\algA)$ or $\kM(\algA)$, due to $G$-invariant symmetry and to the equivariance of $V$.
\end{proof}

\begin{proposition}
Let $\algA$ be a complete HDQ with a $G$-invariant symmetry and $\Omega:\algA\to\caL^2(\ehH)$ a representation of $\algA$. Then, $U_\theta(g):=\Omega_\theta(\caE_g)$ defines a unitary representation $U_\theta:G\to \caU(\ehH_\theta)$ and $\Omega$ is $G$-equivariant:
\begin{equation*}
\forall f\in\algA_\theta,\quad \forall g\in G\quad:\quad \Omega_\theta(\kL_g(f))=U_\theta(g)\Omega_\theta(f)U_\theta(g^{-1}).
\end{equation*}
\end{proposition}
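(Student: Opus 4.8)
The plan is to realise $U_\theta(g)=\Omega_\theta(\caE_g)$ through the bounded extension of $\Omega_\theta$ and then to transport the conjugation identity \eqref{eq-starexp} across the resulting *-isomorphism. Since $\caE_g$ is not an element of $\algA_\theta$ but a bounded unitary multiplier, $\Omega_\theta(\caE_g)$ is to be read as $\tilde\Omega_\theta(\caE_g)$, where $\tilde\Omega_\theta:\kM_\bb(\algA_\theta)\to\kM_\bb(\caL^2(\ehH_\theta))\simeq\caB(\ehH_\theta)$ is the von Neumann isomorphism extending $\Omega_\theta$. The key preliminary identity I would record is that, for $x\in\algA_\bb$, $\tilde\Omega_\theta(\Xi_x)=\Omega_\theta(x)$ as operators on $\ehH_\theta$: by Proposition \ref{prop-morph} applied to $\Omega_\theta$ and the multiplicativity of $\Omega_\theta$ one has $\tilde\Omega_\theta(\Xi_x)=(\Omega_\theta\lambda_x\Omega_\theta^{-1},\Omega_\theta\rho_x\Omega_\theta^{-1})=(\lambda_{\Omega_\theta(x)},\rho_{\Omega_\theta(x)})=\Xi_{\Omega_\theta(x)}$, and under the identification $\kM_\bb(\caL^2(\ehH_\theta))\simeq\caB(\ehH_\theta)$ the multiplier $\Xi_{\Omega_\theta(x)}$ is precisely the Hilbert--Schmidt operator $\Omega_\theta(x)$.

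For the first assertion I would invoke the preceding proposition, by which $\caE:G\to\kM_\bb(\algA_\theta)$ is a group homomorphism valued in the unitary multipliers (itself a consequence of Theorem \ref{thm-starexp}). As $\tilde\Omega_\theta$ is a *-isomorphism, $U_\theta(g)=\tilde\Omega_\theta(\caE_g)$ is unitary and
\begin{equation*}
U_\theta(e)=\tilde\Omega_\theta(\gone)=\text{id},\qquad U_\theta(g)U_\theta(h)=\tilde\Omega_\theta(\caE_g\caE_h)=\tilde\Omega_\theta(\caE_{gh})=U_\theta(gh),
\end{equation*}
while $\caE_{g^{-1}}=\caE_g^*$ gives $U_\theta(g^{-1})=U_\theta(g)^*$; hence $U_\theta:G\to\caU(\ehH_\theta)$ is a unitary representation.

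For the equivariance I would rewrite \eqref{eq-starexp} at the level of multipliers. Two applications of Lemma \ref{lem-ximultb} yield $\caE_g\,\Xi_f\,\caE_{g^{-1}}=\Xi_{\caE_g\star_\theta f\star_\theta\caE_{g^{-1}}}$ in $\kM_\bb(\algA_\theta)$, and \eqref{eq-starexp} identifies the subscript with $\kL_g(f)$, so that $\caE_g\,\Xi_f\,\caE_{g^{-1}}=\Xi_{\kL_g(f)}$. Applying the *-homomorphism $\tilde\Omega_\theta$ and the identity $\tilde\Omega_\theta(\Xi_x)=\Omega_\theta(x)$ of the first paragraph then gives
\begin{equation*}
U_\theta(g)\,\Omega_\theta(f)\,U_\theta(g^{-1})=\Omega_\theta(\kL_g(f)),
\end{equation*}
which is the desired relation. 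The one genuinely delicate point is this bookkeeping identification $\tilde\Omega_\theta(\Xi_x)=\Omega_\theta(x)$, which bridges the abstract multiplier computation and concrete operators on $\ehH_\theta$; once it is secured, both assertions are direct transports of the homomorphism and conjugation relations through $\tilde\Omega_\theta$.
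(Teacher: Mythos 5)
Your proposal is correct and follows essentially the same route as the paper: the paper's own proof is a one-line appeal to the definition of a representation and to Equation \eqref{eq-starexp}, which is precisely what you carry out, pushing \eqref{eq-starexp} through the von Neumann extension $\tilde\Omega_\theta:\kM_\bb(\algA_\theta)\to\caB(\ehH_\theta)$. The only difference is that you make explicit the bookkeeping the paper leaves implicit (the identity $\tilde\Omega_\theta(\Xi_x)=\Omega_\theta(x)$ and the multiplier rewriting $\caE_g\,\Xi_f\,\caE_{g^{-1}}=\Xi_{\kL_g(f)}$ via Lemma \ref{lem-ximultb}), which is a faithful and complete elaboration rather than a different argument.
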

\begin{proof}
This is just a consequence of the definition of a representation (see Definition \ref{def-hdq}) and of Equation \eqref{eq-starexp}.
\end{proof}

\begin{proposition}
Let $\algA$ be a complete HDQ with a $G$-invariant symmetry. Then, 
\begin{itemize}
\item $\bS(\algA_\theta,\kg)$ is the space of smooth vectors of $\algA_\theta$ for the action $G\times G\to\Aut(\algA_\theta)$, $(g,g')\mapsto L_{\caE_g}R^*_{\caE_{g'}}$.
\item $\bH^k(\algA_\theta,\kg)$ is the space of $\caC^k$-vectors of $\algA_\theta$ for the action $G\to\Aut(\algA_\theta)$, $g\mapsto L_{\caE_g}R^*_{\caE_{g}}$. It is independent of the parameter $\theta$.
\end{itemize}
\end{proposition}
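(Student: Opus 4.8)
The plan is to realise each of the two functional spaces as the space of smooth (respectively $\caC^k$) vectors of a strongly continuous unitary representation of a Lie group, and then to match the defining seminorms of $\bS$ and $\bH^k$ with those coming from the infinitesimal action. For the Schwartz space I would first set $U(g,g'):=L_{\caE_g}R^*_{\caE_{g'}}$ for $(g,g')\in G\times G$. Since $\caE:G\to\kM_\bb(\algA_\theta)$ is a homomorphism into unitary multipliers (Theorem \ref{thm-starexp}) and $R^*_{\caE_{g'}}=R_{\caE_{g'^{-1}}}$, one checks $U(g_1,g_1')U(g_2,g_2')=U(g_1g_2,g_1'g_2')$, so $U$ is a unitary representation of $G\times G$ on $\ehH_{\algA_\theta}$, strongly continuous because $t\mapsto\caE_{e^{tX}}$ is. Differentiating, its generators are $\frac{i}{\theta}L_{\eta_X}$ along the first factor and $-\frac{i}{\theta}R_{\eta_X}$ along the second; as $X$ ranges over $\kg$, the generators of the enveloping action are exactly the operators $L_SR_T$, $S,T\in\caU(\kg)$. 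By the standard theory of smooth vectors for representations of Lie groups, the space $\ehH^\infty$ of smooth vectors of $U$ is a Fr\'echet space for the seminorms $f\mapsto\norm L_SR_T(f)\norm$ — precisely the seminorms defining $\bS(\algA_\theta,\kg)$ in Theorem \ref{thm-hdq1}. Because $\caD(M)\subset\bS(\algA_\theta,\kg)\subset\ehH^\infty$ and $\ehH^\infty$ is complete, $\bS(\algA_\theta,\kg)$ is identified with the closure of $\caD(M)$ in $\ehH^\infty$, which gives the inclusion $\bS(\algA_\theta,\kg)\subseteq\ehH^\infty$.

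The reverse inclusion is the point I expect to be the main obstacle, and I would establish it by showing $\caD(M)$ is dense in $\ehH^\infty$ via a G\aa rding regularization. For $\phi\in\caD(G\times G)$ put $U(\phi):=\int_{G\times G}\phi(g,g')\,U(g,g')\,\dd g\,\dd g'$. Each $U(g,g')$ stabilises $\bS(\algA_\theta,\kg)$ since $L_{\caE_g}$ and $R^*_{\caE_{g'}}$ are unitary multipliers in $\kMstab(\bS(\algA_\theta,\kg))$ by Theorem \ref{thm-starexp}; hence for $h\in\caD(M)$ the Riemann sums of $U(\phi)h$ lie in $\bS(\algA_\theta,\kg)$, and as they converge in $\ehH^\infty$ and $\bS(\algA_\theta,\kg)$ is closed there, $U(\phi)h\in\bS(\algA_\theta,\kg)$. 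On the other hand $U(\phi)$ maps $\ehH_{\algA_\theta}$ continuously into $\ehH^\infty$, because applying a generator under the integral sign merely replaces $\phi$ by an invariant derivative of $\phi$; thus approximating any $v\in\ehH_{\algA_\theta}$ in norm by $h_n\in\caD(M)$ gives $U(\phi)h_n\to U(\phi)v$ in $\ehH^\infty$. Consequently the $\ehH^\infty$-closure of $\caD(M)$ contains the whole G\aa rding subspace $\{U(\phi)v\}$, which is dense in $\ehH^\infty$; this yields $\ehH^\infty\subseteq\bS(\algA_\theta,\kg)$ and hence equality. The delicate feature here is that the multipliers $L_S,R_T$ are in general nonlocal, so a naive cutoff in $M$ need not work — it is the stabilisation of $\bS$ under the group action that rescues the argument.

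For the Sobolev spaces I would use Equation \eqref{eq-starexp}, which gives $L_{\caE_g}R^*_{\caE_g}=\kL_g$, so the relevant representation is the geometric left action $\kL$ of $G$ on $\ehH_{\algA_\theta}$, with infinitesimal generator in the direction $X$ the fundamental vector field $X^*=\frac{i}{\theta}(L_{\eta_X}-R_{\eta_X})$. Writing $e_i=\eta_{X_i}$ for a basis $(X_i)$ of $\kg$, one has $L_{e_i}-R_{e_i}=-i\theta\,X_i^*$, so the norm $\norm\fois\norm_k$ of Definition \ref{def-sobolev} equals $\sup_{l\leq k}|\theta|^{l}\norm X^*_{i_1}\cdots X^*_{i_l}(\fois)\norm$, which is equivalent to the $\caC^k$-norm of $\kL$. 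Repeating the G\aa rding argument with $\kL$ in place of $U$ — using that $\kL_g$ preserves $\caC^k$-vectors, as $\kL_g X^*\kL_{g^{-1}}=(\Ad_gX)^*$ — shows that $\caD(M)$ is dense in the Banach space of $\caC^k$-vectors, whence $\bH^k(\algA_\theta,\kg)$ equals that space. Finally, since the action $\kL_g(f)(x)=f(g^{-1}x)$ and the vector fields $X^*$ do not involve $\theta$, the space of $\caC^k$-vectors, and therefore $\bH^k(\algA_\theta,\kg)$, is independent of $\theta$ (the factors $|\theta|^l$ only rescale equivalent norms).
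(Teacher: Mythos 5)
Your proposal is correct, and its core step --- identifying the infinitesimal generators of $(g,g')\mapsto L_{\caE_g}R^*_{\caE_{g'}}$ with $\tfrac{i}{\theta}L_{\eta_X}$ and $-\tfrac{i}{\theta}R_{\eta_X}$, and of $g\mapsto \kL_g$ with $X^*=\tfrac{i}{\theta}(L_{\eta_X}-R_{\eta_X})$, so that the graph seminorms of the enveloping actions coincide with the seminorms defining $\bS(\algA_\theta,\kg)$ and $\bH^k(\algA_\theta,\kg)$ --- is exactly the paper's entire proof: the paper records only the relations $L_{\eta_X}=\tfrac{\dd}{\dd t}_{|t=0}L_{(\caE_{e^{tX}})}$, $R_{\eta_X}=\tfrac{\dd}{\dd t}_{|t=0}R_{(\caE_{e^{tX}})}$ and $L_{\eta_X}-R_{\eta_X}=-i\theta X^*$, and declares the proposition a consequence. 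What you supply beyond this, and what the paper leaves implicit, is the reverse inclusion: because $\bS$ and $\bH^k$ are defined as closures of $\caD(M)$ whereas smooth and $\caC^k$ vectors are defined intrinsically, one must still show $\caD(M)$ is dense in those spaces for their natural topologies. Your G\aa rding regularization does this, and your observation that one must use the stabilization of $\bS$ under $U(g,g')$ (Theorem \ref{thm-starexp}) --- since $L_{\caE_g}$ and $R^*_{\caE_{g'}}$ are nonlocal and do not preserve $\caD(M)$ --- is the right mechanism. Two minor remarks, neither affecting correctness: in the Sobolev case the argument is simpler than you make it, since $\kL_g$ is the geometric pullback and maps $\caD(M)$ onto $\caD(M)$, so no appeal to preservation of $\caC^k$-vectors is needed; and for the inclusion $\bS(\algA_\theta,\kg)\subseteq\ehH^\infty$ one should cite the standard characterization of smooth vectors as the intersection of domains of products of the closed generators (Goodman--Poulsen), since elements of $\bS(\algA_\theta,\kg)$ are only limits of elements of $\caD(M)$ in the graph topology rather than elements on which the $L_SR_T$ act a priori.
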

\begin{proof}
This result is a consequence of the definition of 
\begin{equation*}
L_{\eta_X}=\frac{\dd}{\dd t}_{|t=0} L_{(\caE_{e^{tX}})},\qquad R_{\eta_X}=\frac{\dd}{\dd t}_{|t=0} R_{(\caE_{e^{tX}})},\qquad L_{\eta_X}-R_{\eta_X}=[\eta_X,\fois]_{\star_\theta}=-i\theta X^*.
\end{equation*}
\end{proof}

\section{Examples of Hilbert deformation quantization}

\subsection{Moyal-Weyl deformation quantization}
\label{subsec-moyal}

We recall the non-formal expression of the Moyal product: $\forall f_1,f_2\in\caD(\gR^{2n})$,
\begin{equation}
(f_1\star_\theta f_2)(x):=\frac{1}{(\pi\theta)^{2n}}\int f_1(y)f_2(z) e^{-\frac{2i}{\theta}(\omega(y,z)+\omega(z,x)+\omega(x,y))}\dd y\dd z,\label{eq-moyalprod}
\end{equation}
where $\omega$ is the standard symplectic form of $\gR^{2n}$, $\theta\in\gR^*$ and $x\in\gR^{2n}$. It satisfies the tracial identity
\begin{equation}
\int (f_1\star_\theta f_2)(x)\dd x=\int f_1(x) f_2(x)\dd x.\label{eq-trident}
\end{equation}
It is then well-known that the star-product $\star_\theta$ extends to $L^2(\gR^{2n})$, and that space endowed with $\star_\theta$, the complex conjugation and the standard scalar product $\langle f_1,f_2\rangle:=\int_{\gR^{2n}} \overline{f_1(x)} f_2(x)\dd x$, is a full (and complete) Hilbert algebra. We then introduce its bounded multiplier algebra $\kM_\bb(L^2(\gR^{2n}))$, and call it the {\defin bounded Moyal multiplier algebra}. It is actually isomorphic to the left bounded multipliers (also used in \cite{Gayral:2003dm}) and it is a von Neumann algebra of type $I_\infty$ with trivial center $\caZ_\bb(L^2(\gR^{2n}))=\gC 1$.

We also recall the form of the Weyl quantization. If $x=(q,p)\in\gR^n\oplus\gR^n=\gR^{2n}$ that is a Lagrangian decomposition with respect to $\omega$, the Weyl map $\Omega_\theta: L^2(\gR^{2n})\to\caL^2(L^2(\gR^n))$ is given by
\begin{equation}
\Omega_\theta(f)\varphi(q_0):=\frac{2^n}{(\pi\theta)^n}\int f(q,p)e^{\frac{4i}{\theta}\omega(q-q_0,p)}\varphi(2q-q_0)\dd q\dd p,\label{eq-weyl}
\end{equation}
for $\varphi\in L^2(\gR^n)$ and $f\in L^2(\gR^{2n})$. See also \cite{Maillard:1986} for study on the Weyl map.

Set $\algA_\theta:=(L^2(\gR^{2n}),\star_\theta)$ for $\theta\neq 0$ and $\algA_0:=L^2(\gR^{2n})\cap L^\infty(\gR^{2n})$ in this section.
\begin{proposition}
The family $\algA:=(\algA_\theta)$ defines a complete Hilbert deformation quantization and the Weyl map \eqref{eq-weyl} is a representation of this HDQ.
\end{proposition}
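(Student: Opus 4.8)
The plan is to unwind Definition~\ref{def-hdq}: first confirm that each fibre $(\algA_\theta,\star_\theta)$ is a complete Hilbert algebra containing $\caD(\gR^{2n})$ densely, and then check that the Weyl maps $\Omega_\theta$ form a family of isometric $*$-morphisms into the Hilbert--Schmidt operators with dense range. For $\theta\neq 0$ most of the Hilbert-algebra content is exactly Rieffel's theorem \cite{Rieffel:1993}, recalled just above the statement; what remains is to assemble the axioms of Definition~\ref{def-hilbertalg} from the two structural facts supplied, the explicit kernel \eqref{eq-moyalprod} and the tracial identity \eqref{eq-trident}.

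For the Hilbert-algebra axioms I would first record that complex conjugation is a genuine involution for $\star_\theta$: conjugating \eqref{eq-moyalprod} and swapping the integration variables $y\leftrightarrow z$ flips every occurrence of $\omega$ and yields $\overline{f_1\star_\theta f_2}=\overline{f_2}\star_\theta\overline{f_1}$. Axiom~1, $\langle y^*,x^*\rangle=\langle x,y\rangle$, is then immediate from the pointwise commutativity of functions under the integral. Axiom~2, $\langle x\star_\theta y,z\rangle=\langle y,\overline x\star_\theta z\rangle$, I would obtain by writing both sides as integrals of triple products and combining associativity of $\star_\theta$ with \eqref{eq-trident} in the form $\int(g_1\star_\theta g_2)=\int g_1 g_2$, so that $\int(\overline y\star_\theta\overline x)\,z=\int(\overline y\star_\theta\overline x)\star_\theta z=\int\overline y\star_\theta(\overline x\star_\theta z)=\int\overline y\,(\overline x\star_\theta z)$. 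The continuity of $\lambda_x$ (axiom~3), the density of products (axiom~4), fullness and completeness are precisely Rieffel's statement, which I would cite. For the degenerate fibre I note that $\algA_0=L^2(\gR^{2n})\cap L^\infty(\gR^{2n})$ is the fulfillment of the commutative Hilbert algebra $\caS(\gR^{2n})$ (the example recalled in Section~\ref{subsec-multcommut}), hence itself a full, complete, commutative Hilbert algebra. In every case $\caD(\gR^{2n})$ is dense in the underlying Hilbert space $L^2(\gR^{2n})$ and the completion is $L^2(\gR^{2n})$, so the required completeness holds; this establishes that $\algA=(\algA_\theta)$ is a complete HDQ.

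It remains to verify that the Weyl map \eqref{eq-weyl} is a representation for $\theta\neq0$, with $\ehH_\theta=L^2(\gR^n)$. The operator $\Omega_\theta(f)$ is an integral operator whose kernel is read off from \eqref{eq-weyl}, and I would check the three defining properties in turn. Isometry, $\norm\Omega_\theta(f)\norm_{\caL^2}=\norm f\norm$, is the classical unitarity of the Weyl transform from $L^2(\gR^{2n})$ onto the Hilbert--Schmidt class $\caL^2(L^2(\gR^n))$, a Plancherel-type computation on the Gaussian kernel; surjectivity onto $\caL^2(L^2(\gR^n))$ gives in particular dense range. The $*$-property $\Omega_\theta(\overline f)=\Omega_\theta(f)^*$ follows by conjugating and transposing the kernel of \eqref{eq-weyl}. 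The multiplicativity $\Omega_\theta(f_1\star_\theta f_2)=\Omega_\theta(f_1)\Omega_\theta(f_2)$ is the defining feature of the Moyal product: composing the two kernels of \eqref{eq-weyl} and carrying out the intermediate Gaussian integration reproduces exactly the phase of \eqref{eq-moyalprod}. Together these show that each $\Omega_\theta$ is an isometric $*$-morphism with dense range, i.e.\ a representation in the sense of Definition~\ref{def-hdq}.

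The only genuinely computational points are the isometry and the multiplicativity of $\Omega_\theta$; both reduce to evaluating oscillatory Gaussian integrals and tracking the normalisation constants $(\pi\theta)^{-2n}$ in \eqref{eq-moyalprod} and $2^n(\pi\theta)^{-n}$ in \eqref{eq-weyl}, with no conceptual obstruction. I expect the bookkeeping of these constants and phases to be the main nuisance; since these identities are exactly those underlying Rieffel's Hilbert-algebra theorem and the standard Moyal--Weyl calculus, one may alternatively invoke them directly rather than recompute.
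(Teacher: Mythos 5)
Your proposal is correct, and it follows the same route the paper (implicitly) takes: the paper states this proposition without any proof, treating it as a direct consequence of the facts recalled just before it, namely Rieffel's theorem that $(L^2(\gR^{2n}),\star_\theta)$ is a full complete Hilbert algebra and the standard properties of the Weyl transform (unitarity onto $\caL^2(L^2(\gR^n))$, the $*$-property, and multiplicativity with respect to $\star_\theta$). Your write-up simply fleshes out these routine verifications -- the involution and trace identities for the Hilbert-algebra axioms, the fulfillment of $\caS(\gR^{2n})$ for the fibre $\theta=0$, and the kernel computations for $\Omega_\theta$ -- and your closing remark that one may ``invoke them directly rather than recompute'' is exactly what the author does.
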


Let us have a look on some particular multipliers of the Moyal algebra.
\begin{example}
\label{ex-fourier}
The symplectic Fourier transform $\caF=(\caF_L,\caF_R)$ is a unitary multiplier in $\kM_{\bb}(\algA_\theta)$ given by
\begin{equation*}
\caF_L(f)(x)=\frac{1}{(\pi\theta)^n}\int f(y)e^{\frac{2i}{\theta}\omega(x,y)}\dd y,\qquad \caF_R(f)(x)=\frac{1}{(\pi\theta)^n}\int f(y)e^{-\frac{2i}{\theta}\omega(x,y)}\dd y.
\end{equation*}
The associated automorphism is the change of sign: $U_\caF(f)(x):=\caF_L(\caF_R)^*(f)(x)=f(-x)$. We call $\caF=(\caF_L,\caF_R)$ the {\defin Fourier multiplier}. By using the identification of the multipliers with tempered distributions (see Theorem \ref{thm-distribident}), we can see that the Fourier multiplier is associated to the Dirac distribution $(\pi\theta)^n\delta(x)$, as also noticed in \cite{Gayral:2003dm}.
\end{example}
\begin{proof}
First, it is well-known that $\caF_L$ and $\caF_R$ are unitary (normalization has been chosen for this). Then, for $f_1,f_2\in L^2(\gR^{2n})$, we have
\begin{equation*}
f_1\star_\theta \caF_L(f_2) (x)=\frac{1}{(\pi\theta)^n}\int f_1(y)f_2(x-y) e^{-\frac{2i}{\theta}\omega(x,y)}\dd y= \caF_R(f_1)\star_\theta f_2(x),
\end{equation*}
which shows that $(\caF_L,\caF_R)$ is a multiplier.
\end{proof}

\begin{example}
\label{ex-left}
The {\defin translation} by elements of the Abelian group $\gR^{2n}$:
\begin{equation*}
\kL_{x_0}(f)(x)=f(x-x_0)
\end{equation*}
is a group homomorphism $\kL:\gR^{2n}\to \Aut(\algA_\theta)$ valued in the inner automorphisms, due to the $\gR^{2n}$-invariance of the star-product. Indeed,
\begin{equation*}
L_{\kL_{x_0}}(f)(x)=f(x-\frac12 x_0)e^{\frac{i}{\theta}\omega(x_0,x)},\qquad R_{\kL_{x_0}}(f)(x)=f(x+\frac12 x_0)e^{\frac{i}{\theta}\omega(x_0,x)}
\end{equation*}
is a unitary multiplier and $\kL_{x_0}=L_{\kL_{x_0}}(R_{\kL_{x_0}})^*$. Moreover, $\kL$ induces a group homomorphism $\tilde\kL:\gR^{2n}\to\Aut(\kM_\bb(\algA_\theta))$. In the identification of Theorem \ref{thm-distribident}, $\kL_{x_0}$ is associated to the function $x\mapsto e^{\frac{i}{\theta}\omega(x_0,x)}$. It is easy to see the left action on the multipliers: $\tilde \kL_{x_0}(T)(x)=T(x-x_0)$, for any $T\in \kM_\bb(\algA_\theta)$ seen as a subset of $\caS'(\gR^{2n})$.
\end{example}

It turns out that the action of the translation group $\gR^{2n}$, under which the star-product is invariant, is Hamiltonian with respect to the standard symplectic form used for the deformation quantization, and the moment map of the action has the form
\begin{equation*}
\forall x,y\in\gR^{2n}\quad:\quad \eta_x(y)=\omega(x,y)
\end{equation*}
and it is covariant for the star-product: $[\eta_x,\eta_{x'}]_{\star_\theta}=-i\theta\{\eta_x,\eta_{x'}\}=i\theta\omega(x,x')$.
\begin{proposition}
The Lie algebra $\kg\simeq\gR^{2n}$ of the translation group, via the moment map, together with the unit $1$, yields an (invariant) symmetry of the HDQ $\algA$. Moreover, the Schwartz HDQ induced by this symmetry coincides with the Schwartz functions: $\bS(\algA_\theta,\kg)=(\caS(\gR^{2n}),\star_\theta)$.
\end{proposition}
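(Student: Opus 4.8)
The plan is to verify that $\kg$ satisfies every requirement of Definition \ref{def-symhdq} together with the invariance clause, and then to identify the locally convex topology induced by Theorem \ref{thm-hdq1} with the usual Schwartz topology.

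First I would set $\kg$ to be the span of the moment maps $\eta_x:y\mapsto\omega(x,y)$ (for $x\in\gR^{2n}$) together with the constant function $1$: this is a $(2n+1)$-dimensional (hence countable-dimensional) subspace of $\caC^\infty(\gR^{2n})$, stable under complex conjugation since each $\eta_x$ is real. Condition $1$ is exactly the relation $[\eta_x,\eta_{x'}]_{\star_\theta}=i\theta\omega(x,x')\,1$ recalled just before the statement: the $\star_\theta$-commutator of two moment maps is a central multiple of the unit, so $\kg$ closes into the Heisenberg Lie algebra and the required $T_3\in\kg$ exists. Each $L_{\eta_x},R_{\eta_x}$ is a first-order operator stabilizing $\caD(\gR^{2n})$ (see next paragraph), hence so is $L_T,R_T$ for every $T\in\caU(\kg)$, giving condition $2$; condition $3$ is associativity of $\star_\theta$ (left and right multiplications commute and form multiplier pairs), and condition $4$ is the fact that complex conjugation is an anti-automorphism, so $JL_{\eta_x}J=R_{\overline{\eta_x}}=R_{\eta_x}$; condition $5$ is automatic because $\caZ_\bb(\algA_\theta)=\gC\,1$ as recalled in this section. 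For invariance, the translations $\kL_{x_0}$ are automorphisms of $\algA_\theta$ by Example \ref{ex-left}, the action is strongly Hamiltonian with moment map $\eta_x$, and the required identity $X^*=\frac{i}{\theta}[\eta_X,\fois]_{\star_\theta}$ reduces, via $[\eta_X,f]_{\star_\theta}=-i\theta\{\eta_X,f\}$, to the statement that the fundamental vector field of the translation action is the Hamiltonian vector field $\{\eta_X,\fois\}$, which holds since $\eta_X$ is the moment map. Thus $\kg$ is an invariant symmetry.

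Next I would make the multipliers explicit. Since $\eta_x$ is linear, the asymptotic expansion \eqref{eq-intro-moyal} of the product \eqref{eq-moyalprod} terminates at first order, giving the exact formulas
\begin{equation*}
L_{\eta_x}(f)=\eta_x\,f-\tfrac{i\theta}{2}\{\eta_x,f\},\qquad R_{\eta_x}(f)=\eta_x\,f+\tfrac{i\theta}{2}\{\eta_x,f\},
\end{equation*}
so $L_{\eta_x}$ and $R_{\eta_x}$ are first-order differential operators with linear coefficients. As $x$ ranges over $\gR^{2n}$, $\eta_x$ runs through all linear functions and, because $\omega$ is nondegenerate, $\{\eta_x,\fois\}$ runs through all constant-coefficient first-order derivatives. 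The two combinations
\begin{equation*}
L_{\eta_x}+R_{\eta_x}=2\,\eta_x\fois,\qquad L_{\eta_x}-R_{\eta_x}=-i\theta\,\{\eta_x,\fois\}
\end{equation*}
then exhibit multiplication by every linear function and every first-order constant-coefficient derivative inside the algebra generated by the $L_{\eta_x}$ and $R_{\eta_y}$.

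The heart of the argument is the generation claim. Because $1\in\kg$ we have $L_1=R_1=\text{id}$, so each $L_{\eta_x}$ and each $R_{\eta_y}$ is already of the form $L_SR_T$; and since all $L$'s commute with all $R$'s (condition $3$), any word in the generators $\{L_{\eta_x},R_{\eta_y}\}$ can be reordered into a single $L_SR_T$ with $S,T\in\caU(\kg)$ (using that $L$ is a homomorphism and $R$ an anti-homomorphism). Combining this with the two identities above, an induction on degree shows that the linear span of $\{L_SR_T:S,T\in\caU(\kg)\}$ is precisely the algebra of all differential operators with polynomial coefficients on $\gR^{2n}$: it contains every monomial $x^\alpha\partial^\beta$, and is contained in that algebra since each $L_SR_T$ is such an operator. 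Consequently the seminorms $\norm f\norm_{S,T}=\norm L_SR_T(f)\norm$ of Theorem \ref{thm-hdq1} and the $L^2$-type Schwartz seminorms $f\mapsto\norm x^\alpha\partial^\beta f\norm$ dominate one another, hence define the same locally convex topology on $\caD(\gR^{2n})$. As $\caD(\gR^{2n})$ is dense in $\caS(\gR^{2n})$ for this complete topology and $\star_\theta$ preserves $\caS(\gR^{2n})$, the completion is $\bS(\algA_\theta,\kg)=(\caS(\gR^{2n}),\star_\theta)$. The main obstacle is exactly this generation step: one must check that restricting to the canonical products $L_SR_T$, rather than arbitrary alternating words, still recovers the full Weyl algebra, which is precisely where the inclusion $1\in\kg$ and the commutation of $L$'s with $R$'s are used. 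A secondary, routine point is the equivalence of the $L^2$-based seminorms here with the usual supremum Schwartz seminorms, which follows from Sobolev embedding, together with Rieffel's result that $\star_\theta$ maps $\caS(\gR^{2n})$ into itself.
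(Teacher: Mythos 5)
Your proof is correct and takes essentially the same route as the paper's: check the symmetry axioms via the covariance relation and the triviality of the bounded center $\caZ_\bb(L^2(\gR^{2n}))=\gC 1$, then identify the multiplier topology with the Schwartz topology through the combinations $L_{\eta_x}\pm R_{\eta_x}$ (the $\star_\theta$-anticommutator $2x_j\fois$ and commutator $i\theta(\omega^{-1}\partial_x)_j$), invoking the $L^2$-seminorm characterization of $\caS(\gR^{2n})$. The only difference is one of detail: the paper states tersely that these formulas generate the seminorms $\norm x^\alpha\partial^\beta f\norm$, while you spell out the generation step (reordering words into $L_SR_T$ using commutation of left and right multiplications, so that the span is the full algebra of polynomial-coefficient differential operators), which is a welcome but not essentially different elaboration.
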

\begin{proof}
Left and right $\star_\theta$-multiplications by linear functions are unbounded operators with domain containing $\caD(\gR^{2n})$, and the covariance of $\star_\theta$ gives the Lie algebra condition on these operators. Moreover, $\caZ_\bb(L^2(\gR^{2n}))=\gC 1$ so it commutes with $\caU(\kg)$. To identify $\algB$, we have to look at its multiplier topology. By Theorem \ref{thm-symhdq}, this topology is given by seminorms $\norm L_SR_T(f)\norm$, where the norm is the $L^2$-norm, $L_S$ is the left $\star_\theta$-multiplication by the polynom $S$ and $R_T$ is the right $\star_\theta$-multiplication by the polynomial $T$. Since, the commutator and the anticommutator have the following expression as unbounded operators
\begin{equation*}
[x_j,f]_{\star_\theta}(x)=i\theta (\omega^{-1}\partial_x)_jf(x),\qquad \{x_j,f\}_{\star_\theta}(x)=2x_jf(x),
\end{equation*}
this topology is generated by the seminorms $\norm x^\alpha\partial^\beta f\norm$ ($\alpha,\beta$ multi-indices), and so it corresponds to the standard topology of the Schwartz space by \cite{Reed:1980}.
\end{proof}
It turns out that $\bS(\algA,\kg)$ induced by the translation symmetry is a continuous deformation quantization. Actually, the situation is much better since M. Rieffel proved in \cite{Rieffel:1993} that $\kM_\bb(\algB)$ is a continuous field of C*-algebras.

\begin{remark}
The framework of symmetries of HDQ is more general than pullback of group actions on the underlying manifold $M$. For example on $M=\gR^2$ endowed with the Moyal-Weyl product, consider the complex countable-dimensional Lie algebra $\kg$ with generators $e^{nx}$ ($x\in\gR^2$ and $n\in\gZ^2$ or $n\in\gN^2$) and satisfying the relations
\begin{equation*}
[e^{nx},e^{mx}]_{\star_\theta}=2i\sin(\frac{\theta}{2}\omega(n,m)) e^{(n+m)x}.
\end{equation*}
This symmetry of $\algA$ also induces a HDQ of Fr\'echet algebras.
\end{remark}

\subsection{Matrix basis and GBV spaces}
\label{subsec-gbv}

In this section, we want to show that the GBV spaces $\bG^{k,l}(\algA_\theta,\kg)$ correspond to the ones introduced in \cite{GraciaBondia:1987kw} with the matrix basis, and that the Sobolev spaces $\bH^k(\algA,\kg)$ correspond to the usual Sobolev spaces $H^k(\gR^{2n})$. To get simpler expressions, let us work in dimension 2 ($n=1$) here, even if the results obtained are valid for arbitrary $n$.

First, we recall the {\defin matrix basis} $(b_{mn})_{m,n\in\gN}$ given in \cite{GraciaBondia:1987kw} by
\begin{equation*}
b_{mn}(x)=2(-1)^m\sqrt{\frac{m!}{n!}} e^{i(n-m)\varphi} \left(\frac{2r^2}{\theta}\right)^{\frac{n-m}{2}} L_m^{n-m}\left(\frac{2r^2}{\theta}\right)\, e^{-\frac{r^2}{\theta}},
\end{equation*}
where we use the polar coordinates $x=(x_1,x_2)=(r\cos(\varphi),r\sin(\varphi))\in\gR^2$ and the Laguerre polynomials $L_m^k$. Such a matrix basis is contained in $\caS(\gR^2)$ and it satisfies
\begin{equation*}
(b_{mn}\star_\theta b_{kl})(x)=\delta_{nk} b_{ml}(x),\qquad \int b_{mn}(x)\dd x=2\pi\theta \delta_{mn},\qquad\overline{b_{mn}(x)}=b_{nm}(x).
\end{equation*}

The above properties allow to show directly the following result.
\begin{proposition}
\label{prop-matrix}
Let $\ell^2(\gN^2)$ be the space of infinite matrices $(f_{mn})_{m,n\in\gN}$ such that $\sum_{m,n}|f_{mn}|^2<\infty$. It is a Hilbert algebra for the usual matrix product, the transpose-conjugation and the scalar product associated to the trace. Moreover, there is a Hilbert algebra isomorphism given by
\begin{equation*}
\ell^2(\gN^2)\to (L^2(\gR^2),\star_\theta),\qquad (f_{mn})\, \mapsto\, f(x)=\sum_{m,n=0}^\infty f_{mn}b_{mn}(x)
\end{equation*}
whose inverse has the form
\begin{equation*}
f\in L^2(\gR^2)\quad\mapsto\quad f_{mn}=\frac{1}{2\pi\theta}\int f(x) b_{nm}(x)\dd x.
\end{equation*}
\end{proposition}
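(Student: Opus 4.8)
The plan is to split the statement into two independent claims: that $\ell^2(\gN^2)$ is a Hilbert algebra, and that the stated map $\Psi\colon (f_{mn})\mapsto\sum_{m,n}f_{mn}b_{mn}$ is an isomorphism onto $(L^2(\gR^2),\star_\theta)$. For the first claim I would simply identify $\ell^2(\gN^2)$ with the Hilbert--Schmidt operators $\caL^2(\ell^2(\gN))$ on the separable Hilbert space $\ell^2(\gN)$: an array $(f_{mn})$ with $\sum_{m,n}|f_{mn}|^2<\infty$ is the matrix, in the standard orthonormal basis, of such an operator, matrix multiplication corresponds to composition, transpose--conjugation to the operator adjoint, and the scalar product $\langle f,g\rangle=\tr(f^*g)=\sum_{m,n}\overline{f_{mn}}g_{mn}$ to the Hilbert--Schmidt inner product. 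Since $\caL^2(\ehH)$ was already recorded as an example of a Hilbert algebra, this settles the first part; if one prefers, axioms 1--4 of Definition \ref{def-hilbertalg} follow directly from trace cyclicity and the density of trace-class operators in $\caL^2$.

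For the second part, the algebraic compatibility of $\Psi$ is immediate from the listed properties of the matrix basis. Expanding $\Psi(f)\star_\theta\Psi(g)=\sum_{m,n,k,l}f_{mn}g_{kl}\,(b_{mn}\star_\theta b_{kl})$ and applying $b_{mn}\star_\theta b_{kl}=\delta_{nk}b_{ml}$ collapses the sum to $\sum_{m,n,l}f_{mn}g_{nl}b_{ml}=\Psi(fg)$, so $\Psi$ intertwines the matrix product with $\star_\theta$; similarly $\overline{b_{mn}}=b_{nm}$ gives $\overline{\Psi(f)}=\Psi(f^*)$, so $\Psi$ respects the involutions. Thus $\Psi$ is a $*$-algebra homomorphism on the nose.

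For the metric structure I would next compute the Gram matrix of the $b_{mn}$, combining the tracial identity \eqref{eq-trident} with $b_{nm}\star_\theta b_{kl}=\delta_{mk}b_{nl}$ and $\int b_{nl}=2\pi\theta\,\delta_{nl}$:
\begin{equation*}
\langle b_{mn},b_{kl}\rangle=\int b_{nm}(x)b_{kl}(x)\,\dd x=\int (b_{nm}\star_\theta b_{kl})(x)\,\dd x=2\pi\theta\,\delta_{mk}\delta_{nl}.
\end{equation*}
Hence the $b_{mn}$ are pairwise orthogonal with common norm $\sqrt{2\pi\theta}$, so that $\Psi$ extends to a bounded map on $\ell^2(\gN^2)$ which, up to the fixed scalar $\sqrt{2\pi\theta}$, is an isometry onto its range; and the inverse formula $f_{mn}=\frac{1}{2\pi\theta}\int f(x)b_{nm}(x)\,\dd x$ is then read off as $\langle b_{mn},f\rangle/\norm b_{mn}\norm^2$.

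The main obstacle --- indeed the only genuinely nontrivial input --- is \emph{surjectivity}, that is, that $\{b_{mn}\}$ is a \emph{complete} orthogonal system in $L^2(\gR^2)$ and not merely an orthogonal family. This completeness is the classical result of Gracia-Bond\'ia and V\'arilly \cite{GraciaBondia:1987kw}, and I would invoke it rather than reprove it. Granting it, $\Psi$ is a bijective $*$-homomorphism which is isometric up to the positive constant $2\pi\theta$ in the scalar product, hence a Hilbert-algebra isomorphism in the sense of Definition \ref{def-isomhilb} once the trace normalization on $\ell^2(\gN^2)$ is matched to the $L^2$ normalization on $(L^2(\gR^2),\star_\theta)$.
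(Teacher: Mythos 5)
Your proof is correct and takes essentially the same route as the paper, which offers no written proof beyond the remark that the listed matrix-basis relations ($b_{mn}\star_\theta b_{kl}=\delta_{nk}b_{ml}$, $\int b_{mn}(x)\dd x=2\pi\theta\delta_{mn}$, $\overline{b_{mn}}=b_{nm}$) ``allow to show directly'' the result: your verification of the $*$-homomorphism property, the orthogonality computation $\langle b_{mn},b_{kl}\rangle=2\pi\theta\,\delta_{mk}\delta_{nl}$ via the tracial identity \eqref{eq-trident}, and the resulting inversion formula constitute precisely that direct argument. Your isolation of the completeness of $\{b_{mn}\}$ in $L^2(\gR^2)$ as the single nontrivial external input, cited from \cite{GraciaBondia:1987kw}, matches the paper's implicit reliance on that same reference (already built into calling $(b_{mn})$ a ``basis''), and your remark on matching the $2\pi\theta$ normalization of the trace scalar product is a careful point the paper glosses over.
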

The usual {\defin GBV spaces} are defined \cite{GraciaBondia:1987kw} as
\begin{equation*}
G^{k,l}(\gR^2):=\{f\in L^2(\gR^2),\, \sum_{m,n=0}^\infty m^k n^l|f_{mn}|^2<\infty\}.
\end{equation*}

\begin{proposition}
The GBV spaces $\bG^{k,l}(\algA_\theta,\kg)$ are identical to the usual ones $G^{k,l}(\gR^2)$.
\end{proposition}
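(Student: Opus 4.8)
The plan is to pass to the matrix basis of Proposition \ref{prop-matrix}, under which $f=\sum_{m,n}f_{mn}b_{mn}$ is identified with its matrix $(f_{mn})\in\ell^2(\gN^2)$, and then to compute the action of the generators of $\kg$ on this basis. In the present example the symmetry $\kg$ is spanned, through the moment map, by the two linear coordinate functions $x_1,x_2$ together with the unit, so the operators $L_{e_i}$ and $R_{e_i}$ entering the GBV norm $\norm f\norm_{k,l}$ are exactly the left and right $\star_\theta$-multiplications by $x_1$ and $x_2$ (and the identity). Everything therefore reduces to understanding how these operators move the indices $(m,n)$.

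The central step is to establish the ladder relations. Introducing the complex coordinates $a=\tfrac1{\sqrt2}(x_1+ix_2)$ and $\bar a=\tfrac1{\sqrt2}(x_1-ix_2)$, a direct Moyal computation gives $a\star_\theta\bar a-\bar a\star_\theta a=\theta$, hence $[L_a,L_{\bar a}]=\theta\,\gone$, and since $L_a^+=L_{\bar a}$ the pair $(L_a,L_{\bar a})$ is a harmonic-oscillator ladder. Using $b_{mn}\star_\theta b_{kl}=\delta_{nk}b_{ml}$ one checks that $L_a$ and $L_{\bar a}$ act only on the first index, $L_a b_{mn}=c_m b_{m-1,n}$ and $L_{\bar a}b_{mn}=\overline{c_{m+1}}\,b_{m+1,n}$ with $|c_m|^2=\theta m$, while the right multiplications $R_a,R_{\bar a}$ act the same way on the second index $n$, with coefficients of modulus $\sqrt{\theta n}$ and $\sqrt{\theta(n+1)}$. (These are precisely the relations recorded in \cite{GraciaBondia:1987kw}.) Thus left multiplication by a coordinate shifts $m$ by one unit with a coefficient of size $O(\sqrt m)$, and right multiplication shifts $n$ with a coefficient of size $O(\sqrt n)$.

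From here both inclusions follow by bookkeeping. For $G^{k,l}(\gR^2)\subset\bG^{k,l}(\algA_\theta,\kg)$, any product of at most $k$ left and at most $l$ right coordinate multiplications sends $b_{mn}$ to a finite combination of basis elements with bounded index shifts and coefficients bounded by $C\,(m+1)^{k/2}(n+1)^{l/2}$; taking the $L^2$-norm gives $\norm f\norm_{k,l}^2\le C\sum_{m,n}(m+1)^k(n+1)^l|f_{mn}|^2$, finite whenever $f\in G^{k,l}$. For the reverse inclusion, observe that the normal-ordered operators $L_a^{\,p}R_a^{\,q}$ ($p\le k$, $q\le l$) are finite linear combinations of the products allowed in $\norm\cdot\norm_{k,l}$, so $\norm L_a^{\,p}R_a^{\,q}f\norm<\infty$; an explicit evaluation yields $\norm L_a^{\,p}R_a^{\,q}f\norm^2=\theta^{p+q}\sum_{m,n}\frac{m!}{(m-p)!}\,\frac{n!}{(n-q)!}\,|f_{mn}|^2$. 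Since the falling factorials $\frac{m!}{(m-p)!}$, $p=0,\dots,k$, form a basis of the polynomials of degree $\le k$, combining these estimates (together with the $p=q=0$ term, which is just $\norm f\norm$) reconstructs $\sum_{m,n}(m+\tfrac12)^k(n+\tfrac12)^l|f_{mn}|^2<\infty$, i.e. $f\in G^{k,l}$. The two norms are therefore equivalent and the spaces coincide.

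The main obstacle is the second paragraph: pinning down the ladder relations precisely — i.e. checking that $L_{x_1}^2+L_{x_2}^2$ is, up to the additive constant $\theta$, the oscillator number operator on the first index, and that right multiplication decouples onto the second index — which is where the full force of the matrix-basis identities $b_{mn}\star_\theta b_{kl}=\delta_{nk}b_{ml}$ is needed. Once these are in hand the comparison of weights is routine; the only point requiring a little care is that the operator weights $\frac{m!}{(m-p)!}$ vanish for small $m$, so one must use the lower-order seminorms (down to $\norm\cdot\norm$) to control the low-index rows and columns and thereby recover weights bounded below.
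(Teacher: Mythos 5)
Your proof is correct and follows essentially the same route as the paper's: pass to the matrix basis of Proposition \ref{prop-matrix}, rewrite the symmetry generators in complex (ladder) coordinates, compute their shift action on the indices $(m,n)$ with coefficients of size $\sqrt{\theta m}$, $\sqrt{\theta n}$, and compare the resulting weights with $m^k n^l$. The only difference is one of presentation: the paper states the weight formula for arbitrary words in $z_1,z_2$ and concludes directly, whereas you make the two inclusions explicit via the falling-factorial/Stirling-number bookkeeping, which is a more detailed account of the same weight comparison.
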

\begin{proof}
The translation symmetry $\kg$ contains coordinates $x_1,x_2$. We provide a new basis:
\begin{equation*}
z_1:=\frac{1}{\sqrt{2\theta}}(x_1+ix_2),\qquad z_2:=\overline{z_1}=\frac{1}{\sqrt{2\theta}}(x_1-ix_2).
\end{equation*}
The norm $\norm\fois\norm_{k,l}$ defining the topology of $\bG^{k,l}(\algA_\theta,\kg)$ in Definition \ref{def-sobolev} is expressed in terms of $x_1,x_2\in\kg$, but it can be equivalently reformulated in terms of $z_1,z_2$. Moreover, the isomorphism of Proposition \ref{prop-matrix} can be extended to the polynomials and we have
\begin{equation*}
z_1\,\mapsto\, i\sqrt{m}\,\delta_{m,n+1},\qquad z_2\,\mapsto\, -i\sqrt{m+1}\,\delta_{m+1,n}.
\end{equation*}
Therefore, we can compute the norm in terms of the matrix basis coefficients:
\begin{equation*}
\norm z_{i_1}\star_\theta\dots\star_\theta z_{i_p}\star_\theta f\star_\theta z_{j_q}\star_\theta\dots\star_\theta z_{j_1}\norm^2=\sum_{m,n=0}^\infty (m+\alpha_1)\dots (m+\alpha_p)(n+\beta_1)\dots (n+\beta_q)|f_{mn}|^2,
\end{equation*}
where $\alpha_a,\beta_b$ are real constants depending on the indices $i_1,\dots,i_p$ and $j_1,\dots,j_q$, and satisfying $|\alpha_a|\leq p$, $|\beta_q|\leq q$. With this expression, we see immediatly that $\bG^{k,l}(\algA_\theta,\kg)=G^{k,l}(\gR^2)$.
\end{proof}

Using the expression of the commutator $[x_j,\fois]_{\star_\theta}=i\theta (\omega\partial_x)_j$, we obtain the following result concerning the Sobolev spaces.
\begin{proposition}
The Sobolev spaces $\bH^{k}(\algA_\theta,\kg)$ are identical to the usual ones $H^{k}(\gR^2)$.
\end{proposition}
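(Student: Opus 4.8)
The plan is to use the commutator formula $[x_j,\fois]_{\star_\theta}=i\theta(\omega\partial_x)_j$ recalled just above to identify the operators $(L_{e_i}-R_{e_i})$ occurring in Definition \ref{def-sobolev} with constant-coefficient first-order differential operators, and then to show that the resulting system of seminorms is equivalent to the standard Sobolev norm, so that the two completions of $\caD(\gR^2)$ coincide.

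First I would recall that a basis of the translation symmetry $\kg$ consists of the coordinate functions $x_1,x_2$ (whose moment maps are the linear functions $y\mapsto\omega(e_j,y)$) together with the unit $1$. The generator $1$ is central, so $L_1-R_1=0$ and contributes nothing to the norm; for $j=1,2$ the operator $(L_{x_j}-R_{x_j})$ is precisely the inner $\star_\theta$-derivation $[x_j,\fois]_{\star_\theta}=i\theta(\omega\partial_x)_j$. Since $\omega$ is a non-degenerate constant matrix ($\det\omega\neq 0$), the two operators $(L_{x_1}-R_{x_1})$ and $(L_{x_2}-R_{x_2})$ span over $\gC$ the same space as $\{\partial_{x_1},\partial_{x_2}\}$.

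Next I would pass to products. An $l$-fold composition $(L_{e_{i_1}}-R_{e_{i_1}})\dots(L_{e_{i_l}}-R_{e_{i_l}})$ is a constant-coefficient differential operator of order $l$, and by the previous step the collection of all such compositions (as $i_1,\dots,i_l$ range over the basis) spans the same space of operators as the monomials $\partial^\beta$ with $|\beta|=l$. Hence each seminorm $\norm\partial^\beta f\norm$ with $|\beta|=l$ is a finite constant-coefficient linear combination of the seminorms $\norm(L_{e_{i_1}}-R_{e_{i_1}})\dots(L_{e_{i_l}}-R_{e_{i_l}})f\norm$, and conversely. Taking the supremum over $l\leq k$ then shows that the norm $\norm\fois\norm_k$ of Definition \ref{def-sobolev} is equivalent to the customary norm $\sup_{|\beta|\leq k}\norm\partial^\beta f\norm$ on $H^k(\gR^2)$.

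Finally, since $\bH^k(\algA_\theta,\kg)$ and $H^k(\gR^2)$ are both completions of the same dense subspace $\caD(\gR^2)$ for equivalent norms, they coincide, giving $\bH^k(\algA_\theta,\kg)=H^k(\gR^2)$. The only points needing care are the invertibility of the linear change between the two families of first-order operators, which is immediate from $\det\omega\neq0$, and the elementary equivalence between the sup-type norm used here and the sum-of-squares norm customary for $H^k$; there is no genuine analytic obstacle once the commutator formula is available.
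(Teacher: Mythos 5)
Your proposal is correct and follows exactly the route the paper intends: the paper's entire justification is the sentence preceding the statement, namely that the commutator formula $[x_j,\fois]_{\star_\theta}=i\theta(\omega\partial_x)_j$ identifies the inner derivations $(L_{e_i}-R_{e_i})$ with constant-coefficient first-order derivatives, which is precisely your key step. Your write-up simply makes explicit the remaining routine points (the unit contributing nothing, invertibility of $\omega$ giving equivalence of the two families of operators and hence of the norms, and coincidence of completions of $\caD(\gR^2)$), so there is no substantive difference.
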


\subsection{Link with distributions}
\label{subsec-moyaldistr}

Let us try to identify the multiplier space $\kM(\algB)$ of $\algB=\caS(\gR^{2n})$. We will show here that this unbounded notion of multipliers of Hilbert algebra corresponds in this particular case to the well-known notion of multipliers of a Fr\'echet algebra.

Recall that for $(\algB_\theta,\star_\theta)$ be an arbitrary Hilbert subalgebra of $\algA_\theta=(L^2(\gR^{2n}),\star_\theta)$ and a Fr\'echet algebra containing $\caD(M)$ and contained in $\caC^\infty(M)$, we can also define a notion of Fr\'echet multiplier as following. By denoting also $\langle-,-\rangle$ the duality bracket between the distributions $\algB_\theta'$ and the functions $\algA_\theta$, the product $\star_\theta$ satisfying a tracial identity can be extended as
\begin{equation}
\forall T\in\algB_\theta',\ \forall f,h\in\algB_\theta\quad:\quad \langle T\star_\theta f,h\rangle:=\langle T,f\star_\theta h\rangle\text{ and } \langle f\star_\theta T,h\rangle:=\langle T,h\star_\theta f\rangle,\label{eq-moyext}
\end{equation}
which is compatible with the case $T\in\algB_\theta$ (the duality bracket corresponds then to the scalar product). Then, the multiplier space associated to the Fr\'echet algebra $\algB_\theta$ has the form
\begin{equation*}
\caM_{\star_\theta}(\algB_\theta):=\{T\in\algB_\theta',\ f\mapsto T\star_\theta f\text{ and } f\mapsto f\star_\theta T\text{ are continuous from }\algB_\theta \text{ into itself}\}.
\end{equation*}
This space is equipped with the topology associated to the seminorms:
\begin{equation*}
\norm T\norm_{B,j,L}=\sup_{f\in B}\norm T\star_\theta f\norm_{j}\,\text{ and }\, \norm T\norm_{B,j,R}=\sup_{f\in B}\norm f\star_\theta T\norm_{j}
\end{equation*}
where $B$ is a bounded subset of $\algB_\theta$ and $\norm f\norm_{j}$ are the Fr\'echet seminorms of $\algB_\theta$. The product $\star_\theta$ can be extended to $\caM_{\star_\theta}(\algB_\theta)$ by:
\begin{equation*}
\forall S,T\in\caM_{\star_\theta}(\algB_\theta),\ \forall f\in\algB_\theta\quad:\quad \langle S\star_\theta T,f\rangle:=\langle S,T\star_\theta f\rangle=\langle T,f\star_\theta S\rangle.
\end{equation*}
And $(\caM_{\star_\theta}(\algB_\theta),\star_\theta)$ is an associative Hausdorff locally convex algebra, called the {\defin Fr\'echet multiplier algebra}.

\begin{theorem}
\label{thm-distribident}
Let $(\algB_\theta,\star_\theta)$ be a Hilbert subalgebra of $\algA_\theta$ as well as a nuclear Fr\'echet algebra containing $\caD(M)$ and contained in $\caC^\infty(M)$. Then, $\caM_{\star_\theta}(\algB_\theta)\simeq \kM(\algB_\theta)$.
\end{theorem}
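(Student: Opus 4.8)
The plan is to exhibit the natural map $\Psi\colon\caM_{\star_\theta}(\algB_\theta)\to\kM(\algB_\theta)$ that sends a Fréchet multiplier $T$ to the pair of left and right $\star_\theta$-multiplications, and to prove it is a topological $*$-isomorphism. For $T\in\caM_{\star_\theta}(\algB_\theta)$ I set $L_T(f):=T\star_\theta f$ and $R_T(f):=f\star_\theta T$; by the very definition of $\caM_{\star_\theta}(\algB_\theta)$ these are continuous operators $\algB_\theta\to\algB_\theta$, and $L_T$ is a left multiplier since $L_T(f\star_\theta g)=T\star_\theta f\star_\theta g=L_T(f)\star_\theta g$. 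Associativity of the extended product \eqref{eq-moyext} gives $x\star_\theta L_T(y)=x\star_\theta T\star_\theta y=R_T(x)\star_\theta y$, which is the multiplier identity of Definition \ref{def-mult}. To place $L_T$ in $\caL^+(\algB_\theta)$ I would use the trace identity \eqref{eq-trident}, which yields $\langle T\star_\theta f,g\rangle=\langle f,\overline T\star_\theta g\rangle$ for $f,g\in\algB_\theta$; since $\algB_\theta$ is stable under the conjugation $J$ and $\overline T\star_\theta f=\overline{\,\overline f\star_\theta T\,}$, the conjugate $\overline T$ again lies in $\caM_{\star_\theta}(\algB_\theta)$, so $L_T^+=L_{\overline T}$ stabilises $\algB_\theta$ and $R_T=JL_T^+J$. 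By Proposition \ref{prop-module} this shows $\Psi(T)=(L_T,R_T)\in\kM(\algB_\theta)$, and the identities $L_{S\star_\theta T}=L_SL_T$, $R_{S\star_\theta T}=R_TR_S$, $L_{\overline T}=L_T^+$ make $\Psi$ a $*$-algebra homomorphism. It is injective: if $L_T=R_T=0$ then $\langle T,f\star_\theta h\rangle=\langle T\star_\theta f,h\rangle=0$ for all $f,h$, and since $\{f\star_\theta h\}$ is dense (fourth axiom of Definition \ref{def-hilbertalg}) and $T$ is continuous, $T=0$.

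I would then check that $\Psi$ is a homeomorphism for the two locally convex structures. The strong$*$ topology of Proposition \ref{prop-lchalg} on $\kM(\algB_\theta)$ is generated by the seminorms $\sup_{x\in B}\norm SL_T(x)\norm$ with $B\in\kB_\kM$ and $S\in\kMtop(\algB_\theta)$; since the seminorms $\norm S(\fois)\norm$ for $S\in\kMtop(\algB_\theta)$ define precisely the multiplier topology $\tau_\kM$, which is here the Fréchet topology of $\algB_\theta$, one has $\sup_{x\in B}\norm SL_T(x)\norm=\sup_{x\in B}\norm T\star_\theta x\norm_S$, matching the seminorms $\norm T\norm_{B,j,L}=\sup_{f\in B}\norm T\star_\theta f\norm_j$ that define $\caM_{\star_\theta}(\algB_\theta)$ (and symmetrically for the right seminorms through $J$). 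Hence both $\Psi$ and $\Psi^{-1}$ are continuous.

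The substance is surjectivity: given $(L,R)\in\kM(\algB_\theta)$ I must produce $T\in\caM_{\star_\theta}(\algB_\theta)$ with $L_T=L$, $R_T=R$. The candidate is forced by \eqref{eq-moyext}, namely $\langle T,f\star_\theta h\rangle=\langle L(f),h\rangle$. Working with the bilinear trace pairing $\langle\!\langle a,b\rangle\!\rangle:=\int a\star_\theta b=\int ab$, the left-multiplier property $L(f)\star_\theta h=L(f\star_\theta h)$ combined with \eqref{eq-trident} gives
\begin{equation*}
\int L(f)\,h\;=\;\int L(f\star_\theta h)\;=:\;\Phi(f\star_\theta h),
\end{equation*}
so the continuous bilinear form $(f,h)\mapsto\int L(f)\,h$ factors through the product map and $\Phi$ is a well-defined linear functional on the dense subspace $\algB_\theta\star_\theta\algB_\theta$, automatically vanishing on the kernel of $\star_\theta$. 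Granting an extension of $\Phi$ to some $T\in\algB_\theta'$, the bilinear form of \eqref{eq-moyext} yields $\int(T\star_\theta f)\,g=\langle\!\langle T,f\star_\theta g\rangle\!\rangle=\Phi(f\star_\theta g)=\int L(f)\,g$ for all $g$, whence $T\star_\theta f=L(f)$ and, symmetrically via the right-multiplier property, $f\star_\theta T=R(f)$; continuity of $L,R$ then gives $T\in\caM_{\star_\theta}(\algB_\theta)$ and $\Psi(T)=(L,R)$.

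Finally — and this is where I expect the main difficulty — the extension of $\Phi$ to a \emph{continuous} functional on $\algB_\theta$ is exactly where nuclearity enters. By the kernel theorem the continuous bilinear form $(f,h)\mapsto\int L(f)\,h$ corresponds to an element of $(\algB_\theta\widehat\otimes\algB_\theta)'$, and the displayed identity says it is constant on the fibres of the continuous product map $\star_\theta\colon\algB_\theta\widehat\otimes\algB_\theta\to\algB_\theta$. Continuity of the induced functional $T$ on $\algB_\theta$ then follows, via the open mapping theorem for Fréchet spaces, once that product map is known to be (essentially) surjective onto $\algB_\theta$; for the concrete algebras at hand this factorisation is furnished by the approximate identity $\sum_{m\le N}b_{mm}$ of the matrix basis (Proposition \ref{prop-matrix}), which realises every $g\in\algB_\theta$ as a limit of genuine $\star_\theta$-products. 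This surjectivity/factorisation step is the crux of the argument, the remaining verifications being the routine density and continuity computations sketched above.
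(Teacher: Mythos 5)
Your first two paragraphs (the map $T\mapsto(L_T,R_T)$, the $*$-homomorphism property, injectivity, and the comparison of topologies) are fine and correspond to what the paper dismisses in one line as the ``obvious'' inverse map. The genuine gap is exactly where you locate it: the surjectivity step. Your plan is to define $\Phi$ on the span of products by $\Phi(f\star_\theta h)=\int L(f)h$ and then extend it to a continuous functional $T\in\algB_\theta'$ by combining the kernel theorem with the open mapping theorem applied to the product map $\star_\theta:\algB_\theta\widehat\otimes\algB_\theta\to\algB_\theta$. But the open mapping theorem requires that this map be \emph{surjective}, and nothing in the hypotheses gives that: axiom 4 of Definition \ref{def-hilbertalg} only gives that the span of products is \emph{dense}, and a functional continuous on the preimage of a dense subspace need not extend continuously. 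The approximate identity $\sum_{m\le N}b_{mm}$ does not repair this: it exhibits every $g\in\algB_\theta$ as a \emph{limit} of products, not as a product or finite sum of products, and Cohen-type factorization theorems require a bounded approximate identity in a Banach algebra (the partial sums $\sum_{m\le N}b_{mm}$ are unbounded even in $L^2$, let alone in the Fr\'echet topology). Worse, the theorem is stated for an arbitrary nuclear Fr\'echet Hilbert subalgebra $\algB_\theta$ with $\caD(M)\subset\algB_\theta\subset\caC^\infty(M)$; the matrix basis functions $b_{mn}$ are Gaussian-type elements of $\caS(\gR^{2n})$, not compactly supported, so they need not belong to $\algB_\theta$ at all, and your proposed factorisation device is unavailable in the stated generality.

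The paper's proof avoids this problem entirely by applying the Schwartz kernel theorem not to a functional but to the operators themselves: since $L,R:\algB_\theta\to\algB_\theta\hookrightarrow\algB_\theta'$ are continuous (Proposition \ref{prop-conttopo}) and $\algB_\theta$ is nuclear Fr\'echet, there are kernels $K_L,K_R\in\algB_\theta'\widehat\otimes\algB_\theta'$ representing them. Feeding the left-multiplier identity $L(f_1\star_\theta f_2)=L(f_1)\star_\theta f_2$ into the \emph{explicit} Moyal kernel \eqref{eq-moyalprod} forces the functional equation $K_L(x,y)=K_L(x-y,0)\,e^{\frac{2i}{\theta}\omega(x,y)}$, and then $k_L:=(\pi\theta)^n\caF_R\big(K_L(\fois,0)\big)$ satisfies $L(f)=k_L\star_\theta f$ distributionally; similarly $R(f)=f\star_\theta k_R$, and the multiplier relation forces $k_L=k_R$, which lies in $\caM_{\star_\theta}(\algB_\theta)$ by definition because $(L,R)\in\kM(\algB_\theta)$. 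In other words, the paper produces the representing distribution directly from the kernel of $L$, so no extension, factorisation, or open-mapping argument is ever needed. If you want to salvage your route you would have to prove a factorisation theorem $\algB_\theta=\text{span}(\algB_\theta\star_\theta\algB_\theta)$ for every algebra covered by the statement, which is a substantially harder (and in this generality unproven) claim than the theorem itself.
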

\begin{proof}
We will use the following inclusion $\algB_\theta \hookrightarrow \algB_\theta'$ to see first $\kM(\algB_\theta)$ as a subspace of the distributions $\algB_\theta'$. Let us consider an arbitrary $T=(L,R)\in \kM(\algB_\theta)$. Due to Proposition \ref{prop-conttopo}, the maps $L,R:\algB_\theta\to \algB_\theta'$ are continuous for the Fr\'echet topologies. Due to Schwartz kernel's theorem, there exist kernels $K_L,K_R\in \algB'_\theta\hat\otimes\algB'_\theta$ such that $\forall f\in\algB_\theta$, $\forall x\in M$,
\begin{equation*}
L(f)(x)=\int K_L(x,y)f(y)\dd y,\qquad R(f)(x)=\int K_R(x,y)f(y)\dd y,
\end{equation*}
where we use by convenience the symbol integral to mean only the duality bracket for distributions (extending the scalar product of $\algA_\theta$). Implementing the conditions of $L$ to be a left multiplier leads to
\begin{multline*}
\frac{1}{(\pi\theta)^{2n}}\int K_L(x,y-z)f_1(y)\caF_R(f_2)(z) e^{-\frac{2i}{\theta}\omega(y,z)}\dd y\dd z = L(f_1\star_\theta f_2)(x)\\
= L(f_1)\star_\theta f_2(x)= \frac{1}{(\pi\theta)^{2n}}\int K_L(x+z,y)f_1(y)\caF_R(f_2)(z) e^{-\frac{2i}{\theta}\omega(x,z)}\dd y\dd z,
\end{multline*}
for any $f_1,f_2\in \algB_\theta$ (dense in $\algA_\theta$). This is equivalent to the condition
\begin{equation*}
K_L(x,y)=K_L(x-y,0)e^{\frac{2i}{\theta}\omega(x,y)}.
\end{equation*}
In the same way, we have $K_R(x,y)=K_R(x-y,0)e^{-\frac{2i}{\theta}\omega(x,y)}$. We define the following distributions in $\algB'_\theta$:
\begin{equation*}
k_L:=(\pi\theta)^n \caF_R( K_L(\fois,0)),\qquad k_R:=(\pi\theta)^n \caF_L( K_R(\fois,0)).
\end{equation*}
A simple computation gives that $\forall f\in\algB_\theta$,
\begin{equation*}
L(f)(x)=\frac{1}{(\pi\theta)^{2n}}\int k_L(y)f(z) e^{-\frac{2i}{\theta}(\omega(y,z)+\omega(z,x)+\omega(x,y))}\dd y\dd z=: (k_L\star_\theta f)(x),
\end{equation*}
where $\star_\theta$ is defined between $\algB_\theta'$ and $\algB_\theta$ by \eqref{eq-moyext}, i.e. the integral has a distributional meaning. We have also $R(f)(x)=(f\star_\theta k_R)(x)$. Now, the condition $f_1\star_\theta L(f_2)=R(f_1)\star_\theta f_2$ of $(L,R)$ being a multiplier implies that $k_L=k_R$. Since $(L,R)\in\kM(\algB_\theta)$, we have $k_L=k_R\in\caM_{\star_\theta}(\algB_\theta)$ due to its definition. It is then easy to check that
\begin{align*}
& k_{(L_1L_2)}(x)=\frac{1}{(\pi\theta)^{2n}}\int k_{L_1}(y)k_{L_2}(z) e^{-\frac{2i}{\theta}(\omega(y,z)+\omega(z,x)+\omega(x,y))}\dd y\dd z=(k_{L_1}\star_\theta k_{L_2})(x),\\
& k_{(L^*)}(x)=\overline{k_L(x)},
\end{align*}
as distributions, so that the map $(L,R)\in\kM(\algB_\theta)\mapsto k_L=k_R\in\caM_{\star_\theta}(\algB_\theta)$ is a *-algebra morphism. The map $k\in\caM_{\star_\theta}(\algB_\theta)\mapsto (k\star_\theta\fois,\,\fois\star_\theta k)\in\kM(\algB_\theta)$ is obviously an inverse map.
\end{proof}
We can now apply this theorem directly to the algebra $\algB_\theta=(\caS(\gR^{2n}),\star_\theta)$ and we see that its unbounded multiplier algebra $\kM(\algB_\theta)$ corresponds to the Fr\'echet multipliers of $\caS(\gR^{2n})$ (a subspace of the tempered distributions $\caS'(\gR^{2n})$) used in \cite{Maillard:1986,GraciaBondia:1987kw,Gayral:2003dm}.

\begin{remark}
The Weyl map \eqref{eq-weyl} restricted to $\bS(\algA,\kg)$ is extendable. Due to Theorem \ref{thm-symhdq} and since $\caD_{\bS(\algA,\kg)}=\caS(\gR^n)$, we obtain a faithful *-representation
\begin{equation*}
\tilde\Omega_\theta:\caM_{\star_\theta}(\caS(\gR^{2n}))\to \caL^+(\caS(\gR^n))
\end{equation*}
that was already considered in \cite{Maillard:1986} for the Moyal-Weyl quantization.
\end{remark}

\subsection{Infinite-dimensional Clifford algebras}
\label{subsec-clifford}

As an illustration of the unital case, let us consider the well-known hyperfinite type $II_1$ factor, but seen as a limit of deformation quantization. 

Let $V$ be an infinite dimensional separable real vector space with a positive definite scalar product. We consider 
 an orthonormal basis $(\xi_i)_{i\in\gN}$ of $V$. The Clifford algebra $Cl(V)$ consists in the tensor algebra of the complexification of $V$ quotiented by the ideal generated by $\{v\otimes v-\langle v,v\rangle\gone\}$. It is generated by the $\xi_i$ satisfying
 \begin{equation*}
 \xi_i\xi_j+\xi_j\xi_i=2\delta_{ij}\gone.
 \end{equation*}
A basis of $V$ is given by the $\xi_I:=\prod_{i\in I}\xi_i$ where the product is ordered, $\xi_\emptyset=\gone$ and $I$ are subsets of $\gN$.

The following is a consequence of \cite{Plymen:1994}. The algebra $Cl(V)$ has an involution as well as a normalized ($\tau(\gone)=1$) hermitian ($\tau(x^*)=\overline{\tau(x)}$) trace ($\tau(xy)=\tau(yx)$) $\tau:Cl(V)\to\gC$ defined by
\begin{equation*}
x^*:=\sum_I(-1)^{\frac12|I|(|I|-1)} \overline{x_I}\xi_I,\qquad \tau(x)=x_\emptyset,
\end{equation*}
for any $x=\sum_I x_I\xi_I$ with $x_I\in\gC$, $I\subset\gN$ and the sum is finite. The map $(x,y)\mapsto \tau(x^*y)$ is a sesquilinear hermitian positive definite form and we note $\ehH$ the completion of $Cl(V)$ for the norm associated to this sesquilinear form.

Then, $Cl(V)$ acts by left-multiplication on $\ehH$ and we note $Cl[V]$ the weak completion of $Cl(V)$ seen as a subalgebra of $\caL(\ehH)$. $Cl[V]$ is the hyperfinite factor of type $II_1$ and $\tau$ extends to $Cl[V]$ as a finite faithful normal trace. Since $Cl(V)$ coincide with the Moyal deformation quantization \cite{Bieliavsky:2010su,deGoursac:2014kv} of the superspace $\gR^{0|2m}$ if $2m=\dim(V)<+\infty$, and since $\bigcup_{m=1}^\infty Cl(2m)$ is dense in $Cl[V]$, we can see $Cl[V]$ as the limit of the deformation quantization of $\gR^{0|2m}$ when $m\to\infty$. Taking this into account, we have now that $Cl(V)$ is a Hilbert algebra of completion $\ehH$. Due to Proposition \ref{prop-unit}, we have the following result.
\begin{proposition}
The bounded multipliers $\kM_\bb(\algA)$ of $\algA:=Cl(V)$ are left and right multiplication by elements of $Cl[V]$. Moreover, the unbounded multipliers are $\kM(\algA)= Cl(V)$ that is a non-closed O*-algebra.
\end{proposition}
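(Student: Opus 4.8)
The plan is to split the statement into its two assertions and obtain the unbounded one from the general unital result, the bounded one from the von Neumann description of $\kM_\bb$, and then check non-closedness by hand.

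First I would record that $Cl(V)$ is genuinely a \emph{unital} Hilbert algebra: the identity $\gone=\xi_\emptyset$ lies in $Cl(V)$ and is a unit for the Clifford product, and it was already noted that $(Cl(V),\tau)$ is a Hilbert algebra with completion $\ehH$. Hence Proposition \ref{prop-unit} applies verbatim and gives $\kM(Cl(V))\simeq Cl(V)$, the isomorphism sending $(L,R)$ to $L(\gone)=R(\gone)\in Cl(V)$. This is precisely the unbounded-multiplier claim: every multiplier is left/right Clifford multiplication by a fixed element of $Cl(V)$.

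For the bounded multipliers I would invoke Theorem \ref{thm-caract}, which yields $\kM_\bb(Cl(V))=\{\Xi_x,\ x\in (Cl(V))_\bb\}''$ with $\Xi_x=(\lambda_x,\rho_x)$. The crucial step is to identify the fulfillment $(Cl(V))_\bb$ with $Cl[V]$. Since $\tau$ is a finite faithful normal trace and $\ehH$ is the associated GNS space, every $a\in Cl[V]$ satisfies $\tau(a^*a)\leq \norm a\norm^2\,\tau(\gone)<\infty$, so $Cl[V]\hookrightarrow\ehH$ with bounded left multiplication; conversely a bounded vector defines an element of the left von Neumann algebra $\lambda(Cl(V))=Cl[V]$. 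Thus $(Cl(V))_\bb=Cl[V]$, which is already weakly closed, so the bicommutant in Theorem \ref{thm-caract} is superfluous and $\kM_\bb(Cl(V))=\{(\lambda_a,\rho_a),\ a\in Cl[V]\}$, i.e. left and right multiplication by elements of $Cl[V]$.

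Finally, for non-closedness, I would observe that under the identification above each element of $\kM(Cl(V))$ is $(\lambda_a,\rho_a)$ with $a\in Cl(V)\subset Cl[V]$, hence the restriction of a bounded operator. Consequently the seminorms defining the graphic topology on the domain $Cl(V)\oplus Cl(V)$ are all dominated by the Hilbert norm, so this topology coincides with the norm topology of $\ehH\oplus\ehH$ restricted to the domain. Its completion is $\ehH\oplus\ehH$, and since $Cl(V)$ is a proper dense subspace of $\ehH$, the domain is not complete; by Definition \ref{def-closed} the O*-algebra $\kM(Cl(V))$ is not closed. The only non-formal step is the identification $(Cl(V))_\bb=Cl[V]$, which is exactly where the finiteness of $\tau$ enters; everything else is a direct application of the cited results.
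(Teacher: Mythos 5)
Your proof is correct and follows essentially the same route as the paper, which derives this proposition directly from Proposition \ref{prop-unit} together with the surrounding remarks on the unital case (namely that $\kM_\bb(\algA)\simeq\algA_\bb$ via Theorem \ref{thm-caract} since the left von Neumann algebra is $Cl[V]$, and that $\kM(\algA)$ is non-closed unless $\algA=\ehH_\algA$). You have merely filled in the details the paper leaves implicit — the identification $(Cl(V))_\bb=Cl[V]$ via the finite trace, and the observation that the graphic topology reduces to the norm topology — and these are carried out correctly.
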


Note that the hyperfactor of type $II_\infty$ can be obtained as the tensor product $\kM_\bb(\gR^{2n})\overline\otimes Cl[V]$, so as the limit of the deformation quantization $\gR^{2n|2m}$ with $m\to\infty$.

\subsection{Deformation quantization of normal j-groups}
\label{subsec-normal}

To show the efficiency of this framework of Hilbert deformation quantization, we look at another deformation quantization that was defined and studied in \cite{Bieliavsky:2010kg}. Let us first describe what are {\defin elementary normal $j$-groups} (see \cite{Pyatetskii-Shapiro:1969}). They are $AN$ Iwasawa factor of the simple Lie groups $SU(1,n)$ for $n\in\gN^*$. Explicitly they are realized as $\gS=\gR\times V\times\gR$, where $(V,\omega)$ is a symplectic vector space of dimension $2n$. With the coordinate system $(a,x,\ell)$ associated to such a realization, the group law of $\gS$ has the form
\begin{equation}
(a,x,\ell)\fois(a',x',\ell')=\Big(a+a',e^{-a'}x+x',e^{-2a'}\ell+\ell'+\frac12e^{-a'}\omega(x,x')\Big)\label{eq-gSlaw}
\end{equation}
and the inverse $(a,x,\ell)^{-1}=(-a,-e^ax,-e^{2a}\ell)$. It turns out that the generic coadjoint orbit of this group is $\gS$-equivariantly diffeomorphic to the Lie group $\gS$ itself and under this identification the KKS symplectic form has the expression $\omega_{\gS}=2\dd a\wedge\dd\ell+\omega$. The coadjoint action (or left-multiplication under the identification) has the associated moment maps
\begin{equation}
\eta_H(a,x,\ell)=2\ell,\qquad \eta_y(a,x,\ell)=e^{-a}\omega(y,x),\qquad \eta_E(a,x,\ell)= e^{-2a},\label{eq-moment}
\end{equation}
with decomposition of the Lie algebra $\ks:=\gR H\oplus V\oplus \gR E$ and exponential map $(a,x,\ell)=e^{aH}e^xe^{\ell E}$.

On such groups, P. Bieliavsky and M. Massar introduced  in \cite{Bieliavsky:2001os} an associative $\gS$-invariant star-product that has the following form
\begin{equation}
(f_1\star_{\theta}f_2)(g)=\frac{1}{(\pi\theta)^{2n+2}}\int\ A_{\gS}(g,g_1,g_2)e^{-\frac{2i}{\theta}S_{\gS}(g,g_1,g_2)}f_1(g_1)f_2(g_2)\dd g_1\dd g_2\label{eq-starprod}
\end{equation}
for $f_1,f_2\in\caD(\gS)$, $g_i:=(a_i,x_i,\ell_i)\in\gS$, the left Haar measure $\dd g:=\dd a\dd x\dd\ell$, and where the amplitude and the phase are
\begin{align*}
A_{\gS}(g,g_1,g_2)=& 4\sqrt{\cosh(2(a_1-a_2))\cosh(2(a_1-a))\cosh(2(a-a_2))}\cosh(a_2-a)^{n}\\
&\cosh(a_1-a)^{n}\cosh(a_1-a_2)^{n},\\
S_{\gS}(g,g_1,g_2)=&-\sinh(2(a_1-a_2))\ell-\sinh(2(a_2-a))\ell_1-\sinh(2(a-a_1))\ell_2\\
&+\cosh(a_1-a)\cosh(a_2-a)\omega(x_1,x_2) +\cosh(a_1-a)\cosh(a_1-a_2)\omega(x_2,x)\\
&+\cosh(a_1-a_2)\cosh(a_2-a)\omega(x,x_1).
\end{align*}

This product, which extends to $L^2(\gS)$, is related to the Moyal-Weyl product \eqref{eq-moyalprod} (but on $\gR^{2n+2}$ and for the symplectic form $\omega_\gS$ instead of $\omega$), that we denote $\star_\theta^0$ in this section, via an intertwining operator $U_\theta$: $f_1\star_{\theta}f_2= U_\theta((U_\theta^{-1}f_1)\star_\theta^0(U_\theta^{-1}f_2))$ for $f_1,f_2\in\caD(\gS)$. These operators have the form
\begin{align}
&U_\theta f(a,x,\ell)=\frac{1}{2\pi}\int\dd t\dd\xi\ \sqrt{\cosh(\frac{\theta t}{2})}\cosh(\frac{\theta t}{4})^n e^{\frac{2i}{\theta}\sinh(\frac{\theta t}{2})\ell-i\xi t}f(a,\cosh(\frac{\theta t}{4})x,\xi),\nonumber\\
&U^{-1}_\theta f(a,x,\ell)=\frac{1}{2\pi}\int\dd t\dd\xi\ \frac{\sqrt{\cosh(\frac{\theta t}{2})}}{\cosh(\frac{\theta t}{4})^n}e^{-\frac{2i}{\theta}\sinh(\frac{\theta t}{2})\xi+it\ell}f(a,\cosh(\frac{\theta t}{4})^{-1}x,\xi).\label{eq-intertw}
\end{align}
Finally the product \eqref{eq-starprod} is associated to the following quantization map \cite{Bieliavsky:2010kg}
\begin{multline}
\Omega_{\theta}(f)\varphi(a_0,v_0):=\frac{2}{(\pi\theta)^{n+1}}\int_{\gS} f(a,v,w,\ell)\sqrt{\cosh(2(a-a_0))}\cosh(a-a_0)^n\\
 e^{\frac{2i}{\theta}\Big(\sinh(2(a-a_0))\ell+\omega(\cosh(a-a_0)v-v_0,\cosh(a-a_0)w)\Big)} \varphi(2a-a_0,2\cosh(a-a_0)v-v_0)\dd a\dd v\dd w\dd\ell,\label{eq-qumapelem}
\end{multline}
for $f\in\caD(\gS)$, $\varphi\in L^2(Q)$, $Q:=\gR H\times V_0$, $V_0$ a Lagrangian subspace of $V$, $(v,w)\in V$ in the integral, and $(a_0,v_0)\in Q$.

In \cite{Bieliavsky:2010kg}, a modified Schwartz space was introduced on $\gS$. Let us recall its definition. The left-invariant vector fields of $\gS$ are given by
\begin{equation*}
\tilde H=\partial_a-x\partial_x-2\ell\partial_\ell,\quad \tilde y=y\partial_x+\frac12\omega(x,y)\partial_\ell,\quad \tilde E=\partial_\ell,
\end{equation*}
with $H$, $y$ and $E$ generators of the Lie algebra $\ks$. The maps $\alpha$ are
\begin{equation*}
\alpha_H(g)=\ell,\qquad \alpha_y(g)=\cosh(a)\omega(y,x),\qquad \alpha_E(g)=\sinh(2a),
\end{equation*}
for any $g=(a,x,\ell)\in\gS$. This leads to the following definition. The {\defin modified Schwartz space} of $\gS$ is defined as
\begin{equation}
\caS(\gS)=\{f\in C^\infty(\gS)\quad \forall j\in\gN^{2n+2},\ \forall P\in\caU(\ks)\quad
\norm f\norm_{j,P}:=\sup_{g\in\gS}\Big| \alpha^j(g) \tilde P f(g)\Big|<\infty\}\label{eq-modsch}
\end{equation}
where $\alpha^j:=\alpha_H^{j_1}\alpha_{e_1}^{j_2}\dots\alpha_{e_{2n}}^{j_{2n+1}} \alpha_E^{j_{2n+2}}$. It is  a Fr\'echet nuclear algebra endowed with the seminorms $(\norm f\norm_{j,P})$ and the star-product \eqref{eq-starprod}.
\medskip

Let us see how the above setting fits in the formalism of HDQ. Denote $\algA_\theta:=L^2(\gS)$ for $\theta\neq 0$ and $\algA_0:=L^2(\gS)\cap L^\infty(\gS)$.
\begin{proposition}
\label{prop-hdqelem}
The family $\algA:=(\algA_\theta)$ defines a Hilbert deformation quantization and the operator \eqref{eq-intertw} is an intertwiner (in the sense of Definition \ref{def-hdq}) between the Moyal-Weyl HDQ $(L^2(\gR^{2n+2}),\star_\theta^0)$ and $\algA$. Moreover, the quantization map \eqref{eq-qumapelem} is related to the Weyl map \eqref{eq-weyl}, denoted now by $\Omega_\theta^0$, in the following way: $\Omega_\theta=\Omega_\theta^0\circ U_\theta^{-1}$, and it is a representation of $\algA$.
\end{proposition}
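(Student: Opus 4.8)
The plan is to transport the whole Hilbert deformation quantization structure from the Moyal--Weyl HDQ on $L^2(\gR^{2n+2})$ (for the symplectic form $\omega_\gS$) to $\algA$ by means of the operator $U_\theta$ of \eqref{eq-intertw}, using the already recorded relation $f_1\star_\theta f_2=U_\theta\big((U_\theta^{-1}f_1)\star_\theta^0(U_\theta^{-1}f_2)\big)$ on $\caD(\gS)$. Once I know that $U_\theta$ is unitary on $L^2$ and commutes with complex conjugation, this relation (extended to all of $L^2$ by continuity of both products and boundedness of $U_\theta$) says exactly that $U_\theta$ is a unitary $*$-isomorphism from $(L^2(\gR^{2n+2}),\star_\theta^0)$ onto $(L^2(\gS),\star_\theta)$. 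Since the four axioms of Definition \ref{def-hilbertalg} and the density of products are all invariant under a unitary $*$-isomorphism, and since the Moyal--Weyl algebra is a complete Hilbert algebra by section \ref{subsec-moyal}, each $(\algA_\theta,\star_\theta)$ is then a Hilbert algebra as well; together with the density of $\caD(\gS)$ in $L^2(\gS)$ this yields both that $\algA$ is a HDQ and that $U_\theta$ is an intertwiner in the sense of Definition \ref{def-hdq}.

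Thus the first two assertions reduce to two properties of $U_\theta$. For unitarity I would read off \eqref{eq-intertw}: up to the $t$-dependent dilation $x\mapsto\cosh(\tfrac{\theta t}{4})x$ in the $2n$ symplectic variables, $U_\theta$ is a one-dimensional oscillatory integral exchanging the $\ell$-variable with the pair $(t,\xi)$, and the chosen weights $\sqrt{\cosh(\tfrac{\theta t}{2})}\,\cosh(\tfrac{\theta t}{4})^{\pm n}$ are precisely what makes $U_\theta^{-1}$ of \eqref{eq-intertw} a two-sided inverse and $U_\theta$ isometric; this is exactly the statement established in \cite{Bieliavsky:2001os,Bieliavsky:2010kg}, which I would cite. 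Compatibility with the involution is a one-line substitution: conjugating \eqref{eq-intertw} and then replacing $t$ by $-t$, using that $\cosh$ is even and $\sinh$ odd, gives $\overline{U_\theta f}=U_\theta\overline{f}$, so $U_\theta$ intertwines the two complex conjugations. This completes the proof of the HDQ and intertwiner claims.

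For the last assertion I would first establish the factorization $\Omega_\theta=\Omega_\theta^0\circ U_\theta^{-1}$, where $\Omega_\theta^0$ is the Weyl map \eqref{eq-weyl} on $\gR^{2n+2}$: inserting the explicit $U_\theta^{-1}$ into \eqref{eq-weyl} and carrying out the $t,\xi$ integrations should reproduce, term by term, the amplitude $\sqrt{\cosh(2(a-a_0))}\cosh(a-a_0)^n$ and the phase of the quantization map \eqref{eq-qumapelem}. Granting this identity, the representation property is immediate by composition: $U_\theta^{-1}\colon\algA_\theta\to(L^2(\gR^{2n+2}),\star_\theta^0)$ is a unitary $*$-isomorphism by the previous paragraph, while $\Omega_\theta^0$ is an isometric $*$-morphism with dense range into $\caL^2(\ehH)$ because it is a representation of the Moyal HDQ, so $\Omega_\theta=\Omega_\theta^0\circ U_\theta^{-1}$ is again isometric, $*$-preserving and of dense range, hence a representation of $\algA$ in the sense of Definition \ref{def-hdq}. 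The main obstacle is exactly this kernel matching: it is a somewhat lengthy Gaussian and oscillatory computation, but it is precisely the one already performed in \cite{Bieliavsky:2010kg} when $\Omega_\theta$ was derived from the Weyl map, so it can be quoted rather than redone.
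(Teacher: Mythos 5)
Your proposal is correct and takes essentially the same route as the paper: both arguments transport the Hilbert algebra structure from the Moyal--Weyl HDQ through $U_\theta$, established as a unitary $*$-isomorphism (the paper performs the isometry computation explicitly by integrating over $\ell$ and changing variables, where you cite \cite{Bieliavsky:2001os,Bieliavsky:2010kg}, and it likewise treats conjugation compatibility as immediate), and both obtain the factorization $\Omega_\theta=\Omega_\theta^0\circ U_\theta^{-1}$ by a direct kernel computation. Your additional remarks --- the $t\mapsto -t$ substitution for the involution and the explicit observation that the representation property follows by composing an isometric $*$-morphism with dense range with a unitary $*$-isomorphism --- merely fill in steps the paper leaves implicit.
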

\begin{proof}
First, we know from \cite{Bieliavsky:2001os} that $U_\theta$ is an algebra homomorphism. It is easy to see that it is also compatible with complex conjugation and scalar products:
\begin{multline*}
\int |U_\theta f(a,x,\ell)|^2\dd a\dd x\dd\ell\\
=\frac{1}{2\pi}\int \cosh(\frac{\theta t}{4})^{2n} e^{i(\xi-\xi')t}\overline{f(a,\cosh(\frac{\theta t}{4})x,\xi)} f(a,\cosh(\frac{\theta t}{4})x,\xi')\dd a\dd x\dd t\dd\xi\dd\xi'\\
=\int |f(a,x,\xi)|^2\dd a\dd x\dd\xi,
\end{multline*}
first by integrating on $\ell$, then by changing the variable $x$ and integrating over $t$. So $U_\theta$ is an isometric isomorphism from $(L^2(\gR^{2n+2}),\star_\theta^0)$ and $(L^2(\gS),\star_\theta)$, which shows that $(L^2(\gS),\star_\theta)$ is a complete Hilbert algebra. Actually, it was already directly proved in \cite{Bieliavsky:2010kg} that this was a Hilbert algebra. Then, a direct computation shows the relation $\Omega_\theta=\Omega_\theta^0\circ U_\theta^{-1}$.
\end{proof}
\begin{corollary}
$\kM_\bb(\algA)$ is a continuous field of C*-algebras.
\end{corollary}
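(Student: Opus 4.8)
The plan is to deduce this corollary from the two structural facts already established for $\algA$: that it is intertwined with a Moyal-Weyl HDQ, and that the Moyal-Weyl bounded multiplier field is known to be continuous. The whole argument then reduces to feeding the intertwiner into the transport statement of Proposition~\ref{prop-contfield}, so essentially no new work is required.

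First I would record, by Proposition~\ref{prop-hdqelem}, that the operator \eqref{eq-intertw} is an intertwiner $U:(L^2(\gR^{2n+2}),\star_\theta^0)\to\algA$ between the Moyal-Weyl HDQ on $\gR^{2n+2}$ (for the symplectic form $\omega_\gS$) and the $j$-group HDQ $\algA$. Next I would invoke the continuity of the source field: as recalled in Section~\ref{subsec-moyal}, M.~Rieffel proved in \cite{Rieffel:1993} that the bounded multiplier field of the Moyal-Weyl deformation is a continuous field of C*-algebras. Although Rieffel's statement is phrased for the induced Schwartz algebra $\bS=\caS(\gR^{2n+2})$, this is harmless: since $\caS(\gR^{2n+2})$ is a dense Hilbert subalgebra of $L^2(\gR^{2n+2})$ and bounded multipliers depend only on the fulfillment (the remark following Lemma~\ref{lem-bounded}), one has $\kM_\bb((L^2(\gR^{2n+2}),\star_\theta^0))=\kM_\bb(\bS)$, so this field is itself a continuous field of C*-algebras. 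Finally I would apply the second part of Proposition~\ref{prop-contfield} to the intertwiner $U$, which asserts precisely that continuity of the source multiplier field is transported to the target; this yields at once that $\kM_\bb(\algA)$ is a continuous field of C*-algebras.

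I expect no genuine obstacle here, since the bulk of the work is carried out upstream. The only point that deserves a line of care is the passage from Rieffel's Schwartz-level statement to the $L^2$-level one; this is immediate from the fulfillment-invariance of bounded multipliers, but it is worth flagging explicitly so that the reference to \cite{Rieffel:1993} applies to exactly the field that Proposition~\ref{prop-contfield} requires. I would also remark in passing that the particular symplectic form $\omega_\gS$ is irrelevant, as any two symplectic structures on $\gR^{2n+2}$ differ by a linear change of coordinates and hence give isomorphic continuous fields.
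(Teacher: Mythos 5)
Your proposal is correct and follows exactly the paper's own route: cite Rieffel's continuity result for the Moyal--Weyl bounded multiplier field, use Proposition~\ref{prop-hdqelem} to get the intertwiner $U_\theta$, and apply the transport statement of Proposition~\ref{prop-contfield}. Your additional remark reconciling the Schwartz-level and $L^2$-level statements via $\kM_\bb(\bS)=\kM_\bb(L^2(\gR^{2n+2}),\star_\theta^0)$ is a legitimate detail the paper leaves implicit, but it does not change the argument.
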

\begin{proof}
This is a direct consequence of the fact that $\kM_\bb(L^2(\gR^{2n+2}),\star_\theta)$ is a continuous field of C*-algebras \cite{Rieffel:1993}, of the fact that $U:L^2(\gR^{2n+2})\to L^2(\gS)$ defined by \eqref{eq-intertw} is an intertwiner and of Proposition \ref{prop-contfield}.
\end{proof}

\begin{example}
\label{ex-fourierelem}
Let us define here a particular multiplier of $\algA_\theta$ that gives a new {\defin Fourier transformation}. The symplectic Fourier transformation is a multiplier of the Moyal-Weyl HDQ associated to the distribution $(\pi\theta)^{n}\delta(x)$ (see Example \ref{ex-fourier}). Let us push such a Dirac distribution by the intertwiner $U_\theta$:
\begin{equation*}
\delta_\gS(a,x,\ell)=(\pi\theta)^{n+1}U_\theta(\delta)(a,x,\ell)=(\pi\theta)^n\delta(a)\delta(x)\int (1+t^2)^{-\frac14}c(t)^{-n}e^{\frac{2i}{\theta}t\ell}\dd t
\end{equation*}
Then, this Fourier transform $\caF_\gS=(\caF_{\gS,L},\caF_{\gS,R})$ is the unitary multiplier in $\kM_{\bb}(\algA_\theta)$ associated to $\delta_\gS$, namely
\begin{align*}
\caF_{\gS,L}(f)(a,x,\ell)=\frac{4}{(\pi\theta)^{n+1}}\int &\sqrt{\cosh(2a)\cosh(2a')}  \cosh(a)^n\cosh(a')^n f(a',x',\ell')\\
&e^{-\frac{2i}{\theta}(\sinh(2a')\ell-\sinh(2a)\ell'+\cosh(a)\cosh(a')\omega(x',x))}\dd a'\dd x'\dd\ell',\\
\caF_{\gS,R}(f)(a,x,\ell)=\frac{4}{(\pi\theta)^{n+1}}\int & \sqrt{\cosh(2a)\cosh(2a')}  \cosh(a)^n\cosh(a')^n f(a',x',\ell')\\
& e^{+\frac{2i}{\theta}(\sinh(2a')\ell-\sinh(2a)\ell'+\cosh(a)\cosh(a')\omega(x',x))}\dd a'\dd x'\dd\ell'.
\end{align*}
We have therefore defined two unitary transformations on $\algA_\theta$. The associated automorphism is also the change of sign: $U_{\caF_\gS}(f)(a,x,\ell):=\caF_{\gS,L}(\caF_{\gS,R})^*(f)(a,x,\ell)=f(-a,-x,-\ell)$.
\end{example}

\subsection{Symmetry for normal j-groups}
\label{subsec-normalsymm}

Let us now identify the space $\caS(\gS)$ in the framework of HDQ.
\begin{theorem}
\label{thm-symelem}
The subspace of $\caC^\infty(\gS)$ generated by the functions $\ell$, constants, $e^{-a}x_j$, $e^{a}x_j$ ($j\in\{1,\dots,2n\}$), $e^{-2a}$ and $e^{2a}$ in the coordinate system $(a,x,\ell)$ of $\gS$ forms a symmetry $\kg$ of the HDQ $\algA$ of Proposition \ref{prop-hdqelem}. The Schwartz HDQ $\algB:=\bS(\algA,\kg)$ induced by this symmetry corresponds to the modified Schwartz space $\caS(\gS)$ defined in \eqref{eq-modsch}.
\end{theorem}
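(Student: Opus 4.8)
The plan is to establish the two assertions separately: first that the listed functions meet the five conditions of Definition \ref{def-symhdq}, and then that the graphic topology produced by Theorem \ref{thm-hdq1} is exactly the topology of the modified Schwartz space \eqref{eq-modsch}. Stability under complex conjugation is immediate, since every generator is real-valued; conditions 2 and 3 (well-definedness on $\caD(\gS)$ and the multiplier/commutation property) follow from associativity of $\star_\theta$ together with the fact that each element of $\caU(\kg)$ is a smooth function of at most exponential growth and the kernel \eqref{eq-starprod} is smooth; condition 4 holds because complex conjugation exchanges left and right $\star_\theta$-multiplication and the generators are real, so $JL_TJ=R_{\overline T}=R_T$; and condition 5 is automatic because, through the intertwiner $U_\theta$ of Proposition \ref{prop-hdqelem}, $\algA_\theta=L^2(\gS)$ is isomorphic to the Moyal factor and hence has trivial bounded center $\caZ_\bb(\algA)=\gC 1$.

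The substantive part of the symmetry verification is the Lie-algebra (covariance) condition. I would first note that $2\ell$, $e^{-a}\omega(y,x)$ and $e^{-2a}$ are precisely the moment maps $\eta_H,\eta_y,\eta_E$ of \eqref{eq-moment} for the strongly Hamiltonian action of $\gS$ on itself; since $\star_\theta$ is $\gS$-invariant, the discussion of section \ref{subsec-invsymm} yields $X^*=\frac{i}{\theta}[\eta_X,\fois]_{\star_\theta}$ and therefore the covariance relation $[\eta_X,\eta_Y]_{\star_\theta}=-i\theta\,\eta_{[X,Y]}$, so this half of $\kg$ already closes onto $\ks$. It then remains to handle the dual generators $e^{a}\omega(y,x)$ and $e^{2a}$: I would recognise $\kg$ as the moment-map image of the transvection Lie algebra of the symmetric space in which $\gS$ is an Iwasawa factor, which reduces the check to a finite list of structure brackets. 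The decisive ones are $[e^{-2a},e^{2a}]_{\star_\theta}$, $[e^{-a}\omega(y,x),e^{a}\omega(y',x)]_{\star_\theta}$ and their brackets with $\ell$, computed directly from the oscillatory kernel \eqref{eq-starprod}; these return combinations of $\ell$, $e^{\pm 2a}$ and a constant, which is exactly why both signs of the exponentials and the unit must be adjoined to $\kg$.

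For the identification of $\bS(\algA,\kg)$, recall that by Theorem \ref{thm-hdq1} it is the completion of $\caD(\gS)$ for the seminorms $\norm L_SR_T f\norm$ in $L^2$, with $S,T\in\caU(\kg)$. The key is to decompose these operators into the antisymmetric and symmetric combinations. On one side, $\frac{i}{\theta}(L_{\eta_X}-R_{\eta_X})=\frac{i}{\theta}[\eta_X,\fois]_{\star_\theta}=X^*$ reproduces the left-invariant vector fields $\tilde H,\tilde y,\tilde E$, hence all invariant differential operators $\tilde P\in\caU(\ks)$ appearing in \eqref{eq-modsch}. On the other side, the weights $\alpha_H=\ell$, $\alpha_y=\cosh(a)\omega(y,x)=\frac12(\eta_y+e^{a}\omega(y,x))$ and $\alpha_E=\sinh(2a)=\frac12(e^{2a}-e^{-2a})$ are themselves elements of $\kg$, and pointwise multiplication by them is the symmetric part $\frac12(L_T+R_T)$ modulo lower-order terms of the same system; conversely each $L_T$ and $R_T$ is multiplication by $T$ plus such a derivation. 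Expanding the products $L_SR_T$ therefore shows that $\{L_SR_T\}$ and $\{\alpha^{j}\tilde P\}$ generate comparable systems of seminorms, so the $L^2$-graphic topology of $\kg$ and the $L^2$-version of the modified seminorms define the same space.

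The final step, which I expect to be the main obstacle, is to bridge between the $L^2$-seminorms $\norm\alpha^{j}\tilde Pf\norm$ used by the multiplier topology and the supremum seminorms $\sup_{g}|\alpha^{j}(g)\tilde Pf(g)|$ defining $\caS(\gS)$ in \eqref{eq-modsch}. Since $\gS\cong\gR^{2n+2}$ through the global chart $(a,x,\ell)$, I would obtain this from a Sobolev-type embedding: enough $L^2$-derivatives dominate the sup-norm, while the polynomial-times-exponential weights $\alpha^{j}$ supply the decay needed to bound weighted $L^2$-norms by weighted sup-norms. The delicate bookkeeping is that the invariant operators $\tilde P$ and the multiplications by $\alpha^{j}$ do not commute, so one must control the commutators; but because each $\tilde P$ and each $\alpha$ is, in this chart, an explicit expression in $a,x,\ell$ and $e^{\pm a},e^{\pm 2a}$, finitely many modified seminorms dominate each one and conversely. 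This yields equality of the two completions, i.e. $\bS(\algA,\kg)=\caS(\gS)$.
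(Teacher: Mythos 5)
Your overall architecture — check the five conditions of Definition \ref{def-symhdq}, then compare the multiplier seminorms $\norm L_SR_Tf\norm$ with the modified Schwartz seminorms, then bridge $L^2$- and sup-seminorms by a Sobolev-type embedding — is the same as the paper's, and your treatment of the symmetry conditions (reality, $JL_TJ=R_T$, trivial center via the intertwiner, closure of the brackets including the constants produced by $[e^{-a}\omega(y,x),e^{a}\omega(y',x)]_{\star_\theta}$) as well as your final embedding step are essentially correct. The gap is in the central step. You claim that pointwise multiplication by the weights is the symmetric part $\tfrac12(L_T+R_T)$ ``modulo lower-order terms of the same system,'' and conversely that each $L_T$, $R_T$ is multiplication by $T$ plus a derivation. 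That is exactly what happens for the Moyal product ($L_{x_j}=x_j+\tfrac{i\theta}{2}(\omega^{-1}\partial)_j$ identically), but it fails for the product \eqref{eq-starprod}. The paper computes
\begin{equation*}
\{e^{-2ka},f\}_{\star_\theta}(a,x,\ell)=\frac{2}{\pi\theta}\,e^{-2ka}\int_{\gR^2}\cosh\big(k\,\text{arcsinh}(u)\big)\,e^{\frac{2i}{\theta}u(\ell-v)}f(a,x,v)\,\dd u\,\dd v,
\end{equation*}
i.e. $e^{-2ka}$ times a Fourier multiplier in the variable $\ell$ whose symbol $\cosh(k\,\text{arcsinh}(u))$ (for $k=1$: $\sqrt{1+u^2}$) is not a polynomial; likewise $\{e^{\eps a}\omega(y,x),\fois\}_{\star_\theta}$ involves the non-polynomial symbols $c(t)^2$ and $s(t)/c(t)$. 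These anticommutators are genuinely nonlocal operators, not ``multiplication plus lower-order derivations,'' so expanding $L_SR_T$ does not by itself produce a system of operators comparable with $\{\alpha^j\tilde P\}$. Only the antisymmetric halves (the commutators with the moment maps, i.e. the fundamental vector fields, by strong invariance) are differential operators — and even there, $[e^{-2ka},\fois]_{\star_\theta}$ is nonlocal for $k$ even.

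The device that makes the theorem work, and which your proposal is missing, is that the comparison is carried out at the level of \emph{squared} $L^2$-norms, where the non-polynomial square roots disappear: since $\cosh(k\,\text{arcsinh}(u))^2$ and $\sinh(k\,\text{arcsinh}(u))^2$ are polynomials in $u$, an integration by parts converts $\norm\{e^{-2ka},f\}_{\star_\theta}\norm^2$ and $\norm[e^{-2ka},f]_{\star_\theta}\norm^2$ into exact finite combinations of the local seminorms $\norm e^{-2ka}P(\partial_\ell)f\norm^2$, and these in turn are equivalent to $\norm\sinh(2a)^{k_1}\partial_\ell^{k_2}f\norm$. For the generators $e^{\eps a}\omega(y,x)$ one additionally needs operator positivity inequalities — e.g. $(A+B)^2\leq 2(A^2+B^2)$ applied to the commuting selfadjoint operators $\omega(y,x)$ and $iy\partial_x$ after conjugation by the Fourier transform in $\ell$ — to dominate the seminorm systems in both directions. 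Without this argument (or an equivalent substitute handling the nonlocality), the asserted comparability of the two families of seminorms is unsupported, and that comparability is precisely the hard content of the theorem.
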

It turns out that $\kg$ is a representation of the Lie algebra of the transvection group of the bounded symmetric domain associated (and isomorphic) to $\gS$, so that we can call this symmetry $\kg$ the {\defin transvection symmetry}. The action of this transvection group is actually an invariant symmetry of the HDQ.
\begin{proof}
First, the above generators form a symmetry of $\algA$ with Lie relations
\begin{align*}
&[\ell,e^{-2a}]_{\star_\theta}=-i\theta e^{-2a},\qquad [\ell,e^{2a}]_{\star_\theta}=i\theta e^{2a},\qquad [e^{\eps a}\omega(y,x),e^{\eps' a}\omega(y',x)]_{\star_\theta}=-i\theta e^{(\eps+\eps')a}\omega(y,y'),\\
&[\ell,e^{-a}\omega(y,x)]_{\star_\theta}=-\frac{i\theta}{2}e^{-a}\omega(y,x),\qquad [\ell,e^{a}\omega(y,x)]_{\star_\theta}=\frac{i\theta}{2}e^{a}\omega(y,x),
\end{align*}
for $\eps,\eps'=\pm 1$. Let us show that the multiplier topology of $\algB_\theta$, i.e. the one generated by the seminorms $\norm L_SR_T f\norm$, for $f\in\algB_\theta$, $S,T\in\caU(\kg)$, corresponds to the topology of $\caS(\gS)$, i.e. the one generated by the seminorms $\norm \sinh(2a)^{k_1} x^\alpha\ell^{k_2}\partial_a^{k_3}\partial_x^\beta\partial_\ell^{k_4} f\norm_\infty$, for $k_i\in\gN$ and $\alpha,\beta\in\gN^{2n}$. It turns out that the part of the multiplier topology corresponding to the generator $\ell$ coincides obviously with the part of the topology of $\caS(\gS)$ corresponding to the operators $\ell^{k_2}\partial_a^{k_3}$, as in the case of the Moyal product. Let us compare the part of the multiplier topology corresponding to the generators $e^{-2a},e^{2a}$ with the part of the topology of $\caS(\gS)$ corresponding to the operators $\sinh(2a)^{k_1}\partial_\ell^{k_4}$.

A direct computation using the explicit expression \eqref{eq-starprod} shows that
\begin{align*}
&[e^{-2ka},f]_{\star_\theta}(a,x,\ell)=-\frac{2}{\pi\theta}e^{-2ka}\int_{\gR^2}\sinh(k\fois \text{arcsinh}(u))e^{\frac{2i}{\theta}u(\ell-v)}f(a,x,v)\dd u\dd v,\\
& \{e^{-2ka},f\}_{\star_\theta}(a,x,\ell)=\frac{2}{\pi\theta}e^{-2ka}\int_{\gR^2}\cosh(k\fois \text{arcsinh}(u))e^{\frac{2i}{\theta}u(\ell-v)}f(a,x,v)\dd u\dd v,
\end{align*}
where the anticommutator is denoted as $\{f,g\}_{\star_\theta}=f\star_\theta g+g\star_\theta f$.
\begin{itemize}
\item If $k$ is even, $\cosh(k\fois \text{arcsinh}(u))=P_k(u)\eps_0(u)$, with $P_k$ a real polynomial of degree $k$ and $\eps_0(u)=1$.
\item If $k$ is odd, $\cosh(k\fois \text{arcsinh}(u))=P_k(u)\eps_1(u)$, with $P_k$ a real polynomial of degree $k-1$ and $\eps_1(u)=\sqrt{1+u^2}$.
\item If $k$ is even, $\sinh(k\fois \text{arcsinh}(u))=Q_k(u)\eps_1(u)$, with $Q_k$ a real polynomial of degree $k-1$ and $\eps_1(u)=\sqrt{1+u^2}$.
\item If $k$ is odd, $\sinh(k\fois \text{arcsinh}(u))=Q_k(u)\eps_0(u)$, with $Q_k$ a real polynomial of degree $k$ and $\eps_0(u)=1$.
\end{itemize}
In particular, $P_1(u)=1$, $Q_1(u)=u$, $P_2(u)=1+2u^2$, $Q_2(u)=2u$, $P_3(u)=1+4u^2$, $Q_3(u)=3u+4u^3$,... We can now compute the $L^2$-norms of these quantities. For example,
\begin{multline*}
\norm [e^{-2ka},f]_{\star_\theta}\norm^2=\frac{4}{\pi\theta}\int Q_k(u)^2\eps_{k+1}(u)^2 e^{-4ka}e^{-\frac{2i}{\theta}u(v'-v)}\overline{f(a,x,v)}f(a,x,v'),\dd a\dd u\dd v\dd v'\dd x\\
=\frac{4}{\pi\theta}\int  e^{-4ka}e^{-\frac{2i}{\theta}u(v'-v)}\overline{f(a,x,v)} (Q_k\eps_{k+1})^2(\frac{-i\theta}{2}\partial_{v'}) f(a,x,v'),\dd a\dd u\dd v\dd v'\dd x
\end{multline*}
by using integration by parts since $(Q_k\eps_{k+1})^2$ is always a polynomial. We get
\begin{itemize}
\item If $k$ is even, $\norm \{e^{-2ka},f\}_{\star_\theta}\norm^2=4\norm e^{-2ka} P_k(\frac{-i\theta}{2}\partial_\ell)f\norm^2$.
\item If $k$ is odd, $\norm \{e^{-2ka},f\}_{\star_\theta}\norm^2=4\norm e^{-2ka} P_k(\frac{-i\theta}{2}\partial_\ell)f\norm^2+\theta^2\norm e^{-2ka} P_k(\frac{-i\theta}{2}\partial_\ell)\partial_\ell f\norm^2$.
\item If $k$ is even, $\norm [e^{-2ka},f]_{\star_\theta}\norm^2=4\norm e^{-2ka} Q_k(\frac{-i\theta}{2}\partial_\ell)f\norm^2+\theta^2\norm e^{-2ka} Q_k(\frac{-i\theta}{2}\partial_\ell)\partial_\ell f\norm^2$.
\item If $k$ is odd, $\norm [e^{-2ka},f]_{\star_\theta}\norm^2=4\norm e^{-2ka} Q_k(\frac{-i\theta}{2}\partial_\ell)f\norm^2$.
\end{itemize}
Since the star-mutliplication of powers of $e^{-2a}$ and $e^{2a}$ is just the pointwise multiplication, we see that the $a$-part of the multiplier topology of $\algB_\theta$ is generated by the seminorms $\norm \{e^{-2ka},f\}_{\star_\theta}\norm$ and $\norm [e^{-2ka},f]_{\star_\theta}\norm$. And these are equivalent to the seminorms $\norm e^{-2k_1 a}\partial_\ell^{k_2}f\norm$ (for the use of the $L^2$-norm), and also to the seminorms $\norm \sinh(2a)^{k_1}\partial_\ell^{k_2}f\norm$ (by using the inequality $\cosh(2a)\leq 1+\sinh(2a)^2$).

What remains to be done for this part concerning the generators $e^{\pm 2a}$ is the equivalence between these seminorms and the seminorms using the same operators but with the $L^\infty$-norm. We adapt the standard argument of \cite{Reed:1980} and we concentrate on the variable $a$ and look at the low order case. Since $(1+\sinh(2a)^2)^{-1}$ is in $L^2(\gR)$, then
\begin{equation*}
\norm f\norm_2\leq \norm (1+\sinh(2a)^2)^{-1}\norm_2 \norm (1+\sinh(2a)^2)f\norm_\infty\leq C(\norm f\norm_\infty+\norm \sinh(2a)^2f\norm_\infty).
\end{equation*}
For the other sense of the equivalence, we use $f(a)=\int_{-\infty}^a \partial_{a'}f(a')\dd a'$ and we obtain
\begin{equation*}
\norm f\norm_\infty\leq \norm\partial_a f\norm_1\leq \norm (1+\sinh(2a)^2)^{-1}\norm_2 \norm (1+\sinh(2a)^2)\partial_a f\norm_2
\end{equation*}
by using also the Cauchy-Schwartz inequality.

Now let us compare the part of the multiplier topology corresponding to the generator $x$ with the part of the topology of $\caS(\gS)$ corresponding to the operators $x^\alpha\partial_x^\beta$. The philosophy is the same as before so let us do it for degree 1. First, a computation gives the following expressions (for $\eps=\pm 1$):
\begin{align*}
&[e^{\eps a}\omega(y,x),f]_{\star_\theta}=i\theta e^{\eps a}(y\partial_x -\frac{\eps}{2}\omega(y,x)\partial_\ell)f(a,x,\ell)\\
&\{e^{\eps a}\omega(y,x),f\}_{\star_\theta}=\frac{e^{\eps a}}{\pi\theta}\int e^{-\frac{2i}{\theta}t(s-\ell)}\left(2c(t)^2\omega(y,x)+i\theta\eps \frac{s(t)}{c(t)}y\partial_x\right)f(a,x,s)\dd t\dd s
\end{align*}
with $c(t):=\cosh(\frac12\text{arcsinh}(t))=\left(\frac{1+\sqrt{1+t^2}}{2}\right)^{\frac12}$ and $s(t):=\sinh(\frac12\text{arcsinh}(t))$.

We obtain directly the inequality $\norm [e^{\eps a}\omega(y,x),f]_{\star_\theta}\norm\leq \theta\norm \sinh(2a)y\partial_x f\norm+ \frac{\theta}{2}\norm \sinh(2a)\omega(y,x)\partial_\ell f\norm$ for the commutator. For the anticommutator, notice that $\caF^{-1}T\caF$ is a positive operator as soon as $T$ is a positive operator, where $\caF$ denotes here the usual Fourier transform. Then we use the inequality $(a+b)^2\leq 2(a^2+b^2)$ for the commuting selfadjoint operators $\omega(y,x)$ and $iy\partial_x$:
\begin{equation*}
\Big(2c(t)^2\omega(y,x)-i\theta\frac{s(t)}{c(t)}y\partial_x\Big)^2\leq 2\Big(4c(t)^4\omega(y,x)^2-\theta^2\frac{s(t)^2}{c(t)^2}(y\partial_x)^2\Big)
\end{equation*}
and we obtain for the anticommutator
\begin{equation*}
\norm \{e^{\eps a}\omega(y,x),f\}_{\star_\theta}\norm^2\leq 8\norm e^{\eps a}\omega(y,x)f\norm^2+2\theta^2\norm e^{\eps a}\omega(y,x)\partial_\ell f\norm^2+\frac{\theta^4}{8}\norm e^{\eps a}y\partial_x\partial_\ell f\norm^2.
\end{equation*}
by using integrations by parts and transforming $t$ in $\frac{i\theta}{2}\partial_s$. This means that the multiplier topology is controlled by the one of $\caS(\gS)$ (we have also the equivalence between $L^2$-norms and $L^\infty$-seminorms for these operators). For the converse sense, concerning the operators $x^\alpha\partial_x^\beta$, we have $y\partial_x f=\frac{i}{\theta}(e^a[e^{-a}\omega(y,x),f]_{\star_\theta}- e^{-a}[e^{a}\omega(y,x),f]_{\star_\theta})$ so that
\begin{equation*}
\norm y\partial_x f\norm\leq \frac{1}{\theta}\norm \cosh(2a)[e^{-a}\omega(y,x),f]_{\star_\theta}\norm +\frac{1}{\theta}\norm \cosh(2a)[e^{a}\omega(y,x),f]_{\star_\theta}\norm
\end{equation*}
and we know already that $\norm\cosh(2a)f\norm$ is controlled by the multiplier topology. Since we have
\begin{multline*}
\Big(2c(t)^2\omega(y,x)-i\theta\frac{s(t)}{c(t)}y\partial_x\Big)^2+\Big(2c(t)^2\omega(y,x)+i\theta\frac{s(t)}{c(t)}y\partial_x\Big)^2\\
=2\left(4c(t)^4\omega(y,x)^2-\theta^2\frac{s(t)^2}{c(t)^2}(y\partial_x)^2\right)\geq 8\omega(y,x)^2,
\end{multline*}
by using the above computations and arguments, we find that
\begin{equation*}
\norm \omega(y,x)f\norm^2\leq \frac{1}{8}\norm e^a \{e^{- a}\omega(y,x),f\}_{\star_\theta}\norm^2+\frac{1}{8}\norm e^{-a} \{e^{ a}\omega(y,x),f\}_{\star_\theta}\norm^2.
\end{equation*}
So, the multiplier topology of $\algB_\theta$ is equivalent to the Fr\'echet topology of $\caS(\gS)$ and we have $\algB_\theta=\caS(\gS)$.
\end{proof}

\begin{remark}
Theorem \ref{thm-symelem} extends to the framework of normal $j$-groups, or equivalently to K\"ahlerian Lie groups with negative curvature. Such {\defin normal $j$-groups} \cite{Pyatetskii-Shapiro:1969} are actually semidirect products of elementary normal $j$-groups
\begin{equation*}
G=(\dots (\gS_1\ltimes_{\rho_1}\gS_2)\ltimes_{\rho_2}\dots\gS_{N-1})\ltimes_{\rho_{N-1}}\gS_N
\end{equation*}
where $\rho_i:\gS_i\to\Aut(\gS_{i+1})$ are some (symplectic) actions. In \cite{Bieliavsky:2010kg}, it has been proved that the star-products for $G$ were tensor products of the elementary factors $\gS_i$ and that the modified Schwartz space was also a tensor product: $\caS(G):=\caS(\gS_1)\hat\otimes\dots\hat\otimes\caS(\gS_N)$. Then, by taking the generators given in Theorem \ref{thm-symelem} for each factor $\gS_i$, we obtain a symmetry of the HDQ $(L^2(G),\star_\theta)$ given by the tensor product of each $(L^2(\gS_i),\star_\theta)$, and the Schwartz induced HDQ corresponds to the tensor product $\caS(G)$. In the same way, results below concerning elementary normal $j$-groups $\gS$ can be extended to normal $j$-groups.
\end{remark}

\begin{remark}
Even if the kernel of the star-product \eqref{eq-starprod} is different from the one of the Moyal-Weyl product \eqref{eq-moyalprod}, a slight adaptation of the proof of Theorem \ref{thm-distribident} leads to the same kind of result, namely that the unbounded multiplier space $\kM(\algB_\theta)$, which is a complete nuclear locally convex Hausdorff *-algebra by Proposition \ref{prop-lchalg}, identifies with the Fr\'echet multiplier space $\caM_{\star_\theta}(\caS(\gS))$ defined and used for the star-exponential in \cite{Bieliavsky:2013sk}.
\end{remark}

Let us call the generators of $\kg$ by $H$, $y$, $y'$, $E$, $E'$ (with $y,y'\in V$) such that the moment maps are
\begin{equation*}
\eta_H=2 \ell,\quad \eta_y=e^{-a}\omega(y,x),\quad \eta_{y'}=e^{a}\omega(y',x),\quad \eta_E= e^{-2a},\quad \eta_{E'}=e^{2a}.
\end{equation*}
\begin{corollary}
By Theorem \ref{thm-starexp}, for any $T\in\kg$, the star-exponential $E_{\star_\theta}(\frac{i}{\theta}T)$ of the transvection symmetry $\kg$ belongs to $\kMstab(\algB_\theta)$, so it is a unitary multiplier stabilizing $\algB_\theta\simeq \caS(\gS)$. We can find its explicit expression by using \cite{Bieliavsky:2013sk}. Namely, any arbitrary $T$ in $\kg$ writes
\begin{equation*}
T(a,x,\ell)=\alpha \eta_H+\eta_y+\eta_{y'}+\beta \eta_E+\beta' \eta_{E'},
\end{equation*}
with $\alpha,\beta,\beta'\in\gR$ and $y,y'\in V$. So, the star-exponential $E_{\star_\theta}(\frac{i}{\theta}T)$ is the left and right $\star_\theta$-multiplication by the function
\begin{equation*}
(a,x,\ell)\mapsto \sqrt{\cosh(\alpha)}\cosh(\frac{\alpha}{2})^n\ e^{\frac{i}{\theta}\sinh(\alpha)\Big(2\ell+\frac{\beta}{\alpha}e^{-2a}\ -\frac{\beta'}{\alpha}e^{2a}+\frac{e^{-a}}{\alpha}\omega(y,x)-\frac{e^{a}}{\alpha}\omega(y',x)\Big)}.
\end{equation*}
\end{corollary}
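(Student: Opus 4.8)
The plan is to split the statement into its structural part, which is immediate from the general theory, and the explicit formula, which is the computation recorded in \cite{Bieliavsky:2013sk}.

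First I would settle the structural claims. By Theorem \ref{thm-symelem} the functions $\eta_H=2\ell$, $\eta_y=e^{-a}\omega(y,x)$, $\eta_{y'}=e^{a}\omega(y',x)$, $\eta_E=e^{-2a}$, $\eta_{E'}=e^{2a}$ span the real symmetry $\kg$, with $y,y'$ ranging over $V$ so that the scalar multiples of $\eta_y,\eta_{y'}$ are absorbed; this is exactly the asserted parametrization $T=\alpha\eta_H+\eta_y+\eta_{y'}+\beta\eta_E+\beta'\eta_{E'}$. Since every such $T$ is a real smooth function, Theorem \ref{thm-starexp} applies directly and gives that $E_{\star_\theta}(\tfrac{i}{\theta}T)$ is a unitary multiplier in $\kMstab(\algB_\theta)$; by Theorem \ref{thm-symelem} it therefore stabilizes $\algB_\theta\simeq\caS(\gS)$. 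Only the explicit identification of the representing function remains.

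For that, I would transport the problem to the Moyal-Weyl model through the intertwiner $U_\theta$ of \eqref{eq-intertw}. Since $U_\theta$ is an intertwiner (Proposition \ref{prop-hdqelem}), it conjugates left $\star_\theta$-multiplication into left $\star_\theta^0$-multiplication, so by continuous functional calculus $e^{\frac{i}{\theta}L_T}=U_\theta\, e^{\frac{i}{\theta}L^0_{U_\theta^{-1}T}}\,U_\theta^{-1}$ and likewise for $R_T$. On $(\gR^{2n+2},\star_\theta^0)$ each of the five one-parameter directions has a star-exponential that reduces to an ordinary exponential (the corresponding generator being linear, or a function of $\star_\theta^0$-self-commuting coordinates); the nontriviality comes only from the fact that the generators do not $\star_\theta^0$-commute, and disentangling the exponential of their sum by the Baker-Campbell-Hausdorff structure of the transvection algebra produces the hyperbolic coefficients. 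Pushing the result back through the kernel \eqref{eq-intertw} then yields a function on $\gS$; this is precisely the computation carried out in \cite{Bieliavsky:2013sk}, and it gives the prefactor $\sqrt{\cosh\alpha}\,\cosh(\tfrac{\alpha}{2})^n$ together with the $\tfrac{\sinh\alpha}{\alpha}$-weighted phase.

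As a self-contained check I would instead use the flow equation noted right after Theorem \ref{thm-starexp}: writing $E_t$ for the function representing $E_{\star_\theta}(\tfrac{it}{\theta}T)$, one has $\partial_t E_t=\tfrac{i}{\theta}\,T\star_\theta E_t$ with $E_0$ the constant function $1$, which represents the unit multiplier, and one verifies by direct substitution, using the amplitude and phase of \eqref{eq-starprod}, that the stated function solves this first-order equation. The main obstacle in either route is the same explicit computation: the nonabelian hyperbolic structure of the transvection group enters through the $\sinh$ and $\cosh$ terms, and the normalization prefactor arises from the Gaussian integration, equivalently from the Jacobian of \eqref{eq-intertw}. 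This Mehler-type evaluation is the genuinely technical step, which is why the corollary is stated as a consequence of \cite{Bieliavsky:2013sk} rather than derived from scratch.
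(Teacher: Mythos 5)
Your proposal matches the paper's own treatment: the paper likewise derives the structural claims by combining Theorem \ref{thm-symelem} (which establishes $\kg$ as a symmetry with $\bS(\algA,\kg)\simeq\caS(\gS)$) with Theorem \ref{thm-starexp}, and for the explicit formula it simply cites \cite{Bieliavsky:2013sk} rather than reproducing the computation. Your two verification sketches (conjugation through $U_\theta$, and the evolution equation $\partial_t E_t=\frac{i}{\theta}T\star_\theta E_t$) are consistent with the paper's own remark after Theorem \ref{thm-starexp} that this equation ``could be useful to determine its explicit expression in the concrete examples,'' so nothing essential differs.
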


\begin{corollary}
The representation \eqref{eq-qumapelem} restricted to $\algB$ is extendable, and it extends as a faithful *-representation $\Omega:\kM(\algB)\to\caL^+(\caD_\algB)$ of the multipliers of $\algB$, with
\begin{equation*}
\caD_\algB:=\{\varphi\in L^2(\gR^{n+1}),\, \forall k_i\in\gN,\,\forall \alpha_i\in\gN^n,\, \sup_{a,v} |\sinh(2a)^{k_1} v^{\alpha_1}\partial_a^{k_2}\partial_v^{\alpha_2}\varphi(a,v)|<\infty\}.
\end{equation*}
\end{corollary}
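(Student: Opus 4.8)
The plan is to deduce this Corollary from the general extension result of Theorem \ref{thm-symhdq}, whose hypotheses only need to be checked in the present concrete situation, and then to make the abstract domain $\caD_\algB$ explicit. Since $\algB=\bS(\algA,\kg)=\caS(\gS)$ has already been identified in Theorem \ref{thm-symelem}, and $\kg$ is the transvection symmetry, the two tasks are: first, extendability of the quantization map \eqref{eq-qumapelem}; second, the computation of the canonical domain.

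First I would verify that \eqref{eq-qumapelem}, restricted to $\algB=\caS(\gS)$, is extendable in the sense of Definition \ref{def-symhdqcomp}. Since both the product $\star_\theta$ of \eqref{eq-starprod} and the representation $\Omega_\theta$ of \eqref{eq-qumapelem} are given by smooth kernels, the three conditions of Definition \ref{def-symhdqcomp} reduce, as noted just after that definition, to manipulations of these kernels. Concretely, writing $\Omega_\theta=\Omega_\theta^0\circ U_\theta^{-1}$ from Proposition \ref{prop-hdqelem}, I would compute the unbounded operators $\tilde\Omega_\theta(\eta_X)$ attached to the generators $\eta_H,\eta_y,\eta_{y'},\eta_E,\eta_{E'}$ and check that they admit $\caD(Q)$ as a common invariant dense domain, that $\tilde\Omega_\theta(T)\tilde\Omega_\theta(S)=\tilde\Omega_\theta(T\star_\theta S)$ there, and that $\tilde\Omega_\theta(\overline T)=\tilde\Omega_\theta(T)^*$. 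The last point is automatic on the real generators: by the invariant symmetry formalism of Section \ref{subsec-invsymm} the star-exponentials of Theorem \ref{thm-starexp} integrate to a unitary representation $U_\theta(g)=\Omega_\theta(\caE_g)$ of the transvection group on $L^2(Q)$, so each $\tilde\Omega_\theta(\eta_X)$ is essentially self-adjoint. Granting extendability, Theorem \ref{thm-symhdq} then yields the faithful $*$-representation $\tilde\Omega:\kM(\algB)\to\caL^+(\caD_\algB)$ with canonical domain $\caD_\algB=\bigcap_{T\in\caU(\kg)}\big(\Dom(\tilde\Omega(T))\cap\Dom(\tilde\Omega(T)^*)\big)$.

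It then remains to identify this domain with the space displayed in the statement, computing the generators explicitly on $L^2(Q)\simeq L^2(\gR^{n+1})$ via the conjugate pairing coming from $\omega_\gS=2\,\dd a\wedge\dd\ell+\omega$: the variable $\ell$ is conjugate to $a$, so $\eta_H=2\ell$ quantizes to a multiple of $\partial_a$; the functions $\eta_E=e^{-2a}$ and $\eta_{E'}=e^{2a}$ quantize to multiplication operators in $a$ whose difference is a multiple of $\sinh(2a)$; and the $V$-generators $\eta_y=e^{-a}\omega(y,x)$, $\eta_{y'}=e^{a}\omega(y',x)$, as $y,y'$ range over $V$, produce multiplication by the $V_0$-coordinates $v$ together with the derivatives $\partial_v$, modulated by hyperbolic factors in $a$ that are themselves controlled by the $\sinh(2a)$ seminorms. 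Intersecting the domains of all elements of $\caU(\kg)$ then forces every $\sinh(2a)^{k_1}v^{\alpha_1}\partial_a^{k_2}\partial_v^{\alpha_2}\varphi$ to lie in $L^2$, and the passage from these $L^2$-conditions to the sup-norm conditions in the statement is carried out by the same device as in the proof of Theorem \ref{thm-symelem}, namely using $(1+\sinh(2a)^2)^{-1}\in L^2(\gR)$ together with the Cauchy-Schwarz inequality and one-dimensional Sobolev estimates in each variable. This gives exactly $\caD_\algB$.

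I expect the main obstacle to lie in this last identification. One must compute the quantized moment maps $\tilde\Omega_\theta(\eta_X)$ precisely through $\Omega_\theta=\Omega_\theta^0\circ U_\theta^{-1}$, keeping track of the $\cosh(\tfrac{\theta t}{4})$ and $\sinh(\tfrac{\theta t}{2})$ factors coming from the intertwiner \eqref{eq-intertw}, and then prove that intersecting the domains over the whole enveloping algebra $\caU(\kg)$ produces neither more nor fewer functions than the modified Schwartz conditions on $Q$. The self-adjointness input and the $L^2$-versus-$L^\infty$ equivalence are routine once the generators are in hand, but showing that these generators genuinely reduce to multiplication by $\sinh(2a)$ and $v$ and to the derivations $\partial_a$, $\partial_v$, rather than to genuinely transcendental operators, is the computational heart of the argument.
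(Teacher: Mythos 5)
Your proposal follows the paper's own route: invoke Theorem \ref{thm-symhdq} for the extension (extendability being the routine smooth-kernel check), then compute the quantized generators of $\caU(\kg)$ explicitly to identify $\caD_\algB$ --- the paper does this directly from the kernel \eqref{eq-qumapelem}, obtaining multiplication by $\sinh(2a_0)^k$ and $v_0^\alpha$ together with the derivations $P_k(\partial_{a_0})$ and $\big(\omega^{-1}\partial_{v_0}\big)^\alpha$, exactly the operators you predict. The only differences are cosmetic: you route the generator computation through $\Omega_\theta=\Omega_\theta^0\circ U_\theta^{-1}$ rather than the kernel itself, and you spell out the $L^2$-versus-$L^\infty$ seminorm equivalence that the paper leaves implicit.
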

\begin{proof}
Due to Theorem \ref{thm-symhdq}, it suffices to compute the image of $\Omega$ on generators of $\caU(\kg)$ to identify $\caD_\algB$. By using the explicit expression \eqref{eq-qumapelem}, we have
\begin{align*}
&\Omega(\ell^k)\varphi(a_0,\ell_0)=\left(\frac{i\theta}{4}\right)^k P_k(\partial_{a_0})\varphi(a_0,v_0),\qquad \Omega(\sinh(2a)^k)\varphi(a_0,v_0)=\sinh(2a_0)^k\varphi(a_0,v_0),\\
&\Omega(v^\alpha)\varphi(a_0,v_0)=v_0^\alpha\varphi(a_0,v_0),\qquad \Omega(w^\alpha)\varphi(a_0,v_0)=\Big(-\frac{i\theta}{2}\omega^{-1}\partial_{v_0}\Big)^\alpha \varphi(a_0,v_0),
\end{align*}
where $P_k$ is a polynomial of degree $k$. So the topology of $\caD_\algB$ is the exactly the one of the Corollary.
\end{proof}

We can now give an explicit expression for the new Sobolev spaces associated to this symmetry. 
\begin{proposition}
The Sobolev spaces associated to the transvection symmetry are
\begin{equation*}
\bH^k(\algA,\kg)=\{f\in L^2(\gS),\, \forall X\in\caU_k(\kg),\, \norm X^*f\norm<\infty\},
\end{equation*}
where $\caU_k(\kg)$ denotes the filtered enveloping algebra with generators of degree less or equal than $k$, fundamental vector fields are extended to elements of $\caU(\kg)$ by $(X_1\dots X_p)^*=X_1^*\dots X_p^*$, and the fundamental vectors have the expression
\begin{align*}
&H^*=-\partial_a,\qquad y^*=-e^{-a}\partial_y+\frac12e^{-a}\omega(x,y)\partial_\ell,\qquad E^*=-e^{-2a}\partial_\ell,\\
&(y')^*=-e^{a}\partial_{y'}-\frac12e^{a}\omega(x,y')\partial_\ell,\qquad (E')^*=-e^{2a}\partial_\ell.
\end{align*}
\end{proposition}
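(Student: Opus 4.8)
The starting point is the invariant-symmetry formula from section~\ref{subsec-invsymm}: the text after Theorem~\ref{thm-symelem} records that the transvection action is an invariant symmetry, so for each moment map $\eta_X$ one has $L_{\eta_X}-R_{\eta_X}=[\eta_X,\fois]_{\star_\theta}=-i\theta\,X^*$, where $X^*$ is the fundamental vector field of the generator $X$. A basis of $\kg$ is given, up to the central constant whose inner derivation vanishes identically, by the moment maps $\eta_H,\eta_y,\eta_{y'},\eta_E,\eta_{E'}$ of \eqref{eq-moment} and their $e^{a}$-analogues. Hence the elementary building blocks $(L_{e_i}-R_{e_i})$ entering the Sobolev norm of Definition~\ref{def-sobolev} are, up to the scalar $-i\theta$, precisely the operators $H^*,y^*,(y')^*,E^*,(E')^*$; the whole computation reduces to identifying these fundamental vector fields and then to a completion argument.

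First I would compute these $X^*$ explicitly as first-order differential operators by evaluating the inner derivations $[\eta_X,\fois]_{\star_\theta}$ on the explicit kernel \eqref{eq-starprod}. This merely extends the commutator and anticommutator computations already carried out in the proof of Theorem~\ref{thm-symelem}: for instance the case $k=1$ there gives $[e^{-2a},f]_{\star_\theta}=i\theta\,e^{-2a}\partial_\ell f$, whence $E^*=-e^{-2a}\partial_\ell$, and the entirely analogous evaluations for $\eta_H$, $\eta_y$, $\eta_{y'}$ and $\eta_{E'}$ produce the remaining operators in the stated form. Equivalently, each $X^*$ can be read off as the Hamiltonian vector field of $\eta_X$ for the KKS form $\omega_\gS=2\dd a\wedge\dd\ell+\omega$, with the Poisson bracket normalised by the leading antisymmetric term of $\star_\theta$; this fixes all coefficients and signs.

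With these identifications the translation of the norm is immediate: a product $(L_{e_{i_1}}-R_{e_{i_1}})\cdots(L_{e_{i_l}}-R_{e_{i_l}})$ equals $(-i\theta)^l\,X_{i_1}^*\cdots X_{i_l}^*=(-i\theta)^l(X_{i_1}\cdots X_{i_l})^*$ by the convention $(X_1\cdots X_p)^*=X_1^*\cdots X_p^*$. Since $\caU_k(\kg)$ is spanned by the monomials in the generators of length at most $k$, and the scalars $(-i\theta)^l$ only rescale each seminorm, one has $\norm f\norm_k<\infty$ if and only if $\norm X^*f\norm<\infty$ for every $X\in\caU_k(\kg)$. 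This already yields the inclusion $\bH^k(\algA,\kg)\subset\{f\in L^2(\gS):\norm X^*f\norm<\infty\ \forall X\in\caU_k(\kg)\}$, because the operators $X^*$ are closed and take finite values on the completion.

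The remaining inclusion, that every $f\in L^2(\gS)$ with all $X^*f\in L^2(\gS)$ ($X\in\caU_k(\kg)$) already belongs to the completion of $\caD(\gS)$ for $\norm\fois\norm_k$, is where the work lies, and I expect the required density of $\caD(\gS)$ in this maximal-domain space to be the main obstacle. The plan is a Friedrichs--Meyers--Serrin type argument adapted to the vector fields $X^*$ on $\gS$ (exactly as the analogous equality $\bH^k=H^k(\gR^2)$ for the Moyal case rests on the classical $H^k=W^{k,2}$): approximate $f$ first by a cutoff in the group coordinates to gain compact support, then by mollification, controlling at each order the commutators of $X^*$ with both the cutoff and the mollifier. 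The delicate point is that the coefficients $e^{\pm a},e^{\pm2a}$ are unbounded, so the cutoff must be chosen so that the extra terms it produces are of strictly lower order and remain dominated by the seminorms $\norm\fois\norm_k$; once this is arranged the approximants converge in every $\norm\fois\norm_k$, and the two spaces coincide.
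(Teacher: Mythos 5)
Your core reduction is exactly the argument the paper has in mind; in fact the paper states this proposition with \emph{no} proof at all, leaving it as an immediate consequence of the invariant-symmetry identity $L_{\eta_X}-R_{\eta_X}=[\eta_X,\fois]_{\star_\theta}=-i\theta X^*$ (last proposition of section \ref{subsec-invsymm}, combined with the assertion after Theorem \ref{thm-symelem} that the transvection action is an invariant symmetry) and of the commutator formulas already displayed in the proof of Theorem \ref{thm-symelem}. Your identifications check out against those formulas: $[e^{-2a},f]_{\star_\theta}=i\theta e^{-2a}\partial_\ell f$ gives $E^*=-e^{-2a}\partial_\ell$, and $[e^{\eps a}\omega(y,x),f]_{\star_\theta}=i\theta e^{\eps a}\big(y\partial_x-\frac{\eps}{2}\omega(y,x)\partial_\ell\big)f$ gives $y^*$ and $(y')^*$; the remarks that the central constants drop out of the Sobolev seminorms and that the factors $(-i\theta)^l$ only rescale them are also right. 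So up to and including your third paragraph, your proof coincides with (and actually writes out) the paper's implicit one.

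Where you go beyond the paper is the last step, and that is also where your sketch is not yet a proof. The identification of the completion of $\caD(\gS)$ for $\norm\fois\norm_k$ with the maximal domain $\{f\in L^2(\gS):\ \norm X^*f\norm<\infty\ \forall X\in\caU_k(\kg)\}$ is the only nontrivial analytic content of the statement; the paper passes over it silently (as it already does for the $\caC^k$-vector characterization in section \ref{subsec-invsymm}), so you are right to isolate it. However, your stated mechanism --- choose the cutoff so that the error terms are ``of strictly lower order and remain dominated by the seminorms $\norm\fois\norm_k$'' --- fails as literally formulated. Take $k=1$ and cut off all variables at a common scale $R$: the field $(E')^*=-e^{2a}\partial_\ell$ hitting the $\ell$-cutoff produces $\frac{1}{R}\chi'(\ell/R)\,e^{2a}f$, whose coefficient is of size $e^{4R}/R$ on the support of the $a$-cutoff, and this term cannot be absorbed, since $e^{2a}f\in L^2$ is \emph{not} among the hypotheses (only $e^{2a}\partial_\ell f$ is). The repair requires nested anisotropic scales --- e.g.\ cut $\ell$ at a scale like $e^{R^2}$, $x$ at an intermediate scale, $a$ at scale $R$ --- exploiting that none of the coefficients of the $X^*$ depends on $\ell$, so mollification in $\ell$, then $x$, then $a$ (quasi-)commutes with the fields; alternatively, and closer to the paper's own framework, one can use that $\bH^k$ is the space of $\caC^k$-vectors of the unitary action and smooth along the group, $f\mapsto\int\phi(g)\kL_g f\,\dd g$ with $\phi\in\caD(G)$, which converts the $X^*$ into derivatives of $\phi$ and handles the unbounded coefficients automatically (invoking the classical equality of weak and strong $\caC^k$-vectors). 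Either way, this density step is genuine work that neither the paper nor your sketch completes; your proposal identifies the correct difficulty and a viable strategy, but the argument still needs to be carried out.
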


\begin{proposition}
Since $\star_\theta$ is strongly-invariant under the left-action of $\gS$ on itself, the moments maps \eqref{eq-moment} form another symmetry of the HDQ $\algA$. However, the Schwartz induced HDQ does not correspond to the one of Theorem \ref{thm-symelem}.
\end{proposition}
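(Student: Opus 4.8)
The plan is to check the five conditions of Definition~\ref{def-symhdq} for the triple $\{\eta_H,\eta_y,\eta_E\}$ of \eqref{eq-moment}, and then to compare the multiplier topology it generates with the topology of $\caS(\gS)$ obtained from the transvection symmetry $\kg$ in Theorem~\ref{thm-symelem}. The essential remark for the first part is that $\eta_H=2\ell$, $\eta_y=e^{-a}\omega(y,x)$ and $\eta_E=e^{-2a}$ all lie in the linear span of the transvection generators of Theorem~\ref{thm-symelem}, so that the Lie algebra $\ks=\gR H\oplus V\oplus\gR E$ of $\gS$ is realised as a Lie subalgebra of $\kg$.

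For condition~1 I would simply restrict the $\star_\theta$-commutators computed in the proof of Theorem~\ref{thm-symelem} to the generators carrying $\eps=-1$ together with $e^{-2a}$; this yields
\begin{equation*}
[\eta_H,\eta_E]_{\star_\theta}=-2i\theta\,\eta_E,\qquad [\eta_y,\eta_{y'}]_{\star_\theta}=-i\theta\,\omega(y,y')\,\eta_E,\qquad [\eta_H,\eta_y]_{\star_\theta}=-i\theta\,\eta_y,
\end{equation*}
which are exactly the relations $[\eta_X,\eta_Y]_{\star_\theta}=-i\theta\,\eta_{[X,Y]}$ for $[H,E]=2E$, $[H,y]=y$, $[y,y']=\omega(y,y')E$ in $\ks$; this is the covariance equivalent to the assumed strong invariance of $\star_\theta$. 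Conditions 2, 3 and 4 then hold for every element of $\caU(\ks)$ because $\caU(\ks)\subset\caU(\kg)$ and these properties were already established for $\caU(\kg)$ in Theorem~\ref{thm-symelem}; the three moment maps are real, so $\ks$ is conjugation-stable and $JL_TJ=R_{\overline T}$. Condition~5 is immediate since the intertwiner $U_\theta$ of Proposition~\ref{prop-hdqelem} carries $\caZ_\bb(\algA_\theta)$ to the trivial center $\gC 1$ of the Moyal algebra, so every $L_S,R_T$ commutes with the bounded center. Hence $\{\eta_H,\eta_y,\eta_E\}$ is a symmetry of $\algA$.

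For the second part I would show that the multiplier topology $\tau_\ks$ attached to $\ks$ is strictly coarser than the topology $\tau_\kg$ of $\caS(\gS)$. Since $\caU(\ks)\subset\caU(\kg)$, the defining seminorms $\norm L_SR_T(\fois)\norm$ of $\tau_\ks$ form a subfamily of those of $\tau_\kg$, so $\tau_\ks\subseteq\tau_\kg$. The point is that $\caU(\ks)$ produces only the weights $e^{-a}$ and $e^{-2a}$, never $e^{a}$ or $e^{2a}$: by the computations of Theorem~\ref{thm-symelem} the $\ks$-seminorms are built from the bounded operators $\ell,\partial_a,\partial_\ell$ and from factors $e^{-ka}$ ($k\geq 0$), hence they constrain $f$ only as $a\to-\infty$. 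To see that the inclusion of topologies is strict, I would test against a translated bump: fix $\chi\in\caD(\gR)$ supported in $[0,1]$ and $\psi\in\caD(V\times\gR)$, and set $f_n(a,x,\ell):=\chi(a-n)\,\psi(x,\ell)\in\caD(\gS)$. On the support $a\in[n,n+1]$ every factor $e^{-ka}$ ($k\geq 1$) is exponentially small in $n$, while the purely $\eta_H$-type seminorms $\norm\ell^{k_2}\partial_a^{k_3}f_n\norm$ are independent of $n$ by translation invariance of $\dd a$; thus $(f_n)$ is bounded for every $\tau_\ks$-seminorm. On the other hand the $\tau_\kg$-seminorm dominating $\norm e^{2a}f_n\norm$ blows up, since
\begin{equation*}
\norm e^{2a}f_n\norm^2=e^{4n}\Big(\int_\gR e^{4s}|\chi(s)|^2\,\dd s\Big)\,\norm\psi\norm^2\longrightarrow\infty.
\end{equation*}

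Therefore $\norm e^{2a}(\fois)\norm$ is $\tau_\kg$-continuous but not $\tau_\ks$-continuous, so $\tau_\ks\subsetneq\tau_\kg$ and the two completions cannot coincide; concretely $\bS(\algA,\ks)\supsetneq\caS(\gS)$, the former containing smooth $L^2$ functions that decay super-exponentially as $a\to-\infty$ but only polynomially as $a\to+\infty$ (e.g.\ $g(a)\psi(x,\ell)$ with $g(a)=(1+a^2)^{-1}$ for large $a$), which violate the $\sinh(2a)^k$ estimates defining $\caS(\gS)$. I expect the main obstacle to be precisely this second part: one has to turn the qualitative observation that dropping $e^{a}x_j$ and $e^{2a}$ loses all control as $a\to+\infty$ into a quantitative separation of the two graphic topologies, whereas the verification that the moment maps form a symmetry is essentially bookkeeping once $\ks$ is recognised as a Lie subalgebra of the transvection symmetry.
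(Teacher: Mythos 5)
Your proof is correct and follows the same two-step skeleton as the paper's: (i) the covariance relations $[\eta_H,\eta_y]_{\star_\theta}=-i\theta\eta_y$, $[\eta_H,\eta_E]_{\star_\theta}=-2i\theta\eta_E$, $[\eta_y,\eta_{y'}]_{\star_\theta}=-i\theta\omega(y,y')\eta_E$ make the moment maps a symmetry, and (ii) the absence of $e^{2a}$ destroys any control as $a\to+\infty$. The difference is one of rigor rather than of route. For (i), your remark that $\ks$ is a Lie subalgebra of the transvection symmetry $\kg$, so that conditions 2--5 of Definition \ref{def-symhdq} are simply inherited from Theorem \ref{thm-symelem}, is cleaner than the paper, which states the bracket relations and leaves the remaining conditions implicit. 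For (ii), the paper contents itself with the qualitative sentence that the seminorms ``correspond to a very different behavior for $a\to\infty$,'' whereas you actually separate the two topologies: the translated bumps $f_n=\chi(a-n)\psi$ are $\tau_\ks$-bounded (all $\ks$-seminorms carry only weights $e^{-ka}$, $k\geq 0$, plus polynomials in $\ell$, $e^{-a}x$ and derivatives), while $\norm e^{2a}f_n\norm\to\infty$ and $\norm e^{2a}\fois\norm$ is $\tau_\kg$-continuous since $\cosh(2a)\leq 1+|\sinh(2a)|$. One small step you should make explicit: passing from ``the topologies on $\caD(\gS)$ differ'' to ``the completions differ'' is not automatic; either invoke the open mapping theorem (both completions are Fr\'echet by Theorem \ref{thm-hdq1}, so if they coincided as sets the identity would be a continuous bijection of Fr\'echet spaces, hence a homeomorphism, forcing $\tau_\ks=\tau_\kg$ on $\caD(\gS)$, a contradiction), or verify that your candidate element $g(a)\psi(x,\ell)$ genuinely lies in the $\tau_\ks$-closure of $\caD(\gS)$ --- as written that ``e.g.'' is asserted rather than checked. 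With that one line added, your argument is a complete proof of a statement the paper only sketches.
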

\begin{proof}
The moment maps satisfy the relations:
\begin{equation*}
[\eta_H,\eta_y]_{\star_\theta}=-i\theta\eta_y,\qquad [\eta_H,\eta_E]_{\star_\theta}=-2i\theta\eta_E,\qquad [\eta_y,\eta_{y'}]_{\star_\theta}=-i\theta\omega(y,y')\eta_E,
\end{equation*}
so they form a symmetry of the HDQ $\algA$. Since the moment maps do not include the function $e^{2a}$, we see that the induced family of seminorms will correspond to a very different behavior from the ones of Theorem \ref{thm-symelem} for $a\to\infty$.
\end{proof}

\vskip 0.5 cm
\noindent {\bf Acknowledgements}: The author thanks Jean-Pierre Antoine for introducing him to the subject of O*-algebras, and Pierre Bieliavsky, Victor Gayral, Bruno Iochum and Yoshiaki Maeda for interesting discussions on the part concerning deformation quantizations.

\bibliographystyle{utcaps}
\bibliography{biblio-these,biblio-perso,biblio-recents}

\end{document}